\def\sym#1{\ifmmode^{#1}\else\(^{#1}\)\fi}
\renewcommand{\arraystretch}{1.0}
\newcommand{\cmark}{\ding{51}}%
\newcommand{\xmark}{\ding{55}}%
\def\R{\mathbb{R}}
\def\calC{\mathcal{C}}
\def\calD{\mathcal{D}}
\def\calF{\mathcal{F}}
\def\calH{\mathcal{H}}
\def\calI{\mathcal{I}}
\def\calJ{\mathcal{J}}
\def\calK{\mathcal{K}}
\def\calL{\mathcal{L}}
\def\calM{\mathcal{M}}
\def\calN{\mathcal{N}}
\def\calO{\mathcal{O}}
\def\calP{\mathcal{P}}
\def\calR{\mathcal{R}}
\def\calT{\mathcal{T}}
\def\calX{\mathcal{X}}
\def\bA{\boldsymbol{A}}
\def\bS{\boldsymbol{S}}
\def\bV{\boldsymbol{V}}
\def\bb{\boldsymbol{b}}
\def\be{\boldsymbol{e}}
\def\bh{\boldsymbol{h}}
\def\br{\boldsymbol{r}}
\def\bw{\boldsymbol{w}}
\def\bx{\boldsymbol{x}}
\def\by{\boldsymbol{y}}
\def\bo{\boldsymbol{0}}
\def\bone{\boldsymbol{1}}
\def\bh{\boldsymbol{h}}
\def\bbeta{\boldsymbol{\beta}}
\def\bxi{\boldsymbol{\xi}}
\def\bXi{\boldsymbol{\Xi}}
\def\st{\text{s.t.}}
\newcommand{\E}[1]{\ensuremath \mathbb{E} \left[ #1 \right]}
\newcommand{\condE}[2]{\ensuremath \mathbb{E}_{#1} \left[ #2 \right]}
\newcommand{\Emid}[2]{\ensuremath \mathbb{E} \left[\,#1\,\middle|\,#2\,\right]}
\newcommand{\condEmid}[3]{\ensuremath \mathbb{E}_{#1} \left[\,#2\,\middle|\,#3\,\right]}
\newcommand{\Pmid}[2]{\ensuremath \mathbb{P} \left(\,#1\,\middle|\,#2\,\right)}
\newcommand{\Prob}[1]{\ensuremath \mathbb{P} \left( #1 \right)}
\renewcommand{\exp}[1]{\ensuremath \, \mathrm{exp} \left( #1 \right)}
\newcommand{\ind}[1]{\ensuremath \mathbbm{1} \left( #1 \right)}
\newcommand{\IPbp}[3]{\ensuremath \mathrm{IP}^{#1} \left(\,#2\,\middle|\,#3\,\right)}
\newcommand{\OPTbp}[3]{\ensuremath \mathrm{OPT}^{#1} \left(\,#2\,\middle|\,#3\,\right)}
\definecolor{mygreen}    {RGB}{0,90,0}
\definecolor{myblue}     {RGB}{0,51,140}
\definecolor{myorange}   {RGB}{238,118,0}
\definecolor{myred}      {RGB}{126,0,0}
\definecolor{mygray}     {RGB}{100,100,105}
\definecolor{mygrayblue} {RGB}{0,128,128}
\definecolor{mygraygreen}{RGB}{128,128,0}
\definecolor{DarkPurple}     {RGB}{142, 36, 170}
\definecolor{LightPurple}    {RGB}{57, 130, 7}
\newcommand{\ALG}{\mathrm{ALG}\xspace}
\newcommand{\e}{\varepsilon}
\renewcommand{\d}{\textrm{d}}
\newcommand{\OPT}{\mathrm{OPT}\xspace}
\newcommand{\myqed}{\hfill $\square$}
\newcommand{\mset}{\ensuremath \text{\textit{Set}}}
\begin{document}
\RUNAUTHOR{Baxi et al.}
\RUNTITLE{Online Rack Placement in Large-Scale Data Centers}
\TITLE{\Large Online Rack Placement in Large-Scale Data Centers:\\Online Sampling Optimization and Deployment}
\ARTICLEAUTHORS{
\AUTHOR{Saumil Baxi}
\AFF{Cloud Operations and Innovation, Microsoft}
\AUTHOR{Kayla Cummings}
\AFF{Cloud Supply Chain Sustainability Engineering, Microsoft}
\AUTHOR{Alexandre Jacquillat, Sean Lo} 
\AFF{Sloan School of Management, Operations Research Center, Massachusetts Institute of Technology}
\AUTHOR{Rob McDonald}
\AFF{Cloud Operations and Innovation, Microsoft}
\AUTHOR{Konstantina Mellou, Ishai Menache, Marco Molinaro}
\AFF{Machine Learning and Optimization, Microsoft Research}
} 

\ABSTRACT{
This paper optimizes the configuration of large-scale data centers toward cost-effective, reliable and sustainable cloud supply chains. The problem involves placing incoming racks of servers within a data center to maximize demand coverage given space, power and cooling restrictions. We formulate an online integer optimization model to support rack placement decisions. We propose a tractable online sampling optimization (OSO) approach to multi-stage stochastic optimization, which approximates unknown parameters with a sample path and re-optimizes decisions dynamically. We prove that OSO achieves a strong competitive ratio in canonical online resource allocation problems and sublinear regret in the online batched bin packing problem. Theoretical and computational results show it can outperform mean-based certainty-equivalent resolving heuristics. Our algorithm has been packaged into a software solution deployed across Microsoft's data centers, contributing an interactive decision-making process at the human-machine interface. Using deployment data, econometric tests suggest that adoption of the solution has a negative and statistically significant impact on power stranding, estimated at 1--3 percentage point. At the scale of cloud computing, these improvements in data center performance result in significant cost savings and environmental benefits.
}

\KEYWORDS{integer optimization, online optimization, cloud supply chain, econometrics}

\pagenumbering{arabic}
\maketitle

\vspace{-24pt}
\section{Introduction}

The cloud computing industry is projected to reach nearly one trillion dollars by 2025, with double-digit annual growth. At the heart of cloud supply chains, data centers are critical to ensuring efficient, reliable, and sustainable operations \citep{cmsz2023}. With surging demand, driven in part by the rapid expansion of artificial intelligence, data centers operations have led to high costs and energy use \citep{iea}. Data centers also face rising risks of service outages. In practice, operators typically build in buffers to mitigate overload; yet these conservative practices also lead to wasted resources and financial costs \citep{nrdc}.

In particular, an emerging problem involves allocating incoming demand within a data center. Cloud demand materializes as requests for \textit{racks}---steel framework hosting servers, cables, and computing equipment, and powering billions of queries annually as well as platform-, software-, and infrastructure-as-a-service functionalities. Each rack needs to be mounted on a tile within the data center; once placed, it becomes immovable. Rack placements are therefore pivotal toward maximizing data center utilization and ensuring reliable cloud computing operations when a power device fails. In practice, data center managers make rack placement decisions based on domain expertise and spreadsheet tools, leading to high mental overload and inefficiencies \citep{uptime}.

In response, this paper studies an online rack placement problem to optimize data center configurations. 
Rack placement features a discrete resource allocation structure to maximize utilization subject to multi-dimensional constraints from resource availability restrictions, reliability requirements, and operational requirements. 
It also features an online optimization structure due to uncertainty regarding future demand, with continuous uncertainty (power and cooling requirements), a continuous state space (power and cooling utilization), a large action space (tile-rack assignments), and a long time horizon. In turn, the paper proposes an end-to-end approach by (i) formulating an online rack placement model; (ii) developing an online algorithm for multi-stage stochastic optimization, with theoretical and computational results; (iii) developing software solutions and deploying them across Microsoft's fleet of data centers; and (iv) evaluating its impact on data center performance.

Specifically, this paper makes the following contributions:
\begin{itemize}
\item[--]	\emph{An optimization model of rack placement.} We formulate a multi-stage stochastic integer optimization model to optimize the placement of incoming rack requests. Multi-dimensional capacity constraints reflect space restrictions, cooling restrictions, and power restrictions within a multi-layer architecture, and reliability requirements to ensure operational continuity in the event of a power device failure. Moreover, coupling constraints ensure that racks from the same reservation are placed in the same row in the data center, which enables higher service levels for the end customers.
\item[--]	\emph{An online sampling optimization (OSO) algorithm, with supporting theoretical and computational results.} The OSO algorithm is designed as an easily-implementable and tractable sampling-based resolving heuristic in multi-stage stochastic optimization under exogenous uncertainty. The algorithm samples future realizations of uncertainty at each decision epoch; solves a tractable approximation of the problem; and re-optimizes decisions dynamically. With one sample path, the algorithm relies on a deterministic approximation of the problem at each iteration; with several sample paths, it relies on a two-stage stochastic approximation at each iteration.

Theoretical results show that OSO yields strong performance guarantees even with a \textit{single} sample path at each iteration. Specifically, it provides a $(1-\varepsilon_{d,T,B})$-approximation of the perfect-information optimum in canonical online resource allocation problems, where $\varepsilon_{d,T,B}$ approximately scales with the number of resources $d$ as $\calO(\sqrt{\log d})$, with the time horizon $T$ as $\calO(\sqrt{\log T})$ and with resource capacities $B$ as $\calO(1/\sqrt{B})$. In particular, OSO is asymptotically optimal if capacities scale with the time horizon as $\Omega(\log^{1+\sigma} T)$ for $\sigma>0$. Single-sample OSO also achieves sub-linear regret in the online batched bin packing problem with large-enough batches. We also prove that OSO can yield unbounded improvements as compared to myopic decision-making and mean-based certainty-equivalent resolving heuristics---thereby showing benefits of sampling and re-optimization. Computational results further demonstrate that OSO retains tractability and can return higher-quality solutions than other resolving heuristics in large-scale online optimization instances that remain intractable with stochastic programming and dynamic programming benchmarks.
\item[--]   \emph{Deployment of the model and algorithm in production.} We have packaged our optimization algorithm into a software solution and deployed it across Microsoft's fleet of data centers. As with many supply chain problems, rack placement involves complex considerations that are hard to elicit in a single optimization model. We have closely collaborated with data center managers to improve the model iteratively and capture practical considerations through phased deployment. Since its launch in March 2022, the software recommendations have been increasingly adopted. This paper constitutes one of the first large-scale deployments of a collaborative decision-support tool in data centers, contributing an interactive decision-making process at the human-machine interface.
\item[--]   \emph{Realized benefits in production across Microsoft's data centers.} We leverage post-deployment data to estimate the effect of adoption of our solution on data center performance. An important challenge in the empirical assessment is that performance can only be assessed at the data center level rather than at the rack placement level, due to combinatorial interdependencies across rack placements. Still, we exploit Microsoft's large fleet of data centers to identify the effect of adoption on power stranding, defined as the amount of wasted power within the data center. Econometric specifications based on ordinary least square regression and propensity score matching indicate a negative and statistically significant impact of adoption, with reductions in power stranding by 1--3 percentage points. At the scale of cloud supply chains, these improvements represent large efficiency gains, translating into multi-million-dollar annual cost savings along with environmental benefits.
\end{itemize}
\section{Literature Review}
\label{sec:literature}

\subsubsection*{Cloud supply chains.} Cloud computing involves many optimization problems, spanning, at the upstream level, server procurement \citep{arbabian2021capacity}, capacity expansion \citep{liu2023efficient}, and the allocation of incoming demand across data centers \citep{xl2013}; and, at the downstream level, the assignment of jobs to virtual machines \citep{schroeder2006closed, gardner2017redundancy,grosof2022optimal} and of virtual machines to servers \citep{ckmz2019,gr2020, pst2022,bfmmn2022,muir2024temporal}. In-between, rack placement focuses on the allocation of physical servers within a data center. \cite{zhang2021flex} proposed a flexible assignment of demand to power devices. \cite{mmz2023} developed a scheduling algorithm to manage power capacities. Our paper contributes a comprehensive optimization approach to data center operations, in order to manage cloud demand and supply given space, power and cooling capacities, regular and failover conditions, and multi-rack reservations.

Our paper relates to the deployment of software tools in large-scale data centers. \cite{carbondc} developed a scheduling software to mitigate the carbon footprint of Google's data centers. \cite{wu2016dynamo} deployed a dynamic power-capping software in Facebook's data centers. \cite{lyu2023hyrax} introduced a fail-in-place operational model for servers with degraded components in Microsoft's data centers. Our paper provides a new solution to support rack placements.

\subsubsection*{Stochastic online optimization.}
Multi-stage stochastic integer programs are typically formulated on a scenario tree, using sample average approximation \citep{kleywegt2002sample} or scenario reduction \citep{romisch2009scenario,bertsimas2022optimization,zhang2023optimized}. Solution methods include branch-and-bound \citep{lulli2004branch}, cutting planes \citep{guan2009cutting}, and progressive hedging \citep{rockafellar1991scenarios,gade2016obtaining}. Still, scenario trees grow exponentially with the planning horizon. Alternatively, stochastic dual dynamic programming leverages stage-wise decomposition and outer approximation \citep{pereira1991multi}; yet, its complexity scales with the number of variables, and it becomes much more challenging with integer variables due to non-convex cost-to-go functions \citep{lohndorf2013optimizing,philpott2020midas,zou2019stochastic}. Another set of methods employ linear decision rules to derive tractable approximations of multi-stage problems \citep{kuhn2011primal,bodur2022two,daryalal2024lagrangian}.

As a stochastic integer program or a dynamic program, the rack placement problem is highly challenging to solve due to continuous uncertainty, the continuous state space, the large action space, and the long time horizon. This paper proposes a tractable OSO approximation, which re-optimizes decisions dynamically based on one (or a few) sample path(s). This approach relates to certainty-equivalent (CE) controls, which approximate uncertain parameters by averages \citep{bertsekas2012dynamic}. \cite{gallego1994optimal,gallego1997multiproduct} showed that static CE heuristics achieve a $\Theta(\sqrt T)$ loss in online revenue management. Subsequent work embedded CE into resolving heuristics \citep{ciocan2012model,chen2013simple} and compared static vs. adaptive CE heuristics \citep{cooper2002asymptotic,maglaras2006dynamic,secomandi2008analysis,jasin2013analysis}. Augmented with probabilistic allocations and thresholding adjustments, CE heuristics can achieve a $o(\sqrt T)$ loss \citep{reiman2008asymptotically} and even a bounded $\calO(1)$ loss \citep{jasin2012re,bumpensanti2020re}. Bounded additive losses have also been obtained in the multi-secretary problem using a budget-ratio policy \citep{arlotto2019uniformly}; in online packing and matching using a CE resolving heuristic with probabilistic allocations and thresholding rules \citep{vera2021bayesian}; and in broader online allocation problems using an empirical CE heuristic with thresholding rules \citep{banerjee2024good}. These results rely on a discrete characterization of uncertainty.

With continuous distributions, CE resolving heuristics achieve logarithmic regret rates \citep{lueker1998average,arlotto2020logarithmic,li2022online,bray2024logarithmic}. \cite{balseiro2024survey} extended this result to a unified \textit{dynamic resource constrained reward collection problem}, which our online resource allocation problem falls into. \cite{besbes2022multi} interpolated between bounded and logarithmic regret bounds, depending on the complexity of the distribution. \cite{jiang2025degeneracy} proved $\calO(\log T)$ and $\calO((\log T)^2)$ regret bounds in network revenue management with continuous rewards. \cite{chen2025beyond} extended these results to an online multi-knapsack problem.

Finally, in online bin packing, \cite{rhee1993line} obtained a $\calO(\sqrt T\cdot \log T)$ regret rate with a resolving heuristic. \cite{gr2020} derived a $\calO(\sqrt{T})$ regret with a regularized resolving heuristic, without requiring distributional knowledge. \cite{liu2021online} obtained $\calO(\sqrt{T})$ regret under i.i.d. and random permutation models. \cite{banerjee2024good} obtained constant regret bounds, albeit with a dependency on an exponential number of action types.

Our OSO algorithm contributes a tractable resolving heuristic based on a \emph{sampled} path. This algorithm relates to sampling-based approaches in revenue management \citep{freund2021overbooking}, online matching \citep{chen2023feature}, and prophet inequalities \citep{prophetLimitedInfo,rubinsteinSingleSample,singleSampleProphet}; and to the dual mirror descent algorithm in resource allocation \citep{balseiro_best_2023}. Our paper reports new performance guarantees of OSO, notably a $\calO(\sqrt{\log T})$ competitive ratio in online resource allocation with continuous uncertainty, which depends weakly on the number of resource types $d$, in $\calO\left(\sqrt{\log d}\right)$, and scales with demand-normalized capacity $B$ in $\calO(1/\sqrt B)$. This result contributes to prior work on competitive ratios for secretary problems \citep{kleinberg,kesselheim2020knapsack}, packing and covering \citep{doi:10.1287/moor.2013.0612,agrawal2014fast,kesselheim,guptaMolinaroMOR}, and advertising \citep{10.5555/1888935.1888957,DevanurHayes09}. We also demonstrate that OSO can provide unbounded benefits versus mean-based CE algorithms, thus showing the potential of sampling and re-optimization.
\section{The Rack Placement Problem}\label{sec:model}

\subsection{Problem Statement and Mathematical Notation}

\subsubsection*{Inputs: demand.}

Data center demand materializes as racks of servers, which need to be mounted onto hardware tiles powered with appropriate power and cooling equipment. Demand requests arrive within a data center sequentially in batches. We index the planning horizon by $\calT$ and denote by $\calI^t$ the set of requests at time $t\in\calT$. Demand requests correspond to different hardware types, which result in heterogeneous resource requirements. Each request $i\in \mathcal{I}^t$ comes with $n_i$ racks, each requiring $\rho_i$ units of power and $\gamma_i$ units of cooling (see distributions in \Cref{F:historicalDemand}). A request is satisfied if all racks are placed.

\begin{figure}[h!]
    \centering
	\subfloat[Space requirements]{\label{subfig:demandSizes}\includegraphics[width=0.32\textwidth]{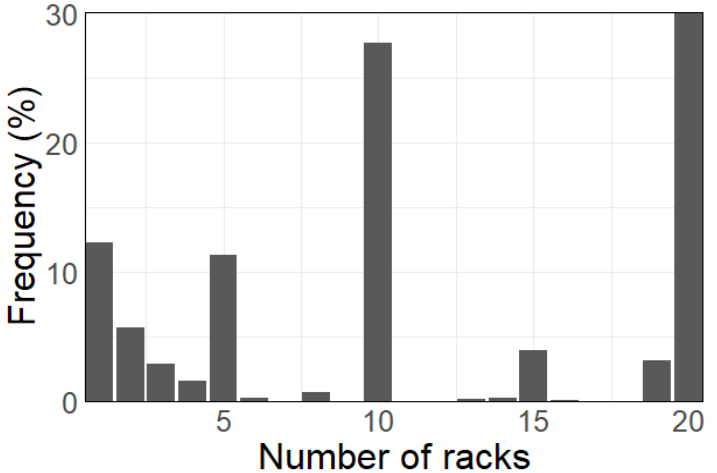}}
	\subfloat[Power requirements]{\label{subfig:demandPowerCDF}\includegraphics[width=0.3\textwidth]{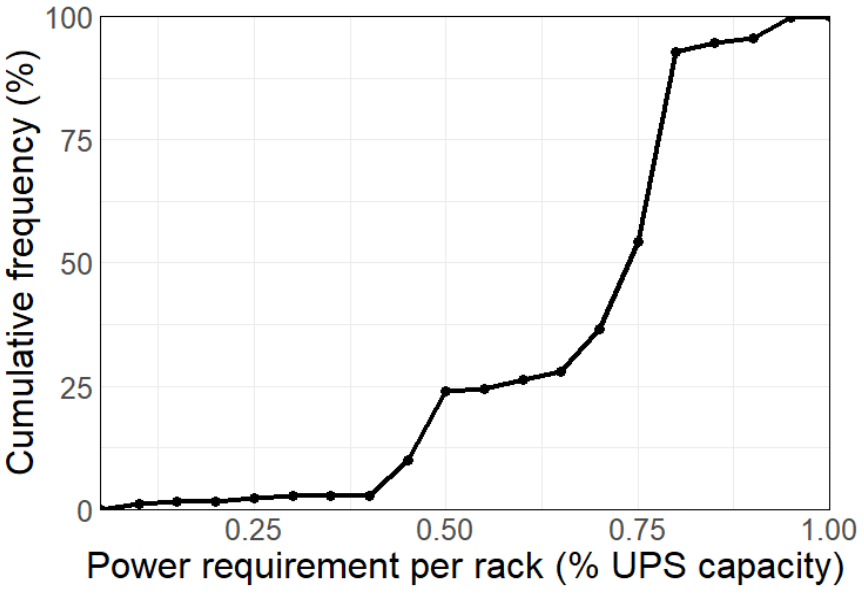}}
	\subfloat[Cooling requirements]{\label{subfig:demandCoolingCDF}\includegraphics[width=0.33\textwidth]{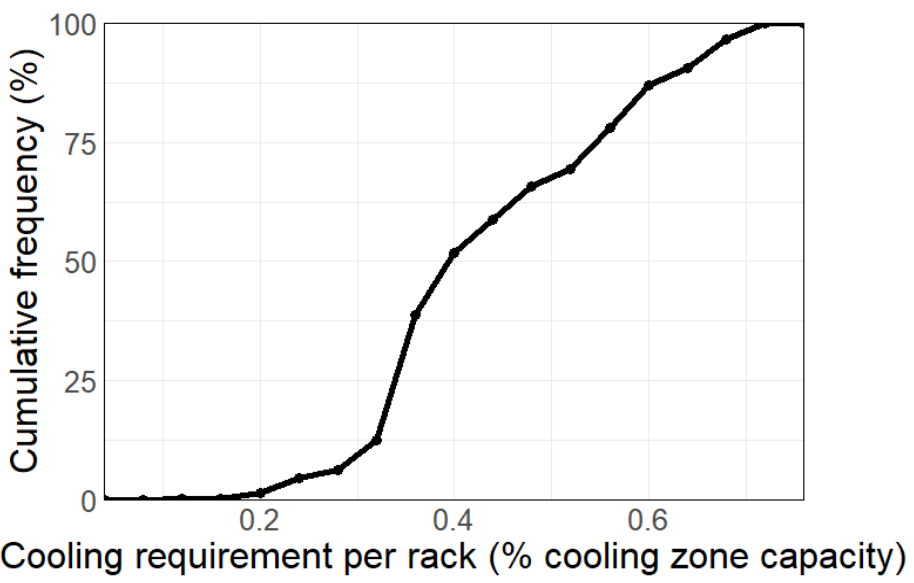}}
    \caption{
    Demand distribution across Microsoft's data centers.}
    \label{F:historicalDemand}
    \vspace{-12pt}
\end{figure}

\subsubsection*{Inputs: data center architecture.}

Data centers comprise server halls that host the main computing equipment, as well as adjacent mechanical and electrical yards that store the primary cooling systems and power generators (\Cref{subfig:space-layout}). 
This architecture creates three bottlenecks:
\begin{enumerate}
\item \emph{Physical space.} Each data center in partitioned into rooms, stored in $\calM$; each room is partitioned into rows, stored in $\calR$; each row comprises tiles which can each fit one rack of servers. All racks from the same demand request must be placed on the same row to be connected to the same networking devices---leading to better network latency. All demand requests have been prepared appropriately by engineering groups, so this constraint does not induce infeasibility by itself---although large demands may need to be rejected once the data center is close to full.

\item \emph{Power equipment.} Each room is connected to a three-level power hierarchy, shown in \Cref{subfig:power-hierarchy}. Let $\calP$ denote the set of power devices, partitioned into (i) upper-level Uninterruptible Power Supplies (UPS) devices that route power from electrical yards to each room ($\calP^{UPS}$); (ii) intermediate-level Power Delivery Units (PDU) devices that route power to the data center floor ($\calP^{PDU}$); and (iii) lower-level Power Supply Units (PSU) devices that distribute power to the tiles ($\calP^{PSU}$). For $p \in \calP^{UPS}$, we denote by $\calL_p\subset\calP$ the subset of connected power devices, including $p$ itself, all PDUs connected to $p$, and all PSUs connected to a PDU in $\calL_p$. This hierarchy defines a tree-based structure that encodes power connectivity, with capacity constraints at each node.

\begin{figure}[h!]
\centering
\subfloat[Layout of a data center room.\label{subfig:space-layout}]{\includegraphics[width=0.495\textwidth]{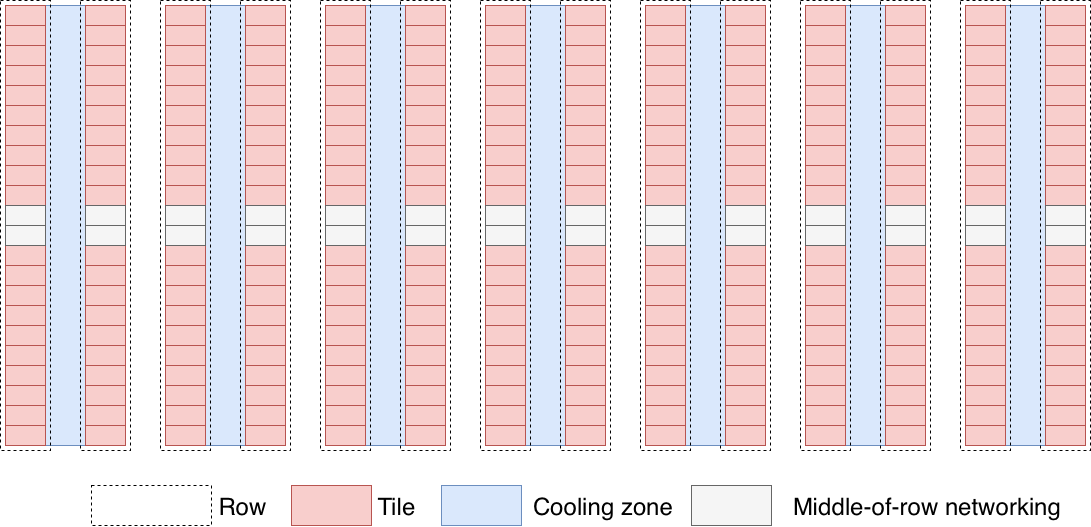}} 
\hfill
\subfloat[Sample power hierarchy.\label{subfig:power-hierarchy}]{\includegraphics[width=0.495\textwidth]{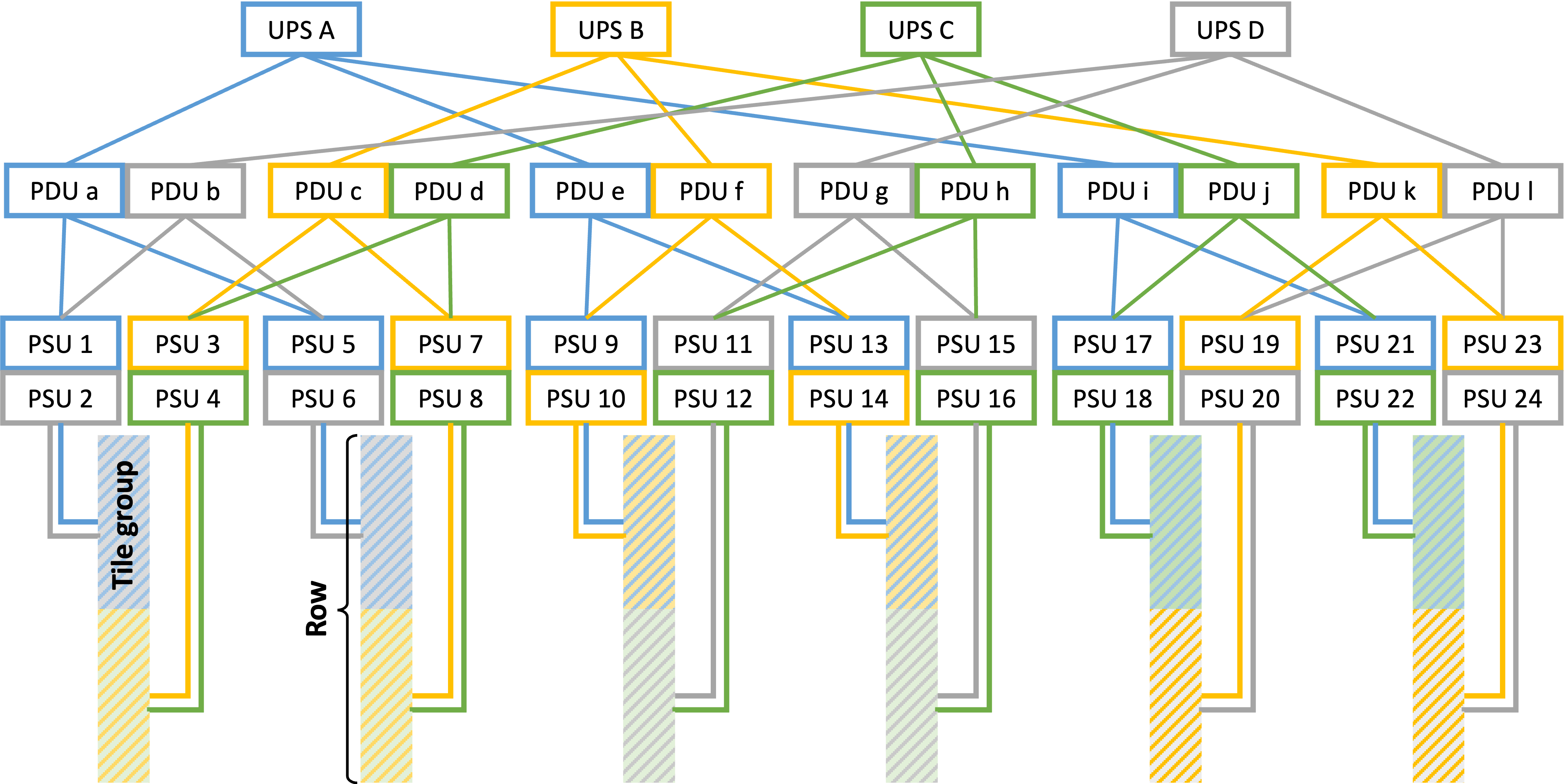}} 
\caption{Visualization of the three main operating bottlenecks in data centers: space, power and cooling.}
\vspace{-12pt}
\end{figure}

Power device failures represent one of the major risks of service outage, so data centers are configured with redundant power architectures \citep{zhang2021flex}. At Microsoft, redundancy is implemented by powering each tile with two leaf-level PSUs, each connected to different mid-level PDU devices and different top-level UPS devices (\Cref{subfig:power-hierarchy}). Under regular conditions, data centers operate in an even-allocation setting, meaning that a tile obtains half of its power from each set of connected power devices. Whenever a power device fails or is taken offline, all affected tiles must derive their power from the surviving devices.

Each device $p\in \calP$ has capacity $P_{p}$ under regular conditions and $F_p> P_{p}$ under failover conditions. The failover capacity can be supplied for a limited amount of time when another device fails. Thus, rack placements need to comply with multiple capacity restrictions across the power hierarchy (UPS, PDU, and PSU) both under regular conditions---so the shared load does not exceed the regular capacities---and under failover conditions---so that the extra load does not exceed the failover capacity of the surviving devices. In our problem, we protect against any one-off UPS failure \citep[as in][]{zhang2021flex}. This trades off efficiency and reliability by protecting against the most impactful failures (protecting against top-level UPS failures also protects against lower-level PDU and PSU failures) but protecting against one failure at a time (in practice, simultaneous failures are extremely rare).

\item \emph{Cooling equipment:} Each room includes several capacitated cooling zones, stored in a set $\calC$, that host necessary equipment to support the computing hardware. Each cooling zone $c\in\calC$ has capacity $C_c$. Each row $r\in\calR$ is connected to one cooling zone, denoted by $\text{cz}(r)\in\calC$. However, multiple rows can share the same cooling zone, creating coupling constraints across rows.

\end{enumerate}

\subsubsection*{Decisions: data center configuration.}
Rack placements determine the data center configuration by assigning each rack to a tile, which links to a cooling zone and two redundant PSU, PDU and UPS devices. Racks then become immovable due to the labor overhead, service interruptions and financial costs associated with changes in data center configurations. Minor changes can come from rack decommissioning and other out-of-scope events; however, these remain rare compared to new rack placements, due to the growing number of datacenters \citep{DC2024,DC2025} and the six-year average lifespan of servers \citep{DecomLifespan}.

To circumvent the many-to-many mapping between rows and power devices, we partition the tiles within each row into \textit{tile groups}, stored in a set $\calJ$. Specifically, each tile group $j\in\calJ$ comprises the set of tiles located within a row, denoted by $\text{row}(j)$, and connected to the same pair of PSU devices; let $\calJ_p \subset \calJ$ denote the subset of tile groups connected to each power device $p\in\calP$ within the three-level power hierarchy. We also denote by $s_j$ the number of tiles in group $j\in\calJ$. By design, all tiles in group $j\in\calJ$ are linked to the same row, the same cooling zone, and the same power devices. Thus, all tiles within the same group are indiscernible in view of the space, cooling and power constraints. To simplify the formulation and reduce model symmetry, we optimize the number of racks assigned to each \textit{tile group}, as opposed to relying on binary rack-tile assignment variables.

\Cref{fig:stranding} highlights three sources of inefficiencies in rack placement: fragmentation, resource unavailability, and failover risk. The example focuses on space and power capacities, which often act as the primary bottlenecks. This example considers two rows of three tiles; each row is powered by two distinct PSU devices connected to two distinct UPS devices (indicated in different colors). We abstract away from the intermediate PDU level in this example. We consider an incoming single-rack request with an 80W requirement. In all cases, at least one row has sufficient space to accommodate it but in neither case it can be added, for the following reasons:
\begin{itemize}
    \item[--] \textit{fragmentation}: in~\Cref{subfig:fragmentation}, available power is spread across power devices, leaving no feasible placement for incoming requests. Whereas the data center has residual capacity of 80W, each row has residual capacity of 40W and cannot handle the incoming 80W request.
    \item[--] \textit{resource unavailability}: in~\Cref{subfig:unavailability}, all placements are infeasible because of unavailable power capacity or unavailable space capacity. Specifically, UPS 1 and UPS 2 have sufficient residual power but are only connected to occupied tiles; vice versa, Row 2 has sufficient space but UPS 3 and UPS 4 do not have sufficient power to handle the incoming 80W request.
    \item[--] \textit{failover risk}: in~\Cref{subfig:redundancy}, the request could be accommodated in Row 2 under regular conditions, with 40W allocated to UPS 1 and UPS3 per the even power allocation. However, UPS 1 does not have sufficient failover capacity to handle it if UPS 2 were to fail. Specifically, UPS 1 would need to handle 200W (160W from Row 1 and 40W from Row 2), in excess of its 180W failover capacity.
\end{itemize}

\begin{figure}[h!]
\centering
\subfloat[Power fragmentation. \label{subfig:fragmentation}]{\includegraphics[width=0.325\textwidth]{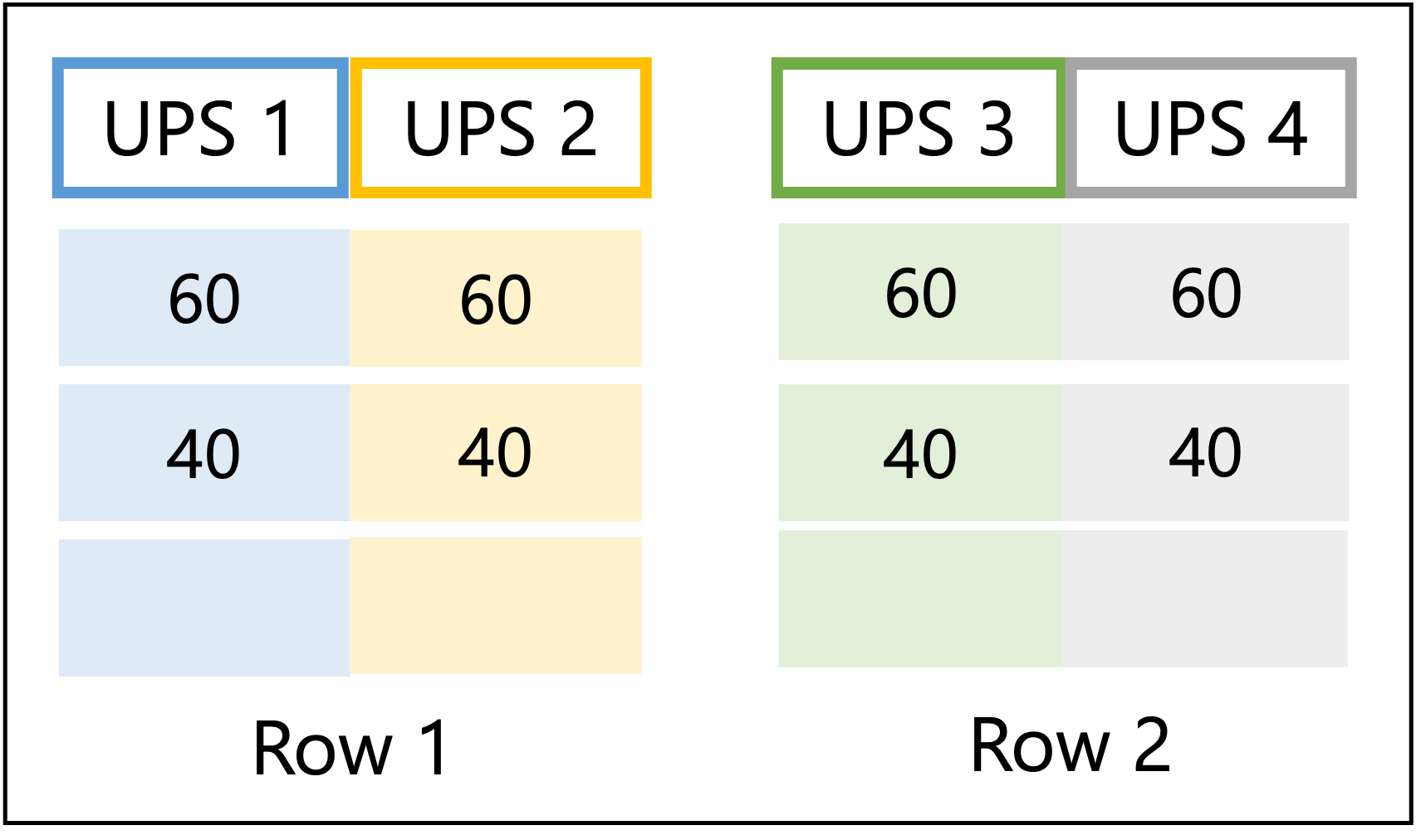}} 
\subfloat[Resource unavailability.
 \label{subfig:unavailability}]{\includegraphics[width=0.325\textwidth]{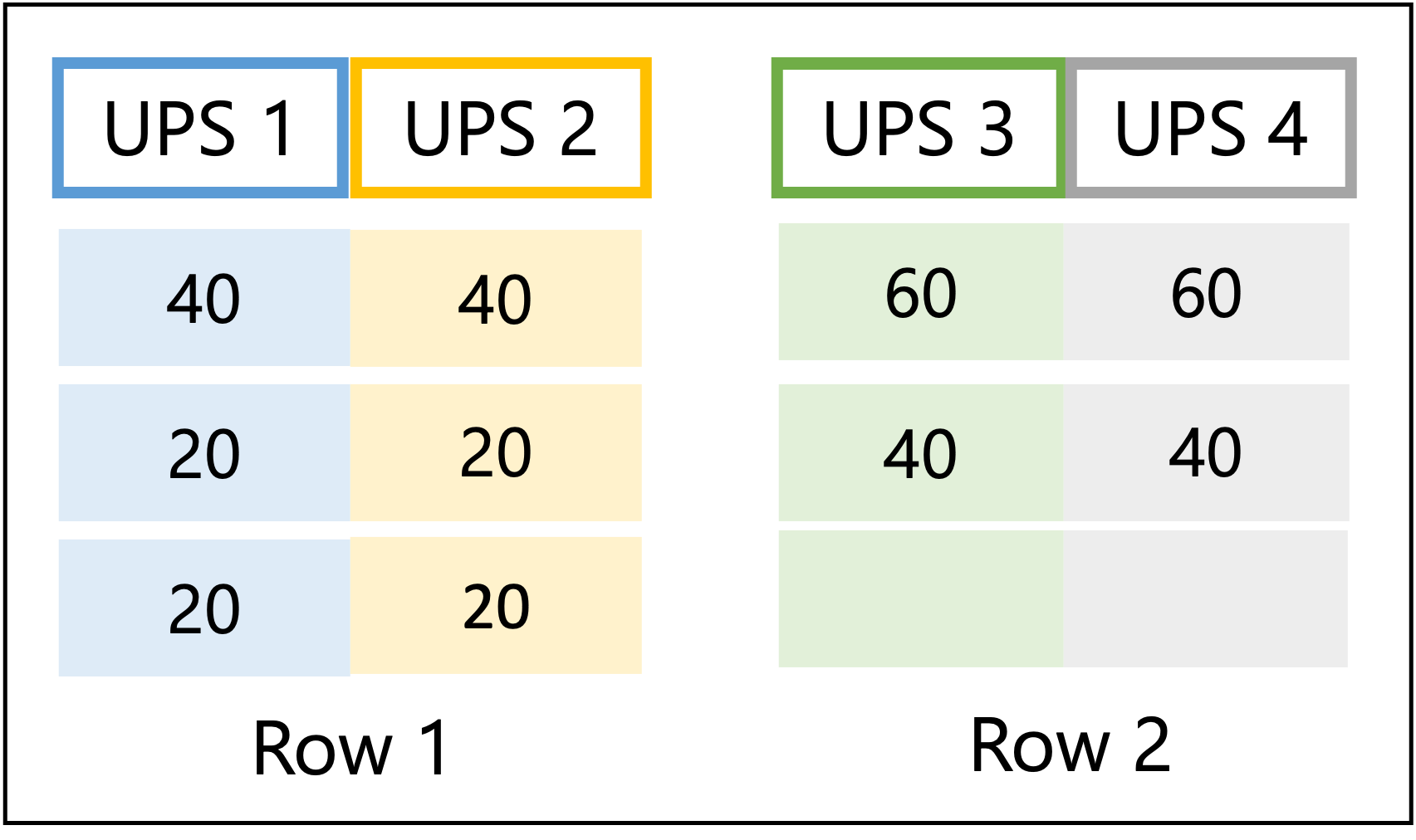}} 
\subfloat[Failover risk.
\label{subfig:redundancy}]{\includegraphics[width=0.325\textwidth]{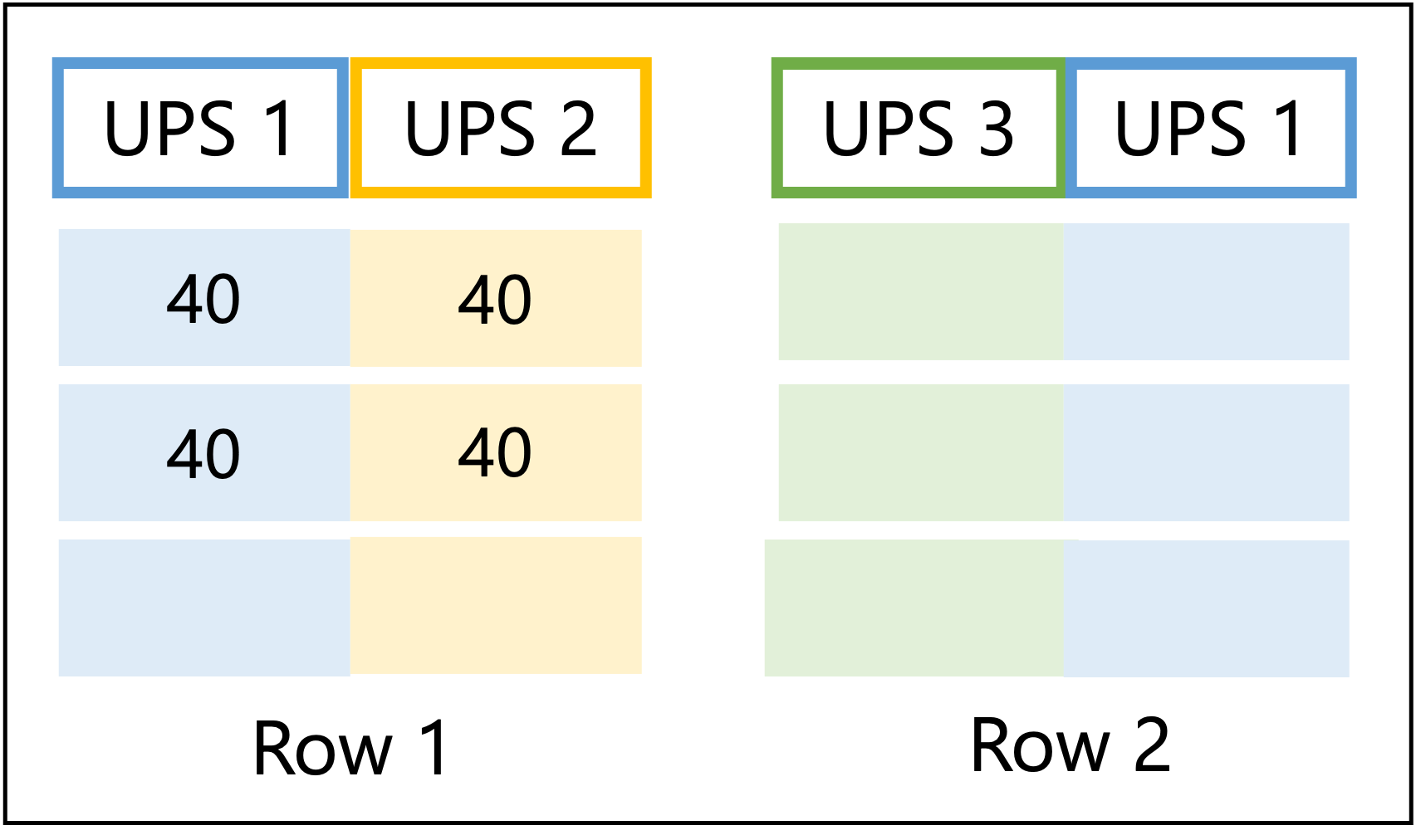}} 
\caption{Illustration of inefficiencies in rack placement operations. The example considers a single-rack request with an 80W power requirement. Each UPS device has a regular capacity of 120W and a failover capacity of 180W. Each row consists of three tiles. The number on each tile represents the amount of power that is obtained from each UPS; tiles without any number denote empty tiles.}\label{fig:stranding}
\vspace{-12pt}
\end{figure}

\subsection{Integer Optimization Formulation for Offline Rack Placement}\label{sec:formulation}

The rack placement problem aims to maximize data center utilization given resource availability and reliability requirements. We define the following decision variables:
\vspace{-6pt}
\begin{align*}
	x^t_{ij} = &
    \ \text{number of racks from request $i\in\calI^t$ from period $t\in\calT$ assigned to tile group $j\in\calJ$.}
    \\
	y^t_{ir} = &
    \begin{cases}
        1 & \text{if request $i\in\calI^t$ from period $t\in\calT$ is assigned to row $r \in \calR$,}
        \\
        0 & \text{otherwise.}
    \end{cases}
\end{align*}

The offline rack placement problem is formulated as follows. \Cref{eq:rackplacement} maximizes the reward from successful placements. \Cref{eq:rackplacement_assignment} ensures that all racks from a request are assigned to the same row, and \Cref{eq:rackplacement_linking} states that a request is placed in a row if all its racks are assigned to corresponding tile groups. \Cref{eq:rackplacement_space} to \Cref{eq:rackplacement_power} apply the space capacity, cooling capacity, and power capacity constraints under regular operations. Each incoming request $i \in \calI^t$ consists of $n_i$ racks, requiring $\gamma_i$ cooling and $\rho_i$ power per rack. The factor $\rho_i/2$ in \Cref{eq:rackplacement_power} reflects that the power requirements of each rack are shared by the two connected PSU devices and by the two sets of connected PDU and UPS devices, per the even power allocation. \Cref{eq:rackplacement_failpower} imposes the power capacity requirements under failover operations: when UPS device $p' \in \calP^{UPS}$ fails, the failover capacities of all non-connected power devices $p\in\calP\setminus\calL_{p'}$ need to accommodate their increased power load, comprising their regular load plus the additional load from all racks connected to $p$ and $p'$.
\begin{align}
    \label{eq:rackplacement}
    \max \quad 
    & \sum_{t\in\calT} \sum_{i\in\calI^t} r_{i} \sum_{r \in \mathcal{R}} y^t_{ir}
    \\
    \st \quad 
    & \sum_{r \in \mathcal{R}} y^t_{ir} \leq 1
    && 
    \ \forall \ i \in \calI^t,
    \ \forall \ t \in \calT
    \label{eq:rackplacement_assignment}
    \\
    & \sum_{\text{row}(j) = r} x^t_{ij} = n_i \cdot y^t_{ir}
    && 
    \ \forall \ i \in\calI^t, 
    \ \forall \ t \in\calT,
    \ \forall \ r \in \mathcal{R}
    \label{eq:rackplacement_linking}
    \\
    & \sum_{t=1}^T \sum_{i\in\calI^t} x^t_{ij} \leq s_j 
    && 
    \ \forall \ j \in \calJ
    \label{eq:rackplacement_space}
    \\
    & \sum_{t=1}^T \sum_{i\in\calI^t} \gamma_i \sum_{\text{cz}(\text{row}(j)) = c} x^t_{ij} \leq C_c
    && 
    \ \forall \ c \in \mathcal{C}
    \label{eq:rackplacement_cooling}
    \\
    & \sum_{t=1}^T \sum_{i\in\calI^t} \frac{\rho_i}{2} \sum_{j\in\calJ_p} x^t_{ij} \leq P_p
    && 
    \ \forall \ p \in \mathcal{P}
    \label{eq:rackplacement_power}
    \\
    & \sum_{t=1}^T \sum_{i\in\calI^t} \frac{\rho_i}{2} \left[
        \sum_{j\in\calJ_p} x^t_{ij} 
        + \sum_{j\in\calJ_p \cap \calJ_{p'}} x^t_{ij}
    \right] \leq F_p
    && 
    \ \forall \ p' \in \mathcal{P}^{\text{UPS}},
    \ \forall \ p \in \mathcal{P} \setminus \mathcal{L}_{p'}
    \label{eq:rackplacement_failpower}
    \\
    & \bx \text{ non-negative integer,} \ \by \text{ binary}
    \label{eq:rackplacement_domain}
\end{align}

We can rewrite this model in general terms to identify its resource allocation structure in \Cref{fig:structure}. The model assigns demand nodes $i\in\calI^t$ (racks, in our context) to supply nodes $j\in\calJ$ (tile groups) that map to a set of $d$ resource nodes $\calK$. In our context, resources include tile groups $j \in \calJ$, cooling zones $c \in \calC$, power devices $p \in \calP$, and coupled pairs of power devices $(p',p) \in \mathcal{P}^{\text{UPS}} \times \left( \mathcal{P} \setminus \mathcal{L}_{p'} \right)$. Let $A_{ijk}$ be the consumption of resource $k\in\calK$ if request $i \in \calI^t$ is assigned to tile group $j \in \calJ$, and $b_k$ be the capacity of resource $k\in\calK$. Specifically: (i) if $k$ indexes tile group $j'\in\calJ$, $A_{ijk} = \ind{j=j'}$ and $b_k=s_{j'}$; (ii) if $k$ indexes cooling zone $c \in \calC$, $A_{ijk} = \gamma_i \cdot \ind{\text{cz}(\text{row}(j)) = c}$ and $b_k = C_c$; (iii) if $k$ indexes power device $p \in \calP$, $A_{ijk} = \frac{\rho_i}{2}\cdot \ind{j \in \calJ_p}$ and $b_k=P_p$; and (iv) if $k$ indexes $(p',p) \in \mathcal{P}^{\text{UPS}}\times\left(\mathcal{P} \setminus \mathcal{L}_{p'}\right)$, $A_{ijk}=\frac{\rho_i}{2}\left(\ind{j \in \calJ_p}+\ind{j \in \calJ_p\cap\calJ_{p'}}\right)$, and $b_k=F_p$. The problem becomes:
\begin{align}
    \label{eq:rackplacement_resourceallocation}
    \max \quad 
    & \sum_{t \in \calT} \sum_{i \in \calI^t} r_{i} \sum_{r \in \calR} y^t_{ir}
    \\
    \st \quad 
    & \text{\Cref{eq:rackplacement_assignment}, \Cref{eq:rackplacement_linking}}
    \\
    & \sum_{t \in \calT} \sum_{i \in \calI^t} \sum_{j \in \calJ} A_{ijk} x^t_{ij} \leq b_k 
    && 
    \ \forall \ k \in \calK
    \label{eq:rackplacement_capacity_resourceallocation}
    \\
    & \bx \text{ non-negative integer,} \ \by \text{ binary}
\end{align}

Thus, the rack placement problem features a three-layer resource allocation structure between racks, tile groups, and resource nodes, along with linking constraints within multi-rack demand requests. Inputs describe the data center architecture by linking tile groups to resource nodes and by specifying the capacities at the resource nodes; and decisions determine the data center configuration by assigning racks to tile groups. More broadly, we study a three-layer resource allocation problem with demand nodes indexed by $i\in\calI$, supply nodes indexed by $j\in\calJ$, and resource nodes indexed by $k\in\calK$; with a slight abuse of notation, we use $\calI$ to refer to demand items (corresponding to individual racks, in our context) as opposed to demand requests in our rack placement formulation (multi-rack reservations). This resource allocation problem captures our rack placement problem upon relaxing the multi-rack linking constraints (\Cref{eq:rackplacement_assignment}, \Cref{eq:rackplacement_linking}).
\begin{figure}[h!]
    \centering
    \begin{tikzpicture}[scale=0.8,transform shape,every text node part/.style={align=center}]
        \node[circle,fill=myred,inner sep=3pt] (D1) at (0,10) {};
        \node[circle,fill=myred,inner sep=3pt] (D2) at (0,9.5) {};
        \node[circle,fill=myred,inner sep=3pt] (D3) at (0,9) {};
        \node[circle,fill=myred,inner sep=3pt] (D4) at (0,8.5) {};
        \node[circle,fill=myred,inner sep=3pt] (D5) at (0,8) {};
        \node[circle,fill=myred,inner sep=3pt] (D6) at (0,7.5) {};
        \node[circle,fill=black!80,inner sep=3pt] (S1) at (5,9.5) {};
        \node[circle,fill=black!80,inner sep=3pt] (S2) at (5,9) {};
        \node[circle,fill=black!80,inner sep=3pt] (S3) at (5,8.5) {};
        \node[circle,fill=black!80,inner sep=3pt] (S4) at (5,8) {};
        \node[circle,fill=DarkPurple,inner sep=3pt] (G1) at (10,9.5) {};
        \node[circle,fill=DarkPurple,inner sep=3pt] (G2) at (10,9) {};
        \node[circle,fill=DarkPurple,inner sep=3pt] (G3) at (10,8.5) {};
        \node[circle,fill=DarkPurple,inner sep=3pt] (G4) at (10,8) {};
        \node[circle,fill=myorange,inner sep=3pt] (C1) at (10,7.5) {};
        \node[circle,fill=myorange,inner sep=3pt] (C2) at (10,7) {};
        \node[circle,fill=mygreen,inner sep=3pt] (P1) at (10,6.5) {};
        \node[circle,fill=mygreen,inner sep=3pt] (P2) at (10,6) {};
        \node[circle,fill=mygreen,inner sep=3pt] (P3) at (10,5.5) {};
        \node[circle,fill=mygreen,inner sep=3pt] (P4) at (10,5) {};
        \node[circle,fill=mygreen,inner sep=3pt] (P5) at (10,4.5) {};
        \node[circle,fill=mygreen,inner sep=3pt] (P6) at (10,4) {};
        \node[circle,fill=mygreen,inner sep=3pt] (P7) at (10,3.5) {};
        \node[circle,fill=mygreen,inner sep=3pt] (P8) at (10,3) {};
        \node[circle,fill=myblue,inner sep=3pt] (PP1) at (10,2.5) {};
        \node[circle,fill=myblue,inner sep=3pt] (PP2) at (10,2) {};
        \node[circle,fill=myblue,inner sep=3pt] (PP3) at (10,1.5) {};
        \node[circle,fill=myblue,inner sep=3pt] (PP4) at (10,1) {};
        \node[circle,fill=myblue,inner sep=3pt] (PP5) at (10,.5) {};
        \node[circle,fill=myblue,inner sep=3pt] (PP6) at (10,0) {};
        \draw[->,color=black!80] (D1) to (S1);
        \draw[->,color=black!80] (D1) to (S2);
        \draw[->,color=black!80] (D1) to (S3);
        \draw[->,color=black!80] (D1) to (S4);
        \draw[->,color=black!80] (D2) to (S1);
        \draw[->,color=black!80] (D2) to (S2);
        \draw[->,color=black!80] (D2) to (S3);
        \draw[->,color=black!80] (D2) to (S4);
        \draw[->,color=black!80] (D3) to (S1);
        \draw[->,color=black!80] (D3) to (S2);
        \draw[->,color=black!80] (D3) to (S3);
        \draw[->,color=black!80] (D3) to (S4);
        \draw[->,color=black!80] (D4) to (S1);
        \draw[->,color=black!80] (D4) to (S2);
        \draw[->,color=black!80] (D4) to (S3);
        \draw[->,color=black!80] (D4) to (S4);
        \draw[->,color=black!80] (D5) to (S1);
        \draw[->,color=black!80] (D5) to (S2);
        \draw[->,color=black!80] (D5) to (S3);
        \draw[->,color=black!80] (D5) to (S4);
        \draw[->,color=black!80] (D6) to (S1);
        \draw[->,color=black!80] (D6) to (S2);
        \draw[->,color=black!80] (D6) to (S3);
        \draw[->,color=black!80] (D6) to (S4);
        \draw[->,color=DarkPurple] (S1) to (G1);
        \draw[->,color=DarkPurple] (S2) to (G2);
        \draw[->,color=DarkPurple] (S3) to (G3);
        \draw[->,color=DarkPurple] (S4) to (G4);
        \draw[->,color=myorange] (S1) to (C1);
        \draw[->,color=myorange] (S2) to (C1);
        \draw[->,color=myorange] (S3) to (C2);
        \draw[->,color=myorange] (S4) to (C2);
        \draw[->,color=mygreen] (S1) to (P1);
        \draw[->,color=mygreen] (S2) to (P3);
        \draw[->,color=mygreen] (S3) to (P4);
        \draw[->,color=mygreen] (S4) to (P6);
        \draw[->,color=mygreen] (S1) to (P7);
        \draw[->,color=mygreen] (S2) to (P7);
        \draw[->,color=mygreen] (S3) to (P8);
        \draw[->,color=mygreen] (S4) to (P8);
        \draw[->,color=myblue] (S1) to (PP1);
        \draw[->,color=myblue] (S2) to (PP3);
        \draw[->,color=myblue] (S3) to (PP4);
        \draw[->,color=myblue] (S4) to (PP6);
        \draw [decorate,decoration={brace,amplitude=8pt,mirror,raise=4ex}](0,0.5) -- (5,0.5) node[midway,yshift=-5em]{Decisions:\\Data center configuration};
        \draw [decorate,decoration={brace,amplitude=8pt,mirror,raise=4ex}](5,0.5) -- (10,0.5) node[midway,yshift=-5em]{Inputs:\\Data center architecture};
        \draw [decorate,decoration={brace,amplitude=8pt,raise=2ex}](10,9.75) -- (10,7.75) node[anchor=west,midway,xshift=2em]{Tile groups};
        \draw [decorate,decoration={brace,amplitude=8pt,raise=2ex}](10,7.75) -- (10,6.75) node[anchor=west,midway,xshift=2em]{Cooling zones};
        \draw [decorate,decoration={brace,amplitude=8pt,raise=2ex}](10,6.75) -- (10,2.75) node[anchor=west,midway,xshift=2em]{Power devices};
        \draw [decorate,decoration={brace,amplitude=8pt,raise=2ex}](10,2.75) -- (10,-.25) node[anchor=west,midway,xshift=2em]{Pairs of power devices};
        \draw[->] (0,11.25) to (0,10.5) node[above,yshift=20pt]{Demand: $i \in \calI$\\(racks)};
        \draw[->] (5,11.25) to (5,10.5) node[above,yshift=20pt]{Supply: $j \in \calJ$\\(tile groups)};
        \draw[->] (10,11.25) to (10,10.5) node[above,yshift=20pt]{Resources: $k\in\calK$\\(equipment)};
    \end{tikzpicture}
    \caption{Three-layer resource allocation structure in the rack placement problem.}
    \label{fig:structure}
    \vspace{-12pt}
\end{figure}

\subsection{The Online Rack Placement Problem}
\label{subsec:onlinerackplacement}

We now model the \emph{online rack placement problem}, where demands are revealed over time and assignment decisions are made sequentially, as a multi-stage stochastic program. Recall that demand requests come in batches $\calI^t$ under uncertainty regarding future demand batches. Let $\bXi^t = \{ (r_i, n_i, \bA_i) \}_{i \in \calI^t}$ be a random variable encapsulating uncertainty in demand requests in stage $t$, including the reward parameter, the number of racks, the cooling requirements, and the power requirements. We denote by $\bxi^t$ a realization of $\bXi^t$, by $\bxi^{1:t}=\{\bxi^1,\cdots,\bxi^t\}$ the past realizations, and by $\bxi^{(t+1):T}=\{\bxi^{t+1},\cdots,\bxi^T\}$ the future realizations. Similarly, we denote the previous decisions by $(\bx^{1:t-1}, \by^{1:t-1})$. At time $t$, placement decisions $(\bx^t, \by^t)$ are constrained by the history of observed realizations and previous decisions. The feasible set, denoted by $\calF_t(\bx^{1:t-1}, \by^{1:t-1}, \bxi^{1:t})$, includes all solutions $(\bx^t, \by^t)$ that satisfy assignment and linking constraints at period $t \in \calT$ (\Cref{eq:mssip_rackplacement_feasibleset_t_assignment} and \Cref{eq:mssip_rackplacement_feasibleset_t_linking}) and capacity constraints across periods $1,\dots,t$ (\Cref{eq:mssip_rackplacement_feasibleset_t_capacity}):
\begin{alignat}{2}
    &\sum_{r \in \calR} y^t_{ir} 
    \leq 1
    &&\quad 
    \forall \ i \in \calI^t
    \label{eq:mssip_rackplacement_feasibleset_t_assignment}
    \\
    &\sum_{\text{row}(j) = r} x^t_{ij} 
    = n_i \cdot y^t_{ir}
    &&\quad 
    \forall \ i \in \calI^t, 
    \ \forall \ r \in \calR
    \label{eq:mssip_rackplacement_feasibleset_t_linking}
    \\
    &\left(\sum_{\tau=1}^{t-1} \sum_{i \in \calI^\tau} \sum_{j \in \calJ} A_{ijk} x^\tau_{ij}\right) + \sum_{i \in \calI^t} \sum_{j \in \calJ} A_{ijk} x^t_{ij}
    \leq b_k 
    &&\quad
    \forall \ k \in \calK
    \label{eq:mssip_rackplacement_feasibleset_t_capacity}\\
    & \bx^t \text{ non-negative integer,} \ \by^t \text{ binary}
    \label{eq:mssip_rackplacement_feasibleset_t_domain}
\end{alignat}

Similarly, per \Cref{eq:rackplacement}, we define a reward function $f^t(\bx^t, \by^t, \bxi^{1:t}) = \sum_{i \in \calI^t} r_i \sum_{r \in \calR} y^{t}_{ir}$ in period $t \in \calT$, as a function of previous realizations $\bxi^{1:t}$ and the current decisions $(\bx^t, \by^t)$. We express the online rack placement problem as the following multi-stage stochastic integer program:
\begin{alignat}{2}
    \label{eq:mssip_rackplacement}
    \mathbb{E}_{\bxi^1}
    \biggl[
        \max_{(\bx^1, \by^1) \in \calF_1(\bxi^1)} \biggl\{ 
            & f^1(\bx^1, \by^1, \bxi^1)
            + 
            \mathbb{E}_{\bxi^2}
            \biggl[
                \max_{(\bx^2, \by^2)  \in \calF_2(\bx^{1}, \by^{1}, \bxi^{1:2})} \biggl\{ 
                    f^2(\bx^2, \by^2, \bxi^{1:2})
                    + \dots
                    \\
                    & + \mathbb{E}_{\bxi^T}{
                        \max_{(\bx^T, \by^T) \in \calF_T(\bx^{1:T-1}, \by^{1:T-1}, \bxi^{1:T})} 
                        \Bigl\{ f^T(\bx^T, \by^T, \bxi^{1:T})\Bigr\}
                    } 
                \dots
                \biggr\}
            \biggr]
        \biggr\} 
    \biggr]
    \nonumber
\end{alignat}

The online rack placement problem remains highly intractable. As a multi-stage stochastic program, it is complicated by the continuous uncertainty of power and cooling requirements, which would require granular discretization in the scenario tree, and by its long time horizon, which would lead to exponential growth in the scenario tree. Moreover, at each node of the scenario tree, the problem involves a discrete optimization structure to assign incoming racks to tile groups. As a dynamic program, the problem involves a large action space and a continuous state space, which also hinders the scalability of available algorithms (see \Cref{app:resourcealloc_comp_exact}). These difficulties motivate our online sampling-based optimization algorithm in the next section to solve it dynamically.
\section{Online Sampling Optimization (OSO)}\label{sec:OSO}

The OSO algorithm provides an easily-implementable, tractable and generalizable sampling-based resolving heuristic in multi-stage stochastic optimization that (i) represents uncertainty with a single sample path (or a few sample paths); (ii) approximates the problem at each decision epoch via deterministic optimization (or two-stage stochastic optimization); and (iii) re-optimizes decisions dynamically in a rolling horizon. We prove that, even with the single-sample approximation, OSO provides strong theoretical guarantees in canonical online optimization problems, and that it can achieve unbounded benefits as compared to myopic and mean-based certainty-equivalent (CE) resolving heuristics. We also report computational results showing that OSO yields high-quality solutions across large-scale online optimization problems, including online rack placement, for which stochastic and dynamic programming methods remain intractable.

We consider a general-purpose framework for multi-stage discrete optimization under uncertainty, with a separable objective function, separable additive constraints, and exogenous uncertainty. We assume that $(\bXi^1, \dots, \bXi^T)$ are independent and identically distributed following distribution $\calD$. Simplifying the notation from \Cref{subsec:onlinerackplacement}, we let $\bx^t$ be the decision variable at period $t\in\calT$, $\bx^{1:t-1}$ the previous decisions, and $\calX^t$ a mixed-integer domain. We define a cost function $f^t(\bx^t,\bxi^{1:t})$ at time $t\in\calT$. The decision $\bx^t$ is constrained by the previous decisions $\bx^{1:t-1}$ and the realizations $\bxi^{1:t}$ based on the following constraints defining a feasible region $\calF_t(\bx^{1:t-1}, \bxi^{1:t})$:
\begin{subequations}
\label{eq:mssip_feasibleset}
\begin{alignat}{2}
    \calF_t(\bx^{1:t-1}, \bxi^{1:t}) 
    & = \Set{ 
        \bx^t \in \calX^t
        |
        \ \sum_{\tau=1}^{t-1} \bA^\tau(\bxi^{1:\tau}) \bx^\tau 
        \ + \bA^t(\bxi^{1:t}) \bx^t 
        \leq \bh^t(\bxi^{1:t})
    }
\end{alignat}
\end{subequations}

The stochastic optimization problem is then formulated as follows:
\begin{alignat}{2}
\label{eq:mssip}
    \mathbb{E}_{\bxi^1}
        \biggl[
        \min_{ \bx^1 \in \calF_1(\bxi^1)} 
        \biggl\{ 
            f^1(\bx^1, \bxi^1) 
            & + \mathbb{E}_{\bxi^2} \biggl[  \min_{ \bx^2 \in \calF_2(\bx^{1}, \bxi^{1:2})} \biggl\{ f^2(\bx^2, \bxi^{1:2})  + \ldots
            \\
            & 
            + \condE{\bxi^T}{
                \min_{\bx^T \in \calF_T(\bx^{1:T-1}, \bxi^{1:T})} 
                \Bigl\{ f^T(\bx^T, \bxi^{1:T}) \Bigr\}
            } \ldots
        \biggr\}
    \biggr]\biggr\}\biggr]
    \nonumber
\end{alignat}

\subsection{The OSO Algorithm}\label{sec:generalOSO}

The algorithm optimizes decisions dynamically using an online implementation of a tractable sampling-based approximation of the problem (\Cref{alg:OSO}). In its simplest version, the algorithm relies on a single-sample deterministic approximation at each iteration; otherwise, it relies on a small-sample two-stage stochastic optimization. In period $t\in\calT$, it generates $S\geq1$ sample paths from period $t+1$ to period $T$, denoted by $\widetilde\bxi^{t+1}_s,\dots,\widetilde\bxi^T_s$ for $s=1,\cdots,S$; it then solves the resulting optimization model; and it implements the immediate decision $\bx^t$. The realization $\bxi^{t+1}$ is then revealed and the algorithm proceeds iteratively by re-optimizing decisions in period $t+1$ onward. We also add a problem-specific regularizer $\Psi(\bx^t)$ which can provide an extra level of flexibility to adjust decisions based on future demand using problem-specific characteristics.

\begin{algorithm}[h!]
    \caption{Online Sampling Optimization (OSO) algorithm.}\label{alg:OSO}
    \begin{algorithmic} 
    \item Input: problem data, number of sample paths at each iteration $S$
    \item Repeat, for $t=1,\dots,T$:
    \begin{itemize}
    \item[] \textbf{Observe:} Observe realization $\bxi^t$.
    \item[] \textbf{Sample:} Collect $S$ sample paths ${ \widetilde{\bxi}_1^{t+1:T}, \dots, \widetilde{\bxi}_S^{t+1:T}}$ from the distribution $\calD$.
    \item[] \textbf{Optimize:} Solve the following problem; store optimal solution $\widetilde{\bx}^t$, $(\widetilde{\bx}^{t+1:T}_1, \dots, \widetilde{\bx}^{t+1:T}_S)$:
        \begin{subequations}
        \label{eq:mssip_oso}
        \begin{alignat}{2}
            \min \quad 
            & f^t(\bx^t, \bxi^{1:t})
            + \frac{1}{S} \sum_{s=1}^S \sum_{\tau = t+1}^T 
                f^{\tau} \bigg( 
                    \bx^\tau_s, 
                    (\bxi^{1:t}, \widetilde{\bxi}^{t+1:\tau}_s)
                \bigg)
                + \Psi(\bx^t)
            \\
            \st \quad 
            & \bx^t \in \calF_t(\overline{\bx}^{1:t-1}, \bxi^{1:t}) \\
            & \bx^\tau_s \in \calF_{\tau} \bigl( 
                (\overline{\bx}^{1:t-1}, \bx^t, \bx^{t+1:\tau-1}_s), 
                (\bxi^{1:t}, \widetilde{\bxi}^{t+1:\tau}_s) \bigr) 
            \ \forall \ \tau \in \{t+1, \dots, T\}, 
            \ \forall \ s \in \{1, \dots, S\}
        \end{alignat}
        \end{subequations}
    \item[] \textbf{Implement:} Implement $\overline{\bx}^t = \widetilde{\bx}^t$, discarding $(\widetilde{\bx}^{t+1:T}_1, \dots, \widetilde{\bx}^{t+1:T}_S)$.
    \end{itemize}
\end{algorithmic}
\end{algorithm}

By design, OSO retains a tractable structure at each decision epoch by relying on a single-sample or a small-sample approximation of uncertainty, illustrated in \Cref{fig:OSO_SAA} in a three-period example. Tractability of the sampling step is guaranteed as long as a sample path can be generated efficiently from distribution $\calD$ (which is the case in all problems considered in this paper). Tractability of the optimization step stems from the deterministic approximation in the single-sample variant, or, in its multi-sample variant, from the two-stage stochastic approximation that relaxes the non-anticipativity constraints in periods $t+1$ onward. In comparison, scenario-tree representations grow exponentially large with the planning horizon. Obviously, the single-sample or small-sample model simplifies the representation of uncertainty at each decision epoch, but the OSO algorithm attempts to mitigate approximation errors via dynamic re-optimization. This relates to certainty-equivalent resolving heuristics, with the difference that OSO leverages a sample path at each iteration rather than expected values. As we shall see theoretically and computationally, the sampling-based approach can outperform mean-based resolving heuristics; moreover, we report in the next section theoretical results showing that the OSO algorithm can provide strong approximations of the perfect-information optimum in canonical online optimization problems.

\begin{figure}[h!]
\centering
\begin{tikzpicture}[scale=0.6,transform shape]
    \node[] at (6,16.6){\large\textbf{Single-sample OSO}};
    \node[rectangle,fill=myred,minimum size=0.5cm] (state1a) at (0,15) {};
    \node[circle,fill=myred,minimum size=0.5cm] (decision1a) at (2,15) {};
    \node[rectangle,fill=black!50,minimum size=0.5cm] (state2a) at (4,15) {};
    \node[circle,fill=black!50,minimum size=0.5cm] (decision2a) at (6,15) {};
    \node[rectangle,fill=black!50,minimum size=0.5cm] (state3a) at (8,15) {};
    \node[circle,fill=black!50,minimum size=0.5cm] (decision3a) at (10,15) {};
    \node[rectangle,fill=black!50,minimum size=0.5cm] (state4a) at (12,15) {};
    \draw[->,ultra thick,color=myred] (state1a) to (decision1a);
    \draw[->,ultra thick,color=black!50] (decision1a) to (state2a);
    \draw[->,ultra thick,color=black!50] (state2a) to (decision2a);
    \draw[->,ultra thick,color=black!50] (decision2a) to (state3a);
    \draw[->,ultra thick,color=black!50] (state3a) to (decision3a);
    \draw[->,ultra thick,color=black!50] (decision3a) to (state4a);
    \node[rectangle,fill=myred,minimum size=0.5cm] (state2b) at (4,14) {};
    \node[circle,fill=myred,minimum size=0.5cm] (decision2b) at (6,14) {};
    \node[rectangle,fill=black!50,minimum size=0.5cm] (state3b) at (8,14) {};
    \node[circle,fill=black!50,minimum size=0.5cm] (decision3b) at (10,14) {};
    \node[rectangle,fill=black!50,minimum size=0.5cm] (state4b) at (12,14) {};
    \draw[->,ultra thick,color=myred] (state2b) to (decision2b);
    \draw[->,ultra thick,color=black!50] (decision2b) to (state3b);
    \draw[->,ultra thick,color=black!50] (state3b) to (decision3b);
    \draw[->,ultra thick,color=black!50] (decision3b) to (state4b);
    \node[rectangle,fill=myred,minimum size=0.5cm] (state3c) at (8,13) {};
    \node[circle,fill=myred,minimum size=0.5cm] (decision3c) at (10,13) {};
    \node[rectangle,fill=black!50,minimum size=0.5cm] (state4c) at (12,13) {};
    \draw[->,ultra thick,color=myred] (state3c) to (decision3c);
    \draw[->,ultra thick,color=black!50] (decision3c) to (state4c);
    \node[rectangle,fill=myred,minimum size=0.5cm] (state4d) at (12,12) {};
    \draw[->,dotted,ultra thick,color=myred] (decision1a) |- (state2b);
    \draw[->,dotted,ultra thick,color=myred] (decision2b) |- (state3c);
    \draw[->,dotted,ultra thick,color=myred] (decision3c) |- (state4d);
    \node[] at (6,10){\large\textbf{Small-sample OSO}};
    \node[rectangle,fill=myred,minimum size=0.5cm] (state1a) at (0,8) {};
    \node[circle,fill=myred,minimum size=0.5cm] (decision1a) at (2,8) {};
    \node[rectangle,fill=black!50,minimum size=0.5cm] (state2as1) at (4,8.6) {};
    \node[circle,fill=black!50,minimum size=0.5cm] (decision2as1) at (6,8.6) {};
    \node[rectangle,fill=black!50,minimum size=0.5cm] (state3as1) at (8,8.6) {};
    \node[circle,fill=black!50,minimum size=0.5cm] (decision3as1) at (10,8.6) {};
    \node[rectangle,fill=black!50,minimum size=0.5cm] (state4as1) at (12,8.6) {};
    \node[rectangle,fill=black!50,minimum size=0.5cm] (state2as2) at (4,8) {};
    \node[circle,fill=black!50,minimum size=0.5cm] (decision2as2) at (6,8) {};
    \node[rectangle,fill=black!50,minimum size=0.5cm] (state3as2) at (8,8) {};
    \node[circle,fill=black!50,minimum size=0.5cm] (decision3as2) at (10,8) {};
    \node[rectangle,fill=black!50,minimum size=0.5cm] (state4as2) at (12,8) {};
    \node[rectangle,fill=black!50,minimum size=0.5cm] (state2as3) at (4,7.4) {};
    \node[circle,fill=black!50,minimum size=0.5cm] (decision2as3) at (6,7.4) {};
    \node[rectangle,fill=black!50,minimum size=0.5cm] (state3as3) at (8,7.4) {};
    \node[circle,fill=black!50,minimum size=0.5cm] (decision3as3) at (10,7.4) {};
    \node[rectangle,fill=black!50,minimum size=0.5cm] (state4as3) at (12,7.4) {};
    \draw[->,ultra thick,color=myred] (state1a) to (decision1a);
    \draw[->,ultra thick,color=black!50] (decision1a) to (state2as1);
    \draw[->,ultra thick,color=black!50] (state2as1) to (decision2as1);
    \draw[->,ultra thick,color=black!50] (decision2as1) to (state3as1);
    \draw[->,ultra thick,color=black!50] (state3as1) to (decision3as1);
    \draw[->,ultra thick,color=black!50] (decision3as1) to (state4as1);
    \draw[->,ultra thick,color=black!50] (decision1a) to (state2as2);
    \draw[->,ultra thick,color=black!50] (state2as2) to (decision2as2);
    \draw[->,ultra thick,color=black!50] (decision2as2) to (state3as2);
    \draw[->,ultra thick,color=black!50] (state3as2) to (decision3as2);
    \draw[->,ultra thick,color=black!50] (decision3as2) to (state4as2);
    \draw[->,ultra thick,color=black!50] (decision1a) to (state2as3);
    \draw[->,ultra thick,color=black!50] (state2as3) to (decision2as3);
    \draw[->,ultra thick,color=black!50] (decision2as3) to (state3as3);
    \draw[->,ultra thick,color=black!50] (state3as3) to (decision3as3);
    \draw[->,ultra thick,color=black!50] (decision3as3) to (state4as3);
    \node[rectangle,fill=myred,minimum size=0.5cm] (state2b) at (4,5.8) {};
    \node[circle,fill=myred,minimum size=0.5cm] (decision2b) at (6,5.8) {};
    \node[rectangle,fill=black!50,minimum size=0.5cm] (state3bs1) at (8,6.4) {};
    \node[circle,fill=black!50,minimum size=0.5cm] (decision3bs1) at (10,6.4) {};
    \node[rectangle,fill=black!50,minimum size=0.5cm] (state4bs1) at (12,6.4) {};
    \node[rectangle,fill=black!50,minimum size=0.5cm] (state3bs2) at (8,5.8) {};
    \node[circle,fill=black!50,minimum size=0.5cm] (decision3bs2) at (10,5.8) {};
    \node[rectangle,fill=black!50,minimum size=0.5cm] (state4bs2) at (12,5.8) {};
    \node[rectangle,fill=black!50,minimum size=0.5cm] (state3bs3) at (8,5.2) {};
    \node[circle,fill=black!50,minimum size=0.5cm] (decision3bs3) at (10,5.2) {};
    \node[rectangle,fill=black!50,minimum size=0.5cm] (state4bs3) at (12,5.2) {};
    \draw[->,ultra thick,color=myred] (state2b) to (decision2b);
    \draw[->,ultra thick,color=black!50] (decision2b) to (state3bs1);
    \draw[->,ultra thick,color=black!50] (state3bs1) to (decision3bs1);
    \draw[->,ultra thick,color=black!50] (decision3bs1) to (state4bs1);
    \draw[->,ultra thick,color=black!50] (decision2b) to (state3bs2);
    \draw[->,ultra thick,color=black!50] (state3bs2) to (decision3bs2);
    \draw[->,ultra thick,color=black!50] (decision3bs2) to (state4bs2);
    \draw[->,ultra thick,color=black!50] (decision2b) to (state3bs3);
    \draw[->,ultra thick,color=black!50] (state3bs3) to (decision3bs3);
    \draw[->,ultra thick,color=black!50] (decision3bs3) to (state4bs3);
    \node[rectangle,fill=myred,minimum size=0.5cm] (state3c) at (8,3.6) {};
    \node[circle,fill=myred,minimum size=0.5cm] (decision3c) at (10,3.6) {};
    \node[rectangle,fill=black!50,minimum size=0.5cm] (state4cs1) at (12,4.2) {};
    \node[rectangle,fill=black!50,minimum size=0.5cm] (state4cs2) at (12,3.6) {};
    \node[rectangle,fill=black!50,minimum size=0.5cm] (state4cs3) at (12,3) {};
    \draw[->,ultra thick,color=myred] (state3c) to (decision3c);
    \draw[->,ultra thick,color=black!50] (decision3c) to (state4cs1);
    \draw[->,ultra thick,color=black!50] (decision3c) to (state4cs2);
    \draw[->,ultra thick,color=black!50] (decision3c) to (state4cs3);
    \node[rectangle,fill=myred,minimum size=0.5cm] (state4d) at (12,2) {};
    \draw[->,dotted,ultra thick,color=myred] (decision1a) |- (state2b);
    \draw[->,dotted,ultra thick,color=myred] (decision2b) |- (state3c);
    \draw[->,dotted,ultra thick,color=myred] (decision3c) |- (state4d);
    \node[] at (22,16.6){\large\textbf{Scenario-tree representation}};
    \node[rectangle,fill=myred,minimum size=0.5cm] (state1) at (14,7.8) {};
    \node[circle,fill=myred,minimum size=0.5cm] (decision1) at (16,7.8) {};
    \node[rectangle,fill=myred,minimum size=0.5cm] (state2s1) at (18,13.2) {};
    \node[rectangle,fill=myred,minimum size=0.5cm] (state2s2) at (18,7.8) {};
    \node[rectangle,fill=myred,minimum size=0.5cm] (state2s3) at (18,2.4) {};
    \node[circle,fill=myred,minimum size=0.5cm] (decision2s1) at (20,13.2) {};
    \node[circle,fill=myred,minimum size=0.5cm] (decision2s2) at (20,7.8) {};
    \node[circle,fill=myred,minimum size=0.5cm] (decision2s3) at (20,2.4) {};
    \node[rectangle,fill=myred,minimum size=0.5cm] (state3s1s1) at (22,15.0) {};
    \node[rectangle,fill=myred,minimum size=0.5cm] (state3s1s2) at (22,13.2) {};
    \node[rectangle,fill=myred,minimum size=0.5cm] (state3s1s3) at (22,11.4) {};
    \node[rectangle,fill=myred,minimum size=0.5cm] (state3s2s1) at (22,9.6) {};
    \node[rectangle,fill=myred,minimum size=0.5cm] (state3s2s2) at (22,7.8) {};
    \node[rectangle,fill=myred,minimum size=0.5cm] (state3s2s3) at (22,6.0) {};
    \node[rectangle,fill=myred,minimum size=0.5cm] (state3s3s1) at (22,4.2) {};
    \node[rectangle,fill=myred,minimum size=0.5cm] (state3s3s2) at (22,2.4) {};
    \node[rectangle,fill=myred,minimum size=0.5cm] (state3s3s3) at (22,0.6) {};
    \node[circle,fill=myred,minimum size=0.5cm] (decision3s1s1) at (24,15.0) {};
    \node[circle,fill=myred,minimum size=0.5cm] (decision3s1s2) at (24,13.2) {};
    \node[circle,fill=myred,minimum size=0.5cm] (decision3s1s3) at (24,11.4) {};
    \node[circle,fill=myred,minimum size=0.5cm] (decision3s2s1) at (24,9.6) {};
    \node[circle,fill=myred,minimum size=0.5cm] (decision3s2s2) at (24,7.8) {};
    \node[circle,fill=myred,minimum size=0.5cm] (decision3s2s3) at (24,6.0) {};
    \node[circle,fill=myred,minimum size=0.5cm] (decision3s3s1) at (24,4.2) {};
    \node[circle,fill=myred,minimum size=0.5cm] (decision3s3s2) at (24,2.4) {};
    \node[circle,fill=myred,minimum size=0.5cm] (decision3s3s3) at (24,0.6) {};
    \node[rectangle,fill=myred,minimum size=0.5cm] (state4s1s1s1) at (26,15.6) {};
    \node[rectangle,fill=myred,minimum size=0.5cm] (state4s1s1s2) at (26,15.0) {};
    \node[rectangle,fill=myred,minimum size=0.5cm] (state4s1s1s3) at (26,14.4) {};
    \node[rectangle,fill=myred,minimum size=0.5cm] (state4s1s2s1) at (26,13.8) {};
    \node[rectangle,fill=myred,minimum size=0.5cm] (state4s1s2s2) at (26,13.2) {};
    \node[rectangle,fill=myred,minimum size=0.5cm] (state4s1s2s3) at (26,12.6) {};
    \node[rectangle,fill=myred,minimum size=0.5cm] (state4s1s3s1) at (26,12.0) {};
    \node[rectangle,fill=myred,minimum size=0.5cm] (state4s1s3s2) at (26,11.4) {};
    \node[rectangle,fill=myred,minimum size=0.5cm] (state4s1s3s3) at (26,10.8) {};
    \node[rectangle,fill=myred,minimum size=0.5cm] (state4s2s1s1) at (26,10.2) {};
    \node[rectangle,fill=myred,minimum size=0.5cm] (state4s2s1s2) at (26,9.6) {};
    \node[rectangle,fill=myred,minimum size=0.5cm] (state4s2s1s3) at (26,9.0) {};
    \node[rectangle,fill=myred,minimum size=0.5cm] (state4s2s2s1) at (26,8.4) {};
    \node[rectangle,fill=myred,minimum size=0.5cm] (state4s2s2s2) at (26,7.8) {};
    \node[rectangle,fill=myred,minimum size=0.5cm] (state4s2s2s3) at (26,7.2) {};
    \node[rectangle,fill=myred,minimum size=0.5cm] (state4s2s3s1) at (26,6.6) {};
    \node[rectangle,fill=myred,minimum size=0.5cm] (state4s2s3s2) at (26,6.0) {};
    \node[rectangle,fill=myred,minimum size=0.5cm] (state4s2s3s3) at (26,5.4) {};
    \node[rectangle,fill=myred,minimum size=0.5cm] (state4s3s1s1) at (26,4.8) {};
    \node[rectangle,fill=myred,minimum size=0.5cm] (state4s3s1s2) at (26,4.2) {};
    \node[rectangle,fill=myred,minimum size=0.5cm] (state4s3s1s3) at (26,3.6) {};
    \node[rectangle,fill=myred,minimum size=0.5cm] (state4s3s2s1) at (26,3.0) {};
    \node[rectangle,fill=myred,minimum size=0.5cm] (state4s3s2s2) at (26,2.4) {};
    \node[rectangle,fill=myred,minimum size=0.5cm] (state4s3s2s3) at (26,1.8) {};
    \node[rectangle,fill=myred,minimum size=0.5cm] (state4s3s3s1) at (26,1.2) {};
    \node[rectangle,fill=myred,minimum size=0.5cm] (state4s3s3s2) at (26,0.6) {};
    \node[rectangle,fill=myred,minimum size=0.5cm] (state4s3s3s3) at (26,0.0) {};
    \draw[->,ultra thick,color=myred] (state1) to (decision1);
    \draw[->,ultra thick,color=myred] (decision1) to (state2s1);
    \draw[->,ultra thick,color=myred] (decision1) to (state2s2);
    \draw[->,ultra thick,color=myred] (decision1) to (state2s3);
    \draw[->,ultra thick,color=myred] (state2s1) to (decision2s1);
    \draw[->,ultra thick,color=myred] (state2s2) to (decision2s2);
    \draw[->,ultra thick,color=myred] (state2s3) to (decision2s3);
    \draw[->,ultra thick,color=myred] (decision2s1) to (state3s1s1);
    \draw[->,ultra thick,color=myred] (decision2s1) to (state3s1s2);
    \draw[->,ultra thick,color=myred] (decision2s1) to (state3s1s3);
    \draw[->,ultra thick,color=myred] (decision2s2) to (state3s2s1);
    \draw[->,ultra thick,color=myred] (decision2s2) to (state3s2s2);
    \draw[->,ultra thick,color=myred] (decision2s2) to (state3s2s3);
    \draw[->,ultra thick,color=myred] (decision2s3) to (state3s3s1);
    \draw[->,ultra thick,color=myred] (decision2s3) to (state3s3s2);
    \draw[->,ultra thick,color=myred] (decision2s3) to (state3s3s3);
    \draw[->,ultra thick,color=myred] (state3s1s1) to (decision3s1s1);
    \draw[->,ultra thick,color=myred] (state3s1s2) to (decision3s1s2);
    \draw[->,ultra thick,color=myred] (state3s1s3) to (decision3s1s3);
    \draw[->,ultra thick,color=myred] (state3s2s1) to (decision3s2s1);
    \draw[->,ultra thick,color=myred] (state3s2s2) to (decision3s2s2);
    \draw[->,ultra thick,color=myred] (state3s2s3) to (decision3s2s3);
    \draw[->,ultra thick,color=myred] (state3s3s1) to (decision3s3s1);
    \draw[->,ultra thick,color=myred] (state3s3s2) to (decision3s3s2);
    \draw[->,ultra thick,color=myred] (state3s3s3) to (decision3s3s3);
    \draw[->,ultra thick,color=myred] (decision3s1s1) to (state4s1s1s1);
    \draw[->,ultra thick,color=myred] (decision3s1s1) to (state4s1s1s2);
    \draw[->,ultra thick,color=myred] (decision3s1s1) to (state4s1s1s3);
    \draw[->,ultra thick,color=myred] (decision3s1s2) to (state4s1s2s1);
    \draw[->,ultra thick,color=myred] (decision3s1s2) to (state4s1s2s2);
    \draw[->,ultra thick,color=myred] (decision3s1s2) to (state4s1s2s3);
    \draw[->,ultra thick,color=myred] (decision3s1s3) to (state4s1s3s1);
    \draw[->,ultra thick,color=myred] (decision3s1s3) to (state4s1s3s2);
    \draw[->,ultra thick,color=myred] (decision3s1s3) to (state4s1s3s3);
    \draw[->,ultra thick,color=myred] (decision3s2s1) to (state4s2s1s1);
    \draw[->,ultra thick,color=myred] (decision3s2s1) to (state4s2s1s2);
    \draw[->,ultra thick,color=myred] (decision3s2s1) to (state4s2s1s3);
    \draw[->,ultra thick,color=myred] (decision3s2s2) to (state4s2s2s1);
    \draw[->,ultra thick,color=myred] (decision3s2s2) to (state4s2s2s2);
    \draw[->,ultra thick,color=myred] (decision3s2s2) to (state4s2s2s3);
    \draw[->,ultra thick,color=myred] (decision3s2s3) to (state4s2s3s1);
    \draw[->,ultra thick,color=myred] (decision3s2s3) to (state4s2s3s2);
    \draw[->,ultra thick,color=myred] (decision3s2s3) to (state4s2s3s3);
    \draw[->,ultra thick,color=myred] (decision3s3s1) to (state4s3s1s1);
    \draw[->,ultra thick,color=myred] (decision3s3s1) to (state4s3s1s2);
    \draw[->,ultra thick,color=myred] (decision3s3s1) to (state4s3s1s3);
    \draw[->,ultra thick,color=myred] (decision3s3s2) to (state4s3s2s1);
    \draw[->,ultra thick,color=myred] (decision3s3s2) to (state4s3s2s2);
    \draw[->,ultra thick,color=myred] (decision3s3s2) to (state4s3s2s3);
    \draw[->,ultra thick,color=myred] (decision3s3s3) to (state4s3s3s1);
    \draw[->,ultra thick,color=myred] (decision3s3s3) to (state4s3s3s2);
    \draw[->,ultra thick,color=myred] (decision3s3s3) to (state4s3s3s3);
\end{tikzpicture}
\caption{Schematic representation of single-sample and small-sample OSO versus scenario-tree representations, in a three-period example. Squares represent states, and circles represent decisions. In OSO, grey elements indicate sampled paths and the brown path represents realizations and implemented decisions.}
\label{fig:OSO_SAA}
\end{figure}

We define in~\ref{app:MSSIP_algs} the perfect-information benchmark along with two baseline algorithms. The first one is a myopic resolving heuristic, which optimizes decisions at each period without anticipating future uncertainty. The second one is a mean-based certainty-equivalent (CE) resolving heuristic, which replaces future uncertain parameters by their averages toward a deterministic approximation of the multi-stage stochastic optimization problem (\Cref{eq:mssip}) at each iteration.

\subsection{Theoretical Results: OSO Approximation Guarantees} \label{sec:theoryBacking}

We consider two canonical problems: online resource allocation and online batched bin packing. Each of these problems capture some of the core dynamics of online rack placement. Both admit feasible solutions with all algorithms. We provide worst-case guarantees of the solution of the single-sample OSO algorithm against a perfect-information benchmark. We refer to the perfect-information optimum as $\OPT$ in a given instance, and by its expected value as $\E{\OPT}$.

\subsubsection*{Online resource allocation.}
This problem assigns demand items to supply items, given multi-dimensional resource capacity constraints. It follows the three-layer resource allocation structure shown in \Cref{fig:structure} with demand nodes $i\in\calI$, supply nodes $j\in\calJ$, and resource nodes $k\in\calK$. As mentioned in Section~\ref{sec:formulation}, this formulation captures the online rack placement relaxation upon relaxing the multi-rack linking constraints. We assume that items arrive one at a time, so we treat the indices $i \in \calI$ and $t \in \calT$ interchangeably. The offline problem is formulated as follows, where $x^t_j$ indicates whether the demand item at time $t$ is assigned to supply $j$; for completeness, we formulate the multi-stage stochastic program in \ref{app:mssip_resourcealloc}.
\begin{equation}\label{eq:resourceallocation}
    \max \quad \left\{
    \sum_{t \in \calT}\sum_{j \in \calJ} r^t_{j} x^t_{j}
    \quad \bigg|\quad
    \sum_{j \in \calJ} x^t_{j} \leq 1,
    \forall \ t \in \calT
    ;\quad
    \sum_{t \in \calT}\sum_{j \in \calJ} A^t_{jk} x^t_{j} \leq b_k,
    \forall \ k \in \calK
    ;\quad
    \bx \text{ binary}
    \right\}
\end{equation}

This resource allocation structure includes canonical optimization problems as special cases:
\begin{itemize}
    \item[--] Multi-dimensional knapsack, when $|\calJ|=1$. Then, demand items can be accepted or rejected into a single supply bin, based on multiple capacity constraints.
    \item[--] Generalized assignment, when $\calK$ can be partitioned into $\Set{\calK_j | j \in \calJ }$, and each supply node is connected to its own resource nodes, i.e., $A^t_{jk} = 0$ if $k \notin \calK_j$. In rack placement, this structure arises from space restrictions, which apply a capacity for each tile group separately (in contrast, power and cooling restrictions give rise to a broader class of resource allocation problems due to a many-to-many mapping between supply and resource nodes). The generalized assignment problem can also model job-machine assignments subject to multiple capacity constraints per machine.
\end{itemize}

Our main result shows that single-sample OSO yields a $(1-\e_{d,T,B})$-approximation of the expected offline optimum of the online resource allocation problem (\Cref{thm:resourceallocation}). Specifically, $\varepsilon_{d,T,B}$ scales approximately as $\calO\left(\sqrt{\frac{\log(dT)}{B}}\right)$, where $d = |\calK|$ is the number of resources, $T$ is the time horizon, and $B$ is the tightest resource capacity normalized to resource requirements. This result shows a weak dependency in the dimensionality and the time horizon. Moreover, the approximation improves as resource capacities become larger relative to resource requirements; the larger the capacity, the less constraining initial assignments are for future items. In particular, OSO becomes asymptotically optimal if capacities scale as $B = \Omega(\log^{1+\sigma} T)$ for any constant $\sigma>0$ (Corollary~\ref{cor:aympopt}).

\begin{theorem} \label{thm:resourceallocation}
    Let $\varepsilon\in (0, 0.001]$ such that $b_k \ge 1024 \cdot \log (\frac{2 d T}{\e}) \cdot \frac{\log^3 (1/\e)}{\e^2} A^t_{jk}$ for all $j \in \calJ$, $k\in\calK$ and $t \in \calT$ and for every $\bA$ in the support of $\calD$. The single-sample OSO algorithm yields an expected value of at least $(1-\e) \cdot \E{\OPT}$ in the online resource allocation problem (\Cref{eq:resourceallocation}).
\end{theorem}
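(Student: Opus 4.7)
The plan is to prove the bound by coupling OSO's implemented trajectory to an offline benchmark and exploiting the distributional identity between the sampled future $\widetilde{\bxi}^{t+1:T}$ and the actual future $\bxi^{t+1:T}$. Write $\mathcal{F}_t = \sigma(\bxi^{1:t}, \overline{\bx}^{1:t-1})$ for the filtration induced by history, $R_t = \sum_j r^t_j \overline{x}^t_j$ for OSO's reward at stage $t$, and $V^t$ for the optimal value of OSO's time-$t$ IP given the committed decisions, the observed $\bxi^{1:t}$, and the fresh sample $\widetilde{\bxi}^{t+1:T}$. Because $\widetilde{\bxi}^{t+1:T}$ is drawn i.i.d.\ from $\calD$ and is independent of $\mathcal{F}_t$, it shares the distribution of $\bxi^{t+1:T}$; at $t=1$ this immediately gives $\mathbb{E}[V^1] = \mathbb{E}[\OPT]$. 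The goal is then to show $\mathbb{E}[\ALG] = \mathbb{E}[\sum_t R_t] \ge (1-\varepsilon) \mathbb{E}[V^1]$.

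I would proceed by constructing the supermartingale $M_t = \sum_{\tau<t} R_\tau + V^t$, representing total reward accumulated plus anticipated remainder. By optimality of the time-$t$ IP solution, $V^t = R_t + \widetilde{V}^{t+1}$, where $\widetilde{V}^{t+1}$ is the continuation value on the sampled instance. Resampling the horizon at stage $t+1$ is distributionally neutral given $\mathcal{F}_t$ and $\overline{\bx}^t$, so $\mathbb{E}[V^{t+1} \mid \mathcal{F}_t, \overline{\bx}^t] \ge \widetilde{V}^{t+1} - \delta_t$, where $\delta_t$ captures the IP sensitivity to the difference between one stale sample and one fresh i.i.d.\ draw at stage $t+1$. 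The small-bids regime implied by the capacity condition---each item uses at most an $\varepsilon^2/\log(dT/\varepsilon)$ fraction of any resource---bounds the LP sensitivity per stage by $O(A^t_{jk}/b_k)$, making $\delta_t$ small. Aggregating across $T$ stages via Azuma--Hoeffding (or Bernstein to exploit variance) and taking a union bound over the $d$ resource constraints yields the $O(\sqrt{\log(dT)/B})$ relative gap, which sharpens to $\varepsilon$ under the stated capacity lower bound. The polylogarithmic $\log^3(1/\varepsilon)$ factor absorbs two standard discretization losses: geometric bucketing of reward magnitudes to obtain a uniform Lipschitz modulus for LP sensitivity, and randomized rounding of the LP solution to an integral one without exceeding capacities.

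The main obstacle will be guaranteeing feasibility of OSO's committed decisions under the realized trajectory, since OSO's time-$t$ IP operates against the actual remaining capacities $b_k - \sum_{\tau<t} \sum_j A^\tau_{jk} \overline{x}^\tau_j$, not those envisioned by any single sample path. The resolution is to identify a high-probability good event $G$ on which cumulative resource consumption stays within an $\varepsilon$ fraction of its conditional expectation at every stage; on $G$, the OSO IP at every stage admits a feasible solution whose value matches the fractional LP optimum up to $\varepsilon$. The capacity assumption $b_k \ge 1024 \log(2dT/\varepsilon) \log^3(1/\varepsilon)/\varepsilon^2 \cdot A^t_{jk}$ is calibrated so that Bernstein concentration combined with a union bound over $d$ resources and $T$ stages guarantees $\mathbb{P}(G) \ge 1 - O(\varepsilon)$. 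On $G$, the supermartingale bound delivers $\ALG \ge (1-\varepsilon) V^1$; on $G^c$, the trivial bound $\ALG \ge 0$ contributes a negligible term in expectation, completing the $(1-\varepsilon)$ approximation of $\mathbb{E}[\OPT]$.
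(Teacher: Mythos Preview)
Your proposal has the right high-level shape—track per-period rewards, show resource consumption concentrates, and use the capacity slack to absorb errors—but the central step breaks down. You write that ``resampling the horizon at stage $t+1$ is distributionally neutral given $\mathcal{F}_t$ and $\overline{\bx}^t$.'' It is not. The committed decision $\overline{\bx}^t$ was computed \emph{from} the sample $\widetilde{\bxi}^{t+1:T}_t$, so conditioning on $\overline{\bx}^t$ (hence on $\bS^t$) distorts the distribution of that sample. Consequently $\widetilde{V}^{t+1}$, the continuation value on the stale sample, is \emph{not} distributed like $\OPT(T-t, B\bone - \bS^t)$ on a fresh draw, and your supermartingale inequality $\mathbb{E}[V^{t+1}\mid \mathcal{F}_t,\overline{\bx}^t] \ge \widetilde{V}^{t+1} - \delta_t$ has no footing. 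Your proposed patch via ``LP sensitivity to one stale versus one fresh draw'' does not address this: the correlation runs through all $T-t$ future samples simultaneously, not a single coordinate.

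The same dependence obstructs your concentration step. You invoke Bernstein on $\sum_{\tau\le t} (\bA^\tau)^\top \overline{\bx}^\tau$, but the increments $(\bA^\tau)^\top \overline{\bx}^\tau$ are neither independent nor a martingale-difference sequence with controllable conditional variance: each $\overline{\bx}^\tau$ depends on the entire path $\bA^{1:\tau}$ and on the sample drawn at time $\tau$. The paper explicitly flags this (``we cannot use standard concentration inequalities'') and develops a new inequality for ``self-centering'' affine processes: sequences satisfying $\mathbb{E}[S_t\mid \calH_{t-1}] \le \alpha_t S_{t-1} + \beta_t$. The missing idea that makes this applicable is the \emph{equivariant solver} assumption: because the time-$t$ IP treats the real item $\bxi^t$ and the $T-t$ sampled items symmetrically, exchangeability gives $\mathbb{E}[(\bA^t)^\top \overline{\bx}^t \mid \calH_{t-1}] \le (B\bone - \bS^{t-1})/(T-t+1)$, which is exactly the affine recursion needed. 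Exchangeability likewise yields $\mathbb{E}[\br^t\cdot\overline{\bx}^t \mid \calH_{t-1}] = \frac{1}{T-t+1}\mathbb{E}[\OPT(T-t+1, B\bone-\bS^{t-1})]$, and a separate scaling lemma bounds $\mathbb{E}[\OPT(s,\bb')]$ below by roughly $\min\{s/T,\min_k b'_k/B\}\cdot \mathbb{E}[\OPT]$. None of your LP-relaxation, geometric bucketing, or randomized-rounding machinery is needed or used; the argument stays at the IP level throughout.
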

\begin{corollary}\label{cor:aympopt}
    Define $B=\min_{k\in\calK} \left\{ \frac{b_k}{\max_{t\in\calT,j\in\calJ} A^t_{jk}} \right\}$. If $B = \Omega(\log^{1+\sigma} T)$ for some constant $\sigma>0$, then single-sample OSO is asymptotically optimal in the online resource allocation problem.
\end{corollary}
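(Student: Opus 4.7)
The plan is to deduce the corollary directly from \Cref{thm:resourceallocation} by constructing a vanishing sequence $\e_T \to 0$ for which the theorem's hypothesis is satisfied for all sufficiently large $T$. First, I would rewrite that hypothesis in terms of $B$: since $B = \min_{k \in \calK} b_k / \max_{t \in \calT, j \in \calJ} A^t_{jk}$, the per-triple requirement of \Cref{thm:resourceallocation} is equivalent to
\begin{equation*}
B \ \ge\ 1024 \cdot \log\!\left(\tfrac{2dT}{\e}\right) \cdot \frac{\log^3(1/\e)}{\e^2}.
\end{equation*}
The task then reduces to: given $B = \Omega(\log^{1+\sigma}T)$, produce some $\e_T = o(1)$ such that the corresponding right-hand side is $o(\log^{1+\sigma}T)$.

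Next, I would try the concrete choice $\e_T = \log^{-\sigma/4} T$ and verify the inequality term-by-term (assuming, as is standard in asymptotic statements of this kind, that $d$ is treated as fixed or grows at most polynomially in $T$, so that $\log(2dT/\e_T) = \calO(\log T)$). With this choice, $1/\e_T^2 = \log^{\sigma/2} T$ and $\log^3(1/\e_T) = \calO((\log\log T)^3)$, so the right-hand side is $\calO\bigl(\log^{1+\sigma/2}T \cdot (\log\log T)^3\bigr)$. Since $\log^{\sigma/2}T$ eventually dominates $(\log\log T)^3$, this quantity is $o(\log^{1+\sigma}T)$ and is therefore upper-bounded by $B$ for all $T$ large enough. \Cref{thm:resourceallocation} then delivers an expected OSO value of at least $(1-\e_T)\,\E{\OPT}$, and since $\e_T \to 0$, the ratio (expected OSO value)$/\E{\OPT}$ tends to $1$ as $T \to \infty$, which is the claimed asymptotic optimality.

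I would expect no deep obstacle here: the corollary is essentially a calibration of the bound in \Cref{thm:resourceallocation}. The only delicate point is balancing the $\e^{-2}$ factor (which forces $\e_T$ to decay strictly slower than $\log^{-\sigma/2}T$) against the additional multiplicative $\log T$ coming from $\log(2dT/\e)$; the strictly positive slack $\sigma$ in the hypothesis is precisely what makes a valid choice of $\e_T$ possible. The specific $\e_T = \log^{-\sigma/4}T$ is a convenient representative, and indeed any decay rate strictly slower than $\log^{-\sigma/2}T$ while still being $o(1)$ would work equally well.
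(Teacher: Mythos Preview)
Your proposal is correct and is precisely the intended derivation: the paper does not spell out a separate proof of the corollary, treating it as an immediate consequence of \Cref{thm:resourceallocation}, and your argument---rewriting the hypothesis in terms of $B$ and calibrating $\e_T$ to decay slowly enough that the right-hand side is $o(\log^{1+\sigma}T)$---is exactly how one fills in that gap. The choice $\e_T = \log^{-\sigma/4}T$ works, your observation that any $\e_T$ decaying strictly slower than $\log^{-\sigma/2}T$ suffices is accurate, and your remark about treating $d$ as fixed (or at most polynomial in $T$) is the natural reading of the corollary statement.
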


The asymptotic regime is relevant in our rack placement problem. In practice, demand requests are handled in batches every few days, with up to a dozen requests per batch; at the same time, each data center can host tens of thousands of racks. Thus, the full rack placement problem from the start of the data center's operations to the point where it is full, involves a long planning horizon---hundreds to thousands of iterations. In addition, the size of demand batches is relatively consistent across data centers, so the planning horizon correlates with data center capacity---larger data centers take longer to fill. These observations motivate the asymptotic regime where $T\to\infty$ and where $B$ increases with the planning horizon (the condition $B = \Omega(\log^{1+\sigma} T)$, in fact, captures a weak dependency between data center capacity and the planning horizon).

The proof (in \ref{app:resourcealloc_proofs}) proceeds by (i) deriving the scaling of the expected offline optimum with the time horizon and resource capacities (\Cref{lemma:sampleOpt}), (ii) showing that the algorithm uses approximately a fraction $t/T$ of the budget after $t$ periods in expectation and with high probability (\Cref{lemma:S} and \Cref{lemma:concS}) and (iii) showing that each period contributes a reward of approximately $1/T\cdot \E{\OPT}$ (\Cref{lemma:val}). A key difficulty lies in the dependence between random variables in the problem; for instance, the incremental resource utilization at period $t$ depends on the utilization in periods $1,\dots,t-1$. This prevents the use of traditional concentration inequalities, so we prove a new concentration inequality for affine stochastic processes that may be of independent interest (\Cref{thm:centeringConc} in \ref{app:concentration}).

\subsubsection*{Online batched bin packing.} 

$n = Tq$ items in batches of $q$ items arrive over $T$ time periods. Each item $i \in \calI^t$ has size $V^t_i \in \{1, \dots, B\}$. The objective is to pack items in as few bins of capacity $B$ as possible. We use the flow-based formulation from \cite{valerio1999exact}, in \ref{app:binpacking}.

We study a slight variant of single-sample OSO in which all uncertain quantities are sampled at the beginning of the horizon---as opposed to being re-sampled in each period (see \Cref{alg:OSO_binpacking} in \ref{app:binpacking}). This change simplifies the proofs without impacting the overall methodology. 

\Cref{thm:binpacking} shows that the single-sample OSO algorithm yields a $\calO\left(\frac{n \log^{3/4} q}{\sqrt{q}}\right)$ regret for the batched bin packing problem, as long as batches are large enough. Notably, if $q = \Omega(n^\delta)$ for $\delta > 0$, OSO achieves sublinear regret. Compared to the online resource allocation setting, this result does not depend on the size of the jobs but depends on the number of jobs in each batch. The proof (in \ref{app:binpacking_proof}) leverages the monotone matching theorem from \cite{rhee1993lineii} to bound the cost difference between the number of bins opened when the decision at time $t=1,\dots,T$ is based on the true job sizes $V^t$ versus the sampled job sizes $\widetilde{V}^t$.

\begin{theorem}\label{thm:binpacking}	
If $\sqrt{q}\, (\log^{3/4} q)\, e^{c \cdot \log^{3/2} q} \ge n$ for a sufficiently small constant $c > 0$, single-sample OSO yields an expected cost of $\E{\OPT} +  \calO\left(\frac{n \log^{3/4} q}{\sqrt{q}}\right)$ in online batched bin packing.
\end{theorem}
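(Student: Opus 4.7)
Let $V^t$ be the true batch at time $t$ and $\widetilde{V}^{2:T}$ the batches sampled once at $t=1$. Define the hybrid item sets $\calS_t = V^{1:t}\cup\widetilde{V}^{t+1:T}$, so $\calS_1 = V^1\cup\widetilde{V}^{2:T}$ and $\calS_T = V^{1:T}$. Let $\Phi_t$ denote the optimal value of OSO's step-$t$ optimization, i.e., the minimum number of bins needed to pack $\calS_t$ subject to the commitment that each item in $V^{1:t-1}$ remains in the bin where OSO placed it at step $t-1$. Since no commitment binds at $t=1$, $\Phi_1=\OPT(\calS_1)$; since $\calS_T=V^{1:T}$, the realized algorithm satisfies $\ALG=\Phi_T$. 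Because $V^1,\widetilde{V}^2,\dots,\widetilde{V}^T$ are i.i.d., the tuple $(V^1,\widetilde{V}^{2:T})$ has the same distribution as $V^{1:T}$, and hence $\E{\Phi_1}=\E{\OPT}$.

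\textbf{Per-step increment and telescoping.} The heart of the proof is to bound $\Phi_t-\Phi_{t-1}$ for $t\ge 2$. The sets $\calS_t$ and $\calS_{t-1}$ differ only by swapping $\widetilde{V}^t$ for $V^t$, two independent size-$q$ samples from the same distribution. I would construct a feasible step-$t$ packing from OSO's step-$(t-1)$ packing as follows: keep all items of $V^{1:t-1}$ (as required) and of $\widetilde{V}^{t+1:T}$ in place; remove every item of $\widetilde{V}^t$, which frees capacity in its host bin without disturbing other items; and insert the items of $V^t$ into the vacated slots via a monotone matching to $\widetilde{V}^t$, redirecting to fresh bins any item that does not fit. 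The monotone matching theorem of \cite{rhee1993lineii}, applied to the i.i.d.\ samples $V^t$ and $\widetilde{V}^t$, then bounds the expected number of such fresh bins by $\calO(\sqrt{q}\log^{3/4}q)$, so $\E{\Phi_t-\Phi_{t-1}}=\calO(\sqrt{q}\log^{3/4}q)$. Telescoping yields $\E{\ALG}\le\E{\Phi_1}+\sum_{t=2}^T\E{\Phi_t-\Phi_{t-1}}=\E{\OPT}+\calO(T\sqrt{q}\log^{3/4}q)=\E{\OPT}+\calO(n\log^{3/4}q/\sqrt{q})$, using $T=n/q$.

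\textbf{Main obstacle.} The main technical challenge is the per-step bound. The Rhee-Talagrand monotone matching theorem is typically stated as a high-probability guarantee, and converting it into an in-expectation bound that also survives a union bound across the $T=n/q$ telescoping increments requires the failure probability to decay sufficiently fast in $q$. This is precisely the role of the regime $\sqrt{q}\log^{3/4}q\cdot e^{c\log^{3/2}q}\ge n$: the factor $e^{c\log^{3/2}q}$ provides enough tail decay to accommodate a union bound over $T$ steps while preserving the per-step $\calO(\sqrt{q}\log^{3/4}q)$ rate. A secondary subtlety is verifying that the remove-and-reinsert operation does not force rearrangement of already-committed items from earlier batches, which is ensured by the fact that removing items from $\widetilde{V}^t$ only frees capacity and never displaces items of $V^{1:t-1}$.
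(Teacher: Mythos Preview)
Your proposal is correct and follows essentially the same route as the paper: the potential $\Phi_t$ you track is exactly the paper's $\OPTbp{t}{\calI^t,\widetilde{\calI}^{t+1:T}}{\overline{\bx}^{1:t-1},\overline{z}^{1:t-1}}$, your observation that the step-$(t-1)$ optimal packing already places $V^{t-1}$ in what becomes its committed slot is the paper's key identity (Equation~\eqref{obs:opt}), and your remove-and-reinsert construction via monotone matching is precisely the content of Lemma~\ref{lemma:exchange}. The paper carries out the high-probability-to-expectation conversion exactly as you anticipate in your ``main obstacle'' paragraph, taking a union bound over all $T$ steps and using the crude bound $\ALG\le n$ on the failure event together with the assumption on $q$ to absorb that contribution into the $\calO(T\sqrt{q}\log^{3/4}q)$ term.
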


\subsubsection*{Discussion.}

These results provide theoretical guarantees on the performance of the OSO algorithm. In online resource allocation, \Cref{thm:resourceallocation} yields a $(1-\varepsilon_{d,T,B})$-approximation, where $\varepsilon_{d,T,B}$ approximately scales with the number of resources $d$ as $\calO(\sqrt{\log d})$, the time horizon $T$ as $\calO(\sqrt{\log T})$ and resource capacities $B$ as $\calO(1/\sqrt{B})$; and it is asymptotically optimal when resource capacities scale with the planning horizon. This result applies to discrete and continuous probability distributions.  In online batched bin packing, \Cref{thm:binpacking} yields additive sublinear regret with large enough batches. This result is somewhat weaker than previous bounds in online (non-batched) bin packing, as \cite{gr2020} proved an additive $\calO(\sqrt{n})$ regret bound without distributional knowledge, and \cite{banerjee2024good} derived a constant additive regret under suitable distributional assumptions. Still, the theoretical guarantees demonstrate the performance of the simple and generalizable OSO algorithm across a broad class of multi-stage stochastic integer optimization problems, and highlight the role of batching in the performance of the OSO algorithm.

To shed further light on these insights, \Cref{prop:myopic} shows that the single sample path at the core of the algorithm plays a critical role in managing resources for future demand in online decision-making. In comparison, myopic decision rules can lead to arbitrarily poor performance.

\begin{proposition}\label{prop:myopic}
	Single-sample OSO can yield unbounded benefits vs. the myopic policy.
\end{proposition}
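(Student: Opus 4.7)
The plan is to exhibit a simple family of instances of the online resource allocation problem of \Cref{eq:resourceallocation}, parameterized by $\varepsilon \in (0,1)$, on which single-sample OSO collects reward $1$ while the myopic policy collects only reward $\varepsilon$, so that the ratio diverges as $\varepsilon \to 0^+$. Because an unbounded ratio on any sequence of instances already establishes the proposition, a deterministic family suffices and no argument about expected performance across random instances is needed.

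Concretely, I would take a two-period instance ($T = 2$) with a single resource of capacity $b = 1$ and a degenerate distribution $\calD$. At time $t=1$ a single demand item arrives with unit size and reward $\varepsilon$; at time $t=2$ a single demand item arrives with unit size and reward $1$. Because $\calD$ is a point mass, the future path sampled by OSO at $t = 1$ coincides exactly with the true realization, so all optimization problems solved by the algorithm are fully specified.

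Next I would trace both algorithms on this instance. The myopic rule at $t=1$ optimizes only over the observed item and greedily accepts it, filling the knapsack and earning reward $\varepsilon$; the reward-$1$ item at $t=2$ then cannot be placed and is rejected. Single-sample OSO at $t=1$ instead solves the joint problem over the observed item and the sampled future item subject to the single capacity constraint $x^1 + \widetilde{x}^2 \le 1$, whose unique optimum places $x^1 = 0$ and $\widetilde{x}^2 = 1$; OSO therefore rejects the reward-$\varepsilon$ item. At $t = 2$ the capacity is still free, OSO accepts the reward-$1$ item, and its total reward is $1$.

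Taking $\varepsilon \to 0^+$ yields a performance ratio $1/\varepsilon \to \infty$, which proves the proposition. This is a textbook anti-myopic construction, so there is no substantive technical obstacle; the only care required is to verify that the instance fits the template of \Cref{eq:resourceallocation} and that \Cref{alg:OSO} with $S = 1$ behaves as described on this trivial two-stage problem. If an example with genuine randomness is preferred, $\calD$ can be replaced by a two-point distribution placing probability $1 - p$ on no arrival at $t = 2$ and probability $p$ on the reward-$1$ item, repeating the calculation to obtain the same conclusion, but this refinement is not needed to witness an unbounded gap.
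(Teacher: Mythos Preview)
Your construction does not fit the paper's framework. The online resource allocation model in \Cref{app:mssip_resourcealloc} (and the broader setup in \Cref{sec:generalOSO}) assumes the realizations $\bXi^1,\dots,\bXi^T$ are \emph{i.i.d.}\ from a single distribution $\calD$, and OSO's sampling step draws future items from that same $\calD$. Your instance has a deterministic reward-$\varepsilon$ item at $t=1$ and a deterministic reward-$1$ item at $t=2$; these cannot both be independent draws from one point-mass distribution. Consequently there is no well-defined $\calD$ from which OSO could sample at $t=1$ and ``see'' the reward-$1$ item while simultaneously observing the reward-$\varepsilon$ item---if $\calD$ is the point mass at reward $\varepsilon$, OSO samples a reward-$\varepsilon$ future and is indifferent; if $\calD$ is the point mass at reward $1$, the $t=1$ realization contradicts the model. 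Your suggested fix (a two-point distribution at $t=2$ only) does not resolve this, because it still leaves the $t=1$ item outside the common distribution.

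The paper handles this by building a genuinely i.i.d.\ instance: $T$ unit-size items with reward $1$ w.p.\ $1/\sqrt{T}$ and reward $\varphi$ otherwise, capacity $\sqrt{T}$. Myopic accepts the first $\sqrt{T}$ arrivals blindly and gets roughly $\varphi\sqrt{T}$ in expectation, while OSO's performance is handled not by direct computation but by invoking \Cref{thm:resourceallocation}, which guarantees a $(1-\varepsilon)$-fraction of $\E{\OPT}\approx\sqrt{T}$ for $T$ large. Your direct-trace idea can be repaired within the i.i.d.\ model, but it requires a nontrivial argument that OSO selectively rejects low-reward items with high enough probability; the paper sidesteps that calculation by appealing to its main theorem.
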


More surprisingly, the sampling-based approach in OSO can also provide unbounded benefits versus mean-based certainty-equivalent resolving heuristics, as shown in \Cref{prop:CE}. The proof constructs multidimensional knapsack instances with a discrete distribution of resource requirements, in which each item has value 1 and consumes one unit of at least one resource, while the average item consumes strictly less than one unit of all resources. The CE benchmark rejects all incoming items in favor of future average items (because accepting an item means rejecting more than one ``average item'' in the future), until the end of the horizon when it is forced to accept all remaining items---including ``sub-optimal'' items with high resource requirements. In contrast, by sampling future unit-sized items, OSO can favor the incoming item. We show that the difference can become arbitrarily large as the number of resources grows infinitely large. It is important to note that this result compares the OSO algorithm to the mean-based CE equivalent based on the online resource allocation formulation in \ref{app:mssip_resourcealloc}, whereas other CE resolving heuristics have been designed in the literature for specific classes of problems \citep[see, e.g.,][]{gallego1994optimal,vera2021bayesian,balseiro2024survey}.

\begin{proposition}\label{prop:CE}
	Single-sample OSO can yield unbounded benefits vs. mean-based certainty-equivalent resolving heuristics.
\end{proposition}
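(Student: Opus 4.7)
The plan is to exhibit a parametric family of online resource allocation instances on which mean-based CE captures $\calO(1)$ reward in expectation while single-sample OSO captures $\Omega(d)$ reward, where $d$ denotes the number of resources, so that the ratio grows without bound as $d \to \infty$. The instance I would use is a multi-dimensional knapsack with $d$ resources each of unit capacity, horizon $T \geq d \log d$, and in each period a unit-value item whose consumption vector is drawn uniformly from the standard basis $\{\be_1, \dots, \be_d\}$: a type-$k$ item consumes exactly one unit of resource $k$ and none of the others. The mean item then has consumption $\bar\bA = (1/d)\bone$, strictly less than one on every coordinate, matching the structural pattern flagged in the proof sketch.

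First, I would analyze mean-based CE by substituting $\bar\bA$ for each future item in the CE subproblem. The key observation is a self-defeating property: once any resource $k^{\star}$ is depleted by a previously accepted item, the CE constraint for $k^{\star}$ becomes $(1/d) N \leq 0$, forcing zero future ``average items.'' Consequently, at any period $t$ at which no item has yet been accepted and the current type is $k$, accepting yields CE objective $1$ (since resource $k$ would then be depleted), whereas rejecting yields CE objective $\min(T - t, d)$ (the mean item fits $d$ times in the undepleted budget). The strict inequality $\min(T - t, d) > 1$ holds whenever $T - t \geq 2$ and $d \geq 2$, so CE rejects every item until the last period or two; a short tie-breaking case analysis then gives $\E{\mathrm{CE}} \leq 2$, uniformly in $d$ and $T$.

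Second, I would analyze single-sample OSO. At time $t$, OSO draws a fresh sample of \emph{actual} item types and solves the corresponding knapsack; its optimum equals the number of distinct unused types appearing in $\{k\} \cup \{\text{sampled types}\}$. A one-line exchange argument shows that whenever resource $k$ remains available, some optimal solution accepts the current item (swap the current arrival in for any sampled type-$k$ item). Under the natural tie-breaking convention of preferring the current item, OSO therefore accepts the current arrival iff its type is still unused, so OSO's realized reward equals the number of distinct types in the actual sequence. Its expectation is $d\bigl(1 - (1 - 1/d)^T\bigr) \geq d - 1$ for $T \geq d \log d$. Combining the two bounds yields $\E{\mathrm{OSO}}/\E{\mathrm{CE}} = \Omega(d)$, unbounded as $d \to \infty$.

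The main obstacle is the tie-breaking step inside OSO's per-period knapsack, since the optimum does not uniquely pin down whether the current item or a sampled duplicate is selected. I would resolve this either by invoking the standard ``accept when indifferent'' convention or, more cleanly, by perturbing the current arrival's value by $+\varepsilon$; the latter leaves the CE analysis intact because CE's strict preference to reject stems from the margin $\min(T - t, d) \geq 2$, not from ties. A secondary subtlety is that once CE has accepted a first item, its LP forbids any further ``average item'' acceptances, so any additional acceptances must be actual current items from the final periods; I would check that at most $\calO(1)$ such acceptances occur in expectation under any tie-breaking rule.
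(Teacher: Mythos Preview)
Your construction is correct and establishes the proposition, but by a genuinely different route from the paper's. The paper builds an instance with $d+1$ item types: type $k\in\{1,\dots,d\}$ consumes one unit of resource $k$ alone (each with probability $1/\sqrt{T}$), while a dominant type $0$ consumes one unit of \emph{every} resource (probability $1-d/\sqrt{T}$), all with large budgets $b_k=\sqrt{T}$. The large-budget regime is the crux: it places the instance inside the hypothesis of Theorem~\ref{thm:resourceallocation}, so the paper bounds OSO simply by invoking the $(1-\varepsilon)$ competitive-ratio guarantee rather than tracking OSO's per-period decisions---and thereby sidesteps any tie-breaking concern entirely. CE is then shown to defer all acceptances until roughly the final $\sqrt{T}/\bar A$ periods, yielding a competitive ratio of at most $1/d$, whence the OSO-to-CE ratio grows like $d$.

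Your unit-budget, basis-vector instance is more elementary and more transparent---the hindsight optimum is exactly the count of distinct arriving types, and OSO matches it under your tie-breaking---but unit budgets fall outside Theorem~\ref{thm:resourceallocation}, forcing a direct analysis of OSO. The tie-breaking obstacle you flag is precisely the price of this simplicity. Your ``perturb the current arrival by $+\varepsilon$'' resolution is sound provided it is read as the regularizer $\Psi(\bx^t)$ already built into Algorithm~\ref{alg:OSO} (and into Algorithm~\ref{alg:CE}, where it leaves CE's strict preference to reject intact since the margin is at least $1$); perturbing the \emph{instance} would not work, because OSO's sampled future items would be drawn from the same perturbed law and the current arrival would enjoy no systematic edge. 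Equivalently, simply stipulating that OSO selects an optimal solution accepting the current item whenever one exists already suffices, since the proposition only asserts that OSO \emph{can} yield unbounded benefits.
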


In summary, OSO provides a simple, easily-implementable and generalizable approach to multi-stage stochastic optimization. We proved performance guarantees of single-sample OSO in generic online resource allocation and online batched bin packing settings, and showed that it can also outperform myopic decision-making and mean-based resolving heuristics. These results underscore the potential of even a single sample path when combined with online re-optimization.

\subsection{Computational Assessment}\label{ssec:computation}

\subsubsection*{Online resource allocation.}
Results in~\ref{app:resourcealloc_comp_exact} first establish that multi-stage stochastic programming and dynamic programming methods do not scale to even moderately-sized instances of the problem. Viewed as a stochastic program, the problem involves a scenario tree that grows exponentially with the time horizon and the number of resources, and features discrete decision variables at each node. Stochastic programming models with scenario-tree representations become intractable with as few as 12 time periods, 1 supply node, 2 resources, and binary uncertainty. Dynamic programming algorithms also remain several orders of magnitude slower than OSO and time out in moderate instances (e.g., 20 time periods, 1 supply node, and 6 resources), due to the exponential growth in the state space as $\calO(T (2B)^d)$. In comparison, we tackle much larger instances in this paper, with up to 100 time periods, 10 supply nodes, 20 resources, and continuous uncertainty. Performance could be improved via approximate dynamic programming and reinforcement learning \citep{sutton2018reinforcement,powell2022reinforcement}; yet, these results are indicative of very high-dimensional multi-stage stochastic optimization problems for which exact methods feature limited scalability---thus motivating the need for resolving heuristics.

We then assess the OSO algorithm against the myopic and certainty-equivalent (CE) benchmarks. We implement both the single-sample and small-sample variants of the algorithm ($S=1$ and $S=5$), with no regularizer ($\Psi(\cdot)=0$). We consider problems with unit rewards, scaled resource capacities by a parameter $B$, and unknown resource consumption. Specifically, we define (i) a multi-dimensional knapsack problem with $T = 50$ items, $|\calK| = 10$ and $b_k = T B$; (ii) an online generalized assignment problem with $T = 50$ items, $|\calJ| = 10$, $|\calK| = 50$, and $b_k = T B / |\calJ|$; and (iii) an online resource allocation with $T = 100$ items, $|\calJ| = 10$, and $3|\calJ|/2$ resources overall, such that $|\calJ|$ resources are consumed by a single supply node and have capacity $b_k = T B / |\calJ|$, $|\calJ| / 2$ resources are consumed by two supply nodes and have capacity $b_k=2 T B / |\calJ|$, and 5 resources are consumed by half of the supply nodes and have capacity $b_k= T B / 2$. When supply node $j$ consumes resource $k$, the parameter $A^t_{jk}$ is modeled via a bimodal distribution; specifically, $A^t_{jk}$ follows a triangular distribution with width $0.5$ centered in $0.5 - \psi$ with probability 0.5, and centered in $0.5 + \psi$ with probability 0.5 (\Cref{fig:bimodal_pdf}). Thus, the problem is governed by the capacity parameter $B$ and the extent of bimodality $\psi$. For each problem, we set parameter to ensure overall demand-supply balance; for each combination of parameters, we generate five instances and, for each one, we run OSO five times. Full computational details and results are in \ref{app:resourcealloc_comp}.

\Cref{fig:resource_alloc_groupedbar_obj_time} reports the proportion of accepted items and computational times. We first observe that the myopic policy can induce a loss of up to 50\% as compared to the perfect-information solution, reflecting the cost of uncertainty regarding future arrivals. By accounting for future demand, the CE benchmark improves upon the myopic solution, but the benefits remain rather limited (3--22\% improvements, leading to solutions within 60 to 85\% of the hindsight optimum). In comparison, OSO generates significant performance improvements, and can yield high-quality solutions against the---unattainable---perfect-information solution. Quantitatively, even the single sample OSO algorithm further improves upon the CE solution by 3--38\% and falls within 84--94\% of the perfect-information benchmark (\Cref{tab:resourcealloc_app} in \ref{app:resourcealloc_comp}). With $S=5$, OSO can further improve the solution within 88--95\% of the perfect-information benchmark. The OSO algorithm involves longer computational times but remains a tractable approximation approach. Notably, these results confirm that even the single-sample approximation of uncertainty combined with online re-optimization consistently yields high-quality solutions within the time limit.

\begin{figure}[h!]
\centering
\subfloat[\small Objective]{\includegraphics[width=0.49\textwidth]{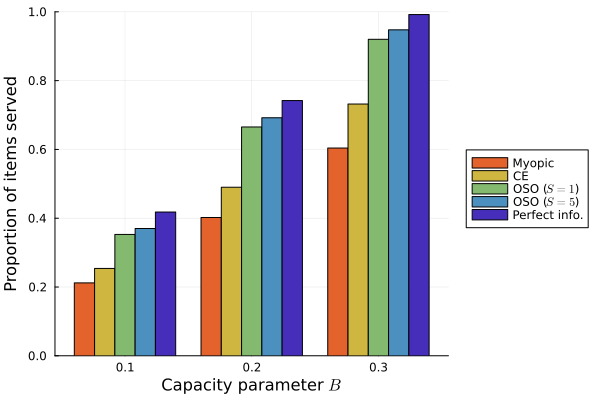}
\label{fig:resource_alloc_groupedbar_obj}
}%
\subfloat[\small Computation time]{\includegraphics[width=0.49\textwidth]{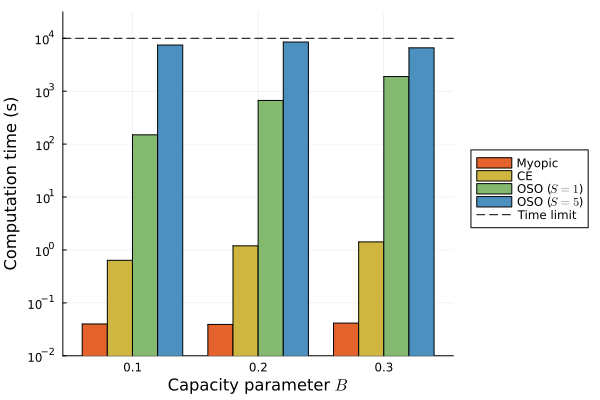}
\label{fig:resource_alloc_groupedbar_time}
}%
\caption{Normalized objectives and computation times for the online resource allocation problem ($\psi=1/4$).} 
\label{fig:resource_alloc_groupedbar_obj_time}
\vspace{-12pt}
\end{figure}

To shed further light into this comparison, \Cref{fig:OSO_CE_heatmap_objective} shows the percent-wise improvement of the single-sample OSO solution from the CE solution. The heatmaps reveal that OSO outperforms CE across virtually all instances. The relative differences can be significant, with benefits of up to 23\% in multi-dimensional knapsack, 19\% in generalized assignment, and 39\% in the general resource allocation problem. Moreover, the heatmaps indicate that OSO tends to provide stronger benefits as the probability distribution of unknown parameters becomes more bimodal---that is, when the mean is less representative of the distribution---and when the capacity parameter is not too large---that is, when poor-quality decisions have a stronger impact down the road.

\begin{figure}[h!]
\centering
\subfloat[\small Multi-dimensional knapsack]{\includegraphics[width=0.365\textwidth]{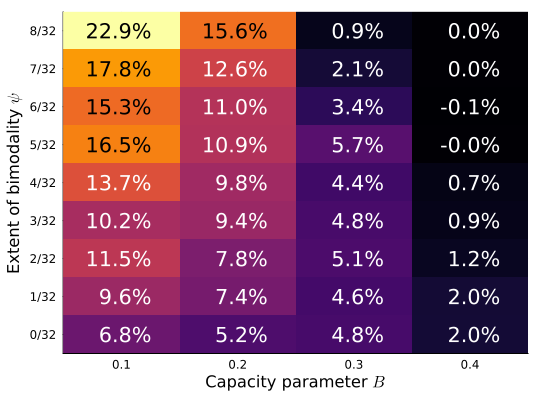}
\label{fig:assignment_knapsack_heatmap_OSO_CE_objective}
}%
\subfloat[\small Generalized assignment]{\includegraphics[width=0.365\textwidth]{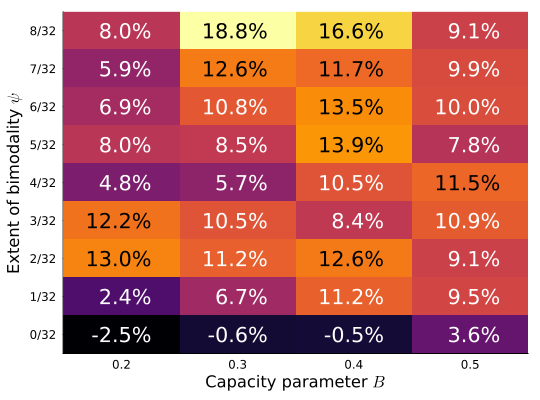}
\label{fig:assignment_fixedC_heatmap_OSO_CE_objective}
}%
\subfloat[\small Resource allocation]{\includegraphics[width=0.27\textwidth]{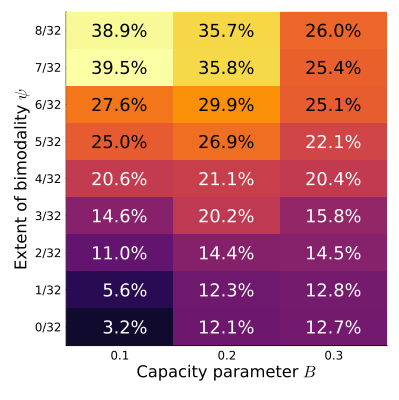}
\label{fig:resource_alloc_heatmap_OSO_CE_objective}
}%
\caption{Relative average improvement of single-sample OSO vs. CE in online resource allocation (positive numbers indicate instances where single-sample OSO outperforms CE on average).} \label{fig:OSO_CE_heatmap_objective}
\vspace{-12pt}
\end{figure}

\subsubsection*{Online batched bin packing.}
We report in \ref{app:binpacking_comp} similar results for the online batched bin packing problem. These results confirm that OSO generates higher-quality solutions than the myopic and CE benchmarks, even with a few samples (1 to 5), in longer but manageable computational times. Moreover, these results show the impact of the regularizer---in that case, the regularizer promotes packing incoming items on fuller bins to leave space for future batches.

\subsubsection*{Online rack placement.}

We define two variants of the problem: the core problem with an online resource allocation structure and discrete linking constraints (Equations~\eqref{eq:rackplacement}--\eqref{eq:rackplacement_domain}); and a variant with ``precedence'' constraints stating that requests cannot be rejected to leave space for future racks. The former is closer to online resource allocation, whereas the latter is closer to the one deployed in production (in practice, data centers must accept requests if possible). Without precedence constraints, we measure performance as the number of accepted requests until the end of the horizon; with precedence constraints, we measure performance as the number of accepted requests until the data center rejects an incoming item.

We consider a data center with two rooms, each with 36 rows, 4 top-level UPS devices, 6 PDU devices per UPS device and 3 PSU devices per PDU device. We use real-world data to simulate incoming demand in data centers, based on the historical distributions of request sizes, power requirements, and cooling requirements (\Cref{F:historicalDemand}). We provide details on the setup in \ref{app:rackplacement_comp_setup}.

Results are reported in \Cref{fig:rack_placement_OSO}. Both the CE and OSO algorithms provide strong performance improvements as compared to myopic decision-making. Whereas the myopic solution ranges from 75\% to 90\% of the hindsight-optimal solution, both resolving methods achieve 90\% to 99\% of the benchmark. Then, the OSO algorithm yields additional gains as compared to the CE solution, although by a smaller margin than in the general online resource allocation setting. Quantitatively, the benefits are estimated at up to 1\% with precedence constraints and up to 4\% without precedence constraints. The difference is strongest with small batches and no precedence constraints, because smaller batches exacerbate the differences between mean-based and sampling-based approximations, and because the variant without precedence constraints provides most flexibility to the model. More broadly, the results confirm our earlier insights, both regarding the large improvements of OSO over the myopic benchmark and the added benefits of OSO over the CE heuristic.

\begin{figure}[h!]
\centering
\subfloat[No precedence constraints]{\includegraphics[width=0.49\textwidth]{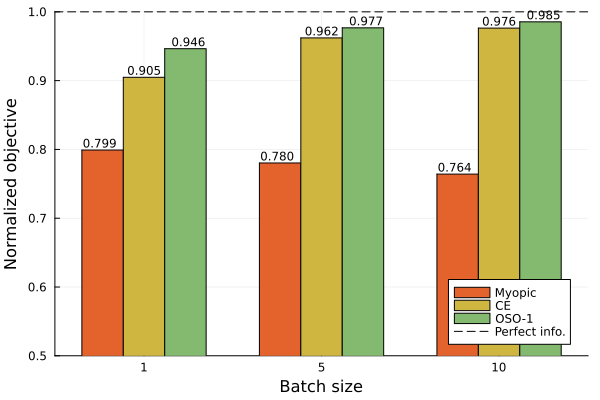}
\label{fig:rack_placement_obj_oldDC_oldDist_noCool}
}%
\subfloat[Precedence constraints]{\includegraphics[width=0.49\textwidth]{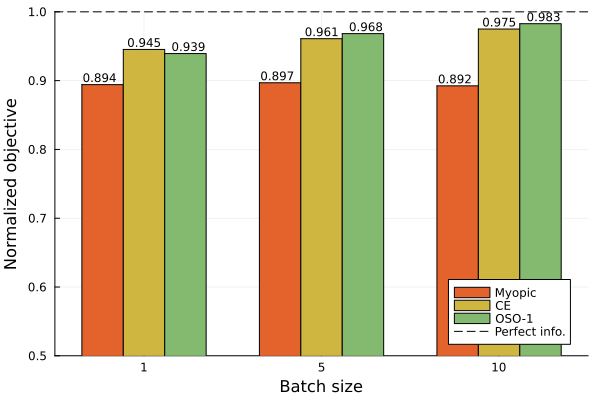}
\label{fig:rack_placement_obj_prec_noafter_oldDC_oldDist_noCool}
}%
\caption{Performance of the OSO algorithm and the myopic and CE benchmarks on the online rack placement problem, with and without precedence constraints.} \label{fig:rack_placement_OSO}
\vspace{-12pt}
\end{figure}
\section{Real-world Deployment in Microsoft Data Centers}
\label{sec:real}

In practice, rack placement decisions used to rely on the expertise of data center managers, assisted by spreadsheet tools and feasibility-oriented software. Cloud computing growth has rendered these decisions increasingly complex, creating interdependent considerations and conflicting objectives. To alleviate mental loads and operational inefficiencies, we have deployed our rack placement solution across Microsoft's fleet of data centers. The goal was to combine the strength of optimization and human expertise, by building a decision-support tool but leaving decision-making authority to data center managers. We have extensively collaborated with stakeholder groups to gradually deploy the solution across the organization, and modified the model to capture practical considerations. The software's recommendations have been increasingly adopted by data center managers. The success of this deployment underscores the impact of human-machine interactions in cloud supply chains to turn an optimization prototype into a full-scale software solution in production.

\subsection{Solution Deployment in Microsoft's Fleet of Data Centers} \label{sec:deployment-milestones}

\subsubsection*{Software development.}

We packaged our algorithm into a software tool that could be embedded into the production ecosystem. We built data pipelines to get access to real-time information on incoming demand and data center configurations. Each demand batch triggers our optimization algorithm to generate placement suggestions. To ensure acceptable wait times, we imposed a four-minute time limit for each optimization run---strengthening the need for our single-sample OSO algorithm as compared to more complex multi-stage stochastic optimization algorithms. We developed a user interface enabling data center managers to visualize placement suggestions in the data center (\Cref{fig:interface}). For each request, data center managers can either accept the placement suggestion or reject it. The suggested and final placements are both recorded.

\begin{figure}[h!]
	\centering
	\subfloat[Suggested placement of an incoming request (yellow).]{\label{fig:interface}\includegraphics[width=.65\textwidth]{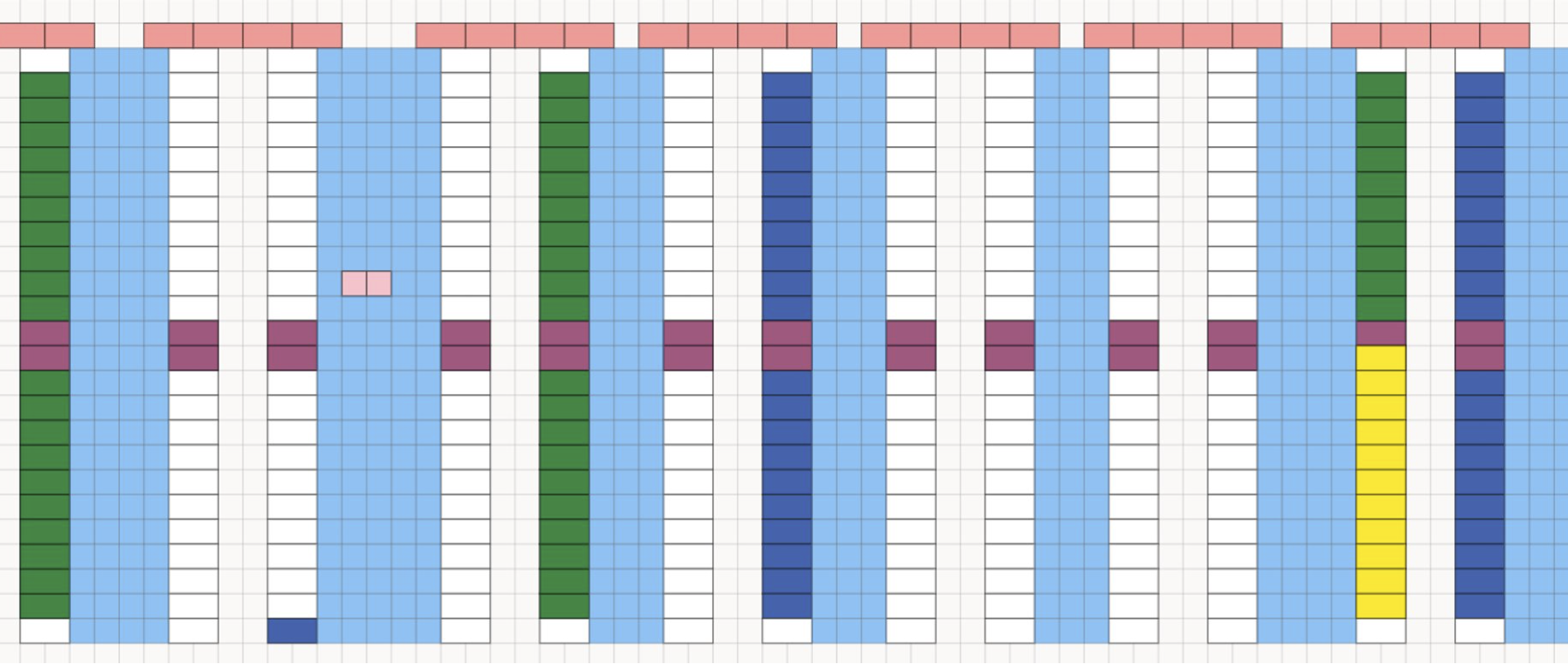}}
    \hspace{12pt} 
	\subfloat[Feedback form.]{\label{fig:feedback-ui}\includegraphics[width=.22\textwidth]{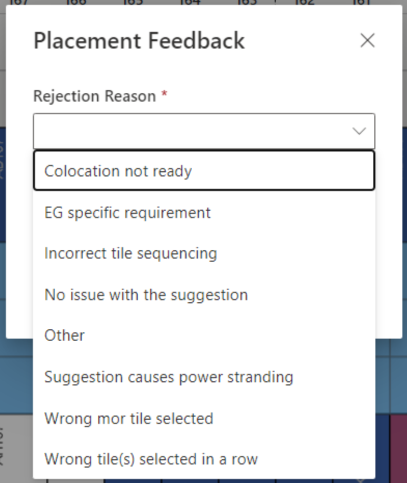}}
	\caption{User interface for data center managers at the core of our solution deployment.}\label{fig:deployment}
    \vspace{-12pt}
\end{figure}

\subsubsection*{Pilot.}

After extensive simulations and testing, we initiated a small-scale pilot in 13 data centers. This phase started at the end of 2021 and lasted three months. We fostered direct interactions with data center managers to assess the new solution. Initial feedback helped us identify issues in the data pipelines. For instance, early deployments failed to record that some rooms were unavailable, that some rows were already reserved, and that some requests came with placement restrictions.

To continue gathering feedback, we augmented the user interface for data center managers to specify the reason for each placement rejection (\Cref{fig:feedback-ui}). This new deployment phase provided valuable insights on real-time adoption. We devised a principled approach to analyze feedback by grouping the main rejection reasons into (i) engineering group requirements; (ii) power balancing considerations; (iii) conflicts with other requests (``already reserved''); (iv) availability of better placements by throttling lower-priority demands (``multi-availability''); and (v) opportunities to utilize small pockets of space (``better space packing''). These options were supplemented with a free-text ``Other'' category for ad hoc requests, hardware compatibility issues, and software bugs.

\subsubsection*{Modeling modifications.}

Any optimization model of supply chain operations builds a necessarily simplified representation of reality. The feedback gathered through the pilot deployment identified the most critical limitations of the initial model. We have performed iterative modeling adjustments by adding secondary objectives for tie-breaking purposes, including: (i) room minimization and row minimization objectives to mitigate operational overhead for data center managers; (ii) a tile group minimization objective to place racks from the same request closer together for better customer service; and (iii) power surplus and power balance objectives to mitigate the risk of overload and device failure. Details on these adjustments are provided in~\ref{app:rackplacement_production_objectives}.

\subsubsection*{Full-scale deployment.}

We organized information sessions to familiarize data center managers with the new system and demonstrate its capabilities. These sessions led to strong engagement on the details of the model. Within a month, our solution was launched globally across Microsoft's global fleet of data centers. Throughout, we performed modeling adjustments to improve the quality of rack placement recommendations, using data on the rejection reasons (\Cref{fig:feedback-ui}).

\subsection{Impact and Adoption} \label{sec:adoption-impact}

\Cref{fig:rejections} reports the main rejection reasons in the last quarter of 2022 and the second quarter of 2023. Once our modeling adjustments got implemented, tested and deployed in production in early 2023, the incidence of rejections due to engineering group requirements decreased from over 40\% to less than 20\%, while at the same time total rejections went down as well. The remaining engineering group requirements are mostly driven by one rack type, which currently lies out of scope of the model. Thus, the iterative modeling adjustments enabled to address the main issues and increase overall adoption of the rack placement solution.

\begin{figure}[h!]
	\centering
	\subfloat[Q4 2022.]{\label{fig:rejections-before}\includegraphics[width=.495\textwidth]{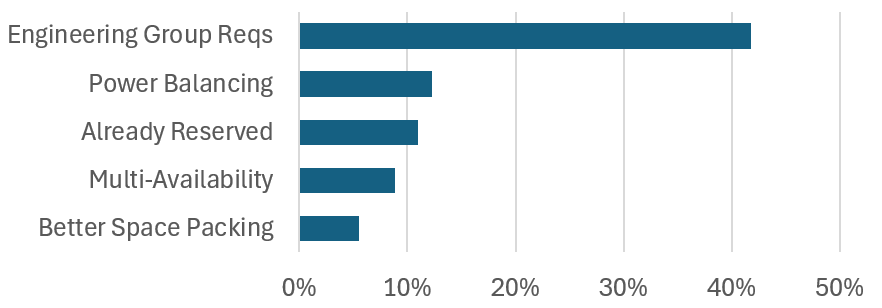}}
	\subfloat[Q2 2023.]{\label{fig:rejections-after}\includegraphics[width=.495\textwidth]{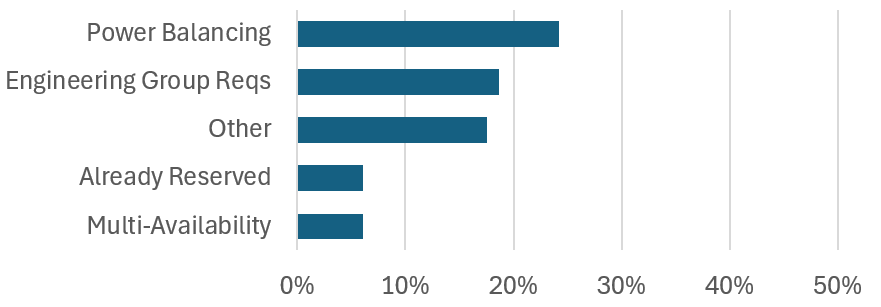}}
	\caption{Rejection reasons before and after incorporating engineering group requirements in the optimization.}
    \label{fig:rejections}
    \vspace{-12pt}
\end{figure}

\Cref{fig:acceptance} reports the proportion of recommendations accepted by data center managers across all Microsoft data centers in April--July 2023. During this period, we made two significant improvements. In May 2023, we incorporated modeling adjustments to capture preferences from engineering groups. In June 2023, we augmented our solution to support a particular data center architecture (Flex) that can throttle low-priority demands in case of failover \citep{zhang2021flex}.

\begin{figure}[h!]
	\centering
	\subfloat[Accepted and rejected placements.]{\label{fig:histograms-acceptance}\includegraphics[width=.4\textwidth]{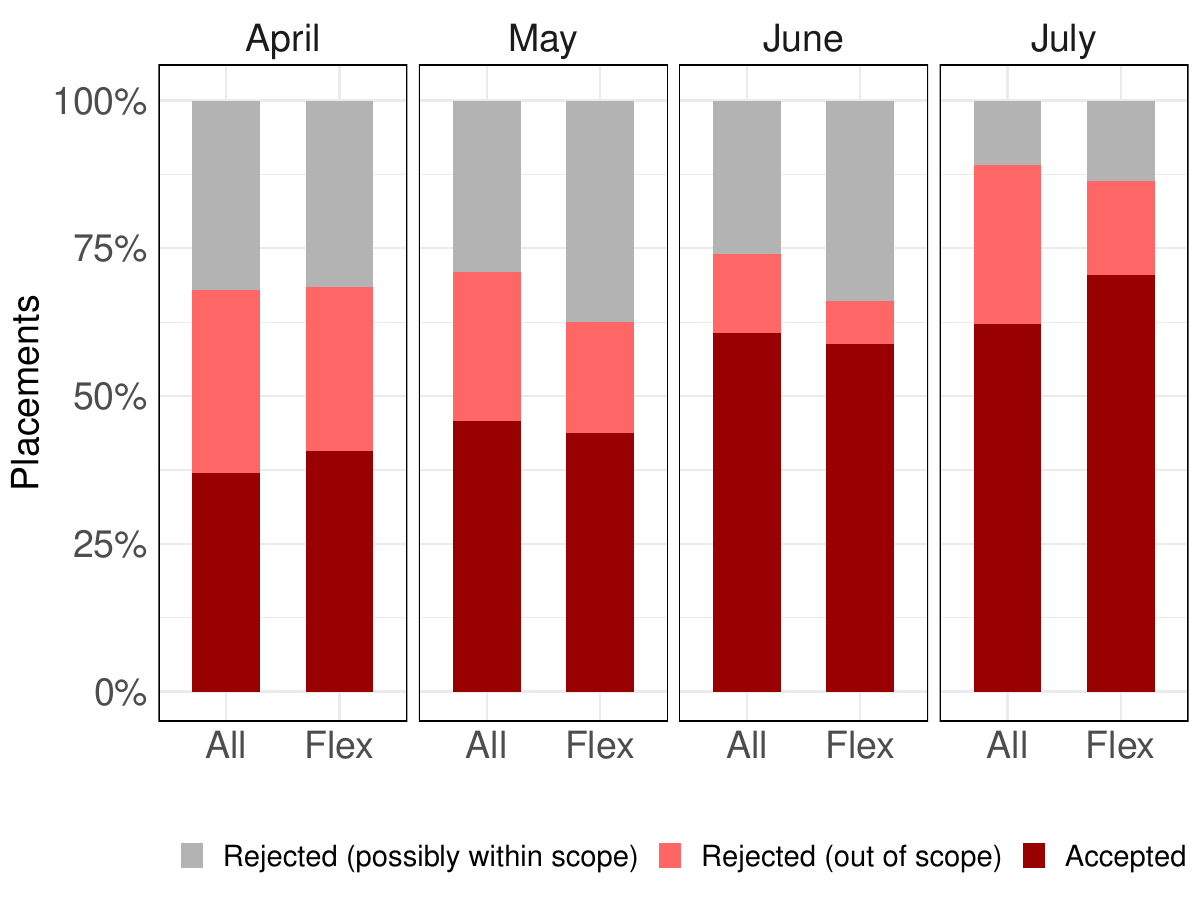}}
 \hspace{0.1cm}
	\subfloat[Accepted location/row/room.]{\label{fig:correct-matches}\includegraphics[width=.4\textwidth]{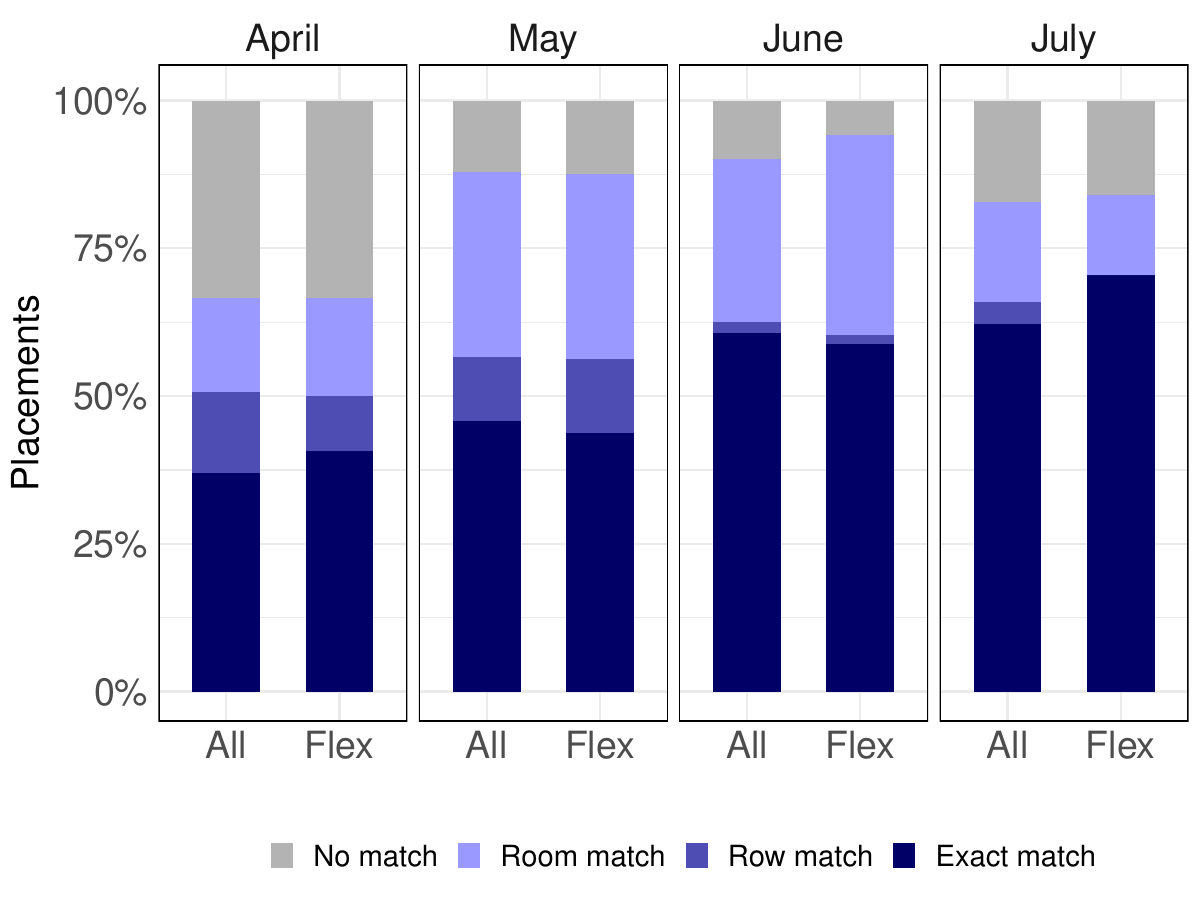}}
	\caption{Acceptance of our recommendations by the data center managers.}
    \label{fig:acceptance}
    \vspace{-12pt}
\end{figure}

\Cref{fig:histograms-acceptance} shows a strong increase in accepted requests between April 2023 and July 2023. The deployment of an optimization solution does not necessarily lead to immediate large-scale impact. Rather, adoption increases over time as users get progressively more familiar with it, and as it gets improved to capture practical requirements. In our case, the acceptance ratio increased from 35\% in April 2023 to over 60\% in July 2023 across all Microsoft data centers---and to over 70\% among the Flex data centers in particular. In fact, \Cref{fig:correct-matches} shows that, even when the data center managers rejected the specific recommendation, many placements remained in the same room and the same row. In particular, the room was accepted for 80--90\% of placements.

\Cref{fig:histograms-acceptance} also breaks down rejected requests into those out of the scope of the optimization model and those possibly within scope. Out-of-scope rejections are primarily due to data issues and bugs (indicated, for instance, in the ``already reserved'' and ``other'' categories of the feedback form). Such rejections could be addressed over time as data pipelines mature. The remaining rejections include requests for which the solution provided an appropriate recommendation but the data center managers decided for other placements. Such rejections would require deeper changes to the optimization model. This breakdown suggests that the vast majority of rejections at the end of the deployment period fell out of scope, suggesting that our iterative improvements were successful at addressing the main in-scope issues. Ultimately, when disregarding out-of-scope rejections, the potential of our optimization solution reaches 80-90\% of requests across all data centers.

\subsubsection*{Takeaways.} 
Deploying an optimization algorithm at the scale of Microsoft's cloud supply chains involves a number of technical and practical challenges. The first one is \emph{solving the right problem}. In practice, there is no ``clean'' problem description outlining objectives and constraints that can be easily translated into an optimization framework. We devoted significant time and effort to understand the rack placement process and adjust the model to meet practical requirements and preferences, in close collaboration with stakeholders (e.g., data center managers, program managers, engineering groups). The second one is \textit{data challenges}. To overcome inconsistencies between databases, we had to build dedicated pipelines into our optimization model. We also developed user interfaces to make the optimization solutions available to decision-makers in real time and to collect data on adoption (\Cref{fig:deployment}). A third challenge is \emph{human factors}: to replace an existing (mostly manual) system with a sophisticated optimization approach, it was critical to involve data center managers early on in the process and gain their trust. In fact, this collaboration was a two-way street. On the one hand, it allowed us to leverage their expertise and feedback to strengthen the optimization solution. At the same time, this enabled them to better understand the logic behind the new rack placement system, thereby alleviating the pitfalls of ``black-box'' optimization. The working sessions were particularly useful to make the model-based recommendation more interpretable and transparent. Ultimately, our full-scale optimization deployment highlights the importance of keeping the user's perspective in mind when designing real-world optimization solutions through cross-organizational collaborations at the human-machine interface.
\section{Empirical Assessment and Impact}
\label{sec:empirical}

We leverage post-deployment data to identify the impact of our solution on data center performance. To this end, we construct econometric specifications measuring the effect of adoption on power stranding (defined formally below), while controlling for possible confounding factors. The results from this section show that our solution can contribute to higher power utilization within data centers, resulting in joint financial and environmental benefits.

\subsubsection*{Unit of analysis.}
Our empirical analysis is complicated by the need to compare performance metrics at the aggregate level---i.e., at the level of a data center over the entire deployment period---as opposed to a disaggregated level---e.g., at the individual rack level. This is driven by the combinatorial complexities of the rack placement problem. Indeed, our algorithm is designed to generate a strong data center configuration over time, that is, over multiple rack placements. We therefore cannot measure its impact one rack at a time; rather, we need to wait for an extended period of time to measure the impact of high- and low-quality decisions on the data center configuration.

For example, consider a data center with low initial utilization. Assume that the next racks are placed based on ``poor-quality decisions’’. These decisions might not result in an \emph{immediate} deterioration in data center performance. However, they might leave limited resources for future rack placements, increasing the risk of fragmentation or resource unavailability (\Cref{fig:stranding}). When other racks get placed down the road, performance might deteriorate regardless of whether these decisions are ``good’’ or ``bad’’. Thus, performance deterioration might be unjustly attributed to the later decisions whereas they would come, in fact, from poor-quality earlier decisions. This example underscores interdependencies across rack placements, which require an empirical analysis at the aggregated data center level rather than a disaggregated rack level.

This aggregation restricts the number of observations. This departs from other empirical contexts, in which treatment is applied on a small cross-section but still impacts a large number of disaggregated observations. For example, \cite{bray2016multitasking} tested the impact of task juggling on six judges but observed hundreds of thousands of (independent) hearings; \cite{stamatopoulos2021effects} tested the impact of electronic shelf labels on two stores but observed hundreds of store-date observations; \cite{cohen2023managing} tested the impact of a mark-up strategy in airline pricing on 11 origin-destination markets but observed hundreds of market-week observations. In contrast, our unit of analysis remains the data center at the aggregate level. Still, we exploit Microsoft's large fleet to identify the effect of adoption on data center performance with statistical significance.

We also stress that this aggregated level of analysis makes it challenging to run a field experiment---a common challenge in system-wide interventions. A field experiment would entail delaying deployment in some data centers by one year or so, which was deemed impractical. Instead, we leverage post-deployment observational data to identify the impact of adoption on performance.

\subsubsection*{Data.}

We have access to a monthly report for each of Microsoft's data centers between October 2022 and September 2023. We aggregate all metrics per data center over the 11-month deployment period. The key performance metric is \textit{power stranding}, defined as the ratio of unusable power to the top-level power capacity of the data center (i.e., the relative slack in \Cref{eq:rackplacement_power} when the data center fills up, summed over all top-level UPS devices, which are the main bottlenecks). In practice, power stranding is ultimately observed at the end of the data center's lifecycle---when the data center is full. However, we use power stranding measurements from Microsoft's engineering teams available in the monthly report, which capture the power that has already been stranded even if the data center can still accommodate demands. For instance, if a power device is no longer connected to any available tile, any residual power is classified as stranded. Note that as more racks get placed within a data center, utilization generally increases---except for decommissioning and other minor events---and power stranding may increase as well.

It is difficult to isolate the impact of our solution, as power stranding depends on the data center configuration (determined by our solution) but also on data centers' broader operations. We use the acceptance ratio as an estimate of the prevalence of our algorithm vs. human decision-making in the data center, and control for data center characteristics. Our main hypothesis is that higher reliance on our algorithm’s recommendations (measured via higher acceptance) leads to stronger performance (measured via a smaller increase in power stranding over the deployment period).

\subsubsection*{Variables.}

\paragraph{Treatment variable:} adoption rate, a continuous variable between 0 and 1 defined as the ratio of the number of demands placed per the algorithm’s recommendations over the number of demands placed during the deployment period. A higher adoption rate indicates a higher reliance on our solution by data center managers.

\paragraph{Outcome variable:} increase in power stranding over the deployment period. New rack placements represent the main changes in data center configurations, leading to an increase in power stranding. Still, occasional rack decommissioning and minor events outside the scope of our problem can lead to episodic decreases in power stranding (Section~\ref{sec:model}). Since our algorithmic solution only impacts new rack placements, we define an aggregate metric of increase in power stranding by isolating month-over-month gains. With $\widetilde{y}_{it}$ denoting the power stranding in data center $i$ at the end of month $t$, obtained from our data, we define the outcome variable in data center $i$ as $y_i=\sum_{t=1}^T(\widetilde{y}_{it}-\widetilde{y}_{i,t-1})^+$.

\paragraph{Control variables:} We define seven control variables to capture characteristics of data centers:
\begin{enumerate}
	\item IT capacity: available power capacity within data center $i$, denoted by $x^C_i$ and measured in kW. We use a scaled version of this variable for confidentiality purposes, i.e., $x^C_i/\max_\ell x^C_\ell$.
	\item Demand: demand for new racks during the deployment period, in percentage of power capacity. As for power stranding, we aggregate utilization data to isolate the month-over-month increases, and disregard episodic reductions caused by out-of-scope events. Let $\widetilde{x}^U_{it}$ denote rack utilization, in kW, in data center $i$ at the end of month $t$; we define the demand variable $x^D_i$ as $x^D_i=\frac{1}{x^C_i}\sum_{t=1}^T(\widetilde{x}^U_{it}-\widetilde{x}^U_{i,t-1})^+$.
	\item Initial utilization: relative occupancy of data center $i$ at the start of the deployment period, denoted by $x^0_i$ and measured as the percentage of power utilization. It is given by $x^0_i=\widetilde{x}^U_{i0}/x^C_i$. This variable differentiates younger and more empty data centers from older and fuller ones.
	\item Initial power stranding: power stranding at the start of the deployment period in data center $i$, denoted by $x^S_i$. This variable captures the historical performance of the data center.
	\item Rooms: number of rooms within data center $i$, denoted by $x^R_i$.
	\item ``Flex'':  binary variable $x^F_i$ indicating whether data center $i$ has the \emph{Flex} architecture.
	\item Location-US: binary variable $x^{US}_i$ indicating whether data center $i$ is in the United States.
\end{enumerate}

\subsubsection*{Raw statistics and model-free evidence.}

We first report statistics on the control and outcome variables, disaggregated between high- and low-adoption data centers. We classify data centers into the high-adoption category if their adoption rate exceeds a threshold of 60\%, corresponding to the 75\textsuperscript{th} percentile of the distribution. For robustness, we replicate the analyses with a threshold of 45\%, corresponding to the 50\textsuperscript{th} percentile of the distribution.

\Cref{fig:controls} plots the distribution of the four continuous control variables. The figure suggests that the distributions are rather balanced between the high- and low-adoption groups, with exceptions of the long tails with high demand and low initial utilization among low-adoption data centers. This visualization suggests that adoption is not driven by underlying control variables but occurs independently from demand, utilization, power stranding, and capacity. In contrast, \Cref{fig:treatment} shows that the distribution of the outcome variable clearly shifts to the left among high-adoption data centers, reflecting a lower increase in power stranding during the deployment period. On average, high-adoption data centers face a smaller increase in power stranding than low-adoption ones (+1.03\% vs +3.02\% with a 60\% threshold, and +1.65\% vs. +3.38\% with a 45\% threshold). This corresponds to a reduction in power stranding by 1.73--1.99 percentage points in absolute terms, or by 105--193\% in relative terms. 

\begin{figure}[h!]
\centering
\subfloat[Demand.]{\includegraphics[width=0.4\textwidth]{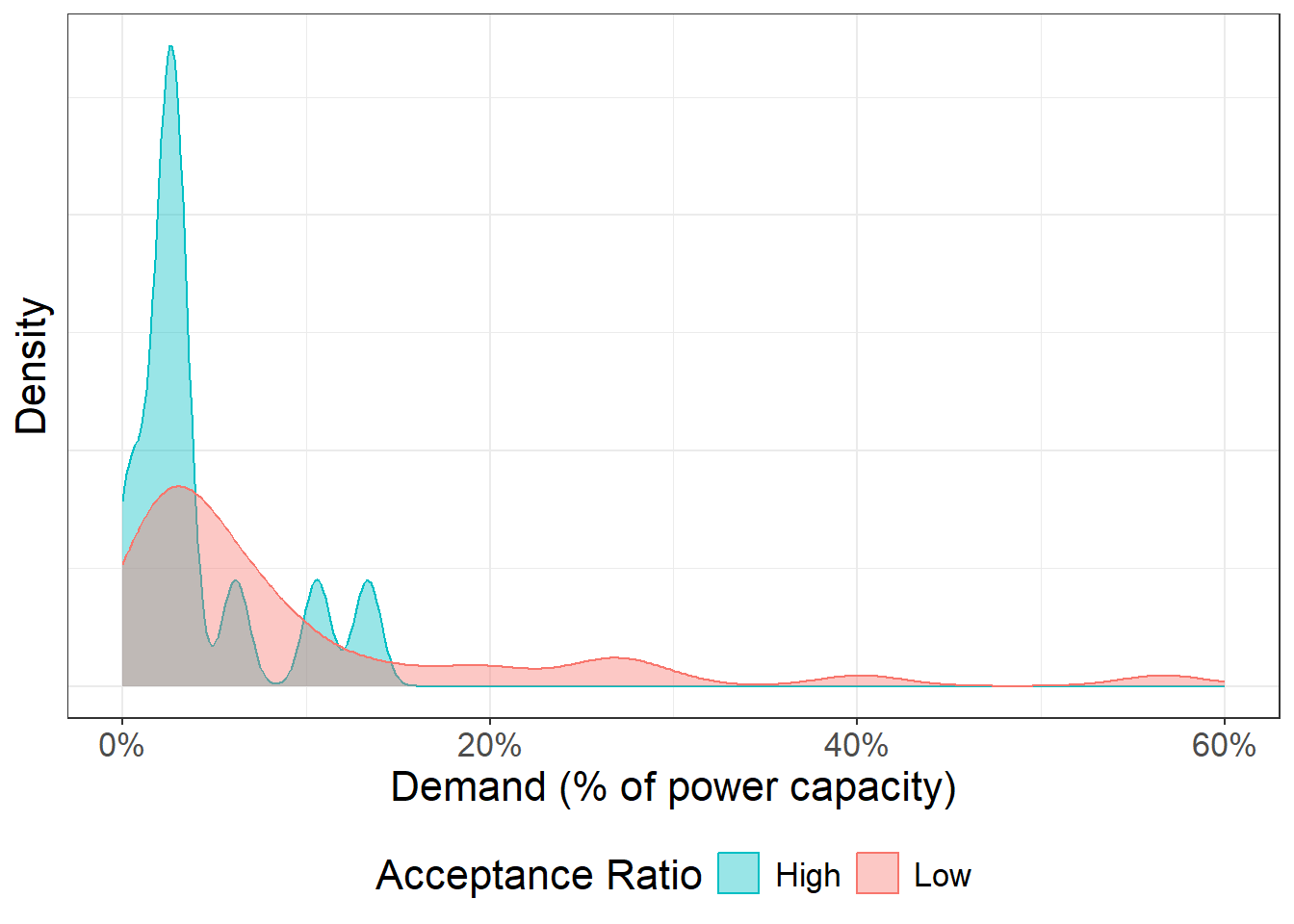}} 
\hspace{0.1 cm}
\subfloat[Initial utilization.]{\includegraphics[width=0.4\textwidth]{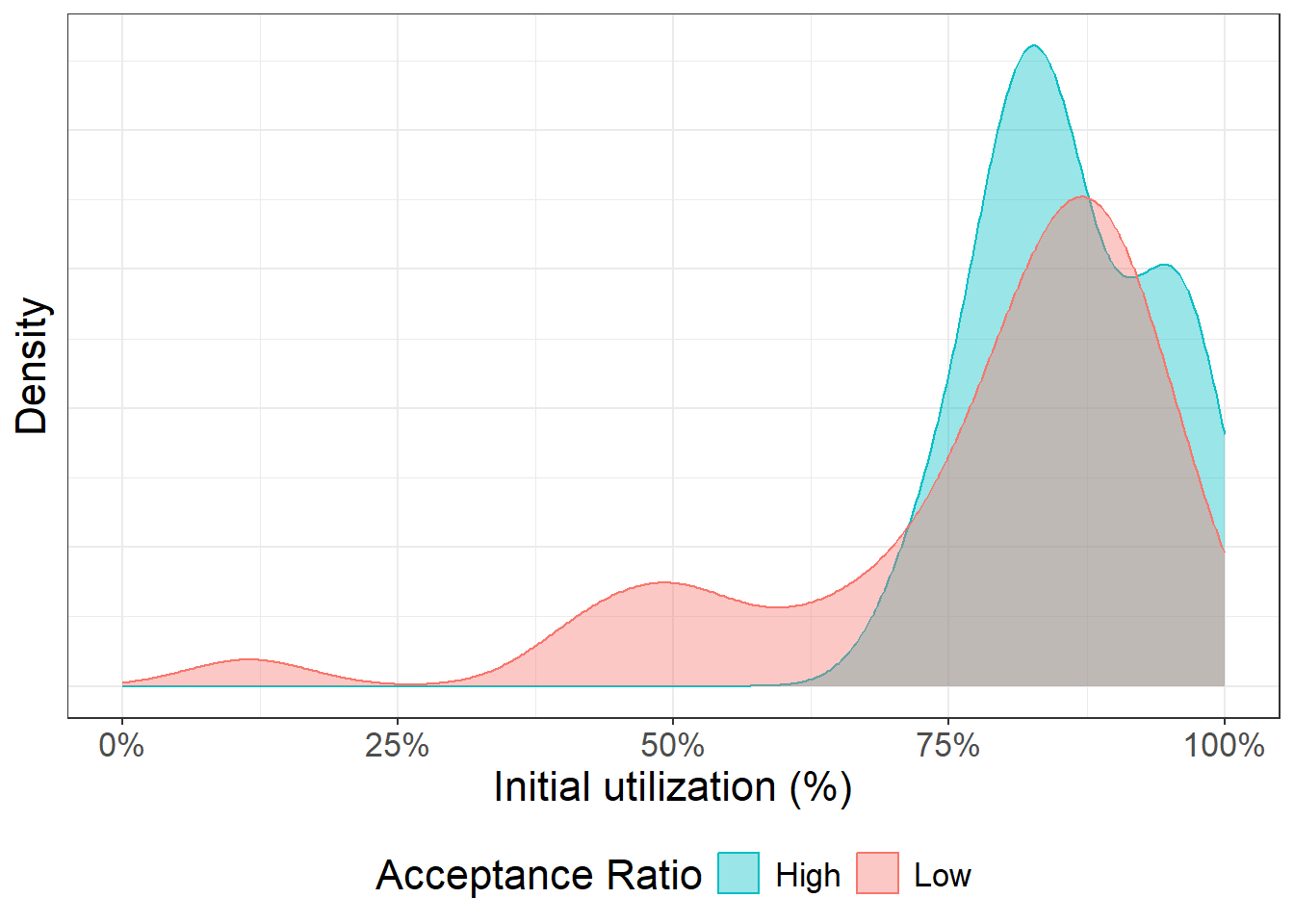}} 
\hspace{0.1 cm}
\subfloat[Initial power stranding.]{\includegraphics[width=0.4\textwidth]{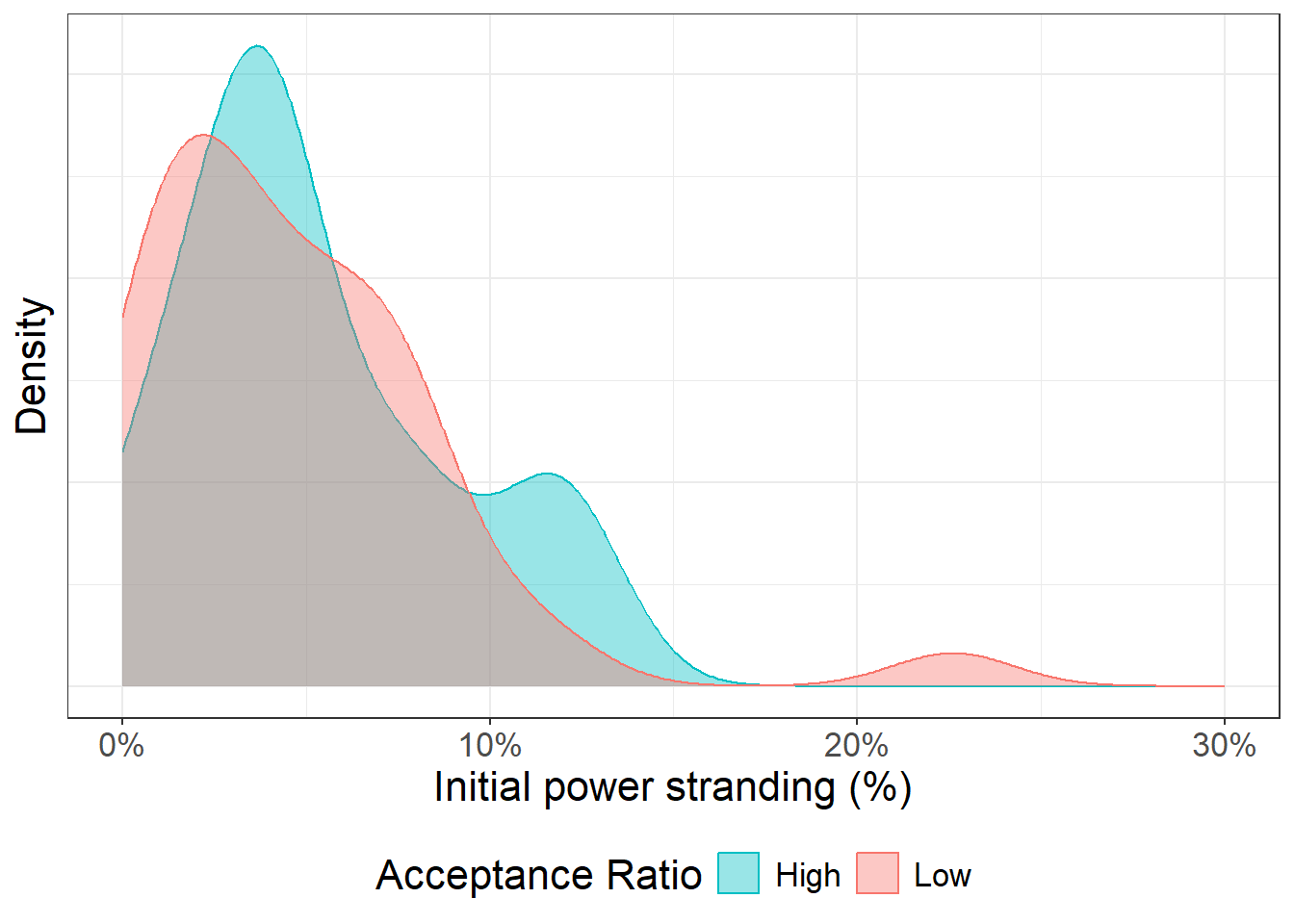}} 
\hspace{0.1 cm}
\subfloat[Power capacity.]{\includegraphics[width=0.4\textwidth]{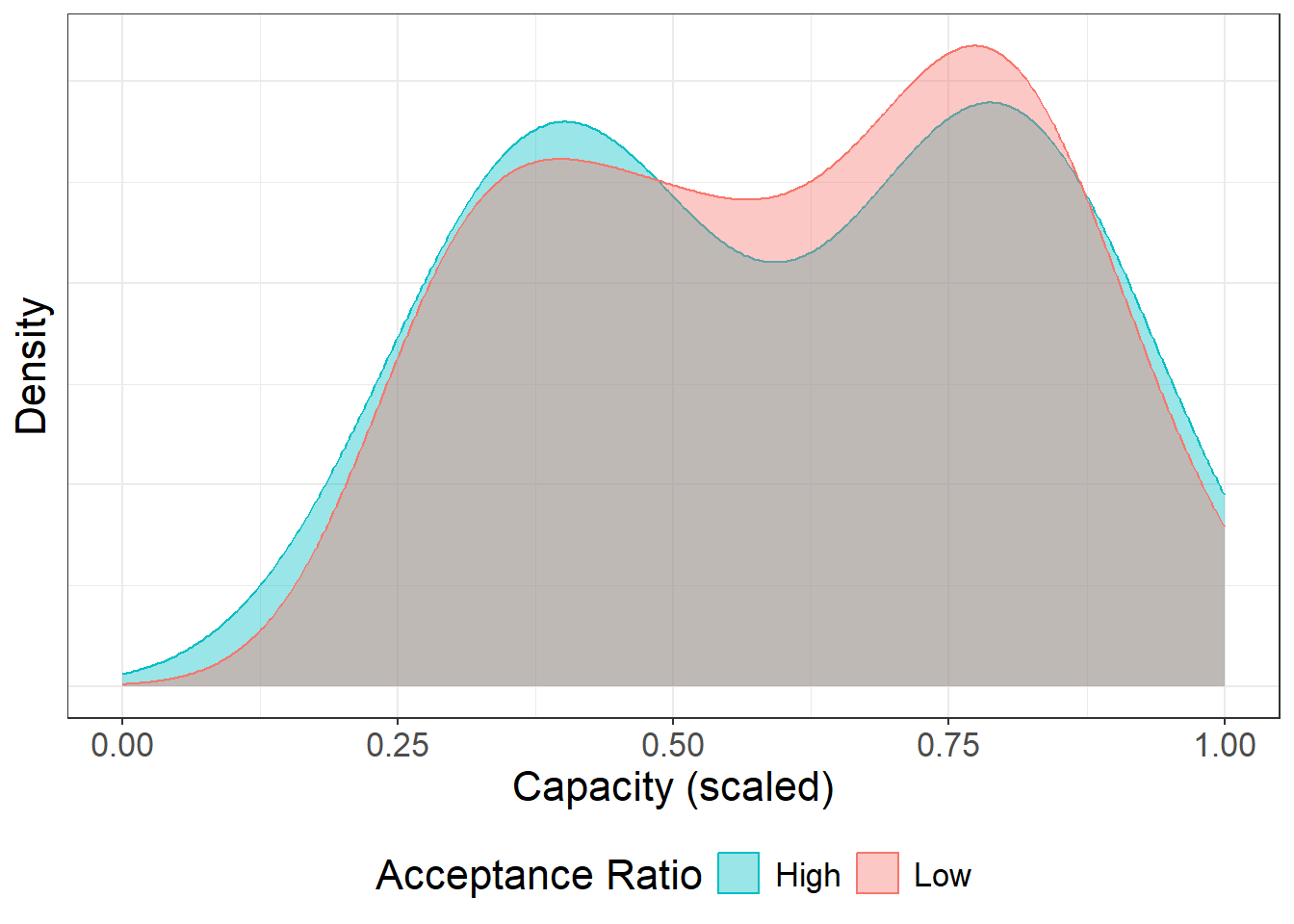}} 
\caption{Distribution of continuous control variables across data centers (using a 60\% threshold), smoothed via Kernel Density Estimation from the \texttt{geom\_density} function in the \texttt{ggplot2} package in R.} 
\label{fig:controls}
\vspace{-12pt}
\end{figure}

\begin{figure}[h!]
\centering
\small
\subfloat[Threshold: 60\%.]{\includegraphics[width=0.4\textwidth]{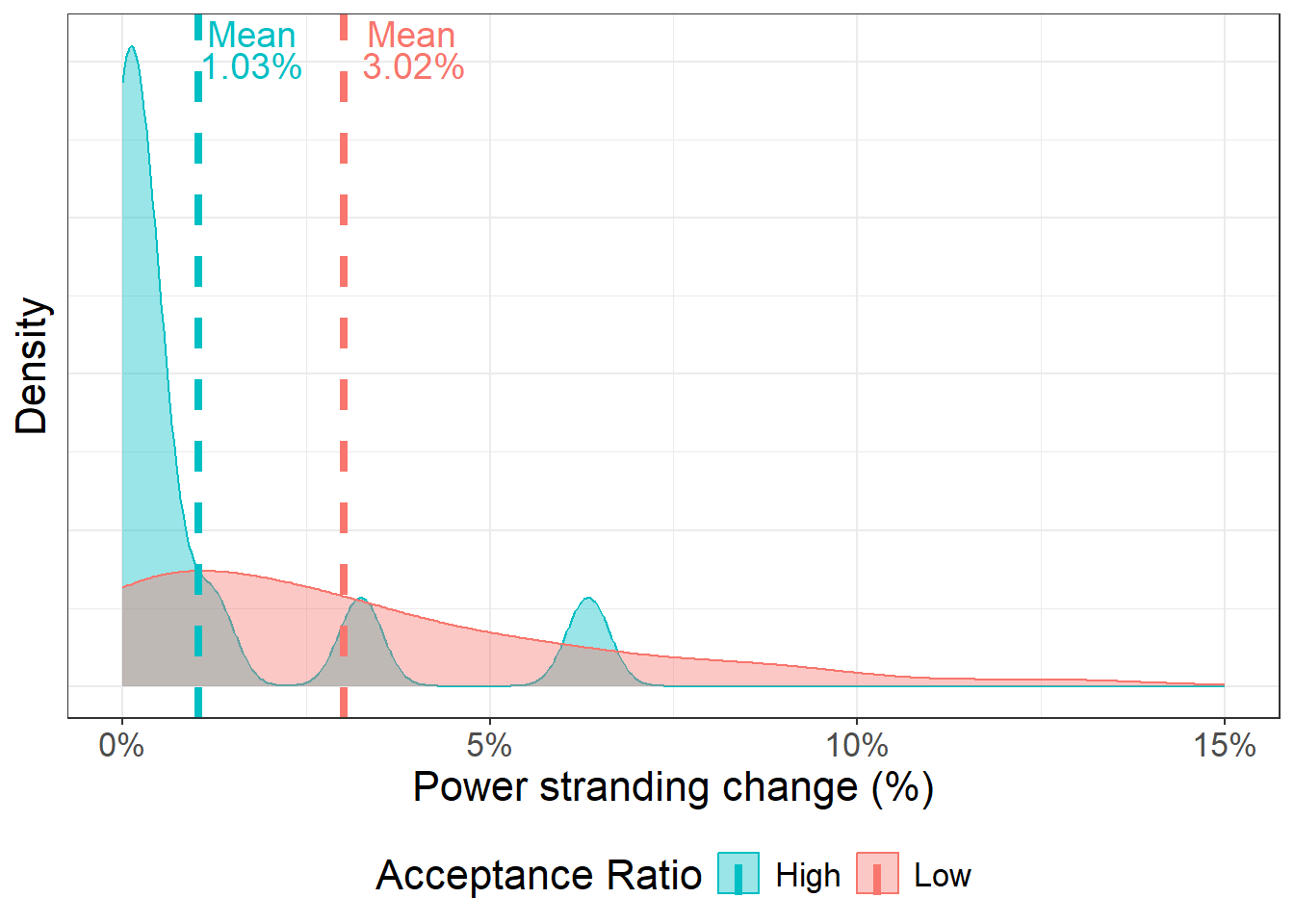}} 
\hspace{0.1 cm}
\subfloat[Threshold: 45\%.]{\includegraphics[width=0.4\textwidth]{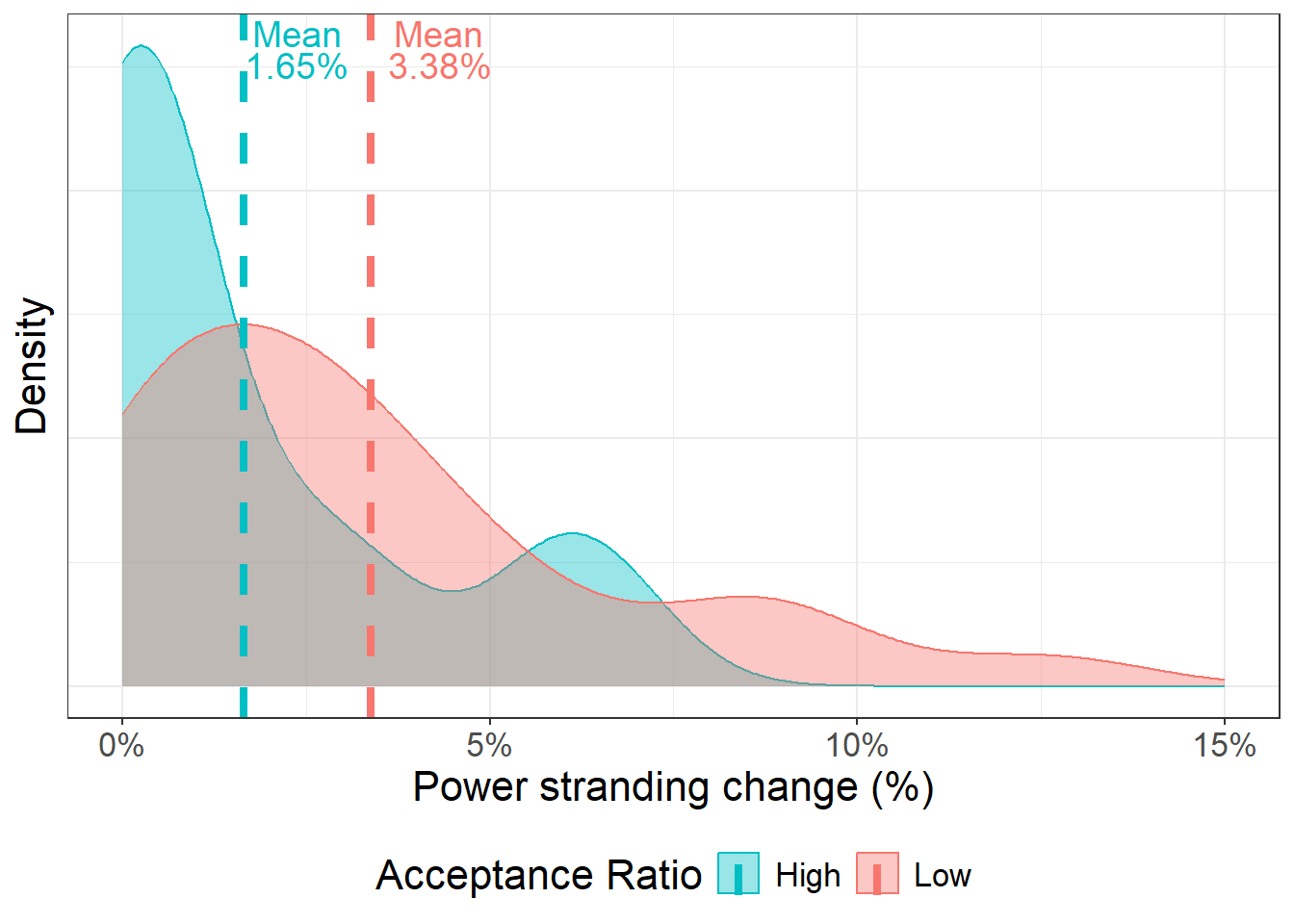}} 
\caption{Distribution of the outcome variable across data centers, smoothed via Kernel Density Estimation from the \texttt{geom\_density} function in the \texttt{ggplot2} package in R.} 
\label{fig:treatment}
\vspace{-12pt}
\end{figure}

In summary, raw deployment data suggest that high- and low-adoption data centers are statistically indistinguishable across most control variables, but then feature a smaller increase in power stranding. Next, we corroborate these observations via an econometric analysis.

\subsubsection*{OLS regression.}

We propose the following specification, where the outcome variable $y_i$ denotes the increase in power stranding in data center $i$, $\texttt{Controls}_i$ refers to the vector of the seven control variables, the treatment variable $\tau_i$ measures adoption, and $\varepsilon_i$ denotes idiosyncratic noise. The coefficient of interest is $\delta$, which measures the impact of adoption on the increase in power stranding. Our hypothesis is that $\delta<0$, reflecting that, all else equal, data centers with a higher adoption of our solution face a milder increase in power stranding during the deployment period.
$$y_i=\beta_0+\bbeta^\top\texttt{Controls}_i+\delta\tau_i+\varepsilon_i$$

\Cref{tab:regression} reports the regression results in an increasingly controlled environment. Results show that adoption has a negative impact on power stranding, and that the effect is statistically significant at the 5\% level. This finding is robust and consistent across all six model specifications. These results confirm the aggregated population-based averages, indicating that the algorithm does, in fact, have a moderating impact on power stranding. In terms of magnitude, the average treatment effect is estimated between $-2.99$ to $-3.58$, meaning that full adoption of our algorithmic solution---resulting in a shift from 0 to 1 in the adoption rate---would mitigate power stranding during the 11-month deployment period by 3 percentage points in the average data center.

\begin{table}[h!]
    \footnotesize
    \renewcommand{\arraystretch}{1}
    \caption{OLS regression estimates, with six controlled specifications.}
    \label{tab:regression}
    \begin{tabular}{
        S[table-column-width=5cm,table-text-alignment=left]
        *{6}{S[table-format = -1.5,digit-group-size=5]}
    }
        \toprule
    	& {(1)} & {(2)} & {(3)} & {(4)} & {(5)} & {(6)} 
        \\
        \midrule
        \texttt{adoption} 
        & -0.0304\sym{**}    & -0.0299\sym{**}    & -0.0324\sym{**}    & -0.0320\sym{**}    & -0.0358\sym{**}    & -0.0358\sym{**}    \\
        &  (0.0135)         &  (0.0134)         &  (0.0140)         &  (0.0144)         &  (0.0140)         &  (0.0143)         \\
    	{Demand (\% of capacity)}
        &                   & 0.0434            & 0.0871            & 0.1365            & 0.0184\sym{**}     & 0.0184\sym{**}     \\
        &                   &  (0.0371)         &  (0.0849)         &  (0.0884)         &  (0.0888)         &  (0.0905)         \\
    	{Initial power stranding (\% of capacity)}
        &                   &                   & 0.0984            & 0.0719            & 0.0996            & 0.0996            \\
        &                   &                   &  (0.1195)         &  (0.1185)         &  (0.1155)         &  (0.1171)         \\
    	{Initial utilization (\% of capacity)}
        &                   &                   & 0.0374            & 0.0482            & 0.0925            & 0.0925            \\
        &                   &                   &  (0.0592)         &  (0.0596)         &  (0.0618)         &  (0.0626)         \\
    	{IT capacity}
        &                   &                   &                   & -0.0263           & -0.0091           & -0.0091           \\
        &                   &                   &                   &  (0.0229)         &  (0.0238)         &  (0.0245)         \\
    	{Rooms}
        &                   &                   &                   & -0.0024           & -0.0029           & -0.0029           \\
        &                   &                   &                   &  (0.0026)         &  (0.0025)         &  (0.0026)         \\
    	{Flex architecture}
        &                   &                   &                   &                   & -0.0177\sym{*}     & -0.0177\sym{*}     \\
        &                   &                   &                   &                   &  (0.0090)         &  (0.0091)         \\
    	{Location-US}
        &                   &                   &                   &                   &                   & -0.00003          \\
        &                   &                   &                   &                   &                   &  (0.0096)         \\
        \midrule
        {Observations}
        &  {49}      &  {49}      &  {49}      &  {49}      &  {49}      &  {49}      \\
        {Adjusted $R^2$}
        &  0.079            &  0.086            &  0.060            &  0.091            &  0.149            &  0.128            \\
    \bottomrule
    \end{tabular}
    \begin{tablenotes}
	\item \sym{*}, \sym{**}, and \sym{***} indicate significance levels of 10\%, 5\%, and 1\%. Standard deviations in parentheses.
    \end{tablenotes}
    \vspace{-12pt}
\end{table}

\subsubsection*{Robustness and discussion.}

We use propensity score matching (PSM) to corroborate our OLS regression estimates by matching high-adoption data centers to low-adoption ones with similar characteristics in terms of the control variables (see~\ref{app:PSM}). Together, econometric results establish that power stranding increases more slowly within data centers with high adoption than low adoption, while controlling for potential confounders. Over an 11-month period, this can translate into a difference of 3 percentage points between zero-adoption and full-adoption data centers (OLS estimates) and of 1--2 percentage points between low- and high-adoption data centers (PSM estimates). At the scale of Microsoft's cloud computing operations, a percentage-point increase in power utilization represents savings on the order of hundreds of millions of dollars. Moreover, better power utilization can yield environmental benefits by delaying building and utilizing new data centers. Given the number and size of Microsoft's data centers \citep{numberDCs,sizeDC,sizeDC2} and the embodied carbon per square foot of a data center \citep{embodiedCarbon}, a percentage-point increase in power utilization can translate into an order of tens to hundreds of thousands of tons of CO$_2$ equivalents.
\section{Conclusion}

This paper addresses the rack placement problem in data center operations. We formulated an integer optimization problem to maximize utilization under space, cooling, power and redundancy constraints. To solve it, we proposed an online sampling optimization (OSO) algorithm as an easily-implementable and generalizable approach in multi-stage stochastic optimization. The algorithm relies on a single-sample or a small-sample approximation of uncertainty along with online re-optimization; thus, it solves a deterministic approximation or a two-stage stochastic optimization at each iteration. Theoretical results established performance guarantees of single-sample OSO. In particular, in canonical online resource allocation, OSO achieves a multiplicative loss that scales with the number of resources $d$ in $\calO(\sqrt{\log d})$, with the time horizon $T$ in $\calO(\sqrt{\log T})$ and with resource capacities $B$ in $\calO(1/\sqrt{B})$. We also showed that single-sample OSO can yield unbounded improvements as compared to mean-based resolving heuristics. We corroborated these insights with computational results, suggesting that OSO can return high-quality solutions in manageable computational times for a range of online optimization problems, outperforming benchmarks.

We packaged the optimization model and algorithm into a dedicated decision-support software tool to deploy it across Microsoft's data centers. Thanks to iterative model improvements performed in close collaboration with data center managers, our solution was increasingly adopted in practice. Using post-deployment data, we conducted econometric analyses to identify the impact of our solution in practice. Results suggest that adoption of our solution has a positive and statistically significant impact on data center performance, resulting in a decrease in power stranding by 1--3 percentage points. These energy efficiency improvements can translate into very large financial and environmental benefits at the scale of Microsoft's cloud computing operations.

These positive results also motivate future research in online resource allocation and cloud supply chains. Methodologically, the OSO algorithm could be augmented with probabilistic allocations and thresholding rules, which have been successful in certainty-equivalent resolving heuristics, and be compared to distribution-free dual mirror descent approaches in online resource allocation. Other opportunities involve characterizing performance guarantees of the small-sample OSO algorithms, and augmenting it with other stochastic programming techniques such as progressive hedging \citep{rockafellar1991scenarios} or two-stage decision rules \citep{bodur2022two}. Practically, the rack placement model could be integrated into the optimization of upstream data center design and downstream virtual machine management. At a time when cloud computing is growing into a major component of modern supply chains, this paper contributes methodologies, theoretical guarantees, and empirical evidence toward the optimization of data center operations.

\ACKNOWLEDGMENT{This work was partially supported by the MIT Center for Transportation and Logistics UPS PhD Fellowship.}

\bibliographystyle{informs2014} 
\bibliography{ref}

\newpage
\ECSwitch

\ECHead{Online Rack Placement in Large-Scale Data Centers\\Electronic Companion}

\section{Multi-stage Stochastic Integer Programming (MSSIP)}
\label{app:MSSIP}

This paper considers multi-stage stochastic (mixed-)integer programs with a separable objective function as the sum of per-period functions $f^t\left(\bx^t, \bxi^{1:t}\right)$; additive constraints over the decision variables (\Cref{eq:mssip_feasibleset}), and uncertain parameters following an exogenous, time-independent and history-independent distribution $\calD$.

\subsection{Benchmark algorithms}
\label{app:MSSIP_algs}

We compare the OSO algorithm to the following benchmarks to solve \Cref{eq:mssip}:
\begin{itemize}
	\item[--] \emph{Certainty-Equivalent (CE) resolving heuristic}. The CE algorithm proceeds as single-sample OSO, except that it replaces uncertain parameters by their mean. Since our uncertainty is i.i.d. over time, this results in a single sample path equal to the mean $\overline{\bxi}^{t+1:T} := (\overline{\bxi}, \dots, \overline{\bxi})$. Again, the solution is re-optimized dynamically (Algorithm~\ref{alg:CE}).

\begin{algorithm}[h!]
    \caption{Certainty-Equivalent (CE) algorithm.}\label{alg:CE}
    \begin{algorithmic} 
    \item Repeat, for $t=1,\dots,T$:
    \begin{itemize}
    \item[] \textbf{Observe:} Observe realization $\bxi^t$.
    \item[] \textbf{Compute mean:} Compute the mean ${ \overline{\bxi}_{t+1:T}} = (\overline{\bxi}, \dots, \overline{\bxi})$ of the distribution $\calD$.
    \item[] \textbf{Optimize:} Solve the following problem; store optimal solution $(\widetilde{\bx}^t, \widetilde{\bx}^{t+1}, \dots, \widetilde{\bx}^T)$:
        \begin{subequations}
        \label{eq:mssip_ce}
        \begin{alignat}{2}
            \min \quad 
            & f^t\left(\bx^t, \bxi^{1:t}\right)
            + \sum_{\tau = t+1}^T 
                f^{\tau} \left( 
                    \bx^\tau, 
                    \left(\bxi^{1:t}, \overline{\bxi}^{t+1:\tau}\right)
                \right)
                + \Psi(\bx^t)
            \\
            \st \quad 
            & \bx^t \in \calF_t\left(\overline{\bx}^{1:t-1}, \bxi^{1:t}\right) \\
            & \bx^\tau \in \calF_{\tau} \left( 
                \left(\overline{\bx}^{1:t-1}, \bx^{t:\tau-1}\right), 
                \left(\bxi^{1:t}, \overline{\bxi}^{t+1:\tau}\right) \right) 
            &\quad& \forall \ \tau \in \{t+1, \dots, T\} 
        \end{alignat}
        \end{subequations}
    \item[] \textbf{Implement:} Implement $\overline{\bx}^t = \widetilde{\bx}^t$, discarding $(\widetilde{\bx}^{t+1}, \dots, \widetilde{\bx}^T)$.
    \end{itemize}
\end{algorithmic}
\end{algorithm}

	\item[--] \emph{Myopic benchmark (Myo)}. This benchmark optimizes the immediate decision only without anticipating future uncertainty realizations, future decisions and future costs.

\begin{algorithm}[h!]
    \caption{Myopic (Myo) benchmark.}\label{alg:Myopic}
    \begin{algorithmic} 
    \item Repeat, for $t=1,\dots,T$:
    \begin{itemize}
    \item[] \textbf{Observe:} Observe realization $\bxi^t$.
    \item[] \textbf{Optimize:} Solve the following problem; store optimal solution $\widetilde\bx^t$:
        \begin{subequations}
        \label{eq:mssip_myo}
        \begin{alignat}{2}
            \min \quad 
            & f^t\left(\bx^t, \bxi^{1:t}\right)
            + \Psi(\bx^t)
            \\
            \st \quad 
            & \bx^t \in \calF_t\left(\overline{\bx}^{1:t-1}, \bxi^{1:t}\right)
        \end{alignat}
        \end{subequations}
    \item[] \textbf{Implement:} Implement $\overline{\bx}^t = \widetilde\bx^t$.
    \end{itemize}
\end{algorithmic}
\end{algorithm}

	\item[--] \emph{Hindsight-optimal benchmark (HO)}. This benchmark solves a deterministic optimization problem assuming full knowledge of uncertain parameters.

\begin{algorithm}[h!]
    \caption{Hindsight-optimal (HO) benchmark.}\label{alg:hindsight}
    \begin{algorithmic} 
    \item \textbf{Observe:} Observe the true realizations ${\bxi^1, \dots, \bxi^T}$.
    \item[] \textbf{Optimize:} Solve the following problem; store optimal solution $(\widetilde\bx^1, \dots, \widetilde\bx^T)$:
    \begin{subequations}
    \label{eq:mssip_ho}
    \begin{alignat}{2}
        \min \quad 
        & \sum_{t \in \calT} f^t\left(\bx^t, \bxi^{1:t}\right)
        \\
        \st \quad
        & \bx^t \in \calF_t\left(\bx^{1:t-1}, \bxi^{1:t}\right)
        &\quad& \forall \ t \in \{1, \dots, T\}
    \end{alignat}
    \end{subequations}
    \item Repeat, for $t=1,\dots,T$:
    \begin{itemize}
    \item[] \textbf{Implement:} Implement $\overline{\bx}^t = \widetilde\bx^t$.
    \end{itemize}
\end{algorithmic}
\end{algorithm}

\end{itemize}
\section{Online Resource Allocation}
\label{app:resourcealloc}

\subsection{Problem Statement and MSSIP Formulation}
\label{app:mssip_resourcealloc}

We first recall the definition of our online resource allocation problem:

\begin{definition}[Online resource allocation]
    Items arrive one at a time, indexed by $t = \{1, \dots, T\}$. There are $m$ supply nodes and $d$ resources, denoted by $\calJ$ and $\calK$ respectively. Each resource $k$ has capacity $b_k$. Each item is assigned to at most one supply node $j \in \calJ$; the assignment of item $i$ to supply node $j$ yields a reward $r^t_j$ and consumes $A^t_{jk}$ units of resource $k$. For item $t$, we can define the vector of actions $\bx^t = \{x^t_j\}_{j \in \calJ}$, the vector of rewards $\br^t = \{ r^t_j \}_{j \in \calJ}$ and the resource consumption matrix $\bA^t = \{ A^t_{jk} \}_{j \in \calJ, k \in \calK}$. For each item $t$, the unknown parameters $ \bXi^t = (\br^t, \bA^t)$ are i.i.d. according to distribution $\calD$ (with realizations $\bxi^t$). Assignments are irrevocable, and the decision-maker wishes to maximize total assignment reward subject to resource constraints.
\end{definition}

We denote by $\Delta_I = \Set{\bx \in \{0, 1\}^m | \sum_{j \in \calJ} x_j \leq 1}$ the action space at time $t$, which is a discrete simplex. Denoting the vector of resources by $\bb \in \mathbb{R}^d$, the offline problem can be written as:
\begin{subequations}
\begin{alignat}{2}
    \OPT \ = \ 
    \max \quad 
    & \sum_{t=1}^T \br^t \cdot \bx^t
    \\
    \text{s.t.} \quad 
    & \sum_{t=1}^T (\bA^t)^\top \bx^t \leq \bb
    \\
    & \bx^t \in \Delta_I 
    &\quad& \forall \ t \in \{1, \dots, T\}
\end{alignat}
\end{subequations}

Correspondingly, we can write the multistage stochastic program as follows. The per-period objective functions and feasible sets are:
\begin{alignat}{2}
    \label{eq:mssip_resourcealloc_objective}
    f^t(\bx^t, \bxi^t)
    & = \br^t \cdot \bx^t
    \\
    \label{eq:mssip_resourcealloc_feasibleset}
    \calF_t(\bx^{1:t-1}, \bxi^{1:t})
    & = \Set{
        \bx^t \in \Delta_I
        | 
        \sum_{\tau=1}^{t-1} (\bA^\tau)^\top \bx^\tau 
        + (\bA^t)^\top \bx^t 
        \leq \bb
    }
\end{alignat}
The multi-stage stochastic program can be expressed as follows:
\begin{alignat}{2}
\label{eq:mssip_resource_allocation}
    \mathbb{E}_{\bxi^1}
    \biggl[
        \max_{\bx^1 \in \calF_1(\bxi^{1})} \biggl\{ 
            f^1(\bx^1, \bxi^1)
            & + 
            \mathbb{E}_{\bxi^2}
            \biggl[
                \max_{\bx^2 \in \calF_2(\bx^{1}, \bxi^{1:2})} \biggl\{ 
                    f^2(\bx^2, \bxi^2)
                    + \dots
                    \\
                    & + \condE{\bxi^{T}}{
                        \max_{\bx^T \in \calF_T(\bx^{1:T-1}, \bxi^{1:T})} 
                        \Bigl\{ f^T(\bx^T, \bxi^T)\Bigr\}
                    } 
                \dots
                \biggr\}
            \biggr]
        \biggr\} 
    \biggr]
    \nonumber
\end{alignat}

The single-sample OSO algorithm (``OSO'' henceforth) for the online resource allocation problem is given in \Cref{alg:OSO1_resourcealloc}. The algorithm relies on one sample path at each iteration $t$, denoted by $\widetilde{\bxi}^{t+1:T}_t = (\widetilde{\bxi}^{t+1}_t, \dots, \widetilde{\bxi}^{T}_t)$. By construction, it returns a feasible solution. We then proceed to prove optimality guarantees in \Cref{thm:resourceallocation}.

\begin{algorithm}[h!]
    \caption{Single-sample OSO algorithm for the online resource allocation. problem}\label{alg:OSO1_resourcealloc}
    
    \begin{algorithmic} 
    \item Repeat, for $t = 1, \dots, T$:
    \begin{itemize}
    \item[] \textbf{Observe:} Observe realization $\bxi^t = (\br^t, \bA^t)$.
    \item[] \textbf{Sample:} Choose $\textit{one}$ sample path $\widetilde{\bxi}^{t+1:T}_t = (\widetilde{\bxi}^{t+1}_t, \dots, \widetilde{\bxi}^{T}_t)$ from distribution $\calD$.
    \item[] \textbf{Optimize:} Solve the following problem; store optimal solution $(\widetilde{\bx}^t, \widetilde{\bx}^{t+1}, \dots, \widetilde{\bx}^T)$:
        \begin{subequations}
        \label{eq:OSO1_resourcealloc}
        \begin{alignat}{2}
            \max \quad 
            & \br^t \cdot \bx^t
            + \sum_{\tau = t+1}^{T} \widetilde{\br}^\tau_t \cdot \bx^\tau
            \\
            \text{s.t.} \quad 
            & \sum_{\tau = 1}^{t-1} (\bA^\tau)^\top \overline{\bx}^\tau
            + (\bA^t)^\top \bx^t
            + \sum_{\tau = t+1}^T (\widetilde{\bA}^\tau_t)^\top \bx^\tau
            \leq \bb
            \\
            & \bx^\tau \in \Delta_I
            &\quad& \forall \ t \in \{t, \dots, T\}
        \end{alignat}
        \end{subequations}
    \item[] \textbf{Implement:} Implement $\overline{\bx}^t = \widetilde{\bx}^t$, discarding $(\widetilde{\bx}^{t+1}, \dots, \widetilde{\bx}^T)$.
    \end{itemize}
\end{algorithmic}
\end{algorithm}

\subsection{Proof of \Cref{thm:resourceallocation}}
\label{app:resourcealloc_proofs}

\subsubsection{Preliminaries}\hfill

To formally state our guarantees, we need the definition of an \emph{equivariant} solver:
\begin{definition}
    An integer program solver is \emph{equivariant} if, when we permute the items, the solution is permuted the same way: if it returns $(\bx^1, \dots, \bx^T)$ as an optimal solution to 
    $$
    \max_{\bx^1, \dots, \bx^T \in \Delta_I} \Set{ 
        \sum_{t=1}^T \br^t \cdot \bx^t
        | 
        \sum_{t=1}^T (\bA^t)^\top \bx^t \leq \bb
    },
    $$
    any permutation $\pi$ of $\{1, \dots, T\}$ gives $( \bx^{\pi(1)}, \dots, \bx^{\pi(T)} )$ as an optimal solution to:
    $$
    \max_{\bx^1, \dots, \bx^T \in \Delta_I} \Set{ 
        \sum_{t=1}^T \br^{\pi(t)} \cdot \bx^{\pi(t)}
        | 
        \sum_{t=1}^T (\bA^{\pi(t)})^\top \bx^{\pi(t)} \leq \bb
    }.
    $$
\end{definition}
Any solver can be made equivariant by sorting items according to a pre-specified order (e.g., such that the tuplets $(\br^t, \bA^t)$ are in lexicographic order) and applying the inverse permutation.

We proceed to prove \Cref{thm:resourceallocation} as long as the algorithm uses an equivariant solver. We consider a discrete distribution $\calD$; the general case follows from standard approximation arguments.

Without loss of generality, we assume that $A^t_{jk} \in [0, 1]$ for all $t, j, k$ and $b_k = B$ for all $k$. Otherwise, this can be obtained by rescaling the rows. Indeed, let $B$ denotes the smallest resource capacity normalized to resource requirements, that is:
\begin{equation*}
    B
    =
    \min_{k\in\calK} \left \{ \frac{b_k}{\max_{t\in\calT,j\in\calJ} A^t_{jk}} \right\}
\end{equation*}

Then, we can rewrite the constraint
$$\sum_{t \in \calT}\sum_{j \in \calJ} A^t_{jk} x^t_{j} \leq b_k$$
as:
$$\sum_{t \in \calT}\sum_{j \in \calJ} \widetilde{A}^t_{jk} x^t_{j} \leq B,$$
with
$$
    \widetilde{A}^t_{jk}
    =
    \frac{B}{b_k}A^t_{jk}
    \leq
    \frac{A^t_{jk}}{\max_{t\in\calT,j\in\calJ} A^t_{jk}}
    \leq 1,
    \quad 
    \ \forall \ t \in \calT, 
    \ j \in \calJ,
    \ k \in \calK.
$$

By assumption, $B \geq 1024 \cdot \log\left(\frac{2 d T}{\e}\right) \cdot \frac{\log^3 (1/\e)}{\e^2}.$

Let us perform the change of variables $\overline{\e} = \frac{\e}{8 \log^{1.5} (1/\e)}$. We note three properties:
\begin{itemize}
    \item[--] Since $8 \e \log^{1.5}(1/\e) \leq 1 < 2dT$ for all $\e \leq 0.001$:
    \begin{alignat}{3}
        \nonumber
        &\quad& 
        2dT & \geq 8 \e \log^{1.5}(1/\e)
        = \frac{\e^2}{\overline{\e}}
        &\quad& \text{(by definition of $\overline{\e}$)}
        \\
        \nonumber
        \iff 
        &\quad& 
        \left(\frac{2dT}{\e}\right)^2
        & \geq \frac{2dT}{\overline{\e}}
        \\
        \label{eq:eps_bar1}
        \iff 
        &\quad& 
        2 \cdot \log \left(\frac{2dT}{\e}\right)
        & \geq \log \left(\frac{2dT}{\overline{\e}}\right)
    \end{alignat}
    \item[--] For all $\e \leq 0.0001$:
    \begin{alignat}{3}
        \nonumber
        &\quad& 
        \log \left( 
            \frac{16 \log^{1.5}(1/\e)}{\e}
        \right)
        & \leq \log^{1.5}(1/\e)
        \\
        \nonumber
        \iff 
        &\quad& 
        \log (2 / \overline{\e})
        & \leq \log^{1.5}( 1 / \e)
        &\quad& \text{(by definition of $\overline{\e}$)}
        \\
        \label{eq:eps_bar2}
        \iff 
        &\quad& 
        8 \overline{\e} \log (2 / \overline{\e})
        & \leq 8 \overline{\e} \log^{1.5}( 1 / \e) 
        = \e 
        &\quad& \text{(by definition of $\overline{\e}$)}
    \end{alignat}
    \item[--] By definition of $\overline{\e}$, $B\geq 16 \cdot \log\left(\frac{2 d T}{\e}\right) \cdot \frac{1}{\overline{\e}^2}$, hence, per \Cref{eq:eps_bar1}, $B \geq 8 \cdot \log\left(\frac{2 d T}{\overline{\e}}\right) \cdot \frac{1}{\overline{\e}^2}$.
\end{itemize}

\subsubsection{\Cref{lemma:sampleOpt}: offline optimum scales with time and budget.}\hfill

\begin{definition}
    Let $\OPT(s, \bb')$ be the optimum of the subproblem which considers the first $s$ items and a resource capacity $\bb' \in \mathbb{R}^d$:
    \begin{align}
        \label{eq:samplePack}
        \OPT(s, \bb') \ = \ 
        \max_{\bx^1, \dots, \bx^s \in \Delta_I} \Set{ 
            \sum_{t=1}^s \br^t \cdot \bx^t
            | 
            \sum_{t=1}^s (\bA^t)^\top \bx^t \leq \bb'
        },
    \end{align}
\end{definition}

Note that $\OPT(s, \bb')$ is a random variable and that $\OPT(T,B \bone)$ is equal to the original problem (where $\bone$ denotes the $d$-dimensional vector of ones). \cite{GM-MOR16} prove that $\OPT(s, \bb')$ scales with the fraction of timesteps $\frac{s}{T}$ and with the smallest fraction of budget available $\min_{k} \frac{b'_k}{B}$ in the more restrictive case where $\bA^t$ is a $1 \times d$ vector (i.e. there is only one supply node). We extend this result in \Cref{lemma:sampleOpt} in the case where $\bA^t$ is a $m \times d$ matrix, namely $\E{\OPT(s, \bb')} \approx \min\left\{\frac{s}{T}, \min_k \frac{b'_k}{B}\right\} \cdot \E{\OPT}$.

\begin{lemma} 
    \label{lemma:sampleOpt}
    Let $\zeta = \min \left\{ \frac{s}{T}, \min_k \frac{b'_k}{B} \right\}$. If $s \geq \overline{\e}^2 T$ and $\min_k b'_k \geq \overline{\e}^2 B$, then:
    \begin{equation}
        \E{\OPT(s, \bb')} 
        \geq \left[
            \zeta \left(
                1 - \overline{\e} \sqrt{\frac{1}{\zeta}} 
            \right) 
            - \frac{1 + \overline{\e}}{T}
        \right]
        \cdot \E{\OPT}
    \end{equation}
\end{lemma}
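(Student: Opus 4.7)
The plan is to lower-bound $\E{\OPT(s,\bb')}$ by exhibiting a feasible candidate for the sub-problem whose expected reward meets the claimed bound. The candidate is obtained by Bernoulli sub-sampling, at rate $q\in(0,1]$, the assignments from an equivariant offline optimum of the full $T$-item problem. Intuitively, if the optimum uses on average $B/T$ of each resource per step, keeping a $q$-fraction of the first $s$ items spends roughly $q(s/T)B$ per resource while earning $q(s/T)\E{\OPT}$ in expected reward; tuning $q$ so that $q(s/T)\approx\zeta$ therefore matches the available budget $\bb'$ and recovers a $\zeta$-fraction of $\E{\OPT}$.

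Concretely, let $(\bx^{*1},\dots,\bx^{*T})$ be the offline optimum produced by an equivariant solver, and define $Y^t_k:=\sum_{j\in\calJ}A^t_{jk}x^{*t}_j$. Because the items $\bxi^t$ are i.i.d.\ and the solver is equivariant, the tuples $(\bxi^t,\bx^{*t})$ are exchangeable across $t$, so $\E{\br^t\cdot\bx^{*t}}=\E{\OPT}/T$ and $\E{Y^t_k}\le B/T$. Define the candidate by $\widehat{\bx}^t=B_t\,\bx^{*t}$ for $t\le s$, where the $B_t$ are i.i.d.\ Bernoulli$(q)$ drawn independently of the data; linearity then gives expected reward $q(s/T)\E{\OPT}$ and expected resource-$k$ consumption at most $q(s/T)B$.

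The crux is to show that this candidate satisfies $\sum_{t\le s}B_t Y^t_k\le b'_k$ for every $k$ with high probability, which enables choosing $q$ as large as $\zeta(1-o(1))$. Two independent sources of deviation must be tamed: first, $\sum_{t\le s}Y^t_k$ concentrates around $(s/T)\sum_{t\le T}Y^t_k\le(s/T)B$ via a Hoeffding--Serfling / sampling-without-replacement argument on the random-permutation view of the first $s$ items; and second, the Bernoulli thinning $\sum_{t\le s}B_t Y^t_k$, conditional on $\bxi^{1:T}$, concentrates around $q\sum_{t\le s}Y^t_k$ by a Chernoff/Bernstein bound on independent $[0,1]$-bounded random variables. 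Combining the two inequalities and union-bounding over the $d$ resources yields, with probability at least $1-\overline{\e}$, the uniform bound $\sum_{t\le s}B_t Y^t_k\le q(s/T)B+\calO(\sqrt{\zeta B\log(dT/\overline{\e})})$. Choosing $q(s/T)=\zeta(1-\eta)$ with $\eta=c\sqrt{\log(dT/\overline{\e})/(\zeta B)}$ for an absolute constant $c$ drives this below $\zeta B\le\min_k b'_k$; the hypotheses $\zeta\ge\overline{\e}^2$ (which follows from $s\ge\overline{\e}^2 T$ and $\min_k b'_k\ge\overline{\e}^2 B$) combined with the global assumption $B\ge 8\log(2dT/\overline{\e})/\overline{\e}^2$ guarantee $q\in(0,1]$ and $\eta\le\overline{\e}/\sqrt{\zeta}$. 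On the $\overline{\e}$-probability failure event, the candidate is patched up by dropping the one or two last items that would break the budget, at an expected reward cost of order $\E{\OPT}/T$; combining the main multiplicative contribution $\zeta(1-\eta)\E{\OPT}$ with this additive correction yields the claimed lower bound $[\zeta(1-\overline{\e}\sqrt{1/\zeta})-(1+\overline{\e})/T]\E{\OPT}$.

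The main technical obstacle is precisely this concentration step, because the consumptions $Y^t_k$ are \emph{not} independent across $t$---the offline optimum $\bx^*$ is computed jointly from all $T$ items. Exchangeability controls first moments but not higher ones, so the independence required for a Chernoff-type bound must be manufactured by the extra Bernoulli sampling layer, while the concentration of $\sum_{t\le s}Y^t_k$ itself has to be argued separately via the random permutation induced by exchangeability. Matching the resulting two-stage error to the precise form $\overline{\e}\sqrt{1/\zeta}$ and folding the failure mass into the additive $(1+\overline{\e})/T$ term is where the hypotheses on $B$, $s$, and $\min_k b'_k$ are used tightly, and is the only place where the sample-complexity constants in the statement appear.
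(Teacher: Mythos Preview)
Your overall strategy---restrict the equivariant offline optimum to a fraction of the items and argue feasibility for the sub-problem via concentration---matches the paper's. But the paper takes a cleaner route, and your proposal has a genuine gap in the failure accounting.

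\textbf{The Bernoulli layer is unnecessary.} The paper does not thin; it simply truncates to the first $\widetilde s$ items, where $\widetilde s$ is the integer in $\bigl(\tfrac{T\widetilde B}{B}-1,\tfrac{T\widetilde B}{B}\bigr]$ with $\widetilde B=B'(1-\overline{\e}\sqrt{B/B'})$ and $B'=\min_k b'_k$, and applies a single Bernstein-type inequality for exchangeable sequences (sampling without replacement) to get $\sum_{t\le\widetilde s}Y^t_k\le B'$ with probability at least $1-\overline{\e}/(dT)$ per resource. Your two-stage argument (first concentrate $\sum_{t\le s}Y^t_k$, then Bernoulli-thin by $q$) reaches the same place with an extra layer of randomness. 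Your remark that ``exchangeability controls first moments but not higher ones, so the independence required for a Chernoff-type bound must be manufactured by the extra Bernoulli sampling layer'' is a misconception: Hoeffding--Serfling and its Bernstein variants give full sub-Gaussian/sub-exponential tails for exchangeable partial sums, which is precisely what the paper uses, so no manufactured independence is needed.

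\textbf{The failure handling is a real gap.} Your claim that on the $\overline{\e}$-probability bad event the candidate can be ``patched up by dropping the one or two last items that would break the budget, at an expected reward cost of order $\E{\OPT}/T$'' is not justified: when the concentration bound fails, the overshoot is uncontrolled, so arbitrarily many items may have to be removed. And with failure probability $\overline{\e}$, the expected reward lost on the bad event is of order $\overline{\e}\,\E{\OPT}$, which cannot be absorbed into the $(1+\overline{\e})/T$ additive term. The paper handles this differently: it targets failure probability $\overline{\e}/T$ (not $\overline{\e}$) after the union bound over $d$ resources; on the bad event it uses only $\OPT(s,\bb')\ge 0$ and bounds the lost reward by $\E{\OPT}$ (valid because the truncated candidate is always feasible for the full problem, hence its reward is at most $\OPT$, and conditioning on the multiset of realizations makes $\OPT$ deterministic so that the bad event decouples). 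This yields exactly the $\tfrac{\overline{\e}}{T}\E{\OPT}$ loss; the remaining $\tfrac{1}{T}$ in the $(1+\overline{\e})/T$ term comes from rounding $\widetilde s$ to an integer.
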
 

\begin{proof}{Proof of \Cref{lemma:sampleOpt}.}
Without loss of generality, all coordinates of $\bb'$ are identical, so $\bb' = B' \bone$ and $B' = \min_k b'_k$, and $\frac{B'}{B} = \min_k \frac{b'_k}{B} \leq \frac{s}{T}$. This can be obtained by rescaling the rows.

Let $\bx^{\star} = (\bx^{\star,1}, \dots, \bx^{\star,T})$ be an optimal solution to the offline problem $\OPT(T, B \bone)$ by an equivariant solver. Also, let $\mset$ be the \emph{set} of the realizations $\{\bxi^1, \dots, \bxi^T\}$ of the problem. The definition of $\mset$ ignores the order, so conditioning on $\mset$ leaves the order of $\{\bxi^1, \dots, \bxi^T\}$ uniformly random; in particular, conditioned on $\mset$, the sequence of random variables $\br^1, \dots, \br^T$ is exchangeable, so the distribution of $(\br^{\pi(1)}, \dots, \br^{\pi(T)})$ is the same for every permutation $\pi$ of $\{1, \dots, T\}$. Due to the equivariance of the solution $\bx^{\star}$, the distribution of $ \br^t \cdot \bx^{\star,t}$ is the same for all $t \in \{1, \dots, T\}$, conditioned on $\mset$; in particular, at time $t$, the contribution to the optimal solution is
\begin{align}
    \label{eq:valueEx}
    \Emid{\br^t \cdot \bx^{\star,t}}{\mset} 
    \ = \
    \frac{1}{T} \cdot 
    \Emid{\sum_{\tau = 1}^T \br^\tau \cdot \bx^{\star,\tau}}{\mset} 
    \ = \ 
    \frac{\Emid{\OPT}{\mset}}{T}.  
\end{align}

Informally, the proof of the lemma relies on the observation that, for $\widetilde{s}$ slightly smaller than $\zeta T$, the solution truncated to its first $\widetilde{s}$ elements is a feasible solution with high probability to the problem given in \Cref{eq:samplePack}, and yields an expected value $\widetilde{s}\frac{\E{\OPT}}{T} \approx \zeta \E{\OPT}$. This will give that $\OPT(s,\bb')$ is larger than but close to $\zeta \E{\OPT}$. Let us formalize these arguments.

Let $\widetilde{B} = B' \left(1 - \overline{\e} \sqrt{B/B'}\right)$, and fix $\widetilde{s}$ to the integer in the interval $\left(\frac{T \widetilde{B}}{B}-1, \frac{T \widetilde{B}}{B}\right]$. In particular, since $\widetilde{B} \leq B' \leq \frac{s B}{T}$, we have that $\widetilde{s} \leq s$. We first show that the solution $\widehat{\bx} = (\bx^{\star,1}, \dots, \bx^{\star,\widetilde{s}}, \bo, \dots, \bo) \in \Delta_I^{m}$ is feasible with high probability for \Cref{eq:samplePack}. Again, conditioned on $\mset$, the sequence of random variables $\bA^1, \dots, \bA^T$ is exchangeable.

Due to the equivariance of $\bx^\star$, it follows that for each resource $k$, the sequence of random variables $
    \big\{ 
    \, (\bA^1)^\top \bx^{\star,1}, 
    \ \dots, 
    \ (\bA^T)^\top \bx^{\star,T}
\big\}$ is also exchangeable. From the feasibility of $\bx^\star$ for $\OPT(T, B \bone)$, $\sum_{t=1}^T (\bA^t)^\top \bx^{\star,t} \leq B \bone$ in every scenario. From the concentration inequality for exchangeable sequences (\Cref{cor:exBernstein} in \ref{app:concentration}), we obtain the following inequality for each resource $k$ (using $\tau = \overline{\e} \sqrt{B B'}$ and $M = B$):
\begin{align}
    \nonumber
	\Pmid{
        \sum_{t=1}^T \sum_{j \in \calJ} 
        A^t_{jk} \widehat{x}^t_j 
        \geq B'
    }{\mset}
    & = \Pmid{
        \sum_{t=1}^T \sum_{j \in \calJ} 
        A^t_{jk} \widehat{x}^t_j 
        \geq \widetilde{B} + \overline{\e} \sqrt{BB'}
    }{\mset}
    \\
    \nonumber
    & \leq \Pmid{
        \sum_{t=1}^T \sum_{j \in \calJ} 
        A^t_{jk} \widehat{x}^t_j 
        \geq 
        \frac{\widetilde{s} B}{T} + \overline{\e} \sqrt{B B'}
    }{\mset}
    \\
    \label{eq:sampleOpt}
	& \leq
    2 \exp{
        - \min\left\{ 
            \frac{\overline{\e}^2 B' T}{8 \widetilde{s}}, 
            \ \frac{\overline{\e} \sqrt{B B'}}{2} 
        \right\} 
    }.
\end{align}

To upper bound the right-hand side, we use $B \geq \frac{8}{\overline{\e}^2} \log \left(\frac{2dT}{\overline{\e}}\right)$, $\widetilde B\leq B'$ and $\widetilde{s}\leq \frac{T \widetilde{B}}{B}$ to obtain:
\begin{equation}
    \frac{\overline{\e}^2 B' T}{8 \widetilde{s}} 
    \geq \frac{\overline{\e}^2 B' B}{8 \widetilde{B}} 
    \geq \frac{\overline{\e}^2 B}{8} 
    \geq \log \left(\frac{2dT}{\overline{\e}}\right).
\end{equation}

Moreover, the assumption that $B' \geq \overline{\e}^2 B$ implies:
\begin{equation}
    \frac{\overline{\e} \sqrt{B B'}}{2} 
    \geq \frac{\overline{\e}^2 B}{2} 
    \geq 4 \log \left(\frac{2dT}{\overline{\e}}\right).
\end{equation}

Thus, the solution violates the resource $k$ constraint of $(\OPT(s, b'))$ with probability at most $\frac{\overline{\e}}{dT}$:
\begin{equation}
	\forall \ k \in \calK: 
    \quad 
    \Pmid{
        \sum_{t=1}^T \sum_{j \in \calJ} A^t_{jk} \widehat{x}^t_j \geq B'
    }{\mset} 
    \, \leq \, 
    \frac{\overline{\e}}{dT}.
\end{equation}

Taking a union bound over all $d$ constraints, the solution is feasible with high probability:
\begin{equation}
	\Pmid{
        \sum_{t=1}^T (\bA^t)^\top \widehat{\bx}^t 
        \leq \bb'
    }{\mset} 
    \, \geq \, 
    1 - \frac{\overline{\e}}{T}.
\end{equation}

Let $G$ be the good event that this feasibility condition holds (and $G^c$ be its complement). Under this event, $\OPT(s, \bb')$ is at least equal to $\sum_{t=1}^{\widetilde{s}} \br^t \cdot \widehat{\bx}^t$. We obtain:
\begin{align}
    \nonumber
    \Emid{\OPT(s, \bb')}{\mset} 
    &
    \, \geq \, 
    \Emid{
        \sum_{t=1}^{\widetilde{s}} 
        \br^t \cdot \widehat{\bx}^t
    }{G \textrm{ and } \mset } \cdot \Pmid{G}{\mset} 
    \\
    \nonumber
    &
    \, = \, 
    \Emid{
        \sum_{t=1}^{\widetilde{s}} 
        \br^t \cdot \widehat{\bx}^t
    }{\mset} 
    - \Emid{ 
        \sum_{t=1}^{\widetilde{s}} 
        \br^t \cdot \widehat{\bx}^t
    }{G^c \textrm{ and } \mset} 
    \cdot \Pmid{G^c}{\mset} 
    \\
    \label{eq:condSet}
    &
    \, \geq \, 
    \Emid{
        \sum_{t=1}^{\widetilde{s}} 
        \br^t \cdot \widehat{\bx}^t
    }{\mset} 
    - \Emid{\OPT}{G^c \textrm{ and } \mset} \cdot \Pmid{G^c}{\mset}, 
\end{align}
where the last inequality stems from the feasibility of $\widehat{\bx}$ in the full problem.
            
To bound the first term, recall that $\br^t \cdot \widehat{\bx}^t = \br^t \cdot \bx^{\star,t}$ for $t \leq \tilde{s}$, so 
\Cref{eq:valueEx} implies $\Emid{\br^t \cdot \widehat{\bx}^t}{\mset} = \displaystyle \frac{\Emid{\OPT}{\mset}}{T}$. 
For the second term, notice that conditioning on $\mset$ fixes the items in the instance, hence the optimum $\OPT$, so further conditioning on $G^c$ has no effect. Thus:
\begin{align}
    \Emid{\OPT(s, \bb')}{\mset} 
    &
    \, \geq \, 
    \frac{\widetilde{s}}{T} \, \Emid{\OPT}{\mset} \,-\, \frac{\overline{\e}}{T}\,\Emid{\OPT}{\mset} \\
    &
    \, \geq \, 
    \left[\frac{\widetilde{B}}{B} - \frac{1}{T} - \frac{\overline{\e}}{T}\right] \cdot \Emid{\OPT}{\mset}\\
    &
    \, = \, 
    \left[\frac{B'}{B} \left(1 - \overline{\e} \sqrt{\frac{B}{B'}} \right) - \frac{1+\overline{\e}}{T}\right] \cdot \Emid{\OPT}{\mset} \\
    &
    \, \geq \, 
    \left[\zeta \left(1 - \overline{\e} \sqrt{\frac{1}{\zeta}} \right) - \frac{1+\overline{\e}}{T}\right] \cdot \Emid{\OPT}{\mset},
\end{align} 
where the last inequality uses that $\zeta \geq \overline{\e}^2$, and the function $x \mapsto x (1 - \overline{\e} \sqrt{1/x})$ is increasing for $x \geq \overline{\e}^2$. Taking expectation with respect to $\mset$ concludes the proof of the lemma. \myqed
\end{proof}

\subsubsection{\Cref{lemma:S,lemma:concS}: Resource consumption scales with time}\hfill

We next show that resources consumption scales with time and the resource capacity. This is, by time $t$, the OSO algorithm utilizes approximately a fraction $\frac{t}{T}$ of the resource budget for each resource. We formalize this via the following definitions:

\begin{definition}[Occupancy vector]
    The occupancy vector of resources consumed by OSO at time $t$ is defined as follows, where $\overline{\bx}^t$ is the decision implemented by OSO at time $t$.
    \begin{equation}
        \bS^t = \sum_{\tau=1}^t (\bA^\tau)^\top \overline{\bx}^\tau \in \mathbb{R}^{d}
    \end{equation}
\end{definition}

\begin{definition}
    We denote by $\calH_t$ the $\sigma$-algebra generated by the history of the OSO algorithm up to time $t$, i.e., the demand realization $\bxi^\tau = (\br^\tau, \bA^\tau)$ and the sample path $\widetilde{\bxi}^{\tau+1:T}_\tau$ for $\tau \in \{1, \dots, t\}$. We can condition on $\calH_t$, giving the expectation operator $\condE{\calH_t}{\cdot}$ (denoted by $\condE{t}{\cdot}$ for simplicity). 
\end{definition}

We show that by time $t$, the algorithm utilizes approximately a fraction $\frac{t}{T}$ of the overall budget, i.e., $\bS^t$ is less than but close to $\frac{t}{T} B \bone$ componentwise. In fact, we prove the following stronger result, which will be later used to show that $\bS^t$ is concentrated around its mean:
\begin{lemma} 
    \label{lemma:S}
    For every $t \geq 1$, we have 
    \begin{align}
        \label{eq:expSeq0}
        \condE{t-1}{\bS^t} \leq \left(1 - \frac{1}{T-t+1}\right) \bS^{t-1} + \frac{B \bone}{T-t+1}.   
    \end{align}
    In particular, $\E{\bS^t} \leq \frac{t}{T} B \bone.$
\end{lemma}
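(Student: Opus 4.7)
The core of the argument is an exchangeability/equivariance observation: at time $t$, the inner problem solved by OSO sees $T-t+1$ items, namely the real item $\bxi^t$ and the sampled items $\widetilde{\bxi}^{t+1}_t,\dots,\widetilde{\bxi}^T_t$, which are i.i.d.\ from $\calD$ conditional on the history $\calH_{t-1}$. The feasible set and objective of the inner problem are symmetric in these items, so under an equivariant solver each one contributes, in expectation, the same share of the total resource consumption. Since that total consumption is bounded componentwise by the residual capacity $B\bone - \bS^{t-1}$, the implemented increment is bounded, in expectation, by $\tfrac{1}{T-t+1}(B\bone - \bS^{t-1})$, which is precisely the stated recursion.

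More concretely, I would first decompose $\bS^t = \bS^{t-1} + (\bA^t)^\top \overline{\bx}^t$ and note that $\bS^{t-1}$ is $\calH_{t-1}$-measurable. Letting $(\widetilde{\bx}^t,\widetilde{\bx}^{t+1},\dots,\widetilde{\bx}^T)$ denote the inner optimizer (with $\overline{\bx}^t=\widetilde{\bx}^t$), the capacity constraint of~\eqref{eq:OSO1_resourcealloc} gives componentwise
\begin{equation*}
(\bA^t)^\top \widetilde{\bx}^t + \sum_{\tau=t+1}^T (\widetilde{\bA}^\tau_t)^\top \widetilde{\bx}^\tau \;\leq\; B\bone - \bS^{t-1}.
\end{equation*}
Applying $\condE{t-1}{\cdot}$ and invoking the symmetry argument sketched below then yields
\begin{equation*}
\condE{t-1}{(\bA^t)^\top \widetilde{\bx}^t} \;=\; \tfrac{1}{T-t+1}\,\condE{t-1}{(\bA^t)^\top \widetilde{\bx}^t + \sum_{\tau=t+1}^T (\widetilde{\bA}^\tau_t)^\top \widetilde{\bx}^\tau} \;\leq\; \tfrac{B\bone - \bS^{t-1}}{T-t+1},
\end{equation*}
which, after adding $\bS^{t-1}$ to both sides, is the claimed inequality.

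For the symmetry step, set $Z_t := \bxi^t$ and $Z_\tau := \widetilde{\bxi}^\tau_t$ for $\tau>t$: conditional on $\calH_{t-1}$, these are i.i.d.\ from $\calD$, so the joint law of $(Z_t,\dots,Z_T)$ is invariant under any transposition of two indices. By equivariance of the solver (which we may assume WLOG via lexicographic tie-breaking), the same transposition applied to the input permutes the solution tuple $(\widetilde{\bx}^t,\dots,\widetilde{\bx}^T)$ identically. Hence the random pair $\bigl((\bA^t)^\top \widetilde{\bx}^t,\,(\widetilde{\bA}^\sigma_t)^\top \widetilde{\bx}^\sigma\bigr)$ is exchangeable for every $\sigma\in\{t+1,\dots,T\}$, so their conditional expectations coincide; averaging over the $T-t+1$ indices produces the $\tfrac{1}{T-t+1}$ factor.

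The second statement then follows by induction on $t$, with base case $\bS^0=\bo$: taking full expectation of the recursion and plugging in the inductive hypothesis $\E{\bS^{t-1}}\leq \tfrac{t-1}{T}B\bone$ gives
\begin{equation*}
\E{\bS^t} \;\leq\; \Bigl(1-\tfrac{1}{T-t+1}\Bigr)\tfrac{t-1}{T}B\bone + \tfrac{1}{T-t+1}B\bone \;=\; \tfrac{t}{T}B\bone.
\end{equation*}
The main obstacle is the exchangeability step: one must be careful that the permutation acts \emph{jointly} on the inputs and on the solver's outputs, which is precisely where the equivariance of the solver---and the fact that $\bS^{t-1}$ is held fixed by conditioning on $\calH_{t-1}$---do the work. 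Once that is handled, the recursion and its unrolling are routine.
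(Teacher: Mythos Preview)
Your proposal is correct and essentially identical to the paper's proof: both use feasibility of the inner OSO solution to bound total residual consumption by $B\bone-\bS^{t-1}$, invoke i.i.d.\ inputs plus solver equivariance to conclude the $T-t+1$ increment vectors are exchangeable (hence have equal conditional expectation), and then unroll the resulting recursion by induction. Your explicit discussion of why the permutation must act jointly on inputs and outputs, and why conditioning on $\calH_{t-1}$ fixes $\bS^{t-1}$, matches the paper's reasoning exactly.
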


\begin{proof}{Proof of \Cref{lemma:S}.}
By construction, OSO yields a feasible solution :
\begin{align}
    \label{eq:expSeq1}
    \bS^{t-1} 
    + (\bA^t)^\top \overline{\bx}^t 
    + (\widetilde{\bA}^{t+1}_{t})^\top \widetilde{\bx}^{t+1}
    + \dots
    + (\widetilde{\bA}^{T
}_{t})^\top \widetilde{\bx}^T
    \leq B \bone.   
\end{align}
Even conditioned on the history up to time $t-1$, the matrices $\bA^t, \widetilde{\bA}^{t+1}_{t}, \dots, \widetilde{\bA}^{T}_{t}$ are sampled i.i.d., and the solution $(\overline{\bx}^t = \widetilde{\bx}^t, \widetilde{\bx}^{t+1}, \dots, \widetilde{\bx}^T)$ is by assumption equivariant. Therefore, conditioned on the history up to time $t-1$, the sequence of vectors $
    \big\{ 
        \,
        (\bA^t)^\top \overline{\bx}^t, 
        \ (\widetilde{\bA}^{t+1}_t)^\top \widetilde{\bx}^{t+1}, 
        \ \dots, 
        \ (\widetilde{\bA}^{T}_t)^\top \widetilde{\bx}^{T}
        \,
    \big\}
$
is again an exchangeable sequence of random variables with equal expectation, conditioned on $\calH_{t-1}$: $\condE{t-1}{ (\bA^t)^\top \overline{\bx}^t } = \condE{t-1}{ (\widetilde{\bA}^{\tau}_t)^\top \widetilde{\bx}^{\tau} }$ for all $\tau > t$. From \Cref{eq:expSeq1}, we therefore have:
\begin{equation}
    \bS^{t-1} + (T - t + 1) \cdot \condE{t-1}{ (\bA^t)^\top \overline{\bx}^t } \leq B \bone.  
\end{equation}

We obtain inequality in \Cref{eq:expSeq0}:
\begin{align}
    \condE{t-1}{ \bS^t } 
    & = 
    \bS^{t-1} + \condE{t-1}{ (\bA^t)^\top \overline{\bx}^t }
    \\
    & \leq \bS^{t-1} + \frac{ B\bone - \bS^{t-1}}{T-t+1}
    \\
    & = \left(1 - \frac{1}{T-t+1}\right) \bS^{t-1} + \frac{B \bone}{T-t+1}.
\end{align}
   	
We now prove that $\E{\bS^t} \leq \frac{t}{T} B \bone$ by induction on $t$. It clearly holds for $t=0$ (where we define $\bS^0 = \bo$ by convention). Assuming that it holds for $t-1$, we obtain:
\begin{align}
    \E{\bS^t} 
    = \E{\condE{t-1}{\bS^t}} 
    &\leq \left(1 - \frac{1}{T - t+1}\right) \E{\bS^{t-1}} +  \frac{B \bone}{T - t+1}\\
    &\leq \left(1 - \frac{1}{T - t+1}\right) \frac{t-1}{T} \cdot B \bone + \frac{B \bone}{T - t+1}\\
    &= \frac{t}{T} B \bone,
\end{align} 
where the first inequality follows from inequality in \Cref{eq:expSeq0} and the next inequality follows by the induction hypothesis. This concludes the proof.  \myqed
\end{proof}
   
While this lemma guarantees that the occupation vector $\E{\bS^t}$ is at most $\frac{t}{T} B \bone$ in expectation, we need high-probability guarantees. We derive them in the next lemma.
   
\begin{lemma} \label{lemma:concS}
    For each $t \geq \frac{\overline{\e}^2 T}{4}$, we have:
    \begin{equation}
        \Prob{
            \bS^t 
            \leq \left(1+ \overline{\e} \sqrt{\frac{T}{t}}\right) \frac{t}{T} B \bone
        }
        \geq 1 - \frac{\overline{\e}}{2T}.
    \end{equation}
\end{lemma}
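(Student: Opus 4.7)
\textbf{Proof proposal for \Cref{lemma:concS}.} The plan is to deduce the high-probability bound coordinate-by-coordinate from the affine supermartingale structure already established in \Cref{lemma:S}, and then apply a union bound over the $d$ resources. Fix an arbitrary resource $k \in \calK$ and consider the scalar process $\{S^t_k\}_{t \geq 0}$. Two structural facts are available. First, the increment $S^t_k - S^{t-1}_k = \sum_{j \in \calJ} A^t_{jk}\, \overline{x}^t_j$ lies in $[0,1]$ by the normalization $A^t_{jk} \in [0,1]$ and the fact that $\overline{\bx}^t \in \Delta_I$. Second, projecting \Cref{eq:expSeq0} onto coordinate $k$ yields the affine recursion
\begin{equation*}
    \condE{t-1}{S^t_k} \;\leq\; \left(1 - \tfrac{1}{T-t+1}\right) S^{t-1}_k + \tfrac{B}{T-t+1},
\end{equation*}
whose fixed-point trajectory is exactly $\frac{t}{T}B$. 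Thus $S^t_k$ is a bounded-increment process whose conditional mean drifts toward $\frac{t}{T}B$ according to an affine contraction.

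This is precisely the regime covered by \Cref{thm:centeringConc} on affine stochastic processes in~\ref{app:concentration}. I would apply that inequality to $S^t_k$ with drift parameters $\lambda_\tau = 1/(T-\tau+1)$ and target level $B$, requesting an upper-tail deviation of $\tau_\star = \overline{\e} B \sqrt{t/T}$ from the mean $\mu = \frac{t}{T}B$. The payoff is a Bernstein-type bound of the form $\exp\!\big(-\min\{\tau_\star^2/(c\mu),\;\tau_\star/c'\}\big)$ for absolute constants $c,c'$. A short calculation gives $\tau_\star^2/\mu = \overline{\e}^2 B$, and under the hypothesis $t \geq \overline{\e}^2 T/4$, one also has $\tau_\star \geq \overline{\e}^2 B / 2$. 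Plugging in the standing lower bound $B \geq 8\log(2dT/\overline{\e})/\overline{\e}^2$ (a consequence of the assumption on $B$ derived in the preliminaries), both branches of the minimum dominate $\log(2dT/\overline{\e})$, so the per-coordinate deviation probability is at most $\overline{\e}/(2dT)$. A union bound over the $d$ resources then produces the desired $1 - \overline{\e}/(2T)$ guarantee and completes the proof.

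The main obstacle is the non-independence of the increments: the matrices $\bA^t$ are i.i.d., but the actions $\overline{\bx}^t$ are chosen by OSO after observing the history, so $S^t_k - S^{t-1}_k$ is not even conditionally mean-zero in a martingale sense, and the conditional mean of $S^t_k$ depends on the previous trajectory in an affine way rather than being a fixed constant. Off-the-shelf Azuma or Bernstein inequalities therefore do not apply directly. The work is offloaded to the new concentration inequality for affine processes (\Cref{thm:centeringConc}); verifying its hypotheses and correctly matching its statement to the drift coefficients $\lambda_\tau = 1/(T-\tau+1)$ produced by \Cref{lemma:S} is the delicate step. Once that reduction is performed, the parameter tuning above is essentially mechanical, relying only on the lower bounds on $B$ and $t$ provided by the hypotheses.
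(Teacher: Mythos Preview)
Your proposal is correct and follows essentially the same route as the paper: fix a coordinate, invoke \Cref{thm:centeringConc} with the affine drift coefficients $\alpha_\tau = 1-\tfrac{1}{T-\tau+1}$, $\beta_\tau = \tfrac{B}{T-\tau+1}$ supplied by \Cref{lemma:S} (so that $Y_t=\tfrac{t}{T}B$), tune the deviation parameter, plug in $B \geq \tfrac{8}{\overline{\e}^2}\log(2dT/\overline{\e})$, and union-bound over $d$. The one minor discrepancy is the anticipated shape of the bound: \Cref{thm:centeringConc} actually delivers a Chernoff-type tail $\exp(-\gamma^2 Y_t)$ rather than a two-branch Bernstein form, and the hypothesis $t\geq \overline{\e}^2 T/4$ is used in the paper to ensure the required $\gamma=\tfrac{\overline{\e}}{2}\sqrt{T/t}\leq 1$, not to control a second branch---but your arithmetic on the first branch already gives exactly what is needed.
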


\begin{proof}{Proof of \Cref{lemma:concS}.}
    We show that for every component $k \in \{1, \dots, d\}$, $S^t_k$ is concentrated around its expected value, namely that $S^t_k \leq \Big( 1 + \overline{\e} \sqrt{\frac{T}{t}} \Big) \frac{t}{T} B$ with probability at least $1-\frac{\overline{\e}}{2dT}$. However, since the increments $ (\bA^1)^\top \overline{\bx}^1, \dots, (\bA^t)^\top \overline{\bx}^t$ are not independent (e.g., $\overline{\bx}^\tau$ depends on $\bA^1, \dots, \bA^\tau$), we cannot use standard concentration inequalities. Instead, we rely on a concentration result for ``self-centering'' sequences, given in \Cref{thm:centeringConc} (see \ref{app:concentration}).

    Let us denote $\alpha_t = 1- \frac{1}{T-t+1}$ and $\beta_t = \frac{B}{T-t+1}$, and let $Y_t$ be the solution to the recurrence relation $y_t = \alpha_t y_{t-1} + \beta_t$ and $y_0 = 0$. From \Cref{thm:centeringConc}, we obtain, for any $\gamma \in (0,1]$:
    \begin{align}
        \label{eq:concS}
        \Prob{
            S^t_k \geq (1+2\gamma) Y_t 
        } \leq \exp{-\gamma^2 Y_t}.    
    \end{align}
    
    As in \Cref{lemma:S}, we prove by induction on $t$ that $Y_t = \frac{t}{T} B$: this holds for $t=0$, and then:
    \begin{align}
        Y_t = \left(1 - \frac{1}{T-t+1}\right) Y_{t-1} + \frac{B}{T-t+1} = \left(1  - \frac{1}{T-t+1}\right) \frac{t-1}{T} B + \frac{B}{T-t+1} = \frac{t}{T} B.
    \end{align}
    From \Cref{eq:concS} with $\gamma = \frac{\overline{\e}}{2} \sqrt{T/t}$ ($\gamma\leq1$ by assumption), we derive:
    \begin{align}
        \Prob{
            S^t_k \geq \left(1+\overline{\e} \sqrt{\frac{T}{t}} \right) \frac{t}{T} B 
        } 
        \leq \exp{-\frac{\overline{\e}^2}{4}B}.
    \end{align}
    Recall that, by assumption, $B \geq \frac{{8}}{\overline{\e}^2} \log\left(\frac{2dT}{\overline{\e}}\right)\geq\frac{{4}}{\overline{\e}^2} \log\left(\frac{2dT}{\overline{\e}}\right)$. This yields:
    \begin{align}
        \Prob{
            S^t_k \geq \left(1+\overline{\e} \sqrt{\frac{T}{t}} \right) \frac{t}{T} B 
        } 
        \leq \exp{- \log (2dT/\overline{\e})} = \frac{\overline{\e}}{2d T}.
    \end{align}
    Taking a union bound over all $d$ coordinates, we obtain:
    \begin{align}
        \Prob{
            \bS^t \leq \left(1+\overline{\e} \sqrt{\frac{T}{t}} \right) \frac{t}{T} B\bone
        } \geq 1-\frac{\overline{\e}}{2T}.
    \end{align}
    This concludes the proof of the lemma.  \myqed
\end{proof}

\subsubsection{\Cref{lemma:val}: Reward of OSO algorithm bounded below.}\hfill 

We now bound the reward of the OSO algorithm at time $t$, i.e. $\br^t \cdot \overline{\bx}^t$. From \Cref{lemma:concS}, there is about $\left(1 - \frac{t-1}{T}\right) B$ of the budget left in each of the constraints, and so the remaining value should be $\left(1-\frac{t-1}{T}\right) \E{\OPT}$. 
Moreover, since there are $T-t+1$ variables in the remaining problem, we expect that $\overline{\bx}^t$ accrues a value of $\frac{1}{T-t+1} \left(1 - \frac{t-1}{T}\right) \E{\OPT}$, or $\frac{1}{T} \E{\OPT}$. 
This is formalized below. 

\begin{lemma} 
    \label{lemma:val}
    For every $t$ satisfying $\overline{\e}^2 T \leq t \leq (1 - 2 \overline{\e}) T$  we have:
    \begin{equation}
        \E{ \br^t \cdot \overline{\bx}^t} 
        \geq \left[
            1 
            - \overline{\e} \sqrt{\frac{T}{(1-\overline{\e}) T - t}} 
            - 2\overline{\e} \frac{\sqrt{Tt}}{T-t} 
            - \frac{1+\overline{\e}}{T - t + 1}
        \right] \frac{\E{\OPT}}{T}.
    \end{equation}
\end{lemma}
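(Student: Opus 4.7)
The plan is to relate $\condE{t-1}{\br^t \cdot \overline{\bx}^t}$ to the value of the subproblem solved by OSO at time $t$ via exchangeability, and then invoke \Cref{lemma:sampleOpt,lemma:concS} to lower-bound this value and control the residual budget, respectively.

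First, let $V_t$ be the optimal value of the OSO subproblem (\Cref{eq:OSO1_resourcealloc}) at time $t$, i.e., the offline optimum over the $T-t+1$ items $\bxi^t, \widetilde{\bxi}^{t+1}_t, \ldots, \widetilde{\bxi}^T_t$ with residual budget $\bb - \bS^{t-1}$. Conditional on the history $\calH_{t-1}$, these $T-t+1$ items are i.i.d.\ draws from $\calD$, so by the equivariance of the solver their contributions to $V_t$ are exchangeable. Averaging the $T-t+1$ equal-in-expectation contributions, I obtain
\begin{equation*}
    \condE{t-1}{\br^t \cdot \overline{\bx}^t}
    = \frac{1}{T-t+1}\, \E{V_t \mid \calH_{t-1}}.
\end{equation*}

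Second, conditional on $\calH_{t-1}$, the random variable $V_t$ is equidistributed with $\OPT(T-t+1, \bb - \bS^{t-1})$, so applying \Cref{lemma:sampleOpt} with $s = T-t+1$ and $\bb' = \bb - \bS^{t-1}$ gives
\begin{equation*}
    \E{V_t \mid \calH_{t-1}} \geq \left[\zeta_t \left(1 - \overline{\e}\sqrt{1/\zeta_t}\right) - (1 + \overline{\e})/T\right] \E{\OPT},
\end{equation*}
where $\zeta_t := \min\{(T-t+1)/T,\ \min_k (b_k - S^{t-1}_k)/B\}$. The preconditions ($s \geq \overline{\e}^2 T$ and $\min_k(b_k - S^{t-1}_k) \geq \overline{\e}^2 B$) follow from $t \leq (1-2\overline{\e})T$ together with the concentration event in the next step. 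Then \Cref{lemma:concS} applied at index $t-1$ (valid since $t \geq \overline{\e}^2 T$) supplies a $\calH_{t-1}$-measurable event $G$ with $\Prob{G} \geq 1 - \overline{\e}/(2T)$, on which $\bS^{t-1} \leq (1 + \overline{\e}\sqrt{T/(t-1)})(t-1)B\bone/T$, and hence $\zeta_t \geq \zeta^* := (T-t+1)/T - \overline{\e}\sqrt{(t-1)/T} \geq \overline{\e}$. Since $\zeta \mapsto \zeta - \overline{\e}\sqrt{\zeta}$ is monotone increasing on this range, I can substitute $\zeta^*$ for $\zeta_t$ on $G$; using non-negativity of $\br^t \cdot \overline{\bx}^t$ on $G^c$ then yields
\begin{equation*}
    \E{\br^t \cdot \overline{\bx}^t}
    \geq \Prob{G} \cdot \frac{1}{T-t+1} \left[\zeta^*(1 - \overline{\e}\sqrt{1/\zeta^*}) - (1+\overline{\e})/T\right] \E{\OPT}.
\end{equation*}

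The remaining step is algebraic bookkeeping to match the target form. Dividing $\zeta^*$ by $T-t+1$ recovers the $1/T$ factor together with an error term $\overline{\e}\sqrt{T(t-1)}/(T-t+1)$, which is upper-bounded by $\overline{\e}\sqrt{Tt}/(T-t)$; the bound $\sqrt{\zeta^*} \leq \sqrt{(T-t+1)/T}$ combined with $(1-\overline{\e})T - t \leq T-t+1$ yields the term $\overline{\e}\sqrt{T/((1-\overline{\e})T - t)}$; absorbing the loss factor $1 - \overline{\e}/(2T)$ into the $\overline{\e}\sqrt{Tt}/(T-t)$ term (which is at least $\overline{\e}/\sqrt{T}$ in the relevant range) doubles its coefficient to $2\overline{\e}$; and the $(1+\overline{\e})/(T-t+1)$ term drops out of $(1+\overline{\e})/T$ after the $1/(T-t+1)$ factor is pulled out. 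The main obstacle is this careful bookkeeping of error terms; conceptually, the proof rests on three ingredients: exchangeability via equivariance of the solver (step 1), the offline lower bound of \Cref{lemma:sampleOpt} (step 2), and the high-probability concentration of $\bS^{t-1}$ from \Cref{lemma:concS} (step 3).
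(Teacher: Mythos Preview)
Your approach is essentially identical to the paper's: exchangeability of the OSO subproblem contributions gives $\condE{t-1}{\br^t\cdot\overline{\bx}^t}=\frac{1}{T-t+1}\E{\OPT(T-t+1,B\bone-\bS^{t-1})}$, then \Cref{lemma:concS} controls $\bS^{t-1}$ on a high-probability event, \Cref{lemma:sampleOpt} lower-bounds the residual optimum, and the rest is algebra. One slip in your bookkeeping: to produce the term $\overline{\e}\sqrt{T/((1-\overline{\e})T-t)}$ you need a \emph{lower} bound on $\zeta^*$ (namely $\zeta^*=(T-t+1)/T-\overline{\e}\sqrt{(t-1)/T}\geq ((1-\overline{\e})T-t)/T$, using $\sqrt{(t-1)/T}\leq 1$), not the upper bound $\sqrt{\zeta^*}\leq\sqrt{(T-t+1)/T}$ you wrote---the inequality goes the wrong way for bounding $1/\sqrt{\zeta^*}$.
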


\begin{proof}{Proof of \Cref{lemma:val}.}
    Fix $t$ such that $\overline{\e}^2 T \leq t \leq (1 - 2 \overline{\e})T$, and consider the solution $(\overline{\bx}^t, \widetilde{\bx}^{t+1}, \dots,\widetilde{\bx}_{T})$ obtained by the OSO algorithm at this time. By definition, we have:
    \begin{align}
        \br^t \cdot \overline{\bx}^t
        + \sum_{\tau = t+1}^T \widetilde{\br}^{\tau}_t \cdot \widetilde{\bx}^{\tau}
        = \max_{\bx^t, \dots, \bx^T \in \Delta_I} \Set{
            \br^t \cdot\bx^t
            + \sum_{\tau = t+1}^T \widetilde{\br}^{\tau}_t \cdot \bx^{\tau}
            |
            (\bA^t)^\top \bx^t 
            + \sum_{\tau = t+1}^T (\widetilde{\bA}^\tau_t)^\top \bx^\tau
            \leq 
            B \bone - \bS^{t-1}
        }
    \end{align}
    	
    Conditioning on the history $\calH_{t-1}$ fixes the occupation vector $\bS^{t-1}$, hence the right-hand side of the resource constraint. The expected value of the stochastic program is equal to $\E{\OPT(T - t + 1, B \bone - \bS^{t-1})}$. It comes:
    \begin{align}
        \label{eq:valuePack}
        \condE{t-1}{
            \br^t \cdot \overline{\bx}^t
            + \sum_{\tau = t+1}^T \widetilde{\br}^{\tau}_t \cdot \widetilde{\bx}^{\tau}
        } = \E{\OPT(T - t + 1, B \bone - \bS^{t-1})}.  
    \end{align}
    
    As earlier, conditioned on $\calH_{t-1}$ the random variables $\big\{ \br^t \cdot \overline{\bx}^t, \widetilde{\br}^{t+1}_t \cdot \widetilde{\bx}^{t+1}, \dots, \widetilde{\br}^{T}_t \cdot \widetilde{\bx}^T\}$ form an exchangeable sequence and thus have the same conditional expectations. Thus, all the terms on the left-hand side of \Cref{eq:valuePack} have the same expectation. In particular,
    \begin{align}
        \condE{t-1}{
            \br^t \cdot \overline{\bx}^t
        } 
        = \frac{1}{T - t + 1} \cdot \E{\OPT(T - t + 1, B \bone - \bS^{t-1})}.
    \end{align}
    	
    Now, let $\gamma_t = \overline{\e} \sqrt{\frac{T}{t}}$. 
    From \Cref{lemma:concS}, we know that, with probability at least $1 - \frac{\overline{\e}}{2T}$, we have:
    \begin{equation}
        \bS^{t-1} \leq (1 + \gamma_{t-1}) \frac{t-1}{T} B \bone
        \implies B \bone-\bS^{t-1}\geq\left(1-(1 + \gamma_{t-1}) \frac{t-1}{T}\right)B\bone.
    \end{equation}
    Denote the above good event happening by $G$. When $G$ transpires, letting $\zeta_t = \min\big\{\frac{T-t+1}{T}, 1 - (1 + \gamma_{t-1}) \frac{t-1}{T}\big\} = 1 - (1+ \gamma_{t-1}) \frac{t-1}{T}$, we have from \Cref{lemma:sampleOpt} that:
    \begin{equation}
        \E{\OPT(T - t + 1, B \bone - \bS^{t-1})} \,\geq\, \left[\zeta_t \left(1 - \overline{\e} \sqrt{\frac{1}{\zeta_t}}\right) - \frac{1+\overline{\e}}{T}\right] \cdot \E{\OPT}.
    \end{equation}
    Note that the assumptions in \Cref{lemma:sampleOpt} are met because $t \leq (1-2\overline{\e})T$ and $\overline{\e} \in (0,1]$.
    Then:
    \begin{align}
        \nonumber
        \E{\br^t \cdot \overline{\bx}^t} 
        & \geq 
        \Emid{ \br^t \cdot \overline{\bx}^t }{G} \cdot \Prob{G} 
        \\
        \nonumber
        & \geq
        \left(1 - \frac{\overline{\e}}{2T}\right) 
        \left[
            \zeta_t \left(1 - \overline{\e} \sqrt{\frac{1}{\zeta_t}}\right) 
            - \frac{1+\overline{\e}}{T}
        \right] \frac{\E{\OPT}}{T - t + 1} 
        \\
        \label{eq:valPacking1}
        & \geq 
        \left[
            \left(1 - \frac{\overline{\e}}{T}\right) 
            \left(1 - \overline{\e} \sqrt{\frac{1}{\zeta_t}} \right) 
            \frac{\zeta_t}{T - t + 1} 
            - \frac{1+\overline{\e}}{T (T - t + 1)}
        \right]\E{\OPT}.
    \end{align}
        
    To further lower bound the right-hand side, using the definitions of $\zeta_t$ and $\gamma_{t-1}$ we have
    \begin{align}
        \frac{\zeta_t}{T - t + 1}
        = 
        \frac{(T-t+1) - \gamma_{t-1} (t-1)}{T (T - t +1)}
        = 
        \frac{1}{T}\left(1 - \overline{\e} \frac{\sqrt{T} \sqrt{t-1}}{T-t+1}\right)
        \geq 
        \frac{1}{T} \left(1- \overline{\e} \frac{\sqrt{T} \sqrt{t}}{T-t}\right)
    \end{align}
    
    Substituting into \Cref{eq:valPacking1} and using $(1-a)(1-b) \geq 1-a - b$ for $a,b \geq 0$, we get:
    \begin{align}
        \E{ \br^t \cdot \overline{\bx}^t} 
        & \geq \left[
            \left(1 - \frac{\overline{\e}}{T}\right) 
            \left(1 - \overline{\e} \sqrt{\frac{1}{\zeta_t}} \right) 
            \left(1 - \overline{\e} \frac{\sqrt{T} \sqrt{t}}{T-t}\right) 
            - \frac{1 + \overline{\e}}{T - t + 1}
        \right] \frac{\E{\OPT}}{T}
        \nonumber
        \\
        & \geq \left[
            1 - \frac{\overline{\e}}{T} -  \overline{\e} \sqrt{\frac{1}{\zeta_t}} 
            - \overline{\e} \frac{\sqrt{T} \sqrt{t}}{T-t} 
            - \frac{1+\overline{\e}}{T - t + 1}
        \right] \frac{\E{\OPT}}{T}
        \nonumber
        \\
        & \geq \left[
            1 - \overline{\e} \sqrt{\frac{1}{\zeta_t}} 
            - 2\overline{\e} \frac{\sqrt{T} \sqrt{t}}{T-t} 
            - \frac{1+\overline{\e}}{T - t + 1}
        \right] \frac{\E{\OPT}}{T}.
    \end{align}
    
    We can complete the lower bound of this right-hand side using 
    \begin{align}
        \zeta_t 
        \,=\, 
        \frac{(T - t + 1)}{T} - \overline{\e} \sqrt{\frac{t-1}{T}} 
        \,\geq\, 
        \frac{(1-\overline{\e}) T - t}{T}
    \end{align}
    We obtain:
    \begin{align}
        \E{\br^t\cdot\overline{\bx}^t} 
        & \geq \left[
            1 - \overline{\e} \sqrt{\frac{T}{(1-\overline{\e}) T - t}} 
            - 2\overline{\e} \frac{\sqrt{T} \sqrt{t}}{T-t} 
            - \frac{1+\overline{\e}}{T - t + 1}
        \right] \frac{\E{\OPT}}{T}.
    \end{align}
    This concludes the proof of the lemma. \myqed
\end{proof}

\subsubsection{Proof of \Cref{thm:resourceallocation}.}\hfill

Let $\ALG$ be the value achieved by the OSO algorithm. From \Cref{lemma:val}, we have:
\begin{align}
    \label{eq:valuePackFinal}
    \E{\ALG}
    & \geq 
    \sum_{t = \overline{\e}^2 T}^{(1-2\overline{\e}) T - 1} 
    \left(
        1 
        - \overline{\e} \sqrt{\frac{T}{(1-\overline{\e}) T - t}} 
        - 2\overline{\e} \frac{\sqrt{T} \sqrt{t}}{T-t} 
        - \frac{1+\overline{\e}}{T-t+1} 
    \right) \frac{\E{\OPT}}{T}. 
\end{align}

Since the function $\sqrt{\frac{T}{(1-\overline{\e}) T - t}}$ is increasing in $t$, we can use an integral to upper bound the sum:
\begin{align}
    \sum_{t = \overline{\e}^2 T}^{(1-2\overline{\e}) T - 1} 
    \sqrt{\frac{T}{(1-\overline{\e}) T - t}}
    & \,\leq\, 
    \int_0^{(1-2\overline{\e}) T} 
    \sqrt{\frac{T}{(1-\overline{\e}) T - t}} \,\d t
    \nonumber
    \\
    & \,=\, 
    \sqrt{T} \int_{\overline{\e}T}^{(1-\overline{\e}) T} \frac{1}{\sqrt{y}} \,\d y 
    \nonumber
    \\
    & \,=\, 
    2\sqrt{T} \left( \sqrt{(1-\overline{\e}) T} - \sqrt{\overline{\e}T} \right)
    \nonumber
    \\
    & \,\leq\, 
    2 T.
\end{align}
	
Similarly, since the function $\frac{\sqrt{t}}{T-t}$ is increasing in $t$, we have:
\begin{align}
    \sum_{t = \overline{\e}^2 T}^{(1-2\overline{\e}) T - 1} 
    \frac{\sqrt{T} \sqrt{t}}{T-t}
    \,=\, 
    \sum_{t = \overline{\e}^2 T}^{(1-2\overline{\e}) T - 1} 
    \frac{\sqrt{t/T}}{1-(t/T)}
    \,\leq\,
    \int_0^{(1-2\overline{\e})T} \frac{\sqrt{t/T}}{1-t/T} \,\d t
    \,=\, 
    T \int_0^{1-2\overline{\e}} \frac{\sqrt{y}}{1-y} \,\d y.
\end{align}

Therefore,
\begin{align}
    \sum_{t = \overline{\e}^2 T}^{(1-2\overline{\e}) T - 1} 
    \frac{\sqrt{T} \sqrt{t}}{T-t}
    & \,\leq\, 
    T \cdot \left[
        - 2 \sqrt{y} 
        + \log \left(\frac{1 + \sqrt{y}}{1-\sqrt{y}} \right) 
    \right] ~\Bigg|_0^{1-2\overline{\e}}
    \nonumber
    \\
    & \,\leq\,
    T \log \left(\frac{1 + \sqrt{1-2\overline{\e}}}{1-\sqrt{1-2\overline{\e}}} \right)
    \,\leq\, 
    T \log \left(\frac{2}{1 - \sqrt{1 - 2 \overline{\e}}} \right)
    \nonumber
    \\
    & \,\leq\, 
    T \log(2/\overline{\e}),
\end{align}
where the last inequality uses the fact that $\sqrt{1 + x} \leq 1 + \frac{x}{2}$ for all $x \in [-1,\infty)$.
    
Finally, the third negative term can be bounded as
\begin{align}
    \sum_{t = \overline{\e}^2 T}^{(1-2\overline{\e}) T - 1} 
    \frac{1 + \overline{\e}}{T - t + 1}
    & 
    \,\leq\, 
    \int_{\overline{\e}^2 T}^{(1-2\overline{\e}) T} 
    \frac{1 + \overline{\e}}{T - t + 1} \,\d t
    \,=\, 
    \int_{1+2 \overline{\e} T }^{1+(1-\overline{\e}^2) T} 
    \frac{1 + \overline{\e}}{y} \,\d y
    \nonumber
    \\
    & 
    \,=\, 
    \left(1+\overline{\e}\right) 
    \log(y) \Bigg|_{1+2 \overline{\e} T }^{1+(1-\overline{\e}^2) T}
    \,\leq\, 
    \left(1+\overline{\e}\right) \log (1/\overline{\e})
\end{align}

Combining these bounds into \Cref{eq:valuePackFinal}, we conclude:
\begin{align}
    \E{\ALG} 
    & 
    \,\geq\, 
    \left(
        1 - 2\overline{\e} 
        - 2 \overline{\e} \log (2/\overline{\e}) 
        - \frac{1+\overline{\e}}{T} \log(1/\overline{\e})
    \right) \E{\OPT} 
    \nonumber
    \\
    &
    \,\geq\, 
    (1 - 8 \overline{\e} \log(2/\overline{\e})) \E{\OPT}
    \nonumber
    \\
    &
    \,\geq\, 
    (1 - \e) \E{\OPT}.
\end{align}
The second inequality uses the assumption that $T \geq \frac{1}{\overline{\e}}$ (otherwise, the facts that $B \geq \frac{1}{\e}$ and the matrices $\bA^t$ have entries in $[0,1]$ make all constraints redundant and the problem becomes trivial). The third inequality comes from \Cref{eq:eps_bar2}. This concludes the proof of \Cref{thm:resourceallocation}.
\myqed

\subsubsection{Concentration Inequalities}\hfill
\label{app:concentration}

In this section, we collect and show concentration inequalities that are used in the proof of \Cref{thm:resourceallocation}. These include a concentration inequality for exchangeable sequences (\Cref{cor:exBernstein}), and a concentration inequality for affine stochastic processes (\Cref{thm:centeringConc}).

We make use of the following result from \cite{van1997weak}:
\begin{theorem}[Theorem 2.14.19 in \cite{van1997weak}] 
    \label{thm:conc-hyp} 
    Let $A = \{a_1, \dots, a_n\}$ be a set of real numbers in $[0,1]$. 
    Let $S$ be a random subset of $A$ of size $s$ and let $A_S = \sum_{i \in S} a_i$. 
    Setting $\overline{a} = \frac{1}{n} \sum_{i=1}^n a_i$ and $\sigma^2 = \frac{1}{n}\sum_{i=1}^n (a_i - \overline{a})^2$, we have, for every $\tau > 0$:
    \begin{align}
        \Prob{|A_S - s \overline{a}| \geq \tau} 
        \leq 2 \exp{ - \frac{\tau^2}{2 s \sigma^2 + \tau} }.
    \end{align}
\end{theorem}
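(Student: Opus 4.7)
The plan is to reduce sampling without replacement to i.i.d. sampling and then apply a Bernstein-type Chernoff argument. Let $Y_1, \dots, Y_s$ be i.i.d. draws from the uniform distribution on $A = \{a_1, \dots, a_n\}$, so $\E{Y_1} = \bar a$ and $\E{(Y_1 - \bar a)^2} = \sigma^2$. The classical Hoeffding reduction (Hoeffding 1963) asserts that for every convex $\phi \colon \R \to \R$,
\[
    \E{\phi(A_S)} \leq \E{\phi\Bigl(\textstyle\sum_{i=1}^s Y_i\Bigr)}.
\]
Taking $\phi(x) = e^{\lambda x}$ for $\lambda \in \R$ dominates the moment generating function of $A_S - s\bar a$ by that of $\sum_{i=1}^s (Y_i - \bar a)$, so it suffices to prove a Bernstein-type tail bound for the i.i.d. surrogate.

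\textbf{Bernstein bound for the i.i.d. sum.} Since $Y_i - \bar a \in [-1, 1]$ has mean zero and variance $\sigma^2$, a standard calculation yields the one-variable log-MGF bound $\log \E{e^{\lambda(Y_i - \bar a)}} \leq \sigma^2(e^\lambda - 1 - \lambda)$ for $\lambda \geq 0$: write $e^x = 1 + x + x^2 \psi(x)$ with $\psi(x) = (e^x - 1 - x)/x^2$ nondecreasing in $x$, use $\psi(\lambda(Y_i - \bar a)) \leq \psi(\lambda)$ to factor out $\psi(\lambda)$, and then invoke $\E{Y_i - \bar a} = 0$ together with $\E{(Y_i - \bar a)^2} = \sigma^2$. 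By independence, $\log \E{e^{\lambda \sum_i (Y_i - \bar a)}} \leq s \sigma^2 (e^\lambda - 1 - \lambda)$, and Markov's inequality combined with the Hoeffding reduction gives, for every $\lambda \geq 0$,
\[
    \Prob{A_S - s\bar a \geq \tau} \leq e^{-\lambda \tau + s\sigma^2(e^\lambda - 1 - \lambda)}.
\]
Optimizing over $\lambda \geq 0$ produces the Bennett rate $e^{-s \sigma^2 h(\tau/(s \sigma^2))}$ with $h(u) = (1+u)\log(1+u) - u$, and the elementary bound $h(u) \geq u^2/(2 + u)$ for $u \geq 0$ yields $\Prob{A_S - s\bar a \geq \tau} \leq e^{-\tau^2/(2s\sigma^2 + \tau)}$. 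A symmetric argument applied to the collection $\{-a_i\}_{i=1}^n$ handles the lower tail, and the factor $2$ in the final bound comes from a union bound over the two tails.

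\textbf{Main obstacle.} The delicate step is Hoeffding's reduction: intuitively, without-replacement sums are more concentrated than with-replacement sums because of negative association, but making this rigorous requires a swapping argument. The standard route represents a uniformly random size-$s$ subset as the first $s$ coordinates of a uniformly random permutation of $A$, and expresses the with-replacement sum $\sum_i Y_i$ as a mixture over permutations of sums of the form $a_{\pi(1)} + \dots + a_{\pi(s)}$ whose repeats are collapsed by an intermediate conditional expectation; Jensen's inequality then transfers the convex $\phi$ inside that conditional expectation to deliver the claim. Once this lemma is in hand the remaining Chernoff manipulations are routine, and the denominator $2s\sigma^2 + \tau$ comes from the slightly loose inequality $h(u) \geq u^2/(2+u)$ used in place of the sharp Bennett ratio $h(u) \geq u^2/(2 + 2u/3)$.
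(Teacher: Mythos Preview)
The paper does not prove this statement: it is quoted verbatim as Theorem~2.14.19 of \cite{van1997weak} and used as a black box in the proof of Corollary~\ref{cor:exBernstein}. Your proposal is therefore not competing against any argument in the paper. That said, the route you outline---Hoeffding's convex-function reduction from without-replacement to with-replacement sampling, followed by the one-variable Bernstein MGF bound $\log \E{e^{\lambda(Y_i-\bar a)}}\le \sigma^2(e^\lambda-1-\lambda)$, Bennett optimization, and the elementary estimate $h(u)\ge u^2/(2+u)$---is exactly the standard proof and is correct as written. The only point worth flagging is that the ``main obstacle'' paragraph gestures at Hoeffding's lemma without actually carrying out the coupling; if you were asked for a complete proof you would need to spell out that step, but as a proof sketch it is sound.
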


A sequence $X_1, \dots,X_n$ of random variables is \emph{exchangeable} if its distribution is permutation invariant, i.e., the distribution of the vector $(X_{\pi(1)}, X_{\pi(2)}, \dots, X_{\pi(n)})$ is the same for all permutations $\pi$ of $\{1, \dots, n\}$. The following result is the main result of this section. 

\begin{corollary}
    \label{cor:exBernstein}
    Let $X_1, \dots, X_n$ be an exchangeable sequence of random variables, i.i.d. according to distribution $\calD$, in the interval $[0,1]$. Assume that $\sum_{i=1}^n X_i \leq M$ with probability 1. Then for every $s \in \{1, \dots, n\}$ and $\tau >0$, we have:
    \begin{align}
    \Prob{ X_1 + \dots + X_s \geq \frac{s M}{n} + \tau } 
    \,\leq\, 
    2 \exp{ 
        -\min\left\{\frac{\tau^2 n}{8 s M}, \frac{\tau}{2}\right\} 
    }.
    \end{align}
\end{corollary}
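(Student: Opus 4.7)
The plan is to reduce the claim to a direct application of \Cref{thm:conc-hyp} by conditioning on the (unordered) multiset of values $\{X_1, \ldots, X_n\}$. Exchangeability implies that, conditional on this multiset, the tuple $(X_1, \ldots, X_n)$ is a uniformly random ordering, so $X_1 + \cdots + X_s$ has the same distribution as $A_S$ in the hypergeometric setup of \Cref{thm:conc-hyp} with $\{a_1, \ldots, a_n\}$ equal to the realized multiset. Since the final bound does not depend on the multiset, proving it in the conditional world immediately yields the unconditional statement.

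With the realization fixed, let $\overline{a}$ and $\sigma^2$ be defined as in \Cref{thm:conc-hyp}. First I would bound the conditional mean: since $\sum_i a_i \leq M$ almost surely, $\overline{a} \leq M/n$, so $s \overline{a} \leq sM/n$. Hence the event $\{X_1 + \cdots + X_s \geq sM/n + \tau\}$ is contained in $\{A_S - s\overline{a} \geq \tau\}$. Next I would bound the conditional variance using $a_i \in [0,1]$: since $a_i^2 \leq a_i$,
\begin{equation*}
    \sigma^2 \;\leq\; \frac{1}{n}\sum_{i=1}^n a_i^2 \;\leq\; \frac{1}{n}\sum_{i=1}^n a_i \;\leq\; \frac{M}{n}.
\end{equation*}
Plugging these into \Cref{thm:conc-hyp} gives
\begin{equation*}
    \Prob{A_S - s\overline{a} \geq \tau} \;\leq\; 2 \exp\!\left(-\frac{\tau^2}{2 s\sigma^2 + \tau}\right) \;\leq\; 2 \exp\!\left(-\frac{\tau^2}{2 sM/n + \tau}\right).
\end{equation*}

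The last step is converting this Bernstein-type denominator into the $\min\{\cdot,\cdot\}$ form. I would split on whether the variance term or the linear term dominates. If $2sM/n \geq \tau$, then the denominator is at most $4sM/n$, giving an exponent of at least $\tau^2 n/(4sM) \geq \tau^2 n/(8sM)$. If $2sM/n < \tau$, the denominator is at most $2\tau$, giving an exponent of at least $\tau/2$. In either case the exponent is at least $\min\{\tau^2 n/(8sM), \tau/2\}$, which is the advertised bound.

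I do not expect a real obstacle here: the reduction via conditioning is standard, and once one sees the trick $a_i^2 \leq a_i$ to turn the sum-bound hypothesis into a variance bound, the rest is elementary casework on the Bernstein denominator. The only thing to be careful about is that \Cref{thm:conc-hyp} is stated for a fixed deterministic set $\{a_1, \ldots, a_n\}$, so one must justify applying it conditionally and then integrating out, which is immediate because the final bound is uniform in the realization.
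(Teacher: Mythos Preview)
Your proposal is correct and follows essentially the same approach as the paper: condition on the realized multiset, apply \Cref{thm:conc-hyp}, bound the variance using the $[0,1]$ support and the sum constraint, convert the Bernstein denominator to $\min$ form, and integrate out. The only cosmetic differences are that your variance bound $\sigma^2 \le \frac{1}{n}\sum a_i^2 \le \frac{1}{n}\sum a_i \le M/n$ (via $a_i^2 \le a_i$) is slightly tighter and cleaner than the paper's $\sigma^2 \le \frac{1}{n}\sum|a_i-\bar a| \le 2\bar a \le 2M/n$, and you do explicit casework where the paper invokes the inequality $\frac{1}{a+b} \ge \frac{1}{2}\min\{1/a,1/b\}$; these are equivalent and both reach the stated bound.
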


\begin{proof}{Proof of \Cref{cor:exBernstein}.}
    We prove the result in the case where the distribution $\calD$ is discrete. The general case following from standard approximation arguments.
        
    Consider a set $A = \{a_1, \dots,a_n\}$ of $n$ values in $[0,1]$, and define $\overline{a} = \frac{1}{n} \sum_{i=1}^n a_i$ and $\sigma^2 = \frac{1}{n} \sum_{i=1}^n (a_i - \overline{a})^2$. Condition on the \emph{set} $\{X_1, \dots, X_n\}$ being equal to $A$, leaving their \emph{order} free. Under this conditioning, $X_1, \dots, X_s$ is just a random subset of size $s$ from $A$, and $\overline{a} = \frac{1}{n} \sum_{i=1}^n X_i \leq \frac{M}{n}$. From \Cref{thm:conc-hyp}, we get:
    \begin{align}
        \Pmid{ 
            X_1 + \dots + X_s \geq \frac{s M}{n} + \tau
        }{
            \{X_1, \dots, X_n\} = A
        } 
        \,\leq\, 
        2 \exp{ -\frac{\tau^2}{2s \sigma^2 +\tau} }. 
        \label{eq:exBernstein}
    \end{align}	
    
    Since the $a_i$'s belong to the interval $[0,1]$, the variance term can be bounded as \begin{equation}
        \sigma^2
        \,=\, 
        \frac{1}{n} \sum_{i=1}^n (a_i - \overline{a})^2 
        \,\leq\,
        \frac{1}{n} \sum_{i=1}^n |a_i - \overline{a}| 
        \,\leq\, 
        \frac{1}{n} \left( 
            \sum_{i=1}^n |a_i| 
            + \sum_{i=1}^n |\overline{a}| 
        \right) 
        \,=\, 2 \overline{a} 
        \,\leq\, \frac{2 M}{n}.
    \end{equation}
    
    Further using the inequality $\frac{1}{a+b} \geq \frac{1}{2} \min\{\frac{1}{a}, \frac{1}{b}\}$ for non-negative $a,b$, we obtain:
    \begin{align}
        \exp{ - \frac{\tau^2}{2s \sigma^2 +\tau} }
        \,\leq\, 
        \exp{ - \frac{\tau^2}{4s M/n +\tau} } 
        \,\leq\, 
        \exp{ - \frac{1}{2} \min \left\{
            \frac{\tau^2n}{4sM} , \tau
        \right\} }.
    \end{align}
    
    Taking the expectation of \Cref{eq:exBernstein} over all possible sets $A$ completes the proof. \myqed
\end{proof}

\begin{theorem}
    \label{thm:centeringConc}
    Consider a sequence $X_1, \dots, X_T$ of (possibly dependent) random variables in $[0,1]$ adapted to a filtration $\calH_1, \dots, \calH_T$. 
    Define the partial sums $S_t = X_1 + \dots + X_t$ for $t \in \{0,1, \dots, T\}$ (with $S_0 = 0$). 
    Furthermore, suppose that there are sequences $\alpha_1, \dots, \alpha_T \in [0,1]$ and $\beta_1, \dots, \beta_T \geq 0$ such that $\Emid{S_t}{\calH_{t-1}} \leq \alpha_t S_{t-1} + \beta_t$ for all $t$. 
    Then for every $\gamma \in (0,1]$, we have:
	\begin{align}
		\Prob{ S_T \geq (1+2\gamma) Y_T } \leq \exp{-\gamma^2 Y_T},
    \end{align}
    where $Y_1, \dots, Y_T$ is the solution to the recursion $y_t = \alpha_t y_{t-1} + \beta_t$ (with $y_0 = 0$).
\end{theorem}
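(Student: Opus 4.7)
The plan is to follow the Chernoff paradigm, with the MGF of $S_t$ controlled by induction on $t$ using the self-centering structure. Concretely, the first goal is to establish that, for every $\lambda \geq 0$ and every $t$,
\[
    \E{e^{\lambda S_t}} \leq \exp\!\bigl((e^\lambda - 1)\, Y_t \bigr).
\]
The base case $t = 0$ is trivial. For the inductive step, I would condition on $\calH_{t-1}$ and apply the standard $[0,1]$-valued MGF bound $\Emid{e^{\lambda X_t}}{\calH_{t-1}} \leq \exp\!\bigl((e^\lambda - 1)\,\Emid{X_t}{\calH_{t-1}}\bigr)$, which follows from the pointwise estimate $e^{\lambda x} \leq 1 + x(e^\lambda - 1)$ on $[0,1]$ together with $1 + y \leq e^y$. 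Using the hypothesis in the form $\Emid{X_t}{\calH_{t-1}} = \Emid{S_t}{\calH_{t-1}} - S_{t-1} \leq \beta_t - (1-\alpha_t) S_{t-1}$ (this quantity is automatically non-negative a.s., since $X_t \geq 0$ forces $(1-\alpha_t)S_{t-1}\leq \beta_t$), this gives
\[
    \E{e^{\lambda S_t}} \leq e^{(e^\lambda - 1)\beta_t} \cdot \E{e^{\mu_t S_{t-1}}}, \qquad \mu_t := \lambda - (e^\lambda - 1)(1 - \alpha_t).
\]

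The case $\mu_t \leq 0$ is easy: $\E{e^{\mu_t S_{t-1}}}\leq 1$ since $S_{t-1} \geq 0$, and the induction closes using $\beta_t \leq Y_t$. The interesting case is $\mu_t \geq 0$, where the inductive hypothesis applied at parameter $\mu_t$ delivers $\E{e^{\mu_t S_{t-1}}} \leq \exp((e^{\mu_t} - 1) Y_{t-1})$, and the remaining task is to verify the algebraic inequality $e^{\mu_t} - 1 \leq \alpha_t (e^\lambda - 1)$, so that the recursion $Y_t = \alpha_t Y_{t-1} + \beta_t$ closes the induction on the exponent. I expect this algebraic step to be the main obstacle. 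Writing $a = e^\lambda - 1$ and $\theta = 1 - \alpha_t \in [0,1]$, it is equivalent to $f(\theta) := (1+a)e^{-a\theta} - 1 - a(1-\theta) \leq 0$ for $\theta \in [0,1]$; the plan is to observe that $f$ is convex (since $f''(\theta) = a^2(1+a)e^{-a\theta} \geq 0$) with $f(0) = 0$ and $f(1) = (1+a)e^{-a} - 1 \leq 0$ (using $e^a \geq 1 + a$), so convexity yields $f(\theta) \leq \theta f(1) \leq 0$ throughout.

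With the MGF bound in hand, the conclusion follows from the exponential Markov inequality and optimization over $\lambda \geq 0$:
\[
    \Prob{S_T \geq (1+2\gamma) Y_T} \leq \exp\!\Bigl(Y_T\bigl[(e^\lambda - 1) - \lambda(1+2\gamma)\bigr]\Bigr),
\]
and setting $\lambda = \log(1 + 2\gamma)$ produces the Bennett-type rate $\exp\!\bigl(-Y_T\bigl[(1+2\gamma)\log(1+2\gamma) - 2\gamma\bigr]\bigr)$. To finish, I would verify the elementary inequality $(1+2\gamma)\log(1+2\gamma) - 2\gamma \geq \gamma^2$ on $(0,1]$ via a calculus check on $g(\gamma) := (1+2\gamma)\log(1+2\gamma) - 2\gamma - \gamma^2$: one computes $g(0) = g'(0) = 0$, $g'(\gamma) = 2\log(1+2\gamma) - 2\gamma$, and $g''(\gamma) = (2-4\gamma)/(1+2\gamma)$, which is positive on $[0,1/2]$ and negative on $[1/2,1]$; since $g'(1) = 2\log 3 - 2 > 0$, the derivative $g'$ remains non-negative on $[0,1]$, hence $g \geq 0$ throughout, yielding $\Prob{S_T \geq (1+2\gamma)Y_T} \leq \exp(-\gamma^2 Y_T)$.
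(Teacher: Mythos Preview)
Your proof is correct and takes a genuinely different route from the paper's argument. The paper works with a \emph{fixed} exponent $\gamma\in(0,1]$ from the start and proves, via a lemma on affine transformations, that $\E{\exp{\gamma f(S_t)}}\le\E{\exp{\gamma f(\alpha_t S_{t-1}+(1+\gamma)\beta_t)}}$ for any affine $f(x)=ax+b$ with $a\in[0,1]$; this uses the cruder estimate $e^x\le 1+x+x^2$ on $[0,1]$. Iterating with $f_t(x)=\alpha_t x+(1+\gamma)\beta_t$ yields $\E{\exp{\gamma S_T}}\le\exp{\gamma(1+\gamma)Y_T}$ directly, and Markov's inequality at this same $\gamma$ gives $\exp{-\gamma^2 Y_T}$ with no optimization and no final calculus check. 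By contrast, you establish the sharper Poisson-type MGF bound $\E{e^{\lambda S_T}}\le\exp{(e^\lambda-1)Y_T}$ for \emph{all} $\lambda\ge0$ via an induction that changes the exponent at each step (applying the hypothesis at $\mu_t$ rather than $\lambda$), with the convexity argument for $e^{\mu_t}-1\le\alpha_t(e^\lambda-1)$ doing the work. Optimizing over $\lambda$ then yields the Bennett rate $(1+2\gamma)\log(1+2\gamma)-2\gamma$, which you finally show dominates $\gamma^2$ on $(0,1]$. Your approach buys a strictly stronger tail bound (the full Bennett inequality) at the cost of two extra algebraic verifications; the paper's approach is more streamlined for the stated target because the affine-composition structure makes the recursion $y_t=\alpha_t y_{t-1}+\beta_t$ appear transparently, and the exponent $\gamma^2$ falls out without any optimization.
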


We make use of a lemma on the moment-generating function of affine transformations of $S_t$.

\begin{lemma}
    \label{lemma:mgf_affine}
    Consider a function $f(x) = ax + b$ with $a \in [0,1]$. Under the assumptions from \Cref{thm:centeringConc}, for all $\gamma \in (0,1]$ we have
    \begin{align}
        \E{ \exp{ \gamma f(S_t) } } 
        \leq 
        \E{ \exp{ \gamma f(\alpha_t S_{t-1} + (1+\gamma) \beta_t) } }.
    \end{align}
\end{lemma}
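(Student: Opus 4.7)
The plan is to condition on $\calH_{t-1}$, bound the one-step MGF of the increment $X_t$ by a Bernstein-type argument, and then match coefficients in $S_{t-1}$ and the constant term so that the bound can be absorbed into $\exp(\gamma f(\alpha_t S_{t-1} + (1+\gamma)\beta_t))$. Writing $f(S_t) = (a S_{t-1} + b) + a X_t$, the first bracket is $\calH_{t-1}$-measurable, so the conditional MGF factors as
\[
\condE{t-1}{e^{\gamma f(S_t)}} \;=\; e^{\gamma(a S_{t-1} + b)} \cdot \condE{t-1}{e^{\gamma a X_t}}.
\]

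For the residual factor I would use two standard inequalities. Since $\gamma a \in [0,1]$ and $X_t \in [0,1]$, convexity of the exponential on the unit interval yields $e^{\gamma a X_t} \leq 1 + (e^{\gamma a} - 1) X_t$. Taking conditional expectation, substituting the hypothesis $\condE{t-1}{X_t} = \condE{t-1}{S_t} - S_{t-1} \leq (\alpha_t - 1) S_{t-1} + \beta_t$ (a nonnegative bound since $X_t \geq 0$), and then applying $1 + y \leq e^y$, gives
\[
\condE{t-1}{e^{\gamma a X_t}} \;\leq\; \exp\bigl((e^{\gamma a} - 1)\bigl[(\alpha_t - 1) S_{t-1} + \beta_t\bigr]\bigr).
\]

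Combining the two exponentials, it remains to verify the deterministic, pointwise inequality that the resulting exponent is dominated by $\gamma f(\alpha_t S_{t-1} + (1+\gamma)\beta_t) = \gamma a \alpha_t S_{t-1} + \gamma a (1+\gamma)\beta_t + \gamma b$ as a function of $S_{t-1}$; taking outer expectations then concludes. Comparing coefficients of $S_{t-1}$ reduces to $\gamma a (1 - \alpha_t) \leq (e^{\gamma a} - 1)(1 - \alpha_t)$, which follows from $\alpha_t \leq 1$ and $e^x \geq 1 + x$. Matching the constants requires $e^{\gamma a} - 1 \leq \gamma a (1 + \gamma)$, which I would obtain from the Taylor-type estimate $e^x \leq 1 + x + x^2$ on $[0,1]$ applied at $x = \gamma a$, together with $a \leq 1$.

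The main obstacle is this last constant-term comparison: it is precisely where the inflation factor $(1+\gamma)$ in the substitution $\beta_t \mapsto (1+\gamma)\beta_t$ is calibrated to absorb the quadratic MGF remainder $(e^{\gamma a} - 1) - \gamma a$. With any smaller inflation the quadratic error would survive, and subsequent iteration in $\Cref{thm:centeringConc}$ (where this lemma is presumably applied recursively in $t$) would accumulate uncontrolled error. Everything else is a routine conditional-expectation computation.
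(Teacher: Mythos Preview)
Your proof is correct and follows essentially the same approach as the paper: condition on $\calH_{t-1}$, bound the one-step MGF of $X_t$ via a quadratic Taylor-type estimate, re-exponentiate with $1+y\le e^y$, and absorb the quadratic remainder into the $(1+\gamma)\beta_t$ inflation. The only cosmetic difference is that the paper applies $e^x\le 1+x+x^2$ pointwise to $e^{\gamma a X_t}$ (then uses $X_t^2\le X_t$ and $a^2\le a$), whereas you first use the chord inequality $e^{\gamma a X_t}\le 1+(e^{\gamma a}-1)X_t$ and defer the estimate $e^{\gamma a}-1\le \gamma a(1+\gamma)$ to the constant-term comparison; these are the same ingredients in a slightly different order.
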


\begin{proof}{Proof of \Cref{lemma:mgf_affine}.}
    Since conditioning on $\calH_{t-1}$ fixes the sum $S_{t-1}$, we observe that:
    \begin{align}
        \E{\exp{\gamma f(S_t) } } 
        = \E{ \exp{\gamma (a S_t + b) } } 
        = \E{
            \exp{\gamma (a S_{t-1} + b) }  
            \cdot \Emid{ \exp{\gamma a X_t } }{ \calH_{t-1} } 
        }.
        \label{eq:aff0}
    \end{align}
    
    We get:
    \begin{alignat}{2}
        \Emid{ \exp{\gamma a X_t } }{ \calH_{t-1} } 
        & \,\leq\, 
        \Emid{ 1 + \gamma a X_t + \gamma^2 a^2 X^2_t }{ \calH_{t-1} } 
        &\quad& \text{(since $e^x \leq 1 + x + x^2$ for $x \in [0,1]$)}
        \nonumber
        \\
        & \,\leq\, 
        \Emid{ 1 + (\gamma + \gamma^2) a X_t }{ \calH_{t-1} } 
        \nonumber
        \\
        & \,=\, 
        1 + (\gamma + \gamma^2) a \cdot \Emid{X_t}{\calH_{t-1}} 
        \nonumber
        \\
        & \,\leq\, 
        \exp{ (\gamma + \gamma^2) a \cdot \Emid{X_t}{\calH_{t-1}} }.
        &\quad& \text{(since $1 + x \leq e^x$)}
        \label{eq:aff1} 
    \end{alignat}
    
    Since $S_t = S_{t-1} + X_t$, the assumption $\Emid{S_t}{\calH_{t-1}} \leq \alpha_t S_{t-1} + \beta_t$ implies $\Emid{X_t}{\calH_{t-1}} \leq (\alpha_t - 1) S_{t-1} + \beta_t$. 
    Applying this to \Cref{eq:aff1} and $\gamma^2 a (\alpha_t - 1) S_t \leq 0$, we get:
    \begin{align}
        \Emid{ \exp{ \gamma a X_t } }{ \calH_{t-1} } 
        &\leq \exp{ (\gamma + \gamma^2) a ((\alpha_t - 1) S_{t-1} + \beta_t) }
        \nonumber\\
        &\leq \exp{ \gamma a ((\alpha_t - 1) S_{t-1} + \beta_t) + \gamma^2 a \beta_t }.
    \end{align}
    
    From \Cref{eq:aff0}, it comes:
    \begin{align}
        \E{ \exp{ \gamma f(S_t) } }
        &\leq \E{ \exp{ \gamma (a (\alpha_t S_{t-1} + (1+\gamma) \beta_t) + b) } } 
        \nonumber\\
        &= \E{ \exp{ \gamma f(\alpha_t S_{t-1} + (1+\gamma) \beta_t) } }.
    \end{align}
    
    This concludes the proof of the lemma.  \myqed
\end{proof}

\begin{proof}{Proof of \Cref{thm:centeringConc}.}
Define the affine function $f_t(x) = \alpha_t x + (1 + \gamma) \beta_t$, so that \Cref{lemma:mgf_affine} can be expressed as $\E{\exp{ \gamma f(S_t) } } \leq \E{ \exp{ \gamma f(f_t(S_{t-1})) } }$. Applying it repeatedly gives:
\begin{align}
    \E{\exp{ \gamma S_T } } 
    & \,\leq\, 
    \E{ \exp{ \gamma f_T(S_{T-1}) } } 
    \,\leq\, 
    \E{ \exp{ \gamma f_T(f_{T-1}(S_{T-2})) } }
    \\
    & \,\leq\, 
    \dots 
    \,\leq\, 
    \E{ \exp{ \gamma f_T(f_{T-1}(\dots f_2(f_1(0)))) } }.
\end{align}

We can still apply \Cref{lemma:mgf_affine} because the composed function $f_T \circ f_{T-1} \circ \dots \circ f_t$ is still affine of the form $ax + b$ with $a \in [0,1]$ and $b \geq 0$ (indeed, $a = \alpha_T \dots \alpha_t \in [0,1]$ and $b$ is obtained by taking products and sums of the $\alpha_t$'s and $\beta_t$'s, which are all non-negative).

Moreover, we prove by induction on $t=1, \dots, T$ that the composition of these functions satisfies:
\begin{align}
    f_t(f_{t-1}(\dots f_2(f_1(0)) )) = (1+\gamma) Y_t.
\end{align}

For $t=0$, we have $f_1(0) = (1+\gamma) \beta_1 = (1 + \gamma) Y_1$. For $t\geq1$, we have:
\begin{align}
    f_t(f_{t-1}(\dots f_2(f_1(0)))) 
    =
    f_t((1+\gamma) Y_{t-1}) 
    = 
    (1+\gamma) \left[ \alpha_t Y_{t-1} + \beta_t\right] 
    = 
    (1+\gamma) Y_t.
\end{align}

Thus, we get the moment-generating function upper bound $\E{\exp{ \gamma S_T } } \leq \exp{ \gamma (1+\gamma) Y_T } $. Finally, applying Markov's inequality we get:
\begin{align}
    \Prob{ \Big. S_T \geq (1+2\gamma) Y_T } 
    & \,=\, 
    \Prob{ \Big. \exp{\gamma S_T} \geq \exp{\gamma (1+2\gamma) Y_T} } 
    \nonumber
    \\
    & \,\leq\, 
    \frac{\E{ \exp{\gamma S_T} }}{\exp{\gamma (1+2\gamma) Y_T}} 
    \nonumber
    \\
    & \,\leq\, 
    \frac{\exp{\gamma (1+\gamma) Y_T}}{\exp{\gamma (1+2\gamma) Y_T}} 
    \nonumber
    \\
    &\,=\,
    \exp{-\gamma^2 Y_T}.
\end{align}
This concludes the proof of the theorem.\myqed
\end{proof}

\subsubsection{Proof of \Cref{prop:myopic}.}\hfill

Consider the online resource allocation problem with $T$ items of size 1, one supply, and one resource in quantity $\sqrt{T}$, i.e., $|\calJ|=1$, $|\calK|=1$, $A^t_{11}=1$ for all $t\in\{1, \dots, T\}$, and $b_1=\sqrt{T}$. Assume that item values $r^t_j$ are equal to $1$ with probability $1/\sqrt{T}$ and to a small value $\varphi > 0$ with probability $1-1/\sqrt{T}$. Since there are on average $\sqrt{T}$ items of value $1$, it can be shown that $\E{\OPT} \approx \sqrt{T}$. Per Theorem 1, for any $\varepsilon>0$, the OSO algorithm achieves a value within a multiplicative factor of $1-\varepsilon$ of the $\sqrt{T}$ optimum for large enough $T$. However, a myopic decision-making rule would always assign items to the supply until all $\sqrt{T}$ resources have been consumed, leading to an expected value of $\sqrt{T} \left(1 \cdot \frac{1}{\sqrt{T}} + \varphi \cdot \left(1-\frac{1}{\sqrt{T}}\right)\right)\,=\, \varphi \sqrt{T} + 1 - \varphi \,\approx\, \varphi \sqrt{T}$. Therefore, a myopic approach achieves a fraction $\varphi$ of the optimal value, which can be made arbitrarily small. \myqed

\subsubsection{Proof of \Cref{prop:CE}.}\hfill

Consider the online resource allocation problem with one supply bin ($m=1$), $d > 2$ resources, and $T > d^2$ time periods. All rewards are known and equal to $r^t_{1}=1$. Each resource $k\in\calK$ has capacity $b_k = \sqrt{T}$. The assignment of an incoming item into the supply bin consumes one unit of one resource  with probability $1/\sqrt{T}$ and one unit of each resource with probability $1-d/\sqrt{T}$. Formally, we define the following probability distribution with support over $d+1$ possible realizations; for simplicity, we encode it via a random variable $\xi_t$ in $\{0, \dots, d\}$.
\begin{equation}
    \left(A^t_{1k}\right)_{k\in\calK} = \begin{cases}
        \be_1 & \text{with probability } 1 / \sqrt{T}\ \text{(encoded via $\xi_t=1$)} \\
        \vdots \\
        \be_d & \text{with probability } 1 / \sqrt{T}\ \text{(encoded via $\xi_t=d$)} \\
        \bone & \text{with probability } 1 - d / \sqrt{T}\ \text{(encoded via $\xi_t=0$)}
    \end{cases}
\end{equation}

The mean-based certainty-equivalent (CE) algorithm for the multi-dimensional knapsack problem solves the following problem at each iteration, and implements the current-period solution $\overline{x}^t := x^t$.
\begin{alignat}{3}
    \max \quad 
    & \sum_{\tau=1}^{t-1} r^{\tau} \overline{x}^{\tau}
    + r^{t} x^{t}
    + \sum_{\tau=t+1}^{T} \E{r^{\tau}} x^{\tau} 
    \label{eq:CEProp2}
    \\
    \text{s.t.} 
    \quad 
    & \sum_{\tau=1}^{t-1} A^{\tau}_{1k} \overline{x}^{\tau}
    + A^{t}_{1k} x^{t}
    + \sum_{\tau=t+1}^{T} \E{A^{\tau}_{1k}} x^{\tau}
    \leq b_k,
    &\quad& \forall \ k \in \calK 
    \\
    & x^{\tau} \in \{0, 1\}
    &\quad& \forall \ \tau \in \{t, \dots, T\}
\end{alignat}

\underline{Hindsight-optimal solution.}
Let $N_\ell = \sum_{t=1}^T \mathbbm{1}(\xi_t = \ell)$ characterize the number of time periods with realization $\ell\in\{0, \dots, d\}$. Then, $(N_1, \dots, N_d, N_0)$ follows a multinomial distribution with sum $T$ and parameters $\left(\frac{1}{\sqrt{T}}, \dots, \frac{1}{\sqrt{T}}, 1 - \frac{d}{\sqrt{T}}\right)$. The hindsight-optimal policy can be formulated via decision variable $G_\ell$ characterizing the number of times an item of type $\ell$ is chosen. Then, the hindsight-optimal solution, denoted by $\OPT(\bxi)$, is obtained from the following optimization problem:
\begin{alignat*}{2}
    \OPT(\bxi) = \max \quad 
    & \sum_{j=0}^d G_\ell
    \\
    \st \quad 
    & G_\ell \leq N_\ell 
    &\quad& \forall \ \ell \in \{0, \dots, d\}
    \\
    & G_\ell + G_0 \leq \sqrt{T}
    &\quad& \forall \ \ell \in \{1, \dots, d\}
\end{alignat*}
Since items of type $0$ involve higher resource consumption, they get de-prioritized in favor of all other item types. The optimal solution is therefore given by:
\begin{align*}
	& G_\ell = \min \{ N_\ell, \sqrt{T} \},\ \forall \ \ell \in \{1, \dots, d\}	\\
	& G_0 = \sqrt{T} - \max_{\ell \in \{1, \dots, d\}} G_\ell
\end{align*}
We can bound $\E{\OPT(\bxi)}$ as follows, for a small enough $\varepsilon>0$ and a large enough $T$:
\begin{alignat*}{2}
    \E{\OPT(\bxi)} 
    & = \sum_{j=0}^d \E{G_\ell}
    \\
    & \geq \sum_{j=1}^d \E{G_\ell}
    \\
    & = d \cdot \left( 
        \Emid{N_1 }{ N_1 \leq \sqrt{T}} \cdot \Prob{ N_1 \leq \sqrt{T} }
        + \sqrt{T} \cdot \Prob { N_1 \geq \sqrt{T} }
    \right)
    \quad\text{(by symmetry)}
    \\
    & \geq d \cdot \left( 
        \frac{1}{2} \left( 
        \sqrt{T} - (\sqrt{T} - 1)^{1/2} \frac{1/\sqrt{2\pi}}{1/2} 
        \right)
        + \frac{1}{2} \sqrt{T}
    \right)-\varepsilon
    \\
    & = d \sqrt{T} - d \frac{1}{\sqrt{2\pi}} (\sqrt{T}-1)^{1/2}-\varepsilon
    \\
    & \geq d \sqrt{T} - d T^{1/4}
\end{alignat*}

The critical step lies in the second inequality. Since $(N_1, \dots, N_d, N_0)$ follows a multinomial distribution with sum $T$ and parameters $\left(\frac{1}{\sqrt{T}}, \dots, \frac{1}{\sqrt{T}}, 1 - \frac{d}{\sqrt{T}}\right)$, $N_1$ follows a binomial distribution with $T$ trials and success probability $\frac{1}{\sqrt{T}}$. Therefore, its mean is $\sqrt{T}$ and its variance is $\sqrt{T}-1$. By the central limit theorem, we therefore know that $N_1$ converges to a normal distribution with mean $\sqrt{T}$ and variance $\sqrt{T}-1$ as $T$ grow large. 
Thus, $\Prob{ N_1 \leq \sqrt{T} }\approx0.5$ and $\Emid{ N_1 }{ N_1 \leq \sqrt{T}} \approx \sqrt{T} - (\sqrt{T} - 1)^{1/2} \frac{1/\sqrt{2\pi}}{1/2}$. This proves that $\E{\OPT(\bxi)}\geq d \sqrt{T} - d T^{1/4}$.

\underline{OSO solution.}
Per Theorem 1, the single-sample OSO algorithm achieves a value close to $\E{\OPT(\bxi)}$, the hindsight optimum, for large enough $T$. Specifically, the single-sample OSO algorithm obtains a value of at least $(1 - \varepsilon_{d,T,B}) \cdot \E{\OPT(\bxi)}$, with $\varepsilon$ scaling approximately as $\calO\left(\sqrt{\frac{\log(dT)}{B}}\right)$.

\underline{CE solution.} The expected resource utilization in each period, denoted by $\overline{A}$, is equal to:
\begin{alignat}{2}
    \overline{A}=\E{A_{1k}^t} 
    & = \frac{1}{\sqrt{T}} + \left(1 - \frac{d}{\sqrt{T}} \right)
    = 1 - \frac{d-1}{\sqrt{T}}<1,\quad\forall k\in\calK
\end{alignat}

Consider a decision epoch $t\in\{1, \dots, T\}$ where the capacity of all resources is equal to $B$. Assume that $\xi_t=0$, i.e., that the incoming item consumes one unit of each resource. Since the mean item consumes $\overline{A} < 1$ unit of each resource while yielding the same reward, the CE solution will reject the incoming item as long as the remaining time horizon is long enough. Similarly, if $\xi_t=j\in\{1, \dots, d\}$, the incoming item consumes one unit of resource $j$ whereas the mean item consumes $\overline{A}<1$ unit of each resource while yielding the same reward. Again, the CE solution will serve as many copies of the mean item as possible rather than the incoming item, and it will therefore reject the incoming item as long as the remaining time horizon is long enough.

Therefore, the CE solution will not serve any incoming item starting from $t = 1$ for the first $T - \sqrt{T}/\overline{A}$ (since the capacities of all resources remain equal). In turn, the CE algorithm will accept the remaining $\sqrt{T}/\overline{A}$ items up to capacity. The CE objective value, referred to as $\text{CE}$, therefore achieves an expected value of $\E{\text{CE}(\bxi)}\leq\sqrt{T}/\overline{A}$.

Next, we prove that the CE solution achieves a competitive ratio of at most $1/d$. Suppose for the sake of contradiction that its competitive ratio is $\frac{1}{d} + \varepsilon$ for some $\varepsilon > 0$. Then the following holds:
\begin{alignat}{2}
    &\quad& 
    \E{\text{CE}(\bxi)}
    & \,\geq\, 
    \left(\frac{1}{d} + \varepsilon \right) \E{\OPT(\bxi)} - \alpha 
    \nonumber
    \\
    \implies
    &\quad&
    \frac{\sqrt{T}}{1 - \frac{d-1}{\sqrt{T}}}
    & \,\geq\, 
    \left(\frac{1}{d} + \varepsilon \right) \cdot \left( d \sqrt{T} - d T^{1/4} \right) - \alpha
    \nonumber
    \\
    \implies 
    &\quad& 
    T 
    & \,\geq\, \left( \sqrt{T} - (d - 1) \right) 
    \cdot \left( 
        (1 + \varepsilon d) \sqrt{T} - (1 + \varepsilon d) T^{1/4} - \alpha
    \right)
    \nonumber
    \\
    \implies
    &\quad& 
    0 
    & \,\geq\, 
    \varepsilon d T 
    - (1 + \varepsilon d) T^{3/4} 
    - (\alpha + (d-1)(1+\varepsilon d)) \sqrt{T}
    + (d-1)(1+\varepsilon d) T^{1/4} 
    + (d-1) \alpha
    \nonumber
\end{alignat}
which is a contradiction for large enough $T$. 

Therefore, CE achieves a competitive ratio of at most $1 / d$, and OSO can achieve a competitive ratio of $1 - \varepsilon_{d,T,B}$ with $\varepsilon$ scaling as $\calO \left(\sqrt{\frac{\log(dt)}{B}}\right)$, This proves that OSO can achieve unbounded (multiplicative) benefits over CE as the number of resources $d$ grows infinitely.\myqed

\subsection{Computational Results}
\label{app:resourcealloc_comp}

\subsubsection{Need for Resolving Heuristics}\hfill
\label{app:resourcealloc_comp_exact}

We first show that the online resource allocation problem (\Cref{eq:mssip}) remains intractable with off-the-shelf stochastic programming and dynamic programming methods even in moderately-sized instances, thus motivating the need for efficient resolving heuristics such as the OSO algorithm studied in this paper. We consider an online multidimensional knapsack problem over $T$ periods with $|\calJ| = 1$ supply node, and $|\calK| = d$ resources of capacity $B$. Items arrive one at a time, and the resource consumption variables $A^t_{1k}$ are binary (equal to 0 or 1 with probability 0.5) and independent. The complexity of the problem is driven by the number of items $T$, the number of resources $d$, and the capacity $B$.

\begin{table}[h!]
    \setlength{\tabcolsep}{4pt}
    \centering
    \small
    \begin{tabular}{
        c
        S[table-format=2] S[table-format=2] S[table-format=2]
        *{6}{S[table-format=1.2e-1]}
    }   
        \toprule
        Metric & {$T$} & {$B$} & {$d$} 
        & {SP} & {DP} & {OSO-1} & {OSO-5} & {OSO-10} & {OSO-20}
        \\ 
        \midrule 
        {Obj.} 
        & 4 & 2 & 2 
        & \multicolumn{1}{c}{\tablenum[table-format=1.2]{3.34}} 
        & \multicolumn{1}{c}{\tablenum[table-format=2.2]{3.34}} 
        & \multicolumn{1}{c}{\tablenum[table-format=2.2]{3.34}}
        & \multicolumn{1}{c}{\tablenum[table-format=2.2]{3.33}}
        & \multicolumn{1}{c}{\tablenum[table-format=2.2]{3.33}}
        & \multicolumn{1}{c}{\tablenum[table-format=2.2]{3.34}} \\ 
        & 6 & 3 & 2 
        & \multicolumn{1}{c}{\tablenum[table-format=1.2]{5.1}} 
        & \multicolumn{1}{c}{\tablenum[table-format=2.2]{5.1}} 
        & \multicolumn{1}{c}{\tablenum[table-format=2.2]{5.11}}
        & \multicolumn{1}{c}{\tablenum[table-format=2.2]{5.09}}
        & \multicolumn{1}{c}{\tablenum[table-format=2.2]{5.12}}
        & \multicolumn{1}{c}{\tablenum[table-format=2.2]{5.09}} \\ 
        & 8 & 4 & 2 
        & \multicolumn{1}{c}{\tablenum[table-format=1.2]{6.94}} 
        & \multicolumn{1}{c}{\tablenum[table-format=2.2]{6.94}}
        & \multicolumn{1}{c}{\tablenum[table-format=2.2]{6.96}}
        & \multicolumn{1}{c}{\tablenum[table-format=2.2]{6.9}}
        & \multicolumn{1}{c}{\tablenum[table-format=2.2]{6.96}}
        & \multicolumn{1}{c}{\tablenum[table-format=2.2]{6.94}} \\ 
        & 10 & 5 & 2 
        & \multicolumn{1}{c}{\tablenum[table-format=1.2]{8.91}} 
        & \multicolumn{1}{c}{\tablenum[table-format=2.2]{8.91}}
        & \multicolumn{1}{c}{\tablenum[table-format=2.2]{8.89}}
        & \multicolumn{1}{c}{\tablenum[table-format=2.2]{8.84}}
        & \multicolumn{1}{c}{\tablenum[table-format=2.2]{8.91}}
        & \multicolumn{1}{c}{\tablenum[table-format=2.2]{8.9}} \\ 
        & 12 & 6 & 2 & {---} 
        & \multicolumn{1}{c}{\tablenum[table-format=2.2]{10.74}}
        & \multicolumn{1}{c}{\tablenum[table-format=2.2]{10.73}}
        & \multicolumn{1}{c}{\tablenum[table-format=2.2]{10.62}}
        & \multicolumn{1}{c}{\tablenum[table-format=2.2]{10.72}}
        & \multicolumn{1}{c}{\tablenum[table-format=2.2]{10.74}} \\ 
        & 14 & 7 & 2 & {---} 
        & \multicolumn{1}{c}{\tablenum[table-format=2.2]{12.64}}
        & \multicolumn{1}{c}{\tablenum[table-format=2.2]{12.62}}
        & \multicolumn{1}{c}{\tablenum[table-format=2.2]{12.55}}
        & \multicolumn{1}{c}{\tablenum[table-format=2.2]{12.63}}
        & \multicolumn{1}{c}{\tablenum[table-format=2.2]{12.64}} \\ 
        & 16 & 8 & 2 & {---} 
        & \multicolumn{1}{c}{\tablenum[table-format=2.2]{14.56}}
        & \multicolumn{1}{c}{\tablenum[table-format=2.2]{14.52}}
        & \multicolumn{1}{c}{\tablenum[table-format=2.2]{14.41}}
        & \multicolumn{1}{c}{\tablenum[table-format=2.2]{14.55}}
        & \multicolumn{1}{c}{\tablenum[table-format=2.2]{14.55}} \\ 
        & 18 & 9 & 2 & {---} 
        & \multicolumn{1}{c}{\tablenum[table-format=2.2]{16.45}}
        & \multicolumn{1}{c}{\tablenum[table-format=2.2]{16.37}}
        & \multicolumn{1}{c}{\tablenum[table-format=2.2]{16.2}}
        & \multicolumn{1}{c}{\tablenum[table-format=2.2]{16.44}}
        & \multicolumn{1}{c}{\tablenum[table-format=2.2]{16.39}} \\ 
        & 20 & 10 & 2 & {---} 
        & \multicolumn{1}{c}{\tablenum[table-format=2.2]{18.41}}
        & \multicolumn{1}{c}{\tablenum[table-format=2.2]{18.36}}
        & \multicolumn{1}{c}{\tablenum[table-format=2.2]{18.24}}
        & \multicolumn{1}{c}{\tablenum[table-format=2.2]{18.38}}
        & \multicolumn{1}{c}{\tablenum[table-format=2.2]{18.37}} \\ 
        & 20 & 10 & 3 & {---} 
        & \multicolumn{1}{c}{\tablenum[table-format=2.2]{17.96}}
        & \multicolumn{1}{c}{\tablenum[table-format=2.2]{17.92}}
        & \multicolumn{1}{c}{\tablenum[table-format=2.2]{17.75}}
        & \multicolumn{1}{c}{\tablenum[table-format=2.2]{17.9}}
        & \multicolumn{1}{c}{\tablenum[table-format=2.2]{17.92}} \\ 
        & 20 & 10 & 4 & {---}
        & \multicolumn{1}{c}{\tablenum[table-format=2.2]{17.64}}
        & \multicolumn{1}{c}{\tablenum[table-format=2.2]{17.5}}
        & \multicolumn{1}{c}{\tablenum[table-format=2.2]{17.43}}
        & \multicolumn{1}{c}{\tablenum[table-format=2.2]{17.57}}
        & \multicolumn{1}{c}{\tablenum[table-format=2.2]{17.59}} \\ 
        & 20 & 10 & 5 & {---} 
        & \multicolumn{1}{c}{\tablenum[table-format=2.2]{17.34}}
        & \multicolumn{1}{c}{\tablenum[table-format=2.2]{17.16}}
        & \multicolumn{1}{c}{\tablenum[table-format=2.2]{17.11}}
        & \multicolumn{1}{c}{\tablenum[table-format=2.2]{17.3}}
        & \multicolumn{1}{c}{\tablenum[table-format=2.2]{17.35}} \\ 
        & 20 & 10 & 6 & {---} & {---}
        & \multicolumn{1}{c}{\tablenum[table-format=2.2]{16.84}}
        & \multicolumn{1}{c}{\tablenum[table-format=2.2]{16.81}}
        & \multicolumn{1}{c}{\tablenum[table-format=2.2]{17.09}}
        & \multicolumn{1}{c}{\tablenum[table-format=2.2]{17.09}} \\ 
        \midrule
        {Time (s)}
        & 4 & 2 & 2 & 2.65e-03 & 1.93e-04 & 3.13e-03 & 3.00e-03 & 3.35e-03 & 3.84e-03 \\ 
        & 6 & 3 & 2 & 4.87e-02 & 5.46e-04 & 2.96e-03 & 4.39e-03 & 4.36e-03 & 4.64e-03 \\ 
        & 8 & 4 & 2 & 1.04e+00 & 1.16e-03 & 4.27e-03 & 4.90e-03 & 5.34e-03 & 6.86e-03 \\ 
        & 10 & 5 & 2 & 3.59e+01 & 2.08e-03 & 5.35e-03 & 6.55e-03 & 6.80e-03 & 9.27e-03 \\ 
        & 12 & 6 & 2 & {---} & 3.47e-03 & 6.24e-03 & 7.26e-03 & 8.75e-03 & 1.16e-02 \\ 
        & 14 & 7 & 2 & {---} & 5.35e-03 & 9.88e-03 & 9.54e-03 & 1.14e-02 & 1.47e-02 \\ 
        & 16 & 8 & 2 & {---} & 7.89e-03 & 8.72e-03 & 1.37e-02 & 1.39e-02 & 1.79e-02 \\ 
        & 18 & 9 & 2 & {---} & 1.10e-02 & 1.05e-02 & 1.70e-02 & 1.69e-02 & 2.23e-02 \\ 
        & 20 & 10 & 2 & {---} & 1.50e-02 & 1.11e-02 & 1.71e-02 & 1.87e-02 & 2.66e-02 \\ 
        & 20 & 10 & 3 & {---} & 5.19e-01 & 1.17e-02 & 2.36e-02 & 2.15e-02 & 3.25e-02 \\ 
        & 20 & 10 & 4 & {---} & 1.30e+01 & 1.16e-02 & 1.59e-02 & 2.19e-02 & 3.74e-02 \\ 
        & 20 & 10 & 5 & {---} & 3.35e+02 & 1.43e-02 & 2.66e-02 & 4.27e-02 & 4.47e-02 \\ 
        & 20 & 10 & 6 & {---} & {---} & 1.26e-02 & 2.03e-02 & 2.84e-02 & 4.34e-02 \\ 
        \bottomrule
    \end{tabular}
    \begin{tablenotes}
	\item {---}: Instances which did not complete within a 1-hour time limit.
    \end{tablenotes}
    \caption{Performance comparison in the online multi-dimensional knapsack problem. ``SP'': Stochastic programming. ``DP'': Dynamic programming. ``OSO-$S$'': OSO algorithm with $S$ sample paths per iteration. Solutions evaluated on 100 instances.}
    \label{tab:exact}
\end{table}

\Cref{tab:exact} compares the objective values and computational times of the OSO algorithm against (i) a multi-stage stochastic programming formulation based on a scenario-tree representation (SP), and (ii) the policy computed via dynamic programming (DP). Note that the scenario-tree representation leads to highly intractable integer optimization instances even for small problems. The full scenario tree involves $2^d$ scenarios at each time period, hence $\calO(2^{dT})$ overall nodes over the multi-stage horizon, $\calO(2^{dT})$ integer decision variables and $\calO(2^{d(T-1)})$ constraints. The stochastic programming model becomes intractable very quickly, with as few as 12 time periods, 1 supply node, 2 resources, and binary uncertainty; in comparison, in the next section, we will solve instances with up to 100 time periods, 10 supply nodes, 20 resources, and continuous uncertainty. Whereas these results rely on an exhaustive scenario tree, they also suggest that stochastic programming remains intractable even with small-sample representations of uncertainty. For example, with $d=2$, the scenario tree involves 4 possible realizations at each time period, and leads to an intractable formulation with $T=12$ and $|\calJ|=1$. Even small-sample scenario-tree approximations based, for example, on sample average approximation or scenario reduction, would result in similarly intractable stochastic programming formulations.

The dynamic programming algorithm is more scalable but still terminates up to 4-5 orders of magnitude slower than OSO, and times out in comparatively small instances (e.g., 20 time periods, 1 supply node, and 6 resources). This again stems from the exponential growth in problem size, with $\calO(T (2B)^d)$ possible states. In comparison, the OSO algorithm is very computationally efficient, terminating in fractions of a second on these simple examples. Furthermore, the OSO solutions are very close to optimal. In low-dimensional problem instances, even the single-sample variant of OSO leads to virtually identical solutions as the DP algorithm (variations are due to the randomness associated with the 100 out-of-sample scenarios). When the number of resources $d$ grows larger, the OSO algorithm benefits from more sample paths.

Next, we compare the OSO algorithm to the other resolving heuristics and the perfect-information benchmark.

\subsubsection{Comparison on online resource allocation problems}\hfill

We provide additional computational results to complement the results from \Cref{ssec:computation} for the multi-dimensional knapsack problem, the online generalized assignment problem, and the general online resource allocation problem. Recall that these problems involve very high-dimensional discrete allocation problems with up to 50 time periods, 10 supply nodes, and 50 resources; or up to 100 time periods, 10 supply nodes, and 20 resources. All uncertain resource consumption parameters $A^t_{jk}$ follow a bimodal distribution parametrized by $\psi$, described in the main text and shown in \Cref{fig:bimodal_pdf}. At each iteration, the integer program is solved with Gurobi 11 and Julia 1.10, with 2 threads. For the multidimensional knapsack and online generalized assignment problems, each IP was solved with termination criteria of either 60 seconds or a relative gap of 0.01\%. For the more challenging online resource allocation instances, we used a termination criteria of either 100 seconds per iteration or a relative gap of 1\%. The capacity parameter $B$ was chosen to vary over a range which is not too small (such that all algorithms cannot serve demand) nor too large (such that all algorithms can trivially serve all items).

\begin{figure}[h!]
    \centering
    \includegraphics[width=0.7\linewidth]{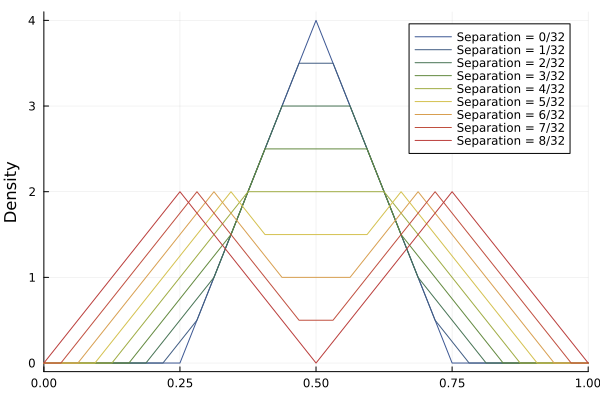}
    \caption{Bimodal distribution with separation parameter $\psi \in [0, 0.25]$.}
    \label{fig:bimodal_pdf}
\end{figure}

\Cref{tab:knapsack_app,tab:assignment_fixedC_app,tab:resourcealloc_app} compare the algorithms' solutions for each problem; \Cref{fig:knapsack_groupedbar_obj_time_app} to \Cref{fig:resource_alloc_groupedbar_obj_time_app} show them visually in absolute terms, without normalization. Throughout, the myopic policy induces a significant loss as compared to the hindsight-optimal solution, of up to 30\% for multidimensional knapsack, 37\% for online generalized assignment, and 50\% for online resource allocation. The CE benchmark improves upon the myopic solution in all settings except the multidimensional knapsack with bimodal resource consumption. Then, single-sample OSO yields significant improvements in solution quality over the CE benchmark, (up to 39\% for online resource allocation). These improvements are stronger when the distribution is more bimodal (and hence the mean is less representative of a sample) and when capacity is smaller. Finally, multi-sample OSO can yield additional improvements but these become marginally smaller as the number of sample paths increases. Although the single-sample and small-sample OSO methods involve longer computational times, these methods still yield high-quality solutions within the time limit.

\begin{table}
    \centering
    {\small
    \begin{tabular}{
        S[table-format=1.2] S[table-format=1.1] c
        S[table-format=3.2] S[table-format=3.2] S[table-format=3.2] r S[table-format=3.2] c
        S[table-format=1.3] S[table-format=1.3] S[table-format=1.3] S[table-format=4.3]
    }
        \toprule
        & & & \multicolumn{5}{c}{Objective (\% of perfect-info)} & & \multicolumn{4}{c}{Computation time (s)}
        \\
        \cmidrule(lr){4-8} \cmidrule(lr){10-13}
        {$\psi$} & {$B$} 
        & & {Myopic} & {CE} & {OSO-1} & {(over CE)} & {OSO-5}
        & & {Myopic} & {CE} & {OSO-1} & {OSO-5}
        \\ 
        \midrule
        0.0 & 0.1 
        & & 90.00 & 87.9 & 94.36 & {$+7.3$\%} & 94.74 
        & & 0.064 & 0.082 & 0.150 & 19.1 \\ 
        0.0 & 0.2 
        & & 92.97 & 90.98 & 95.80 & {$+5.3$\%} & 96.59 
        & & 0.055 & 0.078 & 0.388 & 339 \\ 
        0.0 & 0.3 
        & & 93.31 & 91.99 & 96.53 & {$+4.9$\%} & 96.67 
        & & 0.056 & 0.084 & 0.372 & 145 \\ 
        0.0 & 0.4 
        & & 96.94 & 95.94 & 98.08 & {$+2.2$\%} & 98.28 
        & & 0.055 & 0.089 & 0.219 & 39.0 \\ 
        \midrule
        0.25 & 0.1 
        & & 70.32 & 71.45 & 88.20 & {$+23.4$\%} & 91.15 
        & & 0.056 & 0.082 & 1.02 & 1150 \\ 
        0.25 & 0.2 
        & & 88.81 & 81.22 & 94.60 & {$+16.5$\%} & 95.55 
        & & 0.056 & 0.076 & 0.748 & 790 \\ 
        0.25 & 0.3 
        & & 94.83 & 95.25 & 96.21 & {$+1.0$\%} & 97.60 
        & & 0.062 & 0.077 & 0.113 & 1.20 \\ 
        0.25 & 0.4 
        & & 100.0 & 100.0 & 100.0 & {$+0.0$\%} & 100.0 
        & & 0.070 & 0.075 & 0.079 & 0.191 \\ 
        \bottomrule
    \end{tabular}
    }
    \caption{Objective relative to the hindsight optimum (in percent) and computation time (in seconds) for the online multi-dimensional knapsack problem. ``OSO-1'', ``OSO-5'': OSO algorithm with 1, 5 sample paths per iteration.}
    \label{tab:knapsack_app}
\end{table}

\begin{figure}
\centering
\subfloat[\small Objective (unimodal distribution, $\psi = 0$)]{\includegraphics[width=0.49\textwidth]{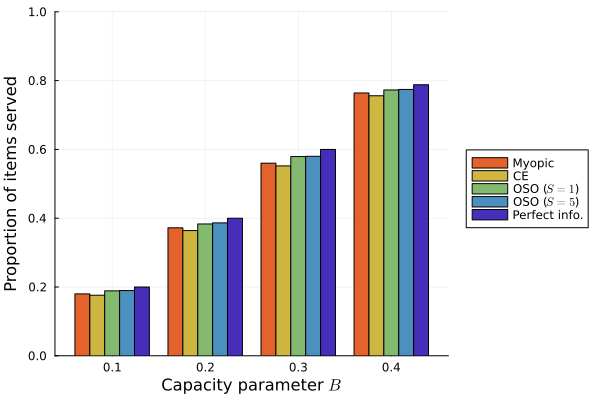}
\label{fig:knapsack_unimodal_groupedbar_obj_app}
}%
\subfloat[\small Computation time (unimodal distribution, $\psi = 0$)]{\includegraphics[width=0.49\textwidth]{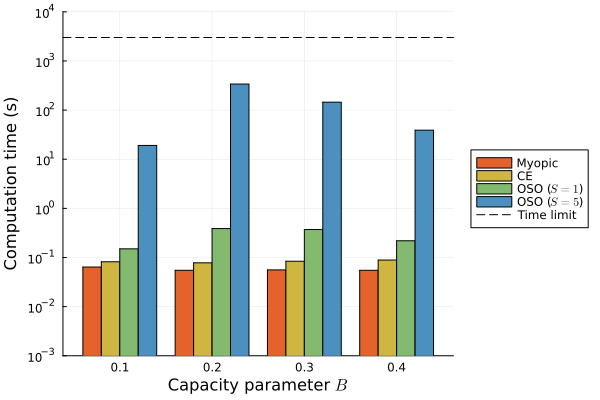}
\label{fig:knapsack_unimodal_groupedbar_time_app}
}%
\newline
\subfloat[\small Objective (bimodal distribution, $\psi = 1/4$)]{\includegraphics[width=0.49\textwidth]{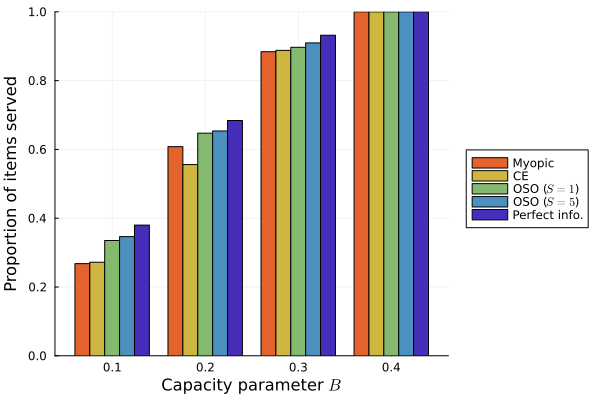}
\label{fig:knapsack_bimodal_groupedbar_obj_app}
}%
\subfloat[\small Computation time (bimodal distribution, $\psi = 1/4$)]{\includegraphics[width=0.49\textwidth]{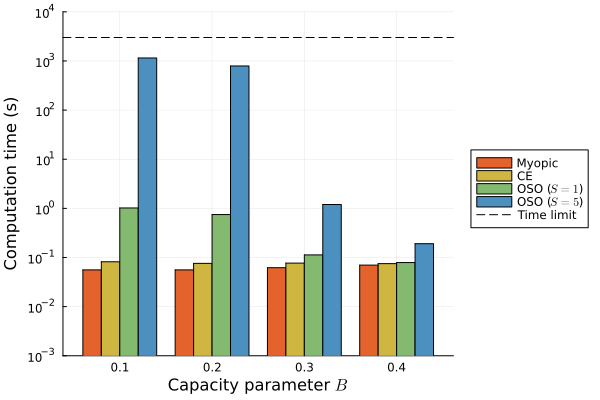}
\label{fig:knapsack_bimodal_groupedbar_time_app}
}%
\caption{Normalized objectives and computation times for the online multidimensional knapsack problem.} 
\label{fig:knapsack_groupedbar_obj_time_app}
\end{figure}

\begin{table}
    \centering
    {
    \footnotesize
    \begin{tabular}{
        S[table-format=1.2] S[table-format=1.1] 
        S[table-format=2.2] S[table-format=2.2] S[table-format=2.2] S[table-format=2.2] S[table-format=2.2] S[table-format=2.2] 
        S[table-format=1.3] S[table-format=1.3] S[table-format=3.3] S[table-format=4.2] S[table-format=4.1] S[table-format=4.1]}
        \toprule
        & & \multicolumn{6}{c}{Objective (\% of perfect-info)} & \multicolumn{6}{c}{Computation time (s)}
        \\
        \cmidrule(lr){3-8} \cmidrule(lr){9-14}
        {$\psi$} & {$B$} 
        & {Myopic} & {CE} & {OSO-1} & {OSO-5} & {OSO-10} & {OSO-20}
        & {Myopic} & {CE} & {OSO-1} & {OSO-5} & {OSO-10} & {OSO-20}
        \\ 
        \midrule
        0.0 & 0.2 
        & 74.38 & 95.93 & 93.58 & 94.17 & 95.98 & 95.37 
        & 0.077 & 0.221 & 0.575 & 6.44 & 14.8 & 35.8 \\ 
        0.0 & 0.3 & 77.84 & 96.64 & 96.12 & 95.56 & 96.38 & 96.09 
        & 0.076 & 0.228 & 4.22 & 53.0 & 164 & 502 \\ 
        0.0 & 0.4 & 80.39 & 95.98 & 95.49 & 96.69 & 97.69 & 97.8 
        & 0.059 & 0.238 & 17.3 & 188 & 682 & 1200 \\ 
        0.0 & 0.5 & 84.35 & 91.57 & 94.86 & 97.84 & 98.24 & 98.48 
        & 0.069 & 0.229 & 21.5 & 45.3 & 118 & 264 \\ 
        \midrule
        0.25 & 0.2 
        & 68.22 & 78.63 & 84.94 & 87.37 & 88.12 & 91.41 
        & 0.080 & 0.230 & 7.03 & 76.7 & 200 & 535 \\ 
        0.25 & 0.3 
        & 63.24 & 68.52 & 81.49 & 86.23 & 86.76 & 87.27 
        & 0.071 & 0.252 & 213 & 1030 & 1310 & 1560 \\ 
        0.25 & 0.4 
        & 69.11 & 72.3 & 84.79 & 88.47 & 90.07 & 89.99 
        & 0.059 & 0.226 & 841 & 1620 & 1850 & 1980 \\ 
        0.25 & 0.5 
        & 85.97 & 88.78 & 96.87 & 98.07 & 98.24 & 99.60 
        & 0.060 & 0.228 & 0.562 & 10.1 & 11.7 & 23.1 \\ 
        \bottomrule
    \end{tabular}
    }
    \caption{Objective relative to the hindsight optimum (in percent) and computation time (in seconds) for the online generalized assignment problem. ``OSO-$S$'': OSO algorithm with $S$ sample paths per iteration.}
    \label{tab:assignment_fixedC_app}
\end{table}

\begin{figure}
\centering
\subfloat[\small Objective (unimodal distribution, $\psi = 0$)]{\includegraphics[width=0.49\textwidth]{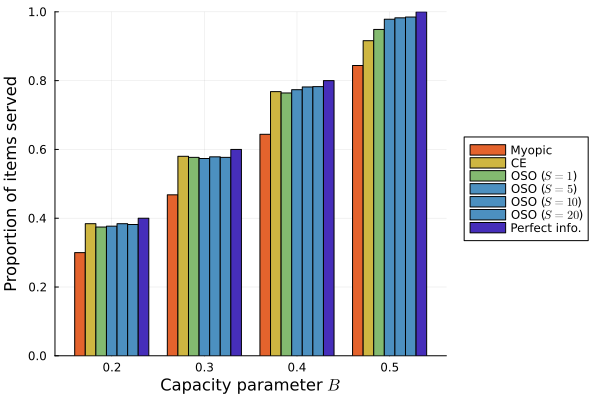}
\label{fig:assignment_fixedC_unimodal_groupedbar_obj_app}
}%
\subfloat[\small Computation time (unimodal distribution, $\psi = 0$)]{\includegraphics[width=0.49\textwidth]{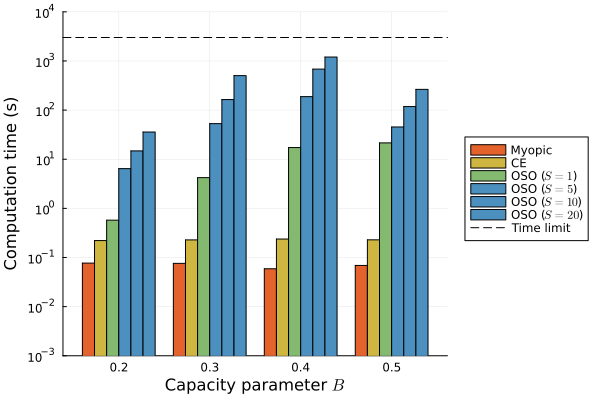}
\label{fig:assignment_fixedC_unimodal_groupedbar_time_app}
}%
\newline
\subfloat[\small Objective (bimodal distribution, $\psi = 1/4$)]{\includegraphics[width=0.49\textwidth]{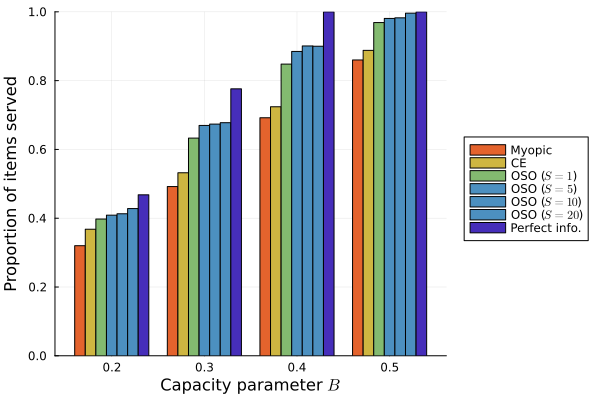}
\label{fig:assignment_fixedC_bimodal_groupedbar_obj_app}
}%
\subfloat[\small Computation time (bimodal distribution, $\psi = 1/4$)]{\includegraphics[width=0.49\textwidth]{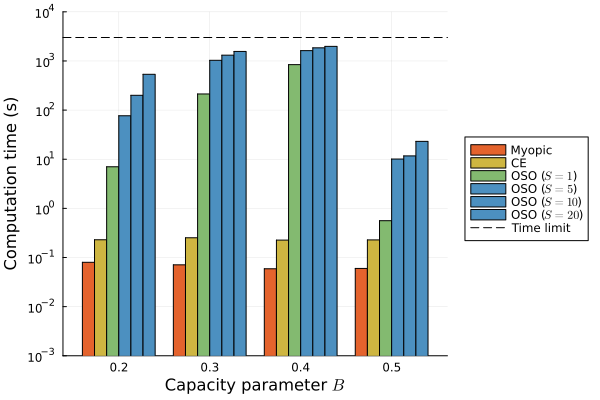}
\label{fig:assignment_fixedC_bimodal_groupedbar_time_app}
}%
\caption{Normalized objectives and computation times for the online generalized assignment problem.} 
\label{fig:assignment_fixedC_groupedbar_obj_time_app}
\end{figure}

\begin{table}
    \centering
    {
    \small
    \begin{tabular}{
        S[table-format=1.2] S[table-format=1.1] c
        S[table-format=2.2] S[table-format=2.2] r S[table-format=2.2] r S[table-format=2.2] c
        S[table-format=1.3] S[table-format=1.3] S[table-format=4.1] S[table-format=4.0]}
        \toprule
        & & & \multicolumn{6}{c}{Objective (\% of perfect-info)} & & \multicolumn{4}{c}{Computation time (s)}
        \\
        \cmidrule(lr){4-9} \cmidrule(lr){11-14}
        {$\psi$} & {$B$} 
        & & {Myopic} & {CE} & {(over Myo.)} & {OSO-1} & {(over CE)} & {OSO-5}
        & & {Myopic} & {CE} & {OSO-1} & {OSO-5}
        \\ 
        \midrule
        0.0 & 0.1 
        & & 78.80 & 84.76 & {$+7.6$\%} & 87.48 & {$+3.2$\%} & 90.68 
        & & 0.039 & 0.600 & 87.7 & 5770 \\ 
        0.0 & 0.2 
        & & 77.32 & 81.89 & {$+5.9$\%} & 91.77 & {$+12.1$\%} & 93.66 
        & & 0.040 & 0.701 & 442 & 7890 \\ 
        0.0 & 0.3 
        & & 80.27 & 83.33 & {$+3.8$\%} & 93.78 & {$+12.5$\%} & 95.11 
        & & 0.042 & 0.869 & 686 & 8310 \\ 
        \midrule
        0.25 & 0.1 
        & & 50.72 & 60.71 & {$+19.7$\%} & 84.33 & {$+38.9$\%} & 88.44 
        & & 0.040 & 0.637 & 150 & 7440 \\ 
        0.25 & 0.2 
        & & 54.11 & 66.00 & {$+22.0$\%} & 89.61 & {$+35.7$\%} & 93.24 
        & & 0.039 & 1.20 & 670 & 8490 \\ 
        0.25 & 0.3 
        & & 60.87 & 73.75 & {$+21.2$\%} & 92.71 & {$+25.7$\%} & 95.50
        & & 0.042 & 1.42 & 1890 & 6590 \\ 
        \bottomrule
    \end{tabular}
    }
    \caption{Objective relative to the hindsight optimum (in percent) and computation time (in seconds) for the online resource allocation problem. ``OSO-1'', ``OSO-5'': OSO algorithm with 1, 5 sample paths per iteration.}
    \label{tab:resourcealloc_app}
\end{table}

\begin{figure}
\centering
\subfloat[\small Objective (unimodal distribution, $\psi = 0$)]{\includegraphics[width=0.49\textwidth]{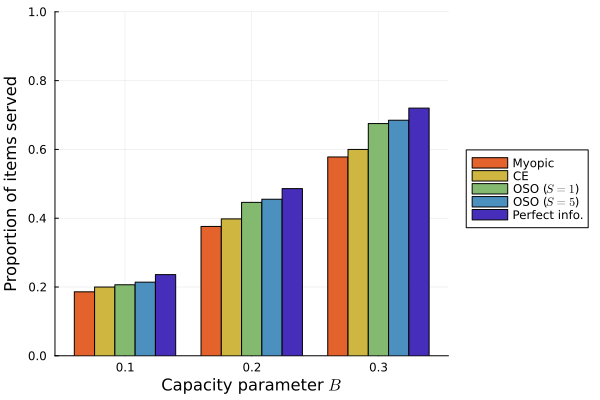}
\label{fig:resource_alloc_unimodal_groupedbar_obj_app}
}%
\subfloat[\small Computation time (unimodal distribution, $\psi = 0$)]{\includegraphics[width=0.49\textwidth]{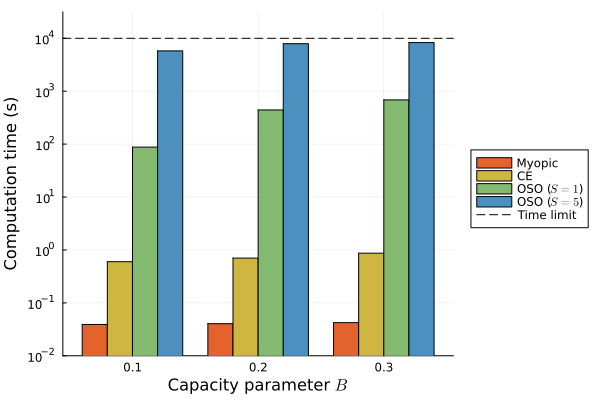}
\label{fig:resource_alloc_unimodal_groupedbar_time_app}
}%
\newline
\subfloat[\small Objective (bimodal distribution, $\psi = 1/4$)]{\includegraphics[width=0.49\textwidth]{figures/resource_alloc/groupedbar_objective_capacity_100_0.25.png}
\label{fig:resource_alloc_bimodal_groupedbar_obj_app}
}%
\subfloat[\small Computation time (bimodal distribution, $\psi = 1/4$)]{\includegraphics[width=0.49\textwidth]{figures/resource_alloc/groupedbar_time_capacity_100_0.25.png}
\label{fig:resource_alloc_bimodal_groupedbar_time_app}
}%
\caption{Normalized objectives and computation times for the online resource allocation problem.} 
\label{fig:resource_alloc_groupedbar_obj_time_app}
\end{figure}
\clearpage
\section{Online Batched Bin-packing} 
\label{app:binpacking}

\subsection{Problem Statement and MSSIP Formulation}

\begin{definition}[Online batched bin packing]
    Items arrive over $T$ time periods. All bins have capacity $B$. At each time period $t$, a batch $\calI^t$ of $q$ items are revealed. Each item $i \in \calI^t$ has size $V^t_i \in \{0, 1, \dots, B\}$. The objective is to pack items in as few bins as possible.
\end{definition}

We use the flow-based formulation from \cite{valerio1999exact}; this formulation exhibits much stronger scalability than other formulation with a looser linear relaxation, in our instances. This formulation relies on a network representation with node set: $\calN=\{0,\dots,B\}$, packing arcs: $\{(i,j):0\leq i<j\leq B\}$, and loss arcs corresponding to wasted capacity $\{(i,i+1):0\leq i<B\}$. A packing is characterized by a path from node 0 to node $B$. \Cref{fig:binpacking_flow} shows an example.

\begin{figure}[h!]
    \centering
    \begin{tikzpicture}
        \node[circle,thick,draw=black,fill=black,inner sep=0pt,minimum size=6pt] (0) at (0,0){};
        \node[circle,thick,draw=black,fill=black,inner sep=0pt,minimum size=6pt] (1) at (1.5,0){};
        \node[circle,thick,draw=black,fill=black,inner sep=0pt,minimum size=6pt] (2) at (3,0){};
        \node[circle,thick,draw=black,fill=black,inner sep=0pt,minimum size=6pt] (3) at (4.5,0){};
        \node[circle,thick,draw=black,fill=black,inner sep=0pt,minimum size=6pt] (4) at (6,0){};
        \node[circle,thick,draw=black,fill=black,inner sep=0pt,minimum size=6pt] (5) at (7.5,0){};
        \node[] at (8,.3) {$B=5$};
        \draw[->, thick, dotted] (0) -- (1);
        \draw[->, thick, dotted] (1) -- (2);
        \draw[->, thick, dotted] (2) -- (3);
        \draw[->, thick, dotted] (3) -- (4);
        \draw[->, thick] (4) -- (5);
        \draw[->, thick, color=myred] (0) to [bend left=45] (2);
        \draw[->, thick, dotted, color=myred] (1) to [bend left=45] (3);
        \draw[->, thick, color=myred] (2) to [bend left=45] (4);
        \draw[->, thick, dotted, color=myred] (3) to [bend left=45] (5);
        \draw[->, thick, dotted, color=myblue] (0) to [bend right=45] (3);
        \draw[->, thick, dotted, color=myblue] (1) to [bend right=45] (4);
        \draw[->, thick, dotted, color=myblue] (2) to [bend right=45] (5);
    \end{tikzpicture}
    \caption{Flow-based bin packing representation. Red (resp. blue) arcs denote items of size 2 (resp. 3). The bin capacity is 5. The solution in solid lines packs two items of size 2.}
    \label{fig:binpacking_flow}
\end{figure}

We define the following decision variables:
\begin{align*}
    x_{ij}
    & \,=\, 
    \text{number of items of size $j-i$ placed in any bin starting in ``position'' $i$}
    \\
    w_{j}
    & \,=\,
    \text{number of loss arcs used across all bins starting in ``position'' $j$}
    \\
    z
    & \,=\,
    \text{number of bins opened}
\end{align*}

The offline bin-packing formulation for a set of items $\calI$ is given as follows. 
\Cref{eq:binpacking_flow_objective} minimizes the number of bins. \Cref{eq:binpacking_flow_flowbalance} defines packing solutions as a path from the source to the sink, and \Cref{eq:binpacking_flow_counts} ensures that all items are placed in a bin (denoting $c_s(\calI)$ as the number of size-$s$ items in $\calI$).
\begin{subequations}
\label{eq:binpacking_flow}
\begin{alignat}{3}
    \label{eq:binpacking_flow_objective}
    \min \quad
    & 
    z
    \\
    \st \quad 
    & \sum_{i=0}^{j-1} x_{ij}
    - \sum_{k=j+1}^{B} x_{jk}
    = \begin{cases}
        - z + w_{j} & j = 0 
        \\
        - w_{j-1} + w_{j} & j \neq 0, B
        \\
        - w_{j-1} + z & j = B
    \end{cases}
    \label{eq:binpacking_flow_flowbalance}
    \\
    & \sum_{j=0}^{B-s} x_{j,j+s}
    = c_s(\calI)
    \quad \forall \ s =1, \dots, B
    \label{eq:binpacking_flow_counts}
    \\
    & z, \bx, \bw \text{ nonnegative integer}
    \label{eq:binpacking_flow_domain}
\end{alignat}
\end{subequations}

\begin{definition}
    Let $\calI$ be a set of items. We denote by $\calF(\calI)$ the feasible set of \Cref{eq:binpacking_flow}, projected on the $\bx$ and $z$ variables:
    \begin{alignat}{2}
        \calF(\calI) := \Set{
            \bx \in \mathbb{Z}_+^{B(B+1)/2}, 
            z \in \mathbb{Z}_+
            |
            \exists \ \bw \in \mathbb{Z}_+^B: \ \text{\Cref{eq:binpacking_flow_flowbalance,eq:binpacking_flow_counts}}
        }
    \end{alignat}
\end{definition}

In the online problem, items come in batch $\calI^t$ at time $t \in \calT$. We denote the uncertainty in batch $t$ by $\bXi^t = \bV^t$, with realized batches $\calI^t$ and sampled batches $\widetilde{\calI}^t$; we denote by $\bx^t, z^t$ the decision variables at time $t \in \calT$. Let also $\calI^{1:t-1} := \calI^1 \cup \dots \cup \calI^{t-1}$ denote the set of all past items, and by $\bx^{1:t-1} := \sum_{\tau=1}^{t-1} \bx^\tau$ and $z^{1:t-1} := \sum_{\tau=1}^{t-1} z^\tau$ the cumulative past decisions (vector of utilization and number of bins). We can express the per-period objective function and feasible set for the current decisions $(\bx^t, z^t)$ as:
\begin{alignat}{3}
    \label{eq:mssip_binpacking_flow_objective}
    f^t(\bx^t, z^t, \calI^{1:t}) 
    & := z^t
    \\
    \label{eq:mssip_binpacking_flow_feasibleset}
    \calF_t(\bx^{1:t-1}, z^{1:t-1}, \calI^{1:t})
    & := \Set{
        \bx^t \in \mathbb{Z}_+^{B(B+1)/2},
        \ z^t \in \mathbb{Z}_+
        |
        (\bx^{1:t-1} + \bx^t, 
        z^{1:t-1} + z^t) \in \calF(\calI^{1:t})
    }
\end{alignat}
Therefore, the multi-stage stochastic programming formulation can be expressed as follows:
\begin{alignat}{2}
\label{eq:mssip_binpacking_flow}
    \mathbb{E}_{\bXi^{1:T}}
    \biggl[
        \max_{(\bx^1, z^1) \in \calF_1(\calI^{1})} \biggl\{ 
            & f^1(\bx^1, z^1, \calI^{1})
            + 
            \mathbb{E}_{\bXi^{2:T}}
            \biggl[
                \max_{(\bx^2, z^2) \in \calF_2(\bx^{1}, z^{1}, \calI^{1:2})} \biggl\{ 
                    f^2(\bx^2, z^2, \calI^{1:2})
                    + \dots
                    \\
                    & + \condE{\bXi^{T}}{
                        \max_{(\bx^T, z^T) \in \calF_T(\bx^{1:T-1}, z^{1:T-1}, \calI^{1:T})} 
                        \Bigl\{ f^T(\bx^T, z^T, \calI^{1:T})\Bigr\}
                    } 
                \dots
                \biggr\}
            \biggr]
        \biggr\} 
    \biggr]
    \nonumber
\end{alignat}

The single-sample OSO algorithm for the online batched bin packing problem is given in \Cref{alg:OSO_binpacking}. We consider a variant where all uncertainty realizations are sampled at the beginning of the horizon, for ease of theoretical analysis. We define the following problem at time $t$:
\begin{definition}
    We denote by $\IPbp{t}{\calI^t, \widetilde{\calI}^{t+1:T}}{\overline{\bx}^{1:t-1}, \overline{z}^{1:t-1}}$ the following integer program which is solved at time $t$ of the OSO algorithm, giving optimal solution $(\widetilde{\bx}^t, \widetilde{\bx}^{t+1:T}, \widetilde{z}^t, \widetilde{z}^{t+1:T})$:
    \begin{subequations}
    \label{eq:binpacking_flow_OSO}
    \begin{alignat}{3}
        \label{eq:binpacking_flow_OSO_obj}
        \min \quad
        & 
        \overline{z}^{1:t-1} + z^t + z^{t+1:T}
        \\
        \label{eq:binpacking_flow_OSO_nowfeas}
        \st \quad 
        & 
        (\bx^t, z^t) \in \calF_t(\overline{\bx}^{1:t-1}, \overline{z}^{1:t-1}, \calI^{1:t})
        \\
        \label{eq:binpacking_flow_OSO_nextfeas}
        & 
        (\bx^\tau, z^\tau) \in \calF_t(
            \overline{\bx}^{1:t-1} + \bx^{t:\tau-1}, 
            \overline{z}^{1:t-1} + z^{t:\tau-1},
            \calI^{1:t} \cup \widetilde{\calI}^{t+1:\tau}
        )
        &\quad& 
        \forall \ \tau \in \{t+1, \dots, T\}
    \end{alignat}
    \end{subequations}
    We call $(\widetilde{\bx}^t, \dots, \widetilde{\bx}^{T})$ an optimal extension of $\overline{\bx}^{1:t-1}$ given $(\calI^t, \widetilde{\calI}^{t+1:T})$. 
    We also denote by $\OPTbp{t}{\calI^t, \widetilde{\calI}^{t+1:T}}{\overline{\bx}^{1:t-1}, \overline{z}^{1:t-1}}$ the corresponding optimal objective value, which is the minimum number of bins that can hold the items $\calI^{1:t} \cup \widetilde{\calI}^{t+1:T}$.
\end{definition}

\begin{algorithm}[h!]
    \caption{Single-sample OSO for the online batched bin packing problem.}
    \label{alg:OSO_binpacking}
    \begin{algorithmic} 
    \raggedright
    \item[] \textbf{Sample:} Sample batches $\widetilde{\calI}^1, \dots, \widetilde{\calI}^T$ from the common distribution $\calD$.
    \item Repeat, for $t \in \{1, \dots, T\}$:
    \begin{itemize}
    \item[] \textbf{Observe:} Observe true batch of items $\calI^t$.
    \item[] \textbf{Optimize:} Solve $\IPbp{t}{\calI^t, \widetilde{\calI}^{t+1:T}}{\overline{\bx}^{1:t-1}, \overline{z}^{1:t-1}}$, with optimal solution $(\widetilde{\bx}^t, \widetilde{\bx}^{t+1:T}, \widetilde{z}^t, \widetilde{z}^{t+1:T})$.
    \item[] \textbf{Implement:} Implement $\overline{\bx}^t = \widetilde{\bx}^t$ and $\overline{z}^t = \widetilde{z}^t$, discarding $\widetilde{\bx}^{t+1:T}$ and $\widetilde{z}^{t+1:T}$.
    \end{itemize}
\end{algorithmic}
\end{algorithm}

\subsection{Proof of \Cref{thm:binpacking}.}
\label{app:binpacking_proof}

The proof proceeds by tracking the evolution of $\OPTbp{t}{\widetilde{\calI}^t, \widetilde{\calI}^{t+1:T}}{\overline{\bx}^{1:t-1}, \overline{z}^{1:t-1}}$, that is, of the cost estimate given that the algorithm has already made decisions $\overline{\bx}^{1:t-1}$ using $\overline{z}^{1:t-1}$ bins. When $t=1$, this cost is $\OPTbp{1}{\widetilde{\calI}^1, \widetilde{\calI}^{2:T}}{\bo, 0}$, which is equal to the true optimum in expectation (since $\calI^t$ and $\widetilde{\calI}^t$ are sampled from the same distribution). At time $t = T+1$, this cost is $\OPTbp{T+1}{\varnothing, \varnothing}{\overline{\bx}^{1:T}, \overline{z}^{1:T}}$, which is the total cost of the OSO algorithm over the true instance. Therefore, to prove \Cref{thm:binpacking}, it suffices to bound:
\begin{align}
    \label{eq:costDiff}
    \OPTbp{t+1}{
        \widetilde{\calI}^{t+1}, \widetilde{\calI}^{t+2:T}
    }{
        \overline{\bx}^{1:t}, \overline{z}^{1:t}
    }
    - \OPTbp{t}{
        \widetilde{\calI}^{t}, \widetilde{\calI}^{t+1:T}
    }{
        \overline{\bx}^{1:t-1}, \overline{z}^{1:t-1}
    }.
\end{align}

Consider a fixed round $t$. 
Since $(\widetilde{\bx}^t, \widetilde{\bx}^{t+1:T}, \widetilde{z}^t, \widetilde{z}^{t+1:T})$ is optimal for $\IPbp{t}{\calI^t, \widetilde{\calI}^{t+1:T}}{\overline{\bx}^{1:t-1}, \overline{z}^{1:t-1}}$, and $\overline{\bx}^{t} := \widetilde{\bx}^{t}$, 
$(\overline{\bx}^t, \widetilde{\bx}^{t+1}, \dots, \widetilde{\bx}^{T})$ is an optimal extension of $\overline{\bx}^{1:t-1}$ given items $(\calI^t, \widetilde{\calI}^{t+1:T})$. Therefore, $(\widetilde{\bx}^{t+1}, \dots, \widetilde{\bx}^{T})$ is an optimal extension of $\overline{\bx}^{1:t}$ given items $(\widetilde{\calI}^{t+1}, \widetilde{\calI}^{t+2:T})$. That is:
\begin{align} 
    \label{obs:opt}
    \OPTbp{t}{
        \calI^t, \widetilde{\calI}^{t+1:T}
    }{
        \overline{\bx}^{1:t-1}, \overline{z}^{1:t-1}
    }
    \,=\,
    \OPTbp{t+1}{
        \widetilde{\calI}^{t+1}, \widetilde{\calI}^{t+2:T}
    }{
        \overline{\bx}^{1:t}, \overline{z}^{1:t}
    }
\end{align}
Thus, upper bounding the difference in \Cref{eq:costDiff} is equivalent to upper bounding 
\begin{align}
   \label{eq:costDiff2}
    \OPTbp{t}{
        \calI^t, \widetilde{\calI}^{t+1:T}
    }{
        \overline{\bx}^{1:t-1}, \overline{z}^{1:t-1}
    }
    - \OPTbp{t}{
        \widetilde{\calI}^{t}, \widetilde{\calI}^{t+1:T}
    }{
        \overline{\bx}^{1:t-1}, \overline{z}^{1:t-1}
    }.
\end{align}
That is, given $\overline{\bx}^{1:t-1}$, we need to show that the total cost is not significantly impacted whether the next batch is $\calI^t$ (the actual one) or $\widetilde{\calI}^t$ (the sampled one). We leverage ``coupling'' between the item sizes in $\calI^t$ and $\widetilde{\calI}^t$ to design an assignment for $\calI^t$ from an assignment for $\widetilde{\calI}^t$. We make use of \emph{monotone matchings}, which match two values if the latter is at least as large as the former. 

\begin{definition}[Monotone matching] 
    Given two sequences  $a_1, \dots, a_n \in \mathbb{R}$ and $b_1, \dots, b_n \in \mathbb{R}$, a \emph{monotone matching} $\pi$ from the $a_\ell$'s to the $b_\ell$'s is an injective function from a subset $L \in \{1, \dots, n\}$ to $\{1, \dots, n\}$ such that $a_\ell \leq b_{\pi(\ell)}$ for all $\ell \in L$. We say that $a_\ell$ is \emph{matched} to $b_{\pi(\ell)}$ if $\ell \in L$, and \emph{unmatched} otherwise. 
\end{definition}

Intuitively, thinking of  $a_1, \dots, a_n$ and $b_1, \dots, b_n$ as sequences of item sizes, a monotone matching indicates that, if item $b_{\pi(\ell)}$ is assigned to some bin, then we can replace it with item $a_\ell$ without violating the bin's capacity.
In other words, we can use an assignment of the items $b_1, \dots, b_n$ to come up with an assignment of the matched items in $a_1, \dots, a_n$ using the same bins. \cite{rhee1993lineii} showed that if the two sequences are i.i.d. from the same distribution, then almost all items can be matched using a monotone matching.

\begin{theorem}[Monotone Matching Theorem~\citep{rhee1993lineii}]
    \label{thm:rheeTal}
    Define independent random variables $A_1, \dots, A_n$ and $B_1, \dots, B_n$ from distribution $\calD$ over $[0,1]$. There is a constant $c$ such that with probability at least $1 - \exp{-c \log^{3/2} n}$ there is a monotone matching $\pi$ of the $A_\ell$'s to the $B_\ell$'s where at most $c \sqrt{n} \log^{3/4} n$ of the $A_\ell$'s are unmatched.
\end{theorem}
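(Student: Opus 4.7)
The plan is to introduce the telescoping potential
\[
\Phi_t := \OPTbp{t}{\widetilde{\calI}^t, \widetilde{\calI}^{t+1:T}}{\overline{\bx}^{1:t-1}, \overline{z}^{1:t-1}}, \qquad t = 1, \dots, T+1,
\]
which is the minimum number of bins needed to extend the algorithm's actual prefix $\overline{\bx}^{1:t-1}$ with the sampled remainder $\widetilde{\calI}^{t:T}$. Two boundary identities will anchor the argument. First, at $t=1$ no decisions have been made and $\widetilde{\calI}^{1:T}$ has the same joint distribution as the true instance, so $\E{\Phi_1} = \E{\OPT}$. Second, at $t = T+1$ no items remain, so $\Phi_{T+1}$ is exactly the total bin count produced by single-sample OSO. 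Telescoping gives
\[
\E{\text{OSO}} - \E{\OPT} \;=\; \sum_{t=1}^{T} \E{\Phi_{t+1} - \Phi_t}.
\]
Moreover, identity~(\ref{obs:opt}) from the excerpt rewrites $\Phi_{t+1} = \OPTbp{t}{\calI^t, \widetilde{\calI}^{t+1:T}}{\overline{\bx}^{1:t-1}, \overline{z}^{1:t-1}}$, so each increment compares, under an identical fixed history, the optimal extension when the \emph{true} batch $\calI^t$ arrives versus the \emph{sampled} batch $\widetilde{\calI}^t$.

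The next step is to bound each increment via monotone matching. Because $\calI^t$ and $\widetilde{\calI}^t$ are two independent size-$q$ samples from $\calD$, Theorem~\ref{thm:rheeTal} applied to their size sequences delivers, outside a failure event $G_t^{c}$ of probability at most $\exp{-c_0 \log^{3/2} q}$, a monotone matching $\pi$ from $\calI^t$ to $\widetilde{\calI}^t$ leaving at most $c_0 \sqrt{q}\,\log^{3/4} q$ items of $\calI^t$ unmatched. On $G_t$ I would take any packing attaining $\Phi_t$---which places $\widetilde{\calI}^t$ and the sampled future on top of $\overline{\bx}^{1:t-1}$---and modify it in two steps: first, for every matched pair $(V_\ell, \widetilde{V}_{\pi(\ell)})$, substitute the weakly smaller $V_\ell$ for $\widetilde{V}_{\pi(\ell)}$ in its bin (bin capacity is preserved since $V_\ell \leq \widetilde{V}_{\pi(\ell)}$) and delete every item of $\widetilde{\calI}^t$ outside the range of $\pi$; second, open one fresh bin for each of the at most $c_0 \sqrt{q} \log^{3/4} q$ unmatched items of $\calI^t$. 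The resulting packing is a feasible extension of $\overline{\bx}^{1:t-1}$ for $\calI^{1:t} \cup \widetilde{\calI}^{t+1:T}$, certifying $\Phi_{t+1} - \Phi_t \leq c_0 \sqrt{q}\,\log^{3/4} q$ on $G_t$.

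To finish, I would split the expectation along the event $G := \bigcap_{t=1}^{T} G_t$ that a good matching exists in every round. On $G$, the per-round bounds accumulate to regret at most $T \cdot c_0 \sqrt{q}\,\log^{3/4} q = c_0\, n \log^{3/4} q/\sqrt{q}$. For $G^c$, a union bound gives $\Prob{G^c} \leq T \exp{-c_0 \log^{3/2} q} = (n/q)\exp{-c_0 \log^{3/2} q}$, while the trivial per-round bound $\Phi_{t+1} - \Phi_t \leq q$ (pack the $q$ new items in $q$ fresh bins) telescopes to at most $n$ extra bins across the horizon. Combining,
\[
\E{\text{OSO}} - \E{\OPT} \;\leq\; c_0 \, \frac{n \log^{3/4} q}{\sqrt{q}} \;+\; \frac{n^2}{q}\, \exp{-c_0 \log^{3/2} q},
\]
and the hypothesis $\sqrt{q}\,(\log^{3/4} q)\, e^{c \log^{3/2} q} \geq n$ with $c < c_0$ sufficiently small is precisely what forces the tail term below $\calO(n \log^{3/4} q/\sqrt{q})$, yielding the claimed regret. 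I expect the main obstacle to be verifying that the swap-and-delete construction yields a valid packing consistent with the fixed prefix $\overline{\bx}^{1:t-1}$ under the flow-based formulation~(\ref{eq:binpacking_flow}); this reduces to noting that the only bin-level constraint is that the contained item sizes sum to at most $B$, so shrinking or removing items is always safe and the path decomposition encoding $\overline{\bx}^{1:t-1}$ is left untouched by the modifications. A secondary subtlety is arguing the boundary identity $\E{\Phi_1} = \E{\OPT}$, which follows because $\widetilde{\calI}^{1:T}$ and $\calI^{1:T}$ are drawn from the same product distribution, so $\Phi_1$ and $\OPT$ have identical marginal laws.
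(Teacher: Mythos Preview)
Your proposal does not address the stated theorem. Theorem~\ref{thm:rheeTal} is the Monotone Matching Theorem of Rhee and Talagrand, a probabilistic combinatorics result asserting that two i.i.d.\ size-$n$ samples from a common distribution on $[0,1]$ admit, with high probability, a monotone matching leaving $\calO(\sqrt{n}\log^{3/4} n)$ elements unmatched. The paper does not prove this theorem; it is quoted from \cite{rhee1993lineii} and invoked as a black box. Your write-up likewise invokes it as a black box (``Theorem~\ref{thm:rheeTal} applied to their size sequences delivers \ldots''), so nothing in your proposal constitutes a proof of the statement you were given.

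What you have actually written is a proof of Theorem~\ref{thm:binpacking}, the OSO regret bound for online batched bin packing. Viewed as such, your argument is correct and coincides with the paper's: you define the same telescoping potential, use identity~\eqref{obs:opt} to recast each increment as a comparison between the true batch $\calI^t$ and the sampled batch $\widetilde{\calI}^t$ under a common fixed history, bound that increment by $c\sqrt{q}\log^{3/4} q$ via monotone matching plus the swap-and-open-fresh-bins construction (this is exactly the content of the paper's Lemma~\ref{lemma:exchange} and Algorithm~\ref{alg:exchange}), and finish with the same union bound and tail estimate. Even your final inequality $c_0\,n\log^{3/4} q/\sqrt{q} + (n^2/q)\exp{-c_0\log^{3/2} q}$ matches the paper's after substituting $T = n/q$. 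The only cosmetic difference is that the paper packages the per-round matching argument into a separate lemma and spells out the flow-variable bookkeeping for the swap in detail, whereas you argue it at the level of bin contents; both are valid since the flow formulation encodes exactly the bin-sum constraint.

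If your intent was indeed Theorem~\ref{thm:binpacking}, the proposal is fine. If you were asked for Theorem~\ref{thm:rheeTal} itself, you would need an entirely different argument---the original proof in \cite{rhee1993lineii} is nontrivial and relies on a multiscale decomposition of the empirical distributions together with concentration for the partial-sum process; none of that appears here.
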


Using this result we can upper bound the difference given in \Cref{eq:costDiff2} as follows. 

\begin{lemma} 
    \label{lemma:exchange}
    There is a constant $c$ such that with probability at least $1 - \exp{-c\log^{3/2} q}$,
    \begin{equation}
        \OPTbp{t}{
            \calI^t, \widetilde{\calI}^{t+1:T}
        }{
            \overline{\bx}^{1:t-1}, \overline{z}^{1:t-1}
        } 
        \,-\, 
        \OPTbp{t}{
            \widetilde{\calI}^t, \widetilde{\calI}^{t+1:T}
        }{
            \overline{\bx}^{1:t-1}, \overline{z}^{1:t-1}
        } 
        \,\leq\, 
        c \sqrt{q} \log^{3/4} q.
    \end{equation}
\end{lemma}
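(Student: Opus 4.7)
The plan is to use the Monotone Matching Theorem (\Cref{thm:rheeTal}) to convert an optimal packing for the sampled batch $\widetilde{\calI}^t$ into a feasible (not necessarily optimal) packing for the true batch $\calI^t$, paying only a small additive overhead in opened bins. Because the $q$ items in $\calI^t$ and the $q$ items in $\widetilde{\calI}^t$ are i.i.d.\ samples from the common distribution $\calD$ (which we may rescale to $[0,1]$ by dividing sizes by $B$), applying \Cref{thm:rheeTal} with $n=q$ yields, with probability at least $1-\exp{-c\log^{3/2} q}$, a monotone matching $\pi:L\to[q]$ of the sizes in $\calI^t$ into those of $\widetilde{\calI}^t$ such that $|[q]\setminus L|\le c\sqrt{q}\log^{3/4} q$ and $A_\ell\le B_{\pi(\ell)}$ for every $\ell\in L$. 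Call this high-probability event $G$.

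Conditional on $G$, I would construct a feasible solution to $\IPbp{t}{\calI^t,\widetilde{\calI}^{t+1:T}}{\overline{\bx}^{1:t-1},\overline{z}^{1:t-1}}$ from an optimal solution $(\widetilde{\bx}^{t:T},\widetilde{z}^{t:T})$ of $\IPbp{t}{\widetilde{\calI}^t,\widetilde{\calI}^{t+1:T}}{\overline{\bx}^{1:t-1},\overline{z}^{1:t-1}}$ via the following swap procedure. First, decompose the cumulative flow vector $\overline{\bx}^{1:t-1}+\widetilde{\bx}^{t:T}$ into $\overline{z}^{1:t-1}+\widetilde{z}^{t:T}$ unit source-to-sink paths, each representing one opened bin with specific item-size slots, and identify every item in $\calI^{1:t-1}\cup\widetilde{\calI}^t\cup\widetilde{\calI}^{t+1:T}$ with a packing arc consistent with the per-size counts of \Cref{eq:binpacking_flow_counts}. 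For each matched pair $\ell\in L$, replace item $B_{\pi(\ell)}\in\widetilde{\calI}^t$ by item $A_\ell\in\calI^t$ in the same slot; since $A_\ell\le B_{\pi(\ell)}$, this amounts to shortening one packing arc and extending an adjacent loss arc, which preserves flow balance. Unmatched items of $\widetilde{\calI}^t$ are deleted outright (their slots become loss arcs), and each of the at most $c\sqrt{q}\log^{3/4} q$ unmatched items of $\calI^t$ is placed into a dedicated new bin (feasible since item sizes are at most $B$). Items of $\widetilde{\calI}^{t+1:T}$ and the committed prefix $\overline{\bx}^{1:t-1},\overline{z}^{1:t-1}$ are untouched by construction.

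The resulting flow is feasible for $\calF_t(\overline{\bx}^{1:t-1},\overline{z}^{1:t-1},\calI^{1:t})$ and its continuation handling $\widetilde{\calI}^{t+1:T}$, and it opens at most $\overline{z}^{1:t-1}+\widetilde{z}^{t:T}+|[q]\setminus L|$ bins overall. Hence on $G$ we have
\begin{equation*}
\OPTbp{t}{\calI^t,\widetilde{\calI}^{t+1:T}}{\overline{\bx}^{1:t-1},\overline{z}^{1:t-1}}
\ \le\ \OPTbp{t}{\widetilde{\calI}^t,\widetilde{\calI}^{t+1:T}}{\overline{\bx}^{1:t-1},\overline{z}^{1:t-1}} + c\sqrt{q}\log^{3/4} q,
\end{equation*}
which is the claimed inequality. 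The main technical subtlety is that the flow formulation tracks aggregate per-size counts rather than bin-level assignments, so the swap has to be phrased as a surgery on a unit-path decomposition of the flow; verifying that substituting a smaller item for a larger one within a given bin corresponds to a valid local modification of one packing arc and a contiguous string of loss arcs is routine bookkeeping but needs to be carried out explicitly.
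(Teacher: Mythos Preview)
Your proposal is correct and follows essentially the same route as the paper: apply the Monotone Matching Theorem to the two i.i.d.\ batches of size $q$, then convert the optimal packing for $\widetilde{\calI}^t$ into a feasible packing for $\calI^t$ by substituting each matched (smaller) true item into the slot of its matched (larger) sampled counterpart and opening one fresh bin per unmatched true item. The only cosmetic difference is that you phrase the surgery via a unit-path decomposition of the cumulative flow, whereas the paper works directly on the period-$t$ flow variables $(\widetilde{\bx}^t,\widetilde{z}^t,\bw^{\text{now}})$---for each sampled size $\tilde s$ it locates some $j$ with $\widetilde{x}^t_{j,j+\tilde s}\ge 1$ and edits that arc in place---thereby avoiding the need to re-attribute arcs to time periods after decomposing; the paper then verifies both the ``now'' and ``next'' feasibility constraints explicitly using the fact that the loss vector only increased, which is precisely the bookkeeping you flag as routine.
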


\begin{proof}{Proof of \Cref{lemma:exchange}.}
The strategy will be to start with the packing solution for $\IPbp{t}{
    \widetilde{\calI}^t, \widetilde{\calI}^{t+1:T}
}{
    \overline{\bx}^{1:t-1}, \overline{z}^{1:t-1}
}$ and construct a ``good enough'' packing solution for $\IPbp{t}{
    \calI^t, \widetilde{\calI}^{t+1:T}
}{
    \overline{\bx}^{1:t-1}, \overline{z}^{1:t-1}
}$ i.e., one that uses at most $c \sqrt{q} \log^{3/4} q$ more bins. 
Then the optimal solution $\OPTbp{t}{
    \calI^t, \widetilde{\calI}^{t+1:T}
}{
    \overline{\bx}^{1:t-1}, \overline{z}^{1:t-1}
}$ will also use at most $c \sqrt{q} \log^{3/4} q$ more bins than $\OPTbp{t}{
    \widetilde{\calI}^t, \widetilde{\calI}^{t+1:T}
}{
    \overline{\bx}^{1:t-1}, \overline{z}^{1:t-1}
}$, completing the proof.

Let $V^t_1, \dots, V^t_q$ be the realized item sizes in the $t$-th batch, and $\widetilde{V}^t_1, \dots, \widetilde{V}^t_q$ be the sampled item sizes in the $t$-th batch. Let $(\widetilde{\bx}^t, \widetilde{\bx}^{t+1:T}, \widetilde{z}^t, \widetilde{z}^{t+1:T})$ be the optimal solution for $\IPbp{t}{
    \widetilde{\calI}^t, \widetilde{\calI}^{t+1:T}
}{
    \overline{\bx}^{1:t-1}, \overline{z}^{1:t-1}
}$ given the previous assignments $\overline{\bx}^{1:t-1}$, with auxillary variables $\bw^{\text{now}}$ and $\bw^{\text{next}}$ corresponding to \Cref{eq:binpacking_flow_OSO_nowfeas,eq:binpacking_flow_OSO_nextfeas}.
We use a monotone matching to construct a feasible solution for $\IPbp{t}{
    \calI^t, \widetilde{\calI}^{t+1:T}
}{
    \overline{\bx}^{1:t-1}, \overline{z}^{1:t-1}
}$. 

Let $\pi$ be a monotone matching from $\{V^t_1 / B, \dots, V^t_q / B\}$, the normalized true item sizes in batch $t$, to $\{\widetilde{V}^t_1 / B, \dots, \widetilde{V}^t_q / B\}$, the normalized sampled item sizes in batch $t$, given by \Cref{thm:rheeTal}. This implies that for all $\ell \in \{1, \dots, q\}$ matched by $\pi$, we have $V^t_\ell \leq \widetilde{V}^t_{\pi(\ell)}$. 
We construct a new solution $(\widehat{\bx}^t, \widehat{z}^t, \widehat{\bw}^{\text{now}})$ from $(\widetilde{\bx}^t, \widetilde{z}^t, {\bw}^{\text{now}})$ according to \Cref{alg:exchange}. The overall idea is that for matched elements in $\{V^t_1, \dots, V^t_q\}$, we can replace sampled items with true items that are not larger than those sampled items. True items that are unmatched are assigned to a new bin each.
\begin{algorithm}[h!]
    \caption{Algorithm for constructing a solution $(\widehat{\bx}^t, \widehat{z}^t, \widehat{\bw}^{\text{now}})$ from $(\widetilde{\bx}^t, \widetilde{z}^t, \bw^{\text{now}})$.}
    \label{alg:exchange}
    \begin{algorithmic} 
    \State \textbf{Initialization:} Define $\pi$ as a matching between item sizes in $\calI^t$ and item sizes in $\widetilde{\calI}^t$.
    \State Define $\widehat{\bx}^t \leftarrow \widetilde{\bx}^t$, $\widehat{z}^t \leftarrow \widetilde{z}^t$ and $\widehat{\bw}^{\text{now}} \leftarrow \bw^{\text{now}}$.
    \State Define $L, M \subset \{1, \dots, q\}$ as domain, range of $\pi$.
    \State Define $L^c \leftarrow \{1, \dots, q\} \setminus L$ and $M^c \leftarrow \{1, \dots, q\} \setminus M$.
    \For{$\ell \in L$}
        \Comment{Replace item $\widetilde{V}^t_{\pi(\ell)}$ with item $V^t_{\ell}$ in the same bin}
        \State Define $m := \pi(\ell)$.
        \State Define $s := V^t_\ell$ and $\widetilde{s} := \widetilde{V}^t_{m}$; $V^t_\ell \leq \widetilde{V}^t_{m}$, so $s \leq \tilde{s}$.
        \State Define $j$ as the minimum such that $\widehat{x}^t_{j,j+\widetilde{s}} \geq 1$.
        \State Replace packing arc $(j, j + \widetilde{s})$ with a packing arc $(j, j + s)$ and empty arcs from $s$ to $\widetilde{s}$:
        \begin{subequations}
        \begin{alignat}{2}
            \widehat{x}^t_{j,j+\widetilde{s}}
            & \,\leftarrow\, \widehat{x}^t_{j,j+\widetilde{s}} \ - 1
            \\
            \widehat{x}^t_{j,j+s}
            & \,\leftarrow\, \widehat{x}^t_{j,j+s} \ + 1
            \\
            \widehat{w}^{\text{now}}_{k} 
            & \,\leftarrow\, \widehat{w}^{\text{now}}_{k} \ + 1
            &\quad& \forall \ k \in \{s, \dots, \widetilde{s} - 1\}
        \end{alignat}
        \end{subequations}
    \EndFor
    \For{$\ell \in L^c$}
        \Comment{Replace any remaining $\widetilde{V}^t_{m}$ with item $V^t_{\ell}$ in its own bin}
        \State Pick any $m \in M^c$; update $M^c \leftarrow M^c \setminus \{m\}$.
        \State Define $s := V^t_\ell$ and $\widetilde{s} := \widetilde{V}^t_{m}$.
        \State Define $j$ as the minimum such that $\widehat{x}^t_{j,j+\widetilde{s}} \geq 1$.
        \State Replace packing arc $(j, j + \widetilde{s})$ with empty arcs from $j$ to $j + \widetilde{s}$;
        \begin{subequations}
        \begin{alignat}{2}
            \widehat{x}^t_{j,j+\widetilde{s}}
            & \,\leftarrow\, \widehat{x}^t_{j,j+\widetilde{s}} \ - 1
            \\
            \widehat{w}^{\text{now}}_{k} 
            & \,\leftarrow\, \widehat{w}^{\text{now}}_{k} \ + 1
            &\quad& \forall \ k \in \{j, \dots, j + \widetilde{s} - 1\}
        \end{alignat}
        \end{subequations}
        \State Add flow on packing arc $(0, s)$ and empty arcs for the rest of the box:
        \begin{subequations}
        \begin{alignat}{2}
            \widehat{x}^t_{0,s}
            & \,\leftarrow\, \widehat{x}^t_{0,s} \ + 1
            \\
            \widehat{z}^t 
            & \,\leftarrow\, \widehat{z}^t \ + 1
            \\
            \widehat{w}^{\text{now}}_{k} 
            & \,\leftarrow\, \widehat{w}^{\text{now}}_{k} \ + 1
            &\quad& \forall \ k \in \{s, \dots, B - 1\}
        \end{alignat}
        \end{subequations}
    \EndFor
    \State \textbf{return} $(\widehat{\bx}^t, \widehat{\bw}^{\text{now}})$
\end{algorithmic}
\end{algorithm}

We first verify that $(\widehat{\bx}^t, \widetilde{\bx}^{t+1:T}, \widehat{z}^t, \widetilde{z}^{t+1:T})$ is a solution that packs batches $\calI^t, \widetilde{\calI}^{t+1:T}$ given the history, i.e. is a feasible solution to $\IPbp{t}{
    \calI^t, \widetilde{\calI}^{t+1:T}
}{
    \overline{\bx}^{1:t-1}, \overline{z}^{1:t-1}
}$. We show this in two parts:

\begin{enumerate}
    \item \underline{\Cref{eq:binpacking_flow_OSO_nowfeas}}. We claim that $\widehat{\bw}^{\text{now}}$ would certify that $(\overline{\bx}^{1:t-1} + \widehat{\bx}^t, \overline{z}^{1:t-1} + \widehat{z}^t)$ belongs in $\calF(\calI^{1:t-1} \cup \calI^t)$. First note that $\bw^{\text{now}}$ would certify that $(\overline{\bx}^{1:t-1} + \widetilde{\bx}^t, \overline{z}^{1:t-1} + \widetilde{z}^t)$ belongs in $\calF(\calI^{1:t-1} \cup \widetilde{\calI}^t)$. Next, at each iteration of either for loop in \Cref{alg:exchange}, flow conservation (\Cref{eq:binpacking_flow_flowbalance}) is maintained at all nodes $j \neq 0, B$; flow balance is also maintained at $j = 0, B$ regardless of whether a new bin is opened for unmatched items in $\calI^t$. Hence flow conservation remains satisfied for $(\overline{\bx}^{1:t-1} + \widehat{\bx}^t, \overline{z}^{1:t-1} + \widehat{z}^t, \widehat{\bw}^{\text{now}})$.

    Also, \Cref{eq:binpacking_flow_counts} is satisfied. This is because $\sum_{j=0}^{B-s} \widetilde{x}^t_{j,j+s} = c_s(\widetilde{\calI}^t)$ (since $\sum_{j=0}^{B-s} \overline{x}^{1:t-1}_{j,j+s} + \widetilde{x}^{t}_{j,j+s} = c_s(\calI^{1:t-1} \cup \widetilde{\calI}^t)$ and $\sum_{j=0}^{B-s} \overline{x}^{1:t-1}_{j,j+s} = c_s(\calI^{1:t-1})$), and at each iteration of either for loop in \Cref{alg:exchange} one item in $\widetilde{\calI}^t$ is swapped out for one item in $\calI^t$. Therefore, at termination, $\sum_{j=0}^{B-s} \widehat{x}^t_{j,j+s} = c_s(\calI^t)$, and $\sum_{j=0}^{B-s} \overline{x}^{1:t-1}_{j,j+s} + \widehat{x}^{t}_{j,j+s} = c_s(\calI^{1:t-1} \cup \calI^t)$. Hence, $(\widehat{\bx}^t, \widehat{z}^t)$ satisfies \Cref{eq:binpacking_flow_OSO_nowfeas}: $(\overline{\bx}^{1:t-1} + \widehat{\bx}^t, \overline{z}^{1:t-1} + \widehat{z}^t) \in \calF(\calI^{1:t-1} \cup \calI^t)$.

    \item \underline{\Cref{eq:binpacking_flow_OSO_nextfeas}}. We claim that $\widehat{\bw}^{\text{now}} + \bw^{\text{next}} -\bw^{\text{now}}$ would certify that $(\overline{\bx}^{1:t-1} + \widehat{\bx}^t + \widetilde{\bx}^{t+1:T}, \overline{z}^{1:t-1} + \widehat{z}^t + \widetilde{z}^{t+1:T})$ belongs in $\calF(\calI^{1:t-1} \cup \calI^t \cup \widetilde{\calI}^{t+1:T})$. Firstly it is positive, since $\widehat{\bw}^{\text{now}}$ starts from $\bw^{\text{now}}$ and is never decremented in \Cref{alg:exchange}. Next, \Cref{eq:binpacking_flow_flowbalance} is satisfied for:
    \begin{itemize}
        \item $\left(
            \overline{\bx}^{1:t-1}
            + \widetilde{\bx}^{t},
            \overline{z}^{1:t-1}
            + \widetilde{z}^{t},
            {\bw}^{\text{now}}
        \right)$ and $\left(
            \overline{\bx}^{1:t-1}
            + \widetilde{\bx}^{t},
            + \widetilde{\bx}^{t+1:T},
            \overline{z}^{1:t-1}
            + \widetilde{z}^{t},
            + \widetilde{z}^{t+1:T},
            {\bw}^{\text{next}}
        \right)$, and therfore the difference $\left(
            \widetilde{\bx}^{t+1:T},
            \widetilde{z}^{t+1:T},
            {\bw}^{\text{next}} - {\bw}^{\text{now}}
        \right)$;
        \item $\left(
            \overline{\bx}^{1:t-1}
            + \widehat{\bx}^{t},
            \overline{z}^{1:t-1}
            + \widehat{z}^{t},
            \widehat{\bw}^{\text{now}}
        \right)$ from above;
        \item and therefore the sum $\left(
            \overline{\bx}^{1:t-1}
            + \widehat{\bx}^{t},
            + \widetilde{\bx}^{t+1:T},
            \overline{z}^{1:t-1}
            + \widehat{z}^{t},
            + \widetilde{z}^{t+1:T},
            \widehat{\bw}^{\text{now}}
            + {\bw}^{\text{next}} 
            - {\bw}^{\text{now}}
        \right)$.
    \end{itemize} Finally, \Cref{eq:binpacking_flow_counts} is satisfied because $\widetilde{\bx}^{t+1:T}$ satisfies the counts of $\widetilde{\calI}^{t+1:T}$ for each item size.
\end{enumerate}

We next evaluate the quality of the constructed solution $(\widehat{\bx}^t, \widetilde{\bx}^{t+1:T}, \widehat{z}^t, \widetilde{z}^{t+1:T})$. This is by definition equal to $\overline{z}^{1:t-1} + \widehat{z}^t + \widetilde{z}^{t+1:T}$. $\widehat{z}^t$ is equal to $\widetilde{z}^t$ plus the number of unmatched elements in $\pi$, which from \Cref{thm:rheeTal} is at most $c \sqrt{q} \log^{3/4} q$ with probability at least $1 - \exp{-c \log^{3/2} q}$. Therefore, with probability at least $1 - \exp{-c \log^{3/2} q}$, we have:
\begin{alignat}{2}
    \OPTbp{t}{
        \calI^t, \widetilde{\calI}^{t+1:T}
    }{
        \overline{\bx}^{1:t-1}, \overline{z}^{1:t-1}
    } 
    & \leq 
    \overline{z}^{1:t-1} + \widehat{z}^t + \widetilde{z}^{t+1:T}
    \\
    & \leq 
    \overline{z}^{1:t-1} + \widetilde{z}^t + \widetilde{z}^{t+1:T} 
    + c \sqrt{q} \log^{3/4} q
    \\
    & = \OPTbp{t}{
        \widetilde{\calI}^t, \widetilde{\calI}^{t+1:T}
    }{
        \overline{\bx}^{1:t-1}, \overline{z}^{1:t-1}
    } 
    + c \sqrt{q} \log^{3/4} q
\end{alignat}
which concludes the lemma.\myqed
\end{proof}

\subsubsection*{Proof of \Cref{thm:binpacking}.}

By taking a union bound, \Cref{lemma:exchange} holds for all $t \in \{1, \dots, T\}$ with probability at least $1 - T\, \exp{-c \log^{3/2} q}$. Under such event, using \Cref{obs:opt} we have
\begin{align}
    \OPTbp{t+1}{
        \widetilde{\calI}^{t+1}, \widetilde{\calI}^{t+2:T}
    }{
        \overline{\bx}^{1:t}, 
        \overline{z}^{1:t}
    }
    & \,=\, 
    \OPTbp{t}{
        \calI^t, \widetilde{\calI}^{t+1:T}
    }{
        \overline{\bx}^{1:t-1}, 
        \overline{z}^{1:t-1}
    }
    \\
    & \,\leq\,
    \OPTbp{t}{
        \widetilde{\calI}^t, \widetilde{\calI}^{t+1:T}
    }{
        \overline{\bx}^{1:t-1}, 
        \overline{z}^{1:t-1}
    }
    + c \sqrt{q} \log^{3/4} q
\end{align}
Telescoping this inequality over $t \in \{1, \dots, T\}$, we obtain:
\begin{align}
    \OPTbp{T+1}{
        \varnothing, 
        \varnothing
    }{
        \overline{\bx}^{1:T}, 
        \overline{z}^{1:T}
    }
    & \,\leq\,
    \OPTbp{1}{
        \widetilde{\calI}^1, \widetilde{\calI}^{2:T}
    }{
        \bo, 
        0
    }
    + c T \sqrt{q} \log^{3/4} q
\end{align}

Recall that $\OPTbp{T+1}{
    \varnothing, 
    \varnothing
}{
    \overline{\bx}^{1:T}, 
    \overline{z}^{1:T}
}$ is the cost of our OSO algorithm on the true items, denoted by $\ALG(\calI^{1:T})$, and that 
$\OPTbp{1}{
    \widetilde{\calI}^1, \widetilde{\calI}^{2:T}
}{
    \bo, 
    0
}$ is the offline optimum for the sampled items, denoted by $\OPT(\widetilde{\calI}^{1:T})$. Thus, with probability at least $1 - T\, \exp{-c \log^{3/2} q}$, we have:
\begin{equation}
    \ALG(\calI^{1:T}) 
    \,\leq\, 
    \OPT(\widetilde{\calI}^{1:T}) 
    + c T \sqrt{q} \log^{3/4} q.
\end{equation}

Let $G$ be the event that this inequality holds, and let $G^c$ be its complement. Note that $G$ is a random event that depends on both $\calI^{1:T}$ (the problem instance) and $\widetilde{\calI}^{1:T}$ (the sampling procedure). Taking expectations over the true item sizes, we obtain:
\begin{equation}
    \condEmid{\calI^{1:T}}{\ALG(\calI^{1:T})}{G}
    \,\leq\, 
    \OPT(\widetilde{\calI}^{1:T})
    + c T \sqrt{q} \log^{3/4} q.
\end{equation}

Therefore, since all items can fit in $n$ bins, we have:
\begin{align}
    \condE{\calI^{1:T}}{\ALG(\calI^{1:T})}
    & = 
    \condEmid{\calI^{1:T}}{\ALG(\calI^{1:T})}{G} \Prob{G}
    + \condEmid{\calI^{1:T}}{\ALG(\calI^{1:T})}{G^c} (1-\Prob{G})
    \\
    & \leq 
    \condEmid{\calI^{1:T}}{\ALG(\calI^{1:T})}{G} \cdot 1
    + n T \exp{-c \log^{3/2} q}
    \\
    & \leq 
    \OPT(\widetilde{\calI}^{1:T})
    + c T \sqrt{q} \log^{3/4} q
    + n T \exp{-c \log^{3/2} q}
\end{align}
and taking a further expectation over the sampled item sizes gives:
\begin{align}
    \E{\ALG(\calI^{1:T})}
    & \leq 
    \E{\OPT(\widetilde{\calI}^{1:T})}
    + c T \sqrt{q} \log^{3/4} q
    + n T \exp{-c \log^{3/2} q}
    \\
    & = 
    \E{\OPT({\calI}^{1:T})}
    + c T \sqrt{q} \log^{3/4} q
    + n T \exp{-c \log^{3/2} q}
\end{align}
Finally, the assumption on $q$ guarantees that $n \exp{-c \log^{3/2} q}\leq \sqrt{q} \log^{3/4} q$, so the expected cost is at most $\E{\OPT({\calI}^{1:T})} +  \calO(T \sqrt{q}\, \log^{3/4} q)$. We conclude by leveraging the fact that $T = \frac{n}{q}$. \myqed

\subsection{Computational Results}
\label{app:binpacking_comp}

We compare the OSO algorithm to the resolving heuristics benchmarks for the online batched bin packing problem. The OSO algorithm provisions for a problem-specific regularizer; we add the following regularizer to the OSO problem solved at time $t$ to encourage packing items into fuller bins:
\begin{equation}
    \Psi(\bx^t) 
    = \frac{1}{|\calI^t|} \sum_{i=0}^{B-1} \sum_{j=i+1}^B 
    \left( 1 - \frac{j^2}{B^2} \right) x^t_{ij}
\end{equation}
This regularizer is similar to other approaches for online bin-packing. \cite{gupta_interior-point-based_2020} use a penalty of the form $\exp{- \varepsilon_t N_t(h)}$, where $N_t(h)$ denotes the number of bins filled to $h$ at time $t$ to discourage actions which deplete bins of levels with small $N(h)$. Instead, since we investigate bin packing instances with relatively large bins and fewer items, $N(h)$ is often 1 or 0 in our context; due to our batched setting, our regularizer prioritizes packing items into fuller bins. 

The problem that OSO solves at each iteration $t \in \calT$ is therefore given by:
\begin{subequations}
\label{eq:binpacking_flow_OSO_Omega}
\begin{alignat}{3}
   \label{eq:binpacking_flow_OSO_obj_Omega}
   \min \quad
   & 
   \overline{z}^{1:t-1} + z^t + z^{t+1:T} + \Psi(\bx^t)
   \\
   \label{eq:binpacking_flow_OSO_nowfeas_Omega}
   \st \quad 
   & 
   (\bx^t, z^t) \in \calF_t(\overline{\bx}^{1:t-1}, \overline{z}^{1:t-1}, \calI^{1:t})
   \\
   \label{eq:binpacking_flow_OSO_nextfeas_Omega}
   & 
   (\bx^\tau, z^\tau) \in \calF_t(
       \overline{\bx}^{1:t-1} + \bx^{t:\tau-1}, 
       \overline{z}^{1:t-1} + z^{t:\tau-1},
       \calI^{1:t} \cup \widetilde{\calI}^{t+1:\tau}
   )
   &\quad& 
   \forall \ \tau \in \{t+1, \dots, T\}
\end{alignat}
\end{subequations}

We construct online batched bin packing instances with $T$ time periods, each with a batch of $|\calI^t| = q$ items. Item sizes are drawn from a uniform distribution over $\{0, 1, \dots, 100\}$ with $B = 100$. For each combination of parameters, we generate 10 random instances and, for each one, we run OSO 5 times. We impose a time limit of 100 seconds for the integer program solved at each iteration. \Cref{tab:binpacking_flow_obj_time} reports the objective values and computation times.

\begin{table}
    \centering
    {
    \setlength{\tabcolsep}{4pt}
    \begin{tabular}{
        S
        *{2}{S[table-format=2]}
        *{4}{S[table-format=+1.3\%,retain-explicit-plus=true]}
        S[table-format=2.3]
        S[table-format=3.3]
        S[table-format=3.2]
        S[table-format=4.1]
    }
        \toprule
        & & & 
        \multicolumn{4}{c}{Objective increase}
        & 
        \multicolumn{4}{c}{Computation time (s)}
        \\
        \cmidrule(lr){4-7}
        \cmidrule(lr){8-11}
        {With $\Psi(\cdot)$?}
        & {$T$} & {$q$}
        & {Myopic} & {CE} & {OSO-1} & {OSO-5}
        & {Myopic} & {CE} & {OSO-1} & {OSO-5}
        \\
        \midrule
        {\xmark} & 16 & 64 & +4.794\% & +4.490\% & +3.634\% & +0.933\% & 2.811 & 4.780 & 11.93 & 393.7 \\ 
        {\xmark} & 32 & 32 & +6.693\% & +6.440\% & +4.942\% & +1.153\% & 3.405 & 6.956 & 14.46 & 572.5 \\ 
        {\xmark} & 64 & 16 & +6.627\% & +8.198\% & +5.536\% & +1.506\% & 4.434 & 8.428 & 20.25 & 398.3 \\ 
        \midrule
        {\cmark} & 16 & 64 & +1.439\% & +1.363\% & +1.318\% & +0.715\% & 52.56 & 74.63 & 188.0 & 437.5 \\ 
        {\cmark} & 32 & 32 & +1.811\% & +2.007\% & +1.915\% & +1.212\% & 73.60 & 108.4 & 211.8 & 1026 \\ 
        {\cmark} & 64 & 16 & +2.096\% & +2.154\% & +2.141\% & +1.420\% & 5.898 & 69.80 & 159.8 & 850.6 \\ 
        \bottomrule 
    \end{tabular}
    }
    \caption{Geometric mean of percentage increase in bins opened over hindsight-optimal benchmark, and mean computational time in seconds. Averages taken over 10 random instances, and 5 random samples per instance for OSO. ``OSO-1'', ``OSO-5'': OSO algorithm with 1, 5 sample paths per iteration.}
    \label{tab:binpacking_flow_obj_time}
\end{table}

These results confirm our insights from the online resource allocation problem, showing that OSO improves upon the myopic and CE solutions, and that performance improves with more samples per iteration at the cost of longer computation times. Without the regularizer, the CE solution barely improves upon the myopic solution with large and medium batch sizes, and actually leads to deteriorated solutions with small batch sizes; in comparison, OSO-1 consistently returns higher-quality solutions than both benchmarks, with reductions in wasted capacity around 1--2 percentage points. Increasing the number of sample paths can achieve further cost improvements, albeit with one to two orders of magnitude increases in computational times. Moreover, these results show the impact of the regularizer in the online batched bin packing problem. Adding the regularizer can result in solution improvements across the board, at the cost of longer computational times. Still, the OSO-5 solution without the regularizer outperforms all benchmarks with the regularizer, further demonstrating the benefits of the sampling approach at the core of the OSO algorithm. Altogether, these results highlight the role of sampling and re-optimization to manage online arrivals in batched bin packing.
\section{Rack Placement}
\label{app:rackplacement}

Here, we provide details on our experimental setup for the rack placement problem, and the modeling modifications made during the deployment process.

\subsection{Experimental Setup}
\label{app:rackplacement_comp_setup}

We build a simulated datacenter with two identical rooms. Each room has 4 top-level UPS devices; each UPS device is connected to 6 mid-level PDU devices, and each PDU device is connected to 3 leaf-level PSU devices. 
The regular capacity of each mid-level PDU and leaf-level PSU is respectively 20\% and 60\% of their parent's capacity. 
The top-level UPS devices have regular capacities equal to 75\% of their failover capacities, while the regular capacities of the PDU and PSU devices is 50\% of their failover capacities. Each room has 36 rows, each with 20 tiles. Each row is connected to two PSU devices in the same room with different parent PDU and UPS devices.

The reward is identical across demand requests, set to $r_i = 200$, while the number of racks $n_i$, power requirement per rack $\rho_i$, and cooling requirement per rack $\gamma_i$ for demand request $i$ are constructed from empirical distributions. 

In computational experiments, the perfect-information benchmark was solved with a 1-hour time limit. The resolving heuristics (OSO and the myopic and CE benchmarks) were solved with a 300-second limit per iteration and a 1\% optimality gap. We considered instances with a total of 150 items, thus 150, 30, or 15 time periods (corresponding to batch sizes of 1, 5, 10 respectively).

\subsection{Modeling Modifications in Production} 
\label{app:rackplacement_production_objectives}

We detail the modifications we made to our rack placement model discussed in \Cref{sec:real} to closely align recommendations with real-world considerations and preferences from data center managers. These modifications come in the form of the following secondary objectives. Throughout, we applied a small weight to these objectives to retain the primary goal of maximizing data center utilization.
\begin{itemize}
\item[--] {\it Room minimization.} 
This objective incentivizes compact data center configurations to reduce operational overhead. The room of row $r\in \calR$ is denoted by $\text{room}(r)\in \mathcal{M}$. 
We add a new binary variable $w_{im}^t$ denoting if room $m$ contains demand $i \in \calI_t$.
We add a term $- \sum_{i \in \calI_t} \sum_{m \in \calM} \lambda_m w_{im}^t$ to the objective to penalize placements in emptier rooms, where $\lambda_m$ is larger for emptier rooms. 
We link this variable to the placement decisions $\by^t$:
\begin{align}
    \label{eq:room_min}
    y_{ir}^t \leq w_{i,\text{room}(r)}^t
    &&
    \ \forall \ i \in \calI_t, 
    \ \forall \ r \in \calR
\end{align}

\item[--] {\it Row minimization.} 
This objective also incentivizes compact configurations to reduce operational overhead. 
We add a new binary variable $z_r^t$ indicating whether row $r \in \calR$ contains racks from the current demand batch $\calI_t$. 
We add a term $-\sum_{r \in \calR} \theta_r z_r^t$ to the objective where $\theta_r$ is larger for rows $r \in \calR$ with fewer placed racks. 
We add the following linking constraints:
\begin{align}
    \label{eq:resourceProfileSizeWithG}
    y_{ir}^t & \leq z_r^t
    &&
    \ \forall \ i \in \calI_t,
    \ \forall \ r \in \calR
\end{align}

\item[--] {\it Tile group minimization.} This objective incentivizes placing multi-rack reservations on identical tile groups to facilitate customer service down the road and, again, to reduce overhead---in practice, tiles belonging to the same tile groups are located in the same part of the row, so this objective promotes contiguity. We add a binary variable $v_{ij}^t$ denoting if tile group $j$ is used by demand $i \in \calI_t$, penalizing it by a parameter $\tau$. We add the following linking constraint:
\begin{align}
    \label{eq:tilegroup_min}
    x_{ij}^t \leq n_i \cdot v_{ij}^t
    &&
    \ \forall \ i \in \calI_t, 
    \ \forall \ j \in \calJ
\end{align}

\item[--] {\it Power balance.} This objective encourages balanced power loads to avoid overloads and mitigate maintenance operations. Let $\calP_m^{UPS}$ store top-level power devices in room $m \in \calM$. The power devices in $\calP^{UPS}_m$ share a load of $\sum_{p \in \calP^{UPS}_m} P_{p}$; all capacities $P_p$ are identical in practice; and there are $\binom{|\calP^{UPS}_m|}{2}$ pairs of distinct power devices in room $m$. Accordingly, if all pairs of distinct power devices share the same load, each pair will power a load of $\frac{1}{\binom{|\calP^{UPS}_m|}{2}}\sum_{p' \in \calP^{UPS}_m} P_{p'}$. We refer to this quantity as the pair-wise balanced load. We first minimize the surplus load $\Phi^t \in \R_+$ for any pair of top-level UPS devices $(p,q)$ as the difference between the total load allocated to power devices $p$ and $q$ and the pair-wise balanced load:
\begin{align}
    \label{eq:powerSurplus}
    \Phi^t & \geq 
    \sum_{\tau=1}^{t-1} 
    \sum_{j \in \calJ_p \cap \calJ_{p'}} 
    \sum_{i \in \calI_\tau}
    \rho_i \overline{x}^{\tau}_{ij}
    + 
    \sum_{i \in \calI_t}
    \sum_{j \in \calJ_p \cap \calJ_{p'}} 
    \rho_i x^{t}_{ij}
    - 
    \frac{1}{\binom{|\calP^{UPS}_m|}{2}} 
    \sum_{q \in \calP^{UPS}_m} P_{q},
    && 
    \ \forall \ m \in \calM, 
    \ \forall \ p, p' \in \calP^{UPS}_{m}
\end{align}

Second, we minimize the largest power load difference across all pairs of top-level UPS devices. This is written as  $\Gamma_{\text{U}}^t - \Gamma_{\text{L}}^t$, where $\Gamma_{\text{U}}^t, \Gamma_{\text{L}}^t \in \R_+$ are defined as follows:
\begin{align}
    \label{eq:powerBalance1}
    \Gamma_{\text{U}}^t \geq 
    \sum_{\tau=1}^{t-1} 
    \sum_{i \in \calI_\tau}
    \sum_{j \in \calJ_p \cap \calJ_{p'}} 
    \rho_i \overline{x}^{\tau}_{ij}
    + 
    \sum_{i \in \calI_t}
    \sum_{j \in \calJ_p \cap \calJ_{p'}} 
    \rho_i x^{t}_{ij}
    ,
    &&
    \ \forall \ m \in \calM, 
    \ \forall \ p, p' \in \calP^{UPS}_{m}
    \\
    \label{eq:powerBalance2}
    \Gamma_{\text{L}}^t \leq 
    \sum_{\tau=1}^{t-1} 
    \sum_{i \in \calI_\tau}
    \sum_{j \in \calJ_p \cap \calJ_{p'}} 
    \rho_i \overline{x}^{\tau}_{ij}
    + 
    \sum_{i \in \calI_t}
    \sum_{j \in \calJ_p \cap \calJ_{p'}} 
    \rho_i x^{t}_{ij}
    ,
    &&
    \ \forall \ m \in \calM, 
    \ \forall \ p, p' \in \calP^{UPS}_{m}
\end{align}
\end{itemize}

The modified objective function at time $t$ is then given by 
\begin{align}
    f_t(\bx^t, \by^t, \bxi^{1:t})
    - \sum_{i \in \calI_t} \sum_{m \in \calM} \lambda_m w_{im}^t
    - \tau \sum_{i \in \calI_t} \sum_{j \in \calJ} v_{ij}^t
    - \sum_{r \in \calR} \theta_r z_r^t
    - \alpha \Phi^t 
    - \beta (\Gamma_{\text{U}}^t - \Gamma_{\text{L}}^t)
\end{align}

\subsection{Production Setting}
\label{app:rackplacement_production_params}

In our rack placement algorithm deployed in production, the room minimization parameter $\lambda_m$ is equal to 40, 3 and 0, for rooms up to 0\%, 20\%, 100\% full respectively at the start of batch $\calI_t$. The row minimization parameter $\theta_r$ is equal to 2, 1, and 0 for rows up to 0\%, 50\%, and 100\% full respectively at the start of batch $\calI_t$. We have objective weights of $\tau = 1$, $\alpha = 10^{-3}$, $\beta = 10^{-5}$ for the tile group minimization parameter, the power surplus parameter, and the power load difference parameter. Since reward parameters are set to $r_i=200$, these objectives remain secondary objectives in the model.

One difference between the generic multi-stage stochastic optimization framework considered in this paper and the rack placement problem is that the latter does not evolve in a well-specified finite horizon $T$; rather, the horizon terminates when the data center can no longer accommodate incoming requests. Accordingly, we define a moving horizon: at each time period $t$, we sample requests for $k_t$ periods, where $k_t$ is determined so that future requests fill all non-empty rooms. We also prioritize incoming demands over sampled demands via a corresponding weight in the objective function---in particular, we ensure that the current requests are placed if they can be placed.

\section{Propensity Score Matching Analysis.}
\label{app:PSM}

We use propensity score matching (PSM) to corroborate our OLS regression estimates reported in \Cref{sec:empirical}. Specifically, we partition data centers into high- and low-adoption categories; we use a logistic regression model with the seven control variables to obtain each data center’s propensity score; and we then match each high-adoption data center to its neighbors within the low-adoption population, allowing for replacement \citep{rosenbaum1983central}. For robustness, we repeat the procedure with two thresholds between high- and low-adoption data centers (60\% and 45\%) and with 1, 2 and 4 neighbors within the low-adoption population for each high-adoption data center. The propensity score model has an area under the curve of $0.75$ with a 60\% cutoff and of $0.71$ with a 45\% cutoff, which satisfy the target threshold of $0.70$ \citep{defond2017client}.

\Cref{fig:controls_PSM} reports the distribution of the four continuous control variables across the high-adoption population and the matched low-adoption population. The visualization suggests that the two matched groups are highly balanced. To corroborate this observation, \Cref{tab:balance_PSM} shows that PSM mitigates the differences between the high- and low-adoption data centers across control variables. Specifically, the table indicates a standardized bias around $0.1$, a variance ratio below 2, and a Kolmogorov-Smirnov statistic around $0.1$--$0.3$, all of which are reflective of balanced distributions \citep{stuart2013prognostic,rubin2001using}. In turn, despite slight remaining disparities due to the small samples and the inherent variability in the control variables, the PSM method matches high-adoption data centers to ``similar’’ low-adoption data centers per the control variables.

\begin{figure}[h!]
\centering
\small
\subfloat[Demand.]{\includegraphics[width=0.48\textwidth]{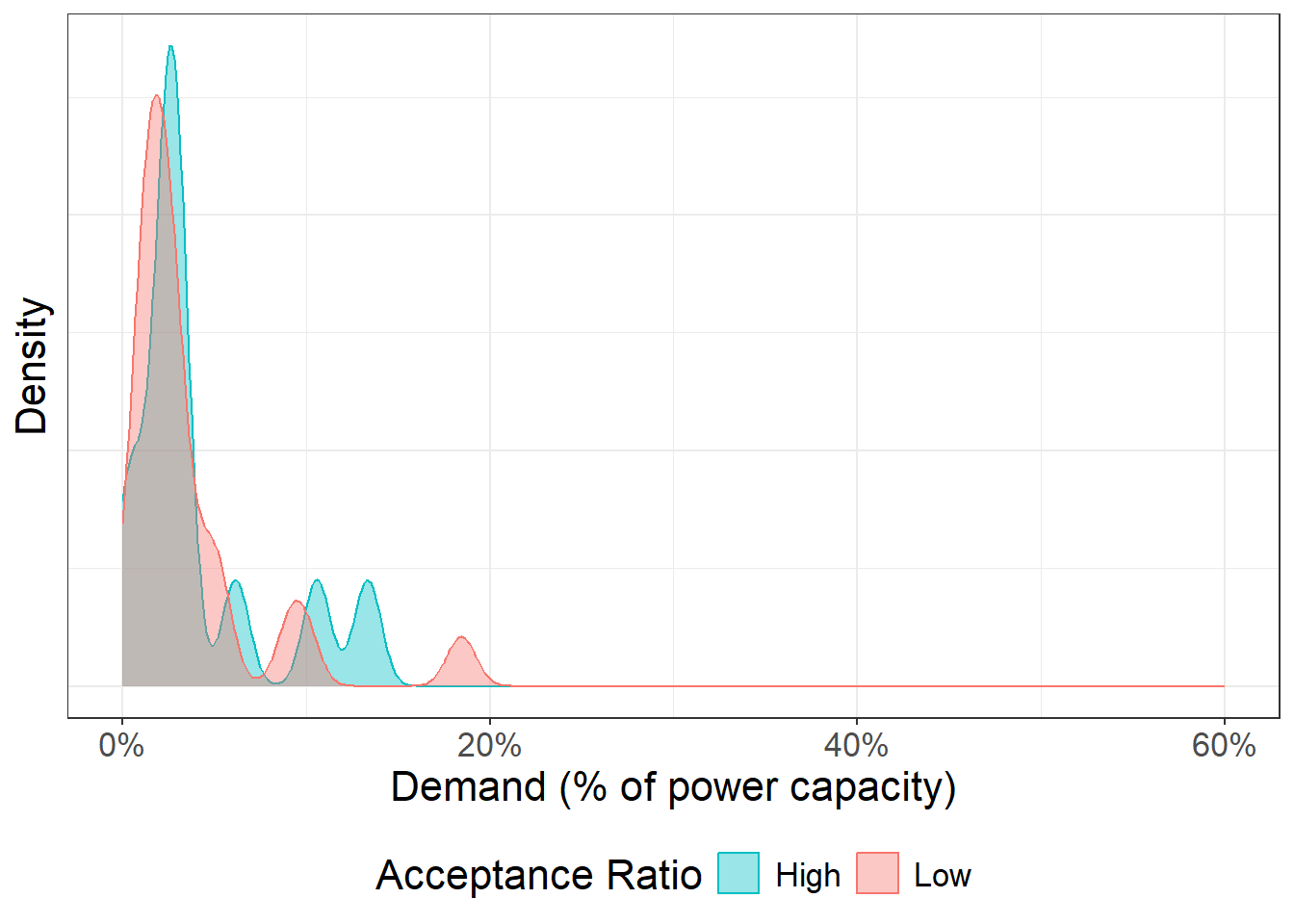}} 
\hspace{0.1 cm}
\subfloat[Initial utilization.]{\includegraphics[width=0.48\textwidth]{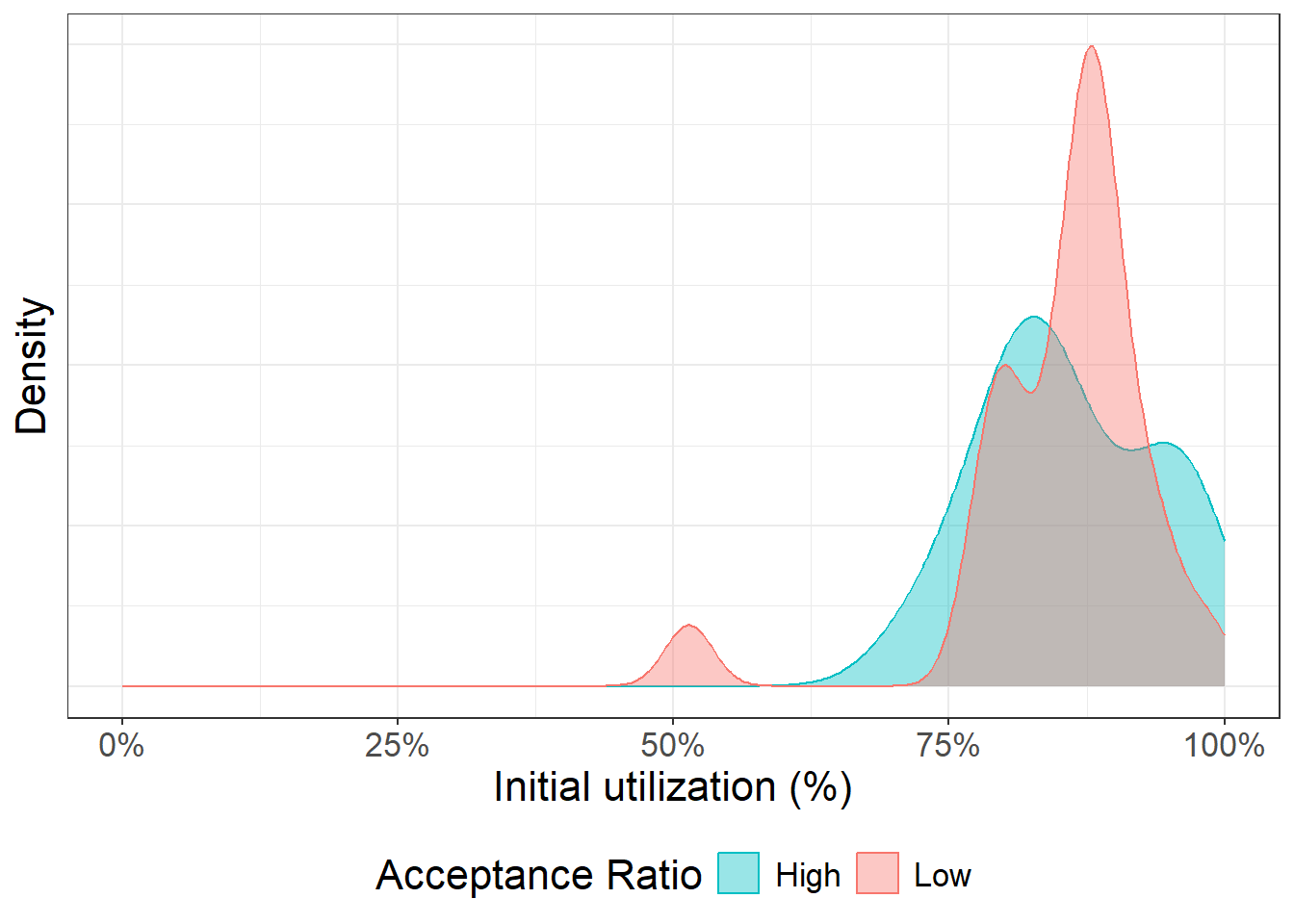}} 
\hspace{0.1 cm}
\subfloat[Initial power stranding.]{\includegraphics[width=0.48\textwidth]{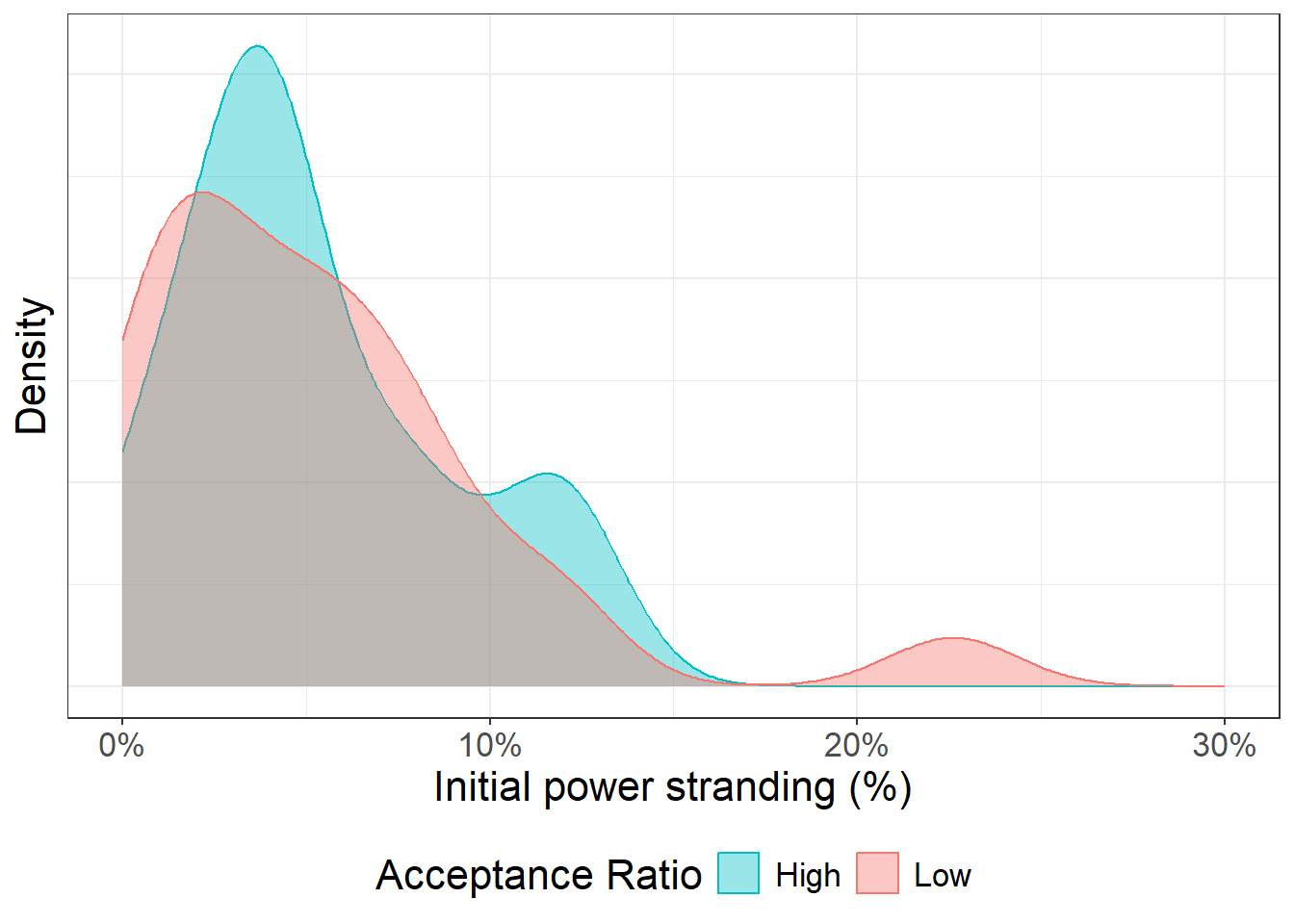}} 
\hspace{0.1 cm}
\subfloat[Power capacity.]{\includegraphics[width=0.48\textwidth]{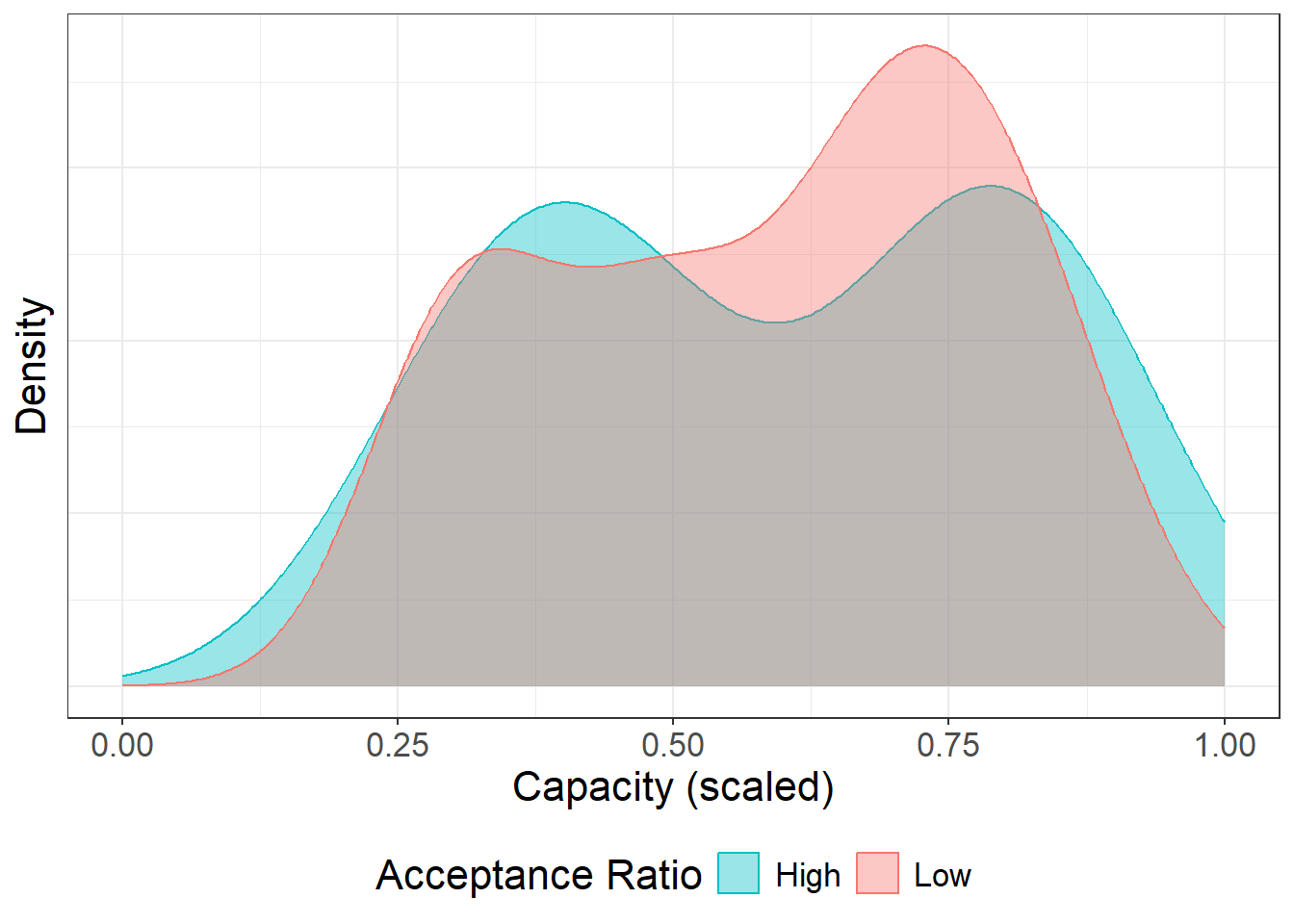}} 
\caption{Distribution of continuous control variables across data centers after PSM (using a 60\% threshold and four neighbors).} \label{fig:controls_PSM}
\vspace{-12pt}
\end{figure}

\begin{table}[h!]
    \centering
    {
    \footnotesize
    \setlength{\tabcolsep}{4pt}
    \renewcommand{\arraystretch}{1}
    \caption{Distributional balance after PSM (using 60\% and 45\% thresholds and four neighbors).}
    \label{tab:balance_PSM}
    \begin{tabular}{
        S[table-column-width=5cm,table-text-alignment=left]
        *{8}{S[table-format=-1.4]}
    }
        \toprule
    	& \multicolumn{4}{c}{Threshold: {60\%}} 
        & \multicolumn{4}{c}{Threshold: {45\%}}\\
	\cmidrule(lr){2-5}\cmidrule(lr){6-9}
	& {SB} & {VR} & {KS} & {($p$-value)} 
    & {SB} & {VR} & {KS} & {($p$-value)} \\
	\midrule
	{Demand} 
    & 0.1321 & 1.0353 & 0.2292 & (0.5745) 
    & -0.0392 & 1.2156 & 0.1563 & (0.6274) \\
	{Initial utilization} 
    & 0.0877 & 0.8033 & 0.2708 & (0.3745) 
    & 0.0828 & 1.9774 & 0.2292 & (0.1962)  \\
	{Initial power stranding} 
    & -0.0084 & 0.5659 & 0.2083 & (0.6821) 
    & -0.1315 & 0.4573 & 0.1771 & (0.4661)\\
	{IT capacity} 
    & 0.0221 & 1.2828 & 0.1667 & (0.8572) 
    & -0.0859 & 1.1487 & 0.1771 & (0.4321) \\
	{Rooms} 
    & 0.1522 & 1.4485 & 0.1458 & (0.6246) 
    & -0.1010 & 1.4169 & 0.1667 & (0.2466)\\
	{``Flex'' architecture} 
    & 0.0000 & 1.0682 & 0.0000 & (1.0000) 
    & 0.1251 & 1.0490 & 0.0625 & (0.6496)  \\
	{Location (US = 1, Europe = 0)}
    & -0.1549 & 1.3147 & 0.0625 & (0.6915) 
    & -0.1877 & 1.2238 & 0.0833 & (0.4435) \\
    \bottomrule
    \end{tabular}
    \begin{tablenotes}\vspace{-3pt}
    \item Threshold: boundary used to differentiate high-adoption data centers vs. low-adoption data centers.\vspace{-3pt}
	\item Standardized bias (SB): difference between means, divided by the pooled standard deviation.\vspace{-3pt}
	\item Variance ratio (VR): ratio between the sample variance of the high- and low-adoption data centers.\vspace{-3pt}
	\item Kolmogorov-Smirnov (KS) statistic: measure of the distance between cumulative distribution functions.
    \end{tablenotes}
    }
\end{table}

Finally, we use the PSM dataset to corroborate our previous findings. \Cref{fig:treatment_PSM} shows the distribution of the outcome variable partitioned between the high-adoption data centers and the matched population, using $60$\% and $45$\% thresholds. The qualitative observations echo those from \Cref{fig:treatment}, in that the distribution shifts to the left, leading to a lower average increase in power stranding among high-adoption data centers than low-adoption ones. Quantitative evidence confirms the impact of adoption on power stranding after controlling for covariates via PSM ($+1.03$\% vs. $+2.57$\% with a $60$\% threshold, and $1.65$\% vs. $3.71$\% with a 45\% threshold). The differences are of the same order of magnitude to the one found in the raw data (\Cref{fig:treatment}) and remain statistically significant---at the $5$\% level with a $60$\% threshold and at the $1$\% level with a $45$\% threshold. These results confirm that differences on power stranding are not merely due to the confounding effect of third variables, but can be attributed to differences in adoption of our algorithmic tool. 

\begin{figure}[h!]
\centering
\small
\subfloat[Threshold: 60\%.]{\includegraphics[width=0.48\textwidth]{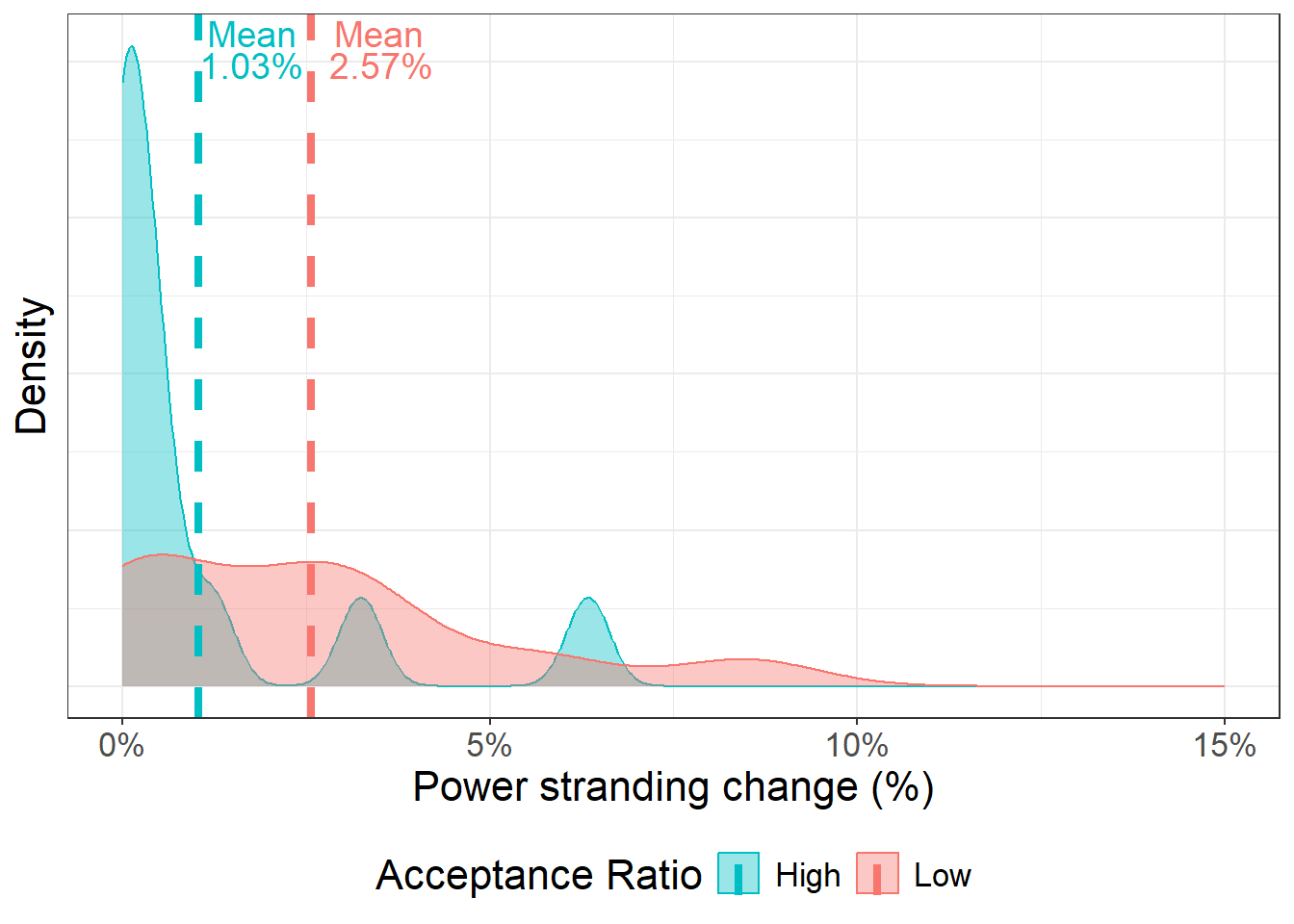}} 
\hspace{0.1 cm}
\subfloat[Threshold: 45\%.]{\includegraphics[width=0.48\textwidth]{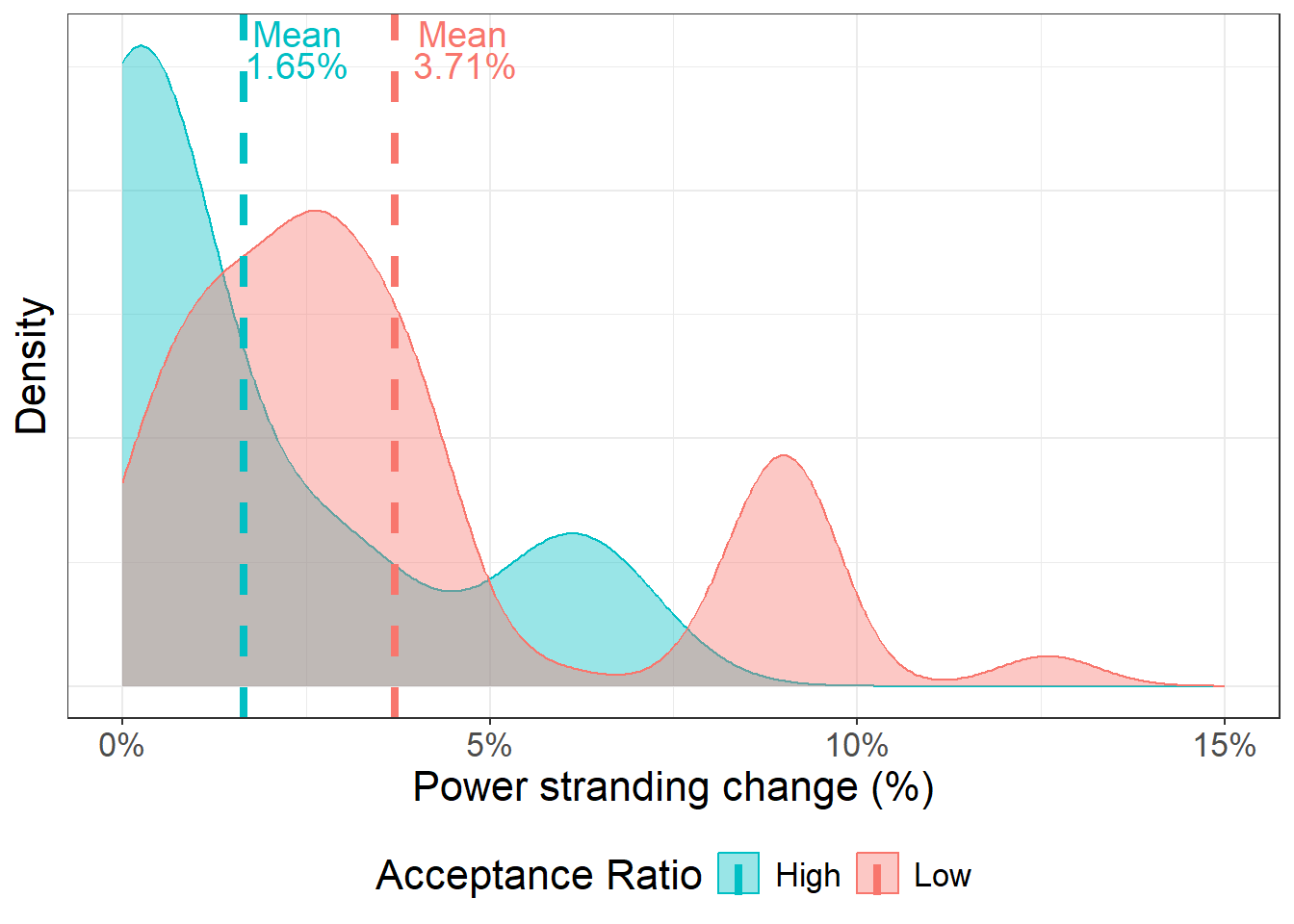}} 
\caption{Distribution of treatment variable across data centers after PSM (using 45\% and 60\% thresholds between high- and low-adoption data centers, and four low-adoption neighbors for each high-adoption data center), smoothed via Kernel Density Estimation from the \texttt{geom\_density} function in the \texttt{ggplot2} package in R.} \label{fig:treatment_PSM}
\vspace{-12pt}
\end{figure}

To conclude, \Cref{tab:regression_PSM} reports the regression estimates, with and without controls, using the eight PSM samples corresponding to the two adoption thresholds and the three matching neighborhoods along with a no-PSM baseline. By design, the no-PSM estimates without controls are identical to those from \Cref{fig:treatment}, and the PSM estimates with four estimates and without controls are identical to those from \Cref{fig:treatment_PSM}; the others provide robustness tests with one and two neighbors, and by adding the control variables in a PSM regression specification. However, the no-PSM estimates do not exactly coincide with those from \Cref{tab:regression} because of the different treatment variable---namely, we used a continuous measure of adoption between 0 and 1 in \Cref{tab:regression} versus a binary treatment variable separating high-adoption from low-adoption data centers in \Cref{tab:regression_PSM}. These results confirm that the impact of adoption on power stranding is negative across all specifications and statistically significant in the majority of cases---in 13 out of the 16 specifications. The magnitude of the coefficients ranges from $-1.1$\% to $-2.2$\%; this suggests that moving a data center from the low-adoption to the high-adoption category could reduce power stranding by 1 to 2 percentage points, which is also consistent with our baseline analysis.

\begin{table}[h!]
    \centering
    {
    \footnotesize
    \renewcommand{\arraystretch}{1}
    \setlength{\tabcolsep}{5pt}
    \caption{Regression estimates of the treatment effect across PSM specifications.}
    \label{tab:regression_PSM}
    \begin{tabular}{
        *{2}{S[table-text-alignment=left]}
        *{8}{S[table-format=-1.5,digit-group-size=5]}
    }
        \toprule
    	& 
        & \multicolumn{4}{c}{Threshold: {60\%}} 
        & \multicolumn{4}{c}{Threshold: {45\%}}\\
	\cmidrule(lr){3-6}\cmidrule(lr){7-10}
	{Controls?} & & {No PSM} & {1N} & {2N} & {4N} & {No PSM} & {1N} & {2N} & {4N} \\
    \midrule
    {No} & {Effect} 
    & -0.01988 & -0.01547 & -0.01213 & -0.01542
    & -0.01734 & -0.01243 & -0.01725 & -0.02064 
    \\
         & {$p$-value}
    & 0.01264\sym{**} & 0.09962\sym{*} & 0.1121 & 0.02887\sym{**}
    & 0.03854\sym{**} & 0.06167\sym{*} & 0.00702\sym{***} & 0.00069\sym{***}
    \\
    {Yes} & {Effect}
    & -0.02259 & -0.01196 & -0.01319 & -0.01601
    & -0.02037 & -0.01108 & -0.01802 & -0.02178 
    \\
          & {$p$-value}
    & 0.0304\sym{**} & 0.216 & 0.138 & 0.0441\sym{**}
    & 0.0220\sym{**} & 0.0880\sym{*} & 0.00572\sym{***} & 0.00032\sym{***}
    \\
    \bottomrule
    \end{tabular}
    \begin{tablenotes}
	\item 1N, 2N, 4N: Results after PSM using 1, 2, and 4 neighbors, respectively.
	\item \sym{*}, \sym{**}, and \sym{***} indicate significance levels of 10\%, 5\%, and 1\%.
    \end{tablenotes}
    }
\end{table}

\end{document}